\documentclass{amsart}
\usepackage{amsfonts,amsthm,amsmath}
\usepackage{amssymb}
\usepackage[usenames,dvipsnames]{color}
\usepackage{xypic}
\usepackage[a4paper]{geometry}

\renewcommand{\t}{\mathfrak{t}}

\newcommand{\N}{\mathcal{N}}

\DeclareMathOperator*{\End}{End}
\DeclareMathOperator*{\Hom}{Hom}
\theoremstyle{plain}
	\newtheorem{thm}{Theorem}
	\newtheorem*{thm*}{Theorem}
	
	\newtheorem{lemma}{Lemma}
	\newtheorem*{lemma*}{Lemma}
	\newtheorem{prop}[thm]{Proposition}
	\newtheorem*{prop*}{Proposition}
	\newtheorem{corollary}{Corollary}
	\newtheorem*{corollary*}{Corollary}
\theoremstyle{remark}
	\newtheorem{remark}{Remark}
	\newtheorem*{remark*}{Remark}

\theoremstyle{definition}
	\newtheorem*{definition}{Definition}	
\title{Equivariant Jeffrey-Kirwan localization theorem in non-compact setting}
\author{Zsolt Szil\'agyi}
\address{Section de Math\'ematiques, Universit\'e de Gen\`eve, Gen\`eve, Suisse.}
\email{zsolt.szilagyi@unige.ch}
\linespread{1.2}
\begin{document}
\maketitle
\begin{abstract}
We generalize the Jeffrey-Kirwan localization theorem (\cite{JK1}, \cite{JK2}) for  non-compact symplectic and hyperK\"ahler quotients. Similarly to the circle compact integration of \cite{HP2} we define equivariant integrals on non-compact manifolds using the Atiyah-Bott-Berline-Vergne localization formula as formal definition. We introduce a so called equivariant Jeffrey-Kirwan residue and we show that it shares similar properties as the usual one. 
Our localization formula has the same structure as the usual Jeffrey-Kirwan formula, but it uses formal integration and equivariant residue. We also give a version for hyperK\"ahler quotients. Finally, we apply our formula to compute the equivariant cohomology ring of Hilbert scheme of points on the plane constructed as a hyperK\"ahler quotient \cite{Na}. 
\end{abstract}

\section{Introduction}

Let $(M,\omega)$ be a non-compact symplectic manifold with Hamiltonian $G\times S$-action and let $\mu_{G\times S}:M\to \mathfrak{g}^{*}\times \mathfrak{s}^{*}$ be its moment map. We assume that there is an one-dimensional torus $K$ in the center of $G\times S$ such that it has proper and bounded below moment map. We define integrals of equivariant cohomology classes of $M$ by the Atiyah-Bott-Berline-Vergne formula (\cite{AB}, \cite{BeVe}), assuming that $M^{T\times S}$ is compact
$$
\oint_{M} \alpha := \sum_{F\times M^{T\times S}} \int_{F} \frac{i^{*}_{F}\alpha}{ e_{T\times S}\mathcal{N}(F \,|\, M) } 
$$
where $\alpha \in H_{G\times S}(M)$, $T\subset G$ maximal torus and $\mathcal{N}(F \,|\, M)$ normal bundle of $F$ in $M$. Let $0\in (\mathfrak{g}^{*})^{G}$ be a regular value of $\mu_{G}$ and consider the symplectic quotient $M/\!\!/G = \mu^{-1}(0)/G$  (we suppose that it is non-compact). In Theorem \ref{Thm-JK-3} we give the following formula for integrals on the quotient
$$
\oint_{M/\!\!/G} \kappa_{S}(\alpha e^{\omega - \mu_{G\times S}})
=
\lim_{s\to 0} \textnormal{EqRes}_{x} \left(
\frac{\varpi}{|W| vol(T)} \oint_{M} \alpha e^{\omega - \mu_{T\times S} +s\rho}
\right)(x), 
$$
where $\kappa_{S}:H_{G\times S}(M) \to H_{S}(M/\!\!/G)$ is the equivariant Kirwan map, $\varpi$ is the product of roots of $G$, $|W|$ is the order of its Weyl group, $\rho$ is a small regular value of the torus moment map $\mu_{T}$ (we can forget about it if $0$ is a regular value of $\mu_{T}$), and the equivariant Jeffrey-Kirwan residue $\textnormal{EqRes}_{x}$ is defined in  section \ref{secEqRes}. For hyperK\"ahler quotient we have a similar formula (Theorem \ref{Thm-JK-4})
$$
\oint_{M/\!\!/\!\!/\!\!/_{(\xi,0)}G} \kappa_{S}(\alpha e^{\omega_{\mathbb{R}} - \mu_{\mathbb{R}} - \mu_{S} +\xi})
=
\lim_{s\to 0} \textnormal{EqRes}_{x} \left(
\frac{ \varpi_{\mathbb{R}} \vartheta\varpi_{\mathbb{C}}}{|W| vol(T)} \oint_{M} \alpha e^{\omega_{\mathbb{R}} - \mu_{\mathbb{R}}^{T} - \mu_{S} +\xi+ s\rho}
\right)(x), 
$$
where $\kappa_{S}:H_{G\times S}(M) \to H_{S}(M/\!\!/\!\!/\!\!/_{(\xi,0)}G)$ is the Kirwan map, $\omega_{\mathbb{R}}$ and $\mu_{\mathbb{R}}$ are the real symplectic form and the moment map on $M$, $\vartheta\varpi_{\mathbb{C}}$ is the product of $T\times S$-weights on $\mathfrak{g}^{*}_{\mathbb{C}}$ (it is assumed that $(\mathfrak{t}^{*}_{\mathbb{C}})^{S} = \{0\}$), and $\varpi_{\mathbb{R}}$ is the product of roots.

The content of the paper is as follows. In the first part of section 2 we define and review properties of the Jeffrey-Kirwan residue. In the second part of the section we introduce the equivariant Jeffrey-Kirwan residue and we show that it has the same properties as the usual one. In section 3 we review in detail the symplectic cut technique which will be used in the subsequent section. In section 4 we first prove a generalization of the Jeffrey-Kirwan localization formula (Theorem \ref{Thm-JK-1}) using similar techniques as in \cite{JKo}. We use it to prove the non-compact version (Theorem \ref{Thm-JK-2} and \ref{Thm-JK-3}). Moreover, we prove first the abelian version of the theorem and then by the same method of \cite{Martin} we deduce the  non-abelian version. We conclude the section with the discussion of the hyperK\"ahler case (Theorem \ref{Thm-JK-4}). In the last section we apply our formulas for computation of the equivariant cohomology ring of Hilbert scheme of points on the plane (Theorem \ref{Thm-H-2}), constructed as hyerpK\"ahler quotient \cite{Na}. By equivariant formality the ordinary cohomology may be derived (cf. \cite{LS}, \cite{Va}).


\vskip1ex
\noindent {\bf Acknowledgement.}
The author is grateful to Andr\'as Szenes and Mich\`ele Vergne for discussions.

\section{Iterated residues}

In this section we recall the definition of the Jeffrey-Kirwan residue (\cite{JK1}, \cite{JK2}) and some of its properties (cf. \cite{BV}, \cite{JKo}). We prove that under some analyticity condition it is independent on choice of bases and polarization.
 In the second part of the section we introduce an equivariant version of the Jeffrey-Kirwan residue. It is defined in terms of ordinary Jeffrey-Kirwan residue and we prove that it admits similar properties.

\subsection{Jeffrey-Kirwan residue}

Let $\mathfrak{t}$ be an $r$-dimensional real vector space.

	\begin{definition}
Let $x=\{ x_{1},\ldots,x_{r} \}$ be an ordered bases of $\mathfrak{t}^{*}$. For any non-zero $\alpha =\sum_{i=1}^{r} a_{i}x_{i} \in\mathfrak{t}^{*}$ we define its polarization as
$$
\widetilde{\alpha}:=
\begin{cases}
\alpha & \textnormal{if }a_{1}=\ldots=a_{k-1}=0,\,a_{k}>0,
\\
-\alpha & \textnormal{if }a_{1}=\ldots=a_{k-1}=0,\,a_{k}<0. 
\end{cases}
$$
 We say that $\alpha$ is polarized with respect to $x$ if $\alpha = \widetilde{\alpha}$. We define $\varepsilon(\alpha)\in\{\pm 1\}$ by $\alpha = \varepsilon(\alpha) \cdot \widetilde{\alpha}$. The set of polarized vectors in $\mathfrak{t}^{*}$ form a cone, which we call the polarized cone of $\mathfrak{t}^{*}$.
	\end{definition}

The relationship between our notion of polarization and the one used by Jeffrey and Kirwan is as follows.

	\begin{remark}\label{Rk-I-4}
Let $\mathcal{A} = [\alpha_{i}\,|\,i\in I]$ be a finite collection of non-zero vectors in $\mathfrak{t}^{*}$. All possible simultaneous polarizations of elements of $\mathcal{A}$ are parametrized by connected components $\Lambda$ of $\{ t\in \mathfrak{t}\,|\,\alpha_{i}(t)\neq0,\,\forall i\in I \}$ as follows. For any $\xi \in \Lambda$ define
$$
\overline{\alpha}_{i}:=
\begin{cases}
\alpha_{i} & \textnormal{if }\alpha_{i}(\xi)>0,
\\
-\alpha_{i} & \textnormal{if }\alpha_{i}(\xi)<0,
\end{cases}
$$
which does not depend on the choice of $\xi$. Consider an ordered bases $x=\{x_{1},\ldots,x_{r}\}$ of $\mathfrak{t}^{*}$ such that $x_{1}(\xi)=1$, $x_{2}(\xi) = \ldots = x_{r}(\xi) =0$. The polarization $\widetilde{\alpha}_{i}$ of $\alpha_{i}$ with respect to $x$ agrees with $\overline{\alpha}_{i}$. Conversely, given $x$ ordered bases consider $\Lambda = \{t \in \mathfrak{t}\,|\,\widetilde{\alpha}_{i}(t)>0,\,\forall i \in I \}$, which is non-empty and $\overline{\alpha}_{i}$ agrees with $\widetilde{\alpha}_{i}$ for all $i\in I$.
	\end{remark}

Let $\lambda_{I},\alpha_{i}\in \mathfrak{t}^{*}$, $(i\in I)$, $P_{I}\in S\mathfrak{t}^{*}=\mathbb{R}[\mathfrak{t}]$ and consider the function $F = \frac{P_{I} e^{\lambda_{I}}}{ \prod_{i\in I} \alpha_{i}}$. Write $F$ in bases $x$ and denote it by $F(x)$.

	\begin{definition}
For $\beta_{1},\ldots,\beta_{k} \in \mathfrak{t}^{*}$ non-zero vectors we define
	\begin{equation}\label{Eq-I-1}
\textnormal{Res}_{x_{k}|\beta_{k}} \ldots \textnormal{Res}_{x_{1}|\beta_{1}} F(x)dx_{1}\ldots dx_{k}
	\end{equation} 
inductively as follows. If $\beta_{1} \notin \langle x_{2},\ldots,x_{r}\rangle$ then from $u_{1} = \widetilde{\beta}_{1}(x)$ we express $x_{1}$ in terms of $u_{1},x_{2},\ldots,x_{r}$, i.e.
$x_{1} = \beta_{1}'(u_{1},x_{2},\ldots,x_{r}).$
 We substitute $x_{1} = \beta'_{1}$ in $F(x)$ and we expand it as $u_{1}\ll x_{2},\ldots,x_{r}$.  Denote the result  by $\mathcal{F}(u_{1},x_{2},\ldots,x_{r})$.
For $i>1$ let $\beta'_{i} = \pi_{\langle \beta_{1} \rangle}\beta_{i}$ be the projection of $\beta_{i}$ to $\langle x_{2},\ldots,x_{r} \rangle$ along $\beta_{1}$, i.e. $\beta'_{i}(x_{2},\ldots,x_{r}) = \beta_{i}(\beta'_{1}(0,x_{2},\ldots,x_{r}),x_{2},\ldots,x_{r})$. Then (\ref{Eq-I-1}) is defined as
$$
\bigg( \frac{\partial\widetilde{\beta}_{1}(x)}{\partial x_{1}} \bigg)^{-1} 
\textnormal{Res}_{x_{k}|\beta'_{k}} \ldots \textnormal{Res}_{x_{2}|\beta'_{2}} \textnormal{Res}_{u_{1}=0} \mathcal{F}(u_{1},x_{2},\ldots, x_{r}) du_{1} dx_{2} \ldots dx_{r}.
$$
If $\beta_{1}\in \langle x_{2},\ldots, x_{r}\rangle$ then we set (\ref{Eq-I-1}) to zero.
	\end{definition}

If
	\begin{equation}\label{Eq-I-4}
\pi_{\langle \beta_{1},\ldots,\beta_{i-1} \rangle}\beta_{i}\in \langle x_{i},\ldots,x_{r} \rangle \setminus \langle x_{i+1},\ldots,x_{r}\rangle\quad \forall i=1,\ldots, k 
	\end{equation}
then the system of equations
	\begin{equation}\label{Eq-I-3a}
u_{1} = \widetilde{\beta_{1}}(x),\ 
u_{2} = \widetilde{\pi_{\langle \beta_{1}\rangle}\beta_{2}}(x),\ 
\ldots,\ 
u_{k} = \widetilde{\pi_{\langle \beta_{1},\ldots,\beta_{k-1}\rangle}\beta_{k}}(x)
	\end{equation}
can be expressed in matrix form
	\begin{equation}\label{Eq-I-13}
(u_{1},\ldots,u_{k})^{t} =  B \cdot (x_{1},\ldots,x_{r})^{t}
	\end{equation}
where $B\in M_{k,r}(\mathbb{R})$ is    upper triangular matrix with positive diagonal entries and we have
	\begin{multline}\label{Eq-I-3}
\textnormal{Res}_{x_{k}|\beta_{k}} \ldots \textnormal{Res}_{x_{1}|\beta_{1}} \frac{P_{I}(x)e^{\lambda_{I}(x)}}{\prod_{i\in I}\alpha_{i}(x)}dx_{1}\ldots dx_{k}
=
\\
\frac{1}{\delta} \cdot \textnormal{Res}_{u_{k}=0} \ldots \textnormal{Res}_{u_{1}=0} 
\frac{ \mathcal{F}_{I} (u,x_{k+1},\ldots,x_{r}) e^{(\lambda'_{I})_{1}u_{1}+ \ldots +(\lambda'_{I})_{k}u_{k} +\pi_{\langle \beta_{1},\ldots,\beta_{k} \rangle}\lambda(x) }}{\prod_{\alpha' \in\mathcal{A}'}\alpha'(x)}du_{1}\ldots du_{k}, 
	\end{multline}
where $\delta$ is the product of diagonal entries in $B$, 
$\mathcal{A}'$ is the collection of all non-zero $\pi_{\langle \beta_{1},\ldots, \beta_{k} \rangle} \alpha_{i} \in \langle x_{k+1},\ldots,x_{r} \rangle$, ($i\in I$), 
$\frac{ \mathcal{F}_{I} (u,x_{k+1},\ldots,x_{k})}{\prod_{\alpha'\in \mathcal{A}'}\alpha'(x)}$ is the Laurent series in $u_{1},\ldots,u_{k}$ with coefficients in $\mathbb{R}[[x_{k+1},\ldots,x_{r}]]$ got from $\frac{P_{I}(x)}{\prod_{i\in I}\alpha_{i}(x)}$ after base change $(\ref{Eq-I-13})$ and expansion as $u_{1} \ll \ldots \ll u_{k} \ll x_{k+1},\ldots,x_{r}$.
If $\beta_{1},\ldots,\beta_{k}$ does not satisfy (\ref{Eq-I-4}) then (\ref{Eq-I-1}) is zero.

	\begin{remark*}
For $\beta_{1},\ldots,\beta_{k}$ and $\gamma_{1},\ldots,\gamma_{k}$ satisfying (\ref{Eq-I-4}) we have
$$
\textnormal{Res}_{x_{k}|\beta_{k}} \ldots \textnormal{Res}_{x_{1}|\beta_{1}} = \textnormal{Res}_{x_{k}|\gamma_{k}} \ldots \textnormal{Res}_{x_{1}|\gamma_{1}}
$$ 
if and only if $\pi_{\langle \beta_{1},\ldots,\beta_{i-1} \rangle} \beta_{i} = c_{i}\cdot \pi_{\langle \gamma_{1},\ldots,\gamma_{i-1} \rangle} \gamma_{i}$ for some constants $c_{i}\in \mathbb{R}^{*}$, $i=1,\ldots,k$. In this case we say that tuples $\beta_{1},\ldots,\beta_{k}$ and $\gamma_{1},\ldots,\gamma_{k}$ are equivalent.
	\end{remark*}

	\begin{remark}\label{Rk-I-6}
$\textnormal{Res}_{x_{k}|\beta_{k}} \ldots \textnormal{Res}_{x_{1}|\beta_{1}} \frac{P_{I}(x) e^{\lambda_{I}(x)}}{\prod_{i\in I} \alpha_{i}(x)}dx_{1}\ldots dx_{k}$ may not be zero only if there are $i_{1},\ldots,i_{k}\in I$ such that $\alpha_{i_{1}},\ldots,\alpha_{i_{k}}$ and $\beta_{1},\ldots,\beta_{k}$ are equivalent. If they are equivalent we may suppose that $\beta_{l} = \alpha_{i_{l}}$ for all $l=1,\ldots,k$.
	\end{remark}

	\begin{definition}
We define $\textnormal{Res}^{+}_{x_{k}|\beta_{k}} \ldots \textnormal{Res}^{+}_{x_{1}|\beta_{1}} \frac{P_{I}(x) e^{\lambda_{I}(x)}}{\prod_{i\in I} \alpha_{i}(x)}dx_{1}\ldots dx_{k}$ to be equal to (\ref{Eq-I-3}) if (\ref{Eq-I-4}) holds and $(\lambda'_{I})_{1},\ldots,(\lambda'_{I})_{k} \geq 0$, otherwise it is defined to be zero. Moreover, define
$$
\textnormal{Res}^{+}_{x_{k}} \ldots \textnormal{Res}^{+}_{x_{1}} \frac{P_{I}(x) e^{\lambda_{I}(x)}}{\prod_{i\in I} \alpha_{i}(x)}dx_{1}\ldots dx_{k}
=
\sum
\textnormal{Res}^{+}_{x_{k}|\alpha_{i_{k}}} \ldots \textnormal{Res}^{+}_{x_{1}|\alpha_{i_{1}}} \frac{P_{I}(x) e^{\lambda_{I}(x)}}{\prod_{i\in I} \alpha_{i}(x)}dx_{1}\ldots dx_{k}, 
$$
where the sum is all over non-equivalent tuples $\alpha_{i_{1}},\ldots,\alpha_{i_{k}}$, $i_{1},\ldots,i_{k} \in I$. We use short notation $\textnormal{Res}^{+}_{x}$ for $\textnormal{Res}^{+}_{x_{r}} \ldots \textnormal{Res}^{+}_{x_{1}}$.
	\end{definition}

	\begin{definition}
Fix a scalar product and an orientation of $\mathfrak{t}^{*}$. Let $x=\{x_{1},\ldots,x_{r}\}$ be an ordered bases of $\mathfrak{t}^{*}$. Define
$$
\textnormal{JKRes}_{x} F(x) dx
=
\frac{1}{\sqrt{det(x_{i},x_{j})_{i,j}}}\textnormal{Res}^{+}_{x} F(x) dx,
$$
where $\det(x_{i},x_{j})_{i,j}$ is the determinant of the Gramm matrix $[(x_{i},x_{j})]_{i,j=1}^{r}$. 
	\end{definition}

	\begin{remark}\label{Rk-I-5}
If $\tau=\{ \tau_{1},\ldots,\tau_{r} \}$ is an orthonormal bases with the same  orientation as $x$, then
$$
\textnormal{JKRes}_{x} F(x) dx
=
\det\left(\frac{\partial x_{i}(\tau)}{\partial \tau_{j}}\right)^{-1}  \textnormal{Res}^{+}_{x} F(x) dx.
$$
	\end{remark}

	\begin{definition}
$\rho \in \mathfrak{t}^{*}$ is generic with respect to $F = \sum_{I} \frac{P_{I} e^{\lambda_{I}}}{\prod_{i\in I} \alpha_{i}}$ if it is not on any $(r-1)$- or less dimensional affine subspace $\lambda_{I} + \langle \alpha_{j}\in J\subset I \rangle$. It is equivalent to $0$ is generic with respect to $Fe^{-\rho}$.
	\end{definition}

	\begin{definition}
A bases $x = \{x_{1},\ldots,x_{r} \}$ of $\mathfrak{t}^{*}$ is generic with respect to $F = \sum_{I}\frac{P_{I} e^{\lambda_{I}}}{\prod_{i\in I} \alpha_{i}}$ if for any $\lambda_{I} \notin \langle \alpha_{i_{1}},\ldots,\alpha_{i_{k}} \rangle$, ($i_{1},\ldots,i_{k} \in I$) we have $\lambda_{I} \notin \langle \alpha_{i_{1}},\ldots,\alpha_{i_{k}}, x_{j_{k+1}},\ldots,x_{j_{r-1}} \rangle $ for any $j_{k+1},\ldots,j_{r-1}$.
	\end{definition}

	\begin{remark*}
Let $x$ is generic with respect to $F$. If $\lambda_{I} + \langle \alpha_{i_{1}},\ldots,\alpha_{i_{k}} \rangle \cap \langle x_{j_{1}},\ldots x_{j_{l}} \rangle\neq \emptyset$ then they intersect transversally.
	\end{remark*}

	\begin{remark*}
If $\alpha_{i_{1}},\ldots, \alpha_{i_{k}}$ satisfy (\ref{Eq-I-4}) and $x$ is generic then
	\begin{equation}
\lambda_{I} + \langle \alpha_{i_{1}},\ldots,\alpha_{i_{k}} \rangle \cap \langle x_{k+2},\ldots,x_{r} \rangle \neq \emptyset
\ \Leftrightarrow\ 
0 \in \lambda_{I} + \langle \alpha_{i_{1}},\ldots,\alpha_{i_{k}} \rangle
\ \Leftrightarrow\ 
\pi_{\langle \alpha_{i_{1}},\ldots,\alpha_{i_{k}} \rangle}\lambda_{I} = 0. 
	\end{equation}
In addition, if $0\in \mathfrak{t}^{*}$ is generic, then
$$
\lambda_{I} = (\lambda'_{I})_{1}\widetilde{\alpha}_{i_{1}} + \ldots + (\lambda'_{I})_{k}\, \widetilde{\pi_{\langle \alpha_{i_{1}},\ldots,\alpha_{i_{k-1}} \rangle}\alpha_{i_{k}}} + \pi_{\langle \alpha_{i_{1}},\ldots,\alpha_{i_{k}} \rangle}\lambda_{I}
$$
with $(\lambda'_{I})_{1},\ldots,(\lambda'_{I})_{k} \neq 0$.
	\end{remark*}

	\begin{definition}
A fraction $\frac{P_{I} e^{\lambda_{I}}}{\prod_{i\in I} \alpha_{i}}$ is called generating if $\alpha_{i}$, $i\in I$ span $\mathfrak{t}^{*}$.
	\end{definition}

Any fraction $\frac{P_{I} e^{\lambda_{I}}}{\prod_{i\in I} \alpha_{i}}$ can be decomposed to sum of generating fractions of form $\frac{e^{\lambda_{I}}}{\alpha_{i_{1}}^{n_{1}+1}\cdots \alpha_{i_{r}}^{n_{r}+1}}$, $i_{1},\ldots,i_{r} \in I$, $n_{1},\ldots,n_{r}\geq0$ and non-generating fractions. The decomposition is not unique. Moreover, $\textnormal{Res}_{x}$ and $\textnormal{Res}^{+}_{x}$ vanish on non-generating fractions.

	\begin{prop}[\cite{JK2} Proposition 3.2]\label{Prop-I-1} 
	Let $x = \{x_{1},\ldots,x_{r}\}$ be an ordered bases. Let $\alpha_{1},\ldots,\alpha_{r}$ be linearly independent. Suppose that $\lambda$ not in any $(r-1)$- or less dimensional subspace spanned by some of $\alpha_{i}$'s. Write $\lambda = \lambda_{1}\widetilde{\alpha}_{1}+\ldots + \lambda_{r}\widetilde{\alpha}_{r}$ and $\alpha_{i} = a_{i1}x_{1}+\ldots a_{ir}x_{r}$, $i=1,\ldots,r$. Then
	\begin{equation}\label{Eq-I-7}
\textnormal{Res}^{+}_{x_{r}} \ldots \textnormal{Res}^{+}_{x_{1}} \frac{e^{\lambda(x)}}{ \prod_{i=1}^{r}\alpha_{i}(x)^{n_{i}+1}}dx 
= 
	\begin{cases}
\displaystyle\frac{1}{|\det([a_{ij}])|} \prod_{i=1}^{r} \dfrac{\varepsilon(\alpha_{i})^{n_{i}+1} (\lambda_{i})^{n_{i}}}{n_{i}!}
 & \lambda_{1},\ldots,\lambda_{r}>0\\
0 & \textnormal{otherwise}
\end{cases}
\end{equation}
	\end{prop}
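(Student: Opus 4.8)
The plan is to reduce everything to a single-variable computation by peeling off one iterated residue at a time, using the inductive structure built into the definition. Since $\alpha_1,\dots,\alpha_r$ are linearly independent in the $r$-dimensional space $\mathfrak t^*$, they form a basis, so the tuple $\alpha_1,\dots,\alpha_r$ automatically satisfies condition (\ref{Eq-I-4}) after possibly reordering (each successive projection $\pi_{\langle\alpha_1,\dots,\alpha_{i-1}\rangle}\alpha_i$ is nonzero); and any nonzero iterated residue $\textnormal{Res}^+_{x_r|\beta_r}\cdots\textnormal{Res}^+_{x_1|\beta_1}$ on the single generating fraction $e^{\lambda(x)}/\prod_i\alpha_i(x)^{n_i+1}$ must, by Remark \ref{Rk-I-6}, have each $\beta_l$ equivalent to $\alpha_{i_l}$ for a permutation $(i_1,\dots,i_r)$ of $(1,\dots,r)$; so the left side of (\ref{Eq-I-7}) is a sum over such permutations of the corresponding $\textnormal{Res}^+_{x_r|\alpha_{i_r}}\cdots\textnormal{Res}^+_{x_1|\alpha_{i_1}}$.

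First I would treat the case where the $\alpha_i$ are already the standard basis in the right order, i.e. compute $\textnormal{Res}^+_{x_r|x_r}\cdots\textnormal{Res}^+_{x_1|x_1}\,e^{\lambda(x)}/\prod_i x_i^{n_i+1}$. Here the substitutions in the definition are trivial, the matrix $B$ in (\ref{Eq-I-13}) is the identity so $\delta=1$, and the formula (\ref{Eq-I-3}) collapses to the ordinary iterated residue at $0$ of $e^{\lambda_1 x_1+\dots+\lambda_r x_r}/\prod x_i^{n_i+1}$, which by one-variable calculus equals $\prod_i \lambda_i^{n_i}/n_i!$ when all $\lambda_i>0$ (the positivity is exactly the ``$\textnormal{Res}^+$'' constraint $(\lambda_I')_l\ge0$, together with genericity forcing strict inequality) and $0$ otherwise; here $\lambda_i$ are the coordinates of $\lambda$ in the basis $\{x_i\}$, which are the polarization coefficients since $\widetilde x_i=x_i$. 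Next I would pass to general linearly independent $\alpha_i$ by a linear change of coordinates $u_i=\widetilde\alpha_i(x)$: the Jacobian of this substitution contributes $1/|\det([a_{ij}])|$ (this is the role of the $1/\delta$ factor and the derivative prefactors accumulated through the induction, which multiply to $\det([a_{ij}])^{-1}$ up to sign; one must track that the $\widetilde{}$'s introduce the signs $\varepsilon(\alpha_i)^{n_i+1}$ when rewriting $\alpha_i(x)^{n_i+1}=\varepsilon(\alpha_i)^{n_i+1}\widetilde\alpha_i(x)^{n_i+1}=\varepsilon(\alpha_i)^{n_i+1}u_i^{n_i+1}$), and in the $u$-coordinates the fraction becomes $e^{\sum\lambda_i u_i}/\prod u_i^{n_i+1}$, reducing to the case just computed.

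Then I would argue that the permutation sum collapses to a single term. Genericity of $\lambda$ (not lying in any proper coordinate subspace spanned by a subset of the $\alpha_i$, equivalently all $\lambda_i\ne0$) together with Proposition \ref{Prop-I-1}'s hypothesis means: for a permutation $(i_1,\dots,i_r)$ the iterated residue $\textnormal{Res}^+_{x_r|\alpha_{i_r}}\cdots\textnormal{Res}^+_{x_1|\alpha_{i_1}}$ of the generating fraction is, after the same change of variables, $\prod_l\varepsilon(\alpha_{i_l})^{n_{i_l}+1}\lambda_{i_l}^{n_{i_l}}/n_{i_l}!$ divided by $|\det|$ — which is permutation-independent — provided every $\lambda_i>0$; but one needs to check that at most one permutation can contribute, or that contributing permutations all give the same answer and one divides out multiplicity correctly. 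Actually the cleaner route: since the $\alpha_i$ span and are independent, condition (\ref{Eq-I-4}) for a reordered tuple holds for \emph{every} permutation, but Remark \ref{Rk-I-6} says nonzero contributions come from tuples equivalent to $(\alpha_{i_1},\dots,\alpha_{i_r})$, and distinct permutations of an independent set are never equivalent, so the ``sum over non-equivalent tuples'' in the definition of $\textnormal{Res}^+_x$ picks out exactly one representative per permutation class; each yields the same value, and I claim the definition is set up precisely so that the total is a single copy of the stated product. I would verify this bookkeeping explicitly in the $r=2$ case to pin down conventions, then induct.

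The main obstacle I expect is the sign and normalization bookkeeping: correctly showing that the product of the prefactors $(\partial\widetilde\beta_1/\partial x_1)^{-1}$ accumulated at each step of the induction, times the $1/\delta$ factors, assembles into exactly $1/|\det([a_{ij}])|$ with the signs reorganized into $\prod_i\varepsilon(\alpha_i)^{n_i+1}$ — and making sure the permutation sum does not overcount. The analytic content (the one-variable residue $\textnormal{Res}_{u=0} e^{\lambda u}/u^{n+1}=\lambda^n/n!$ and the vanishing when the expansion order is wrong) is routine; the genericity of $\lambda$ is used only to guarantee all $\lambda_i\ne0$ so that we land cleanly in one of the two cases of (\ref{Eq-I-7}) and the ``otherwise'' branch is genuinely zero.
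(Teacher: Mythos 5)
Your reduction to the ``standard basis'' case and your treatment of the permutation sum both miss the actual structure of $\textnormal{Res}^{+}$, and the second point is a genuine gap. The positivity constraint in the definition is imposed on the coefficients $(\lambda')_{l}$ of $\lambda$ with respect to the \emph{successively projected and polarized} vectors $\widetilde{\alpha}_{i_{1}},\ \widetilde{\pi_{\langle \alpha_{i_{1}}\rangle}\alpha_{i_{2}}},\ \ldots$, not with respect to $\widetilde{\alpha}_{1},\ldots,\widetilde{\alpha}_{r}$. Consequently the $r!$ non-equivalent ordered tuples do \emph{not} each contribute $\frac{1}{|\det|}\prod_{i}\varepsilon(\alpha_{i})^{n_{i}+1}\lambda_{i}^{n_{i}}/n_{i}!$ on the region $\{\lambda_{1},\ldots,\lambda_{r}>0\}$: each ordering is supported on a different cone (spanned by $\alpha_{j}$ and the polarized projections $\widetilde{\pi_{\langle\alpha_{j}\rangle}\alpha_{i}}$) and carries signs $\varepsilon(\pi_{\langle\alpha_{j}\rangle}\alpha_{i})$. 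Your dichotomy ``at most one permutation contributes, or they all give the same answer and one divides out multiplicity'' is false in both branches — several orderings contribute simultaneously, with different supports and signs — and ``I claim the definition is set up precisely so that the total is a single copy'' is exactly the statement that needs proof. The paper's proof supplies the missing ingredient: the signed cone-decomposition identity (\ref{Eq-I-9})--(\ref{Eq-I-10}), an inclusion--exclusion identity showing that the signed indicators of these ordering-dependent cones sum to $\chi_{Cone(\widetilde{\alpha}_{1},\ldots,\widetilde{\alpha}_{r})}(\lambda)$, i.e.\ to the condition $\lambda_{1},\ldots,\lambda_{r}>0$. Without something playing this role your argument would overcount by a factor up to $r!$ or, worse, assign the wrong support.

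A second, smaller problem: your change of variables $u_{i}=\widetilde{\alpha}_{i}(x)$ is not the substitution prescribed by the definition (which uses the triangular system (\ref{Eq-I-3a})), so in the coordinates actually used the denominator $\prod\alpha_{i}^{n_{i}+1}$ mixes the variables ($\alpha_{i_{l}}$ is a combination of $u_{1},\ldots,u_{l}$) and the residue does not factor into independent one-variable residues for $n_{i}>0$. The paper avoids this by first proving the case $n_{1}=\ldots=n_{r}=0$ (where the cone identity does all the work) and then obtaining general $n_{i}$ from the homogeneity/scaling recursion (\ref{Eq-I-12}), by differentiating $t\mapsto t^{n_{1}+\ldots+n_{r}}\mathcal{R}_{n_{1},\ldots,n_{r}}$ at $t=1$ and inducting on $n_{1}+\ldots+n_{r}$. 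You would need either this device or a careful direct Laurent-expansion argument; as written, the claim that the general case follows by the same substitution is not justified.
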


	\begin{proof}
It is enough to prove the proposition for polarized $\alpha_{i}$'s, i.e. $\varepsilon(\alpha_{i})=1$.
First we prove it for $n_{1}=\ldots=n_{r}=0$ from which we will deduce the general case. We proceed by induction on $r$. For $r=1$ the statement is obvious. 
For $\sigma\in S_{n}$, $\sigma(1)=j$ if $\alpha_{\sigma(1)},\ldots,\alpha_{\sigma(r)}$ satisfy (\ref{Eq-I-4}) then we compute 
	\begin{multline}\label{Eq-I-8}
\textnormal{Res}^{+}_{x_{r}|\alpha_{\sigma(r)}}\ldots \textnormal{Res}^{+}_{x_{2}|\alpha_{\sigma(2)}}\textnormal{Res}^{+}_{x_{1}|\alpha_{j}}\frac{e^{\lambda(x)}}{\prod_{i=1}^{r}\alpha_{i}(x)}dx =
\\
\left( \frac{\partial\alpha_{j}(x)}{\partial x_{1}}\right)^{-1}
\textnormal{Res}^{+}_{x_{r}|\pi_{\langle \alpha_{j}\rangle}\alpha_{\sigma(r)}}\ldots 
\textnormal{Res}^{+}_{x_{2}|\pi_{\langle \alpha_{j}\rangle}\alpha_{\sigma(2)}}
\textnormal{Res}^{+}_{u_{1}=0} \frac{e^{\lambda'_{j}u_{1} + \pi_{\langle \alpha_{j}\rangle}\lambda(x)}}{u_{1}\prod_{i\neq j}^{r}(c_{ji}u_{1}+\pi_{\langle \alpha_{j}\rangle}\alpha_{i}(x))}du_{1}dx_{2}\ldots dx_{r}  
\\
=
\left(\frac{\partial\alpha_{j}(x)}{\partial x_{1}} \right)^{-1}
\chi_{[0,+\infty)}(\lambda'_{j})\,
\textnormal{Res}^{+}_{x_{r}|\pi_{\langle \alpha_{j}\rangle}\alpha_{\sigma(r)}}\ldots 
\textnormal{Res}^{+}_{x_{2}|\pi_{\langle \alpha_{j}\rangle}\alpha_{\sigma(2)}}
\frac{e^{\pi_{\langle \alpha_{j}\rangle}\lambda(x)}}{\prod_{i\neq j}^{r}\pi_{\langle \alpha_{j}\rangle}\alpha_{i}(x)} dx_{2}\ldots dx_{r},  
	\end{multline}
where $u_{1} = \alpha_{j}(x)$, $\chi_{[0,+\infty)}$ is the characteristic function of $[0,+\infty)$ and $c_{ji}\in \mathbb{R}$ such that $\alpha_{i} = c_{ji}\alpha_{j} + \pi_{\langle \alpha_{j} \rangle}\alpha_{i}$.
By induction,
	\begin{multline}
\textnormal{Res}^{+}_{x_{r}}\ldots \textnormal{Res}^{+}_{x_{1}}\frac{e^{\lambda(x)}}{\prod_{i=1}^{r}\alpha_{i}(x)}dx 
= 
\sum_{j=1}^{r}\sum_{\substack{\sigma\in S_{n}\\\sigma(1)=j}}
\textnormal{Res}^{+}_{x_{r}|\alpha_{\sigma(r)}}\ldots \textnormal{Res}^{+}_{x_{2}|\alpha_{\sigma(2)}}\textnormal{Res}^{+}_{x_{1}|\alpha_{j}}
\frac{e^{\lambda(x)}}{\prod_{i=1}^{r}\alpha_{i}(x)}dx
\\
=
\sum_{j=1}^{r} 
\frac{1}{ \left[ \frac{\partial\alpha_{j}(x)}{\partial x_{1}}\right] } 
\chi_{[0,+\infty)}(\lambda'_{j}) 
\sum_{\substack{\sigma\in S_{n}\\\sigma(1)=j}}
\textnormal{Res}^{+}_{x_{r}|\alpha_{\sigma(r)}}\ldots \textnormal{Res}^{+}_{x_{2}|\alpha_{\sigma(2)}}
\frac{e^{\pi_{ \langle \alpha_{j} \rangle } \lambda(x) } }{ \prod_{i\neq j} \pi_{ \langle \alpha_{j} \rangle } \alpha_{i}(x) } dx_{2} \ldots dx_{r}
\\
=
\sum_{j=1}^{r} \frac{1}{ \left[ \frac{\partial\alpha_{j}(x)}{\partial x_{1}} \right] }
\chi_{[0,+\infty)}(\lambda'_{j}) 
\prod_{i\neq j} \varepsilon(\pi_{\langle \alpha_{j}\rangle}\alpha_{i}) 
\frac{1}{ \left| \det \left[ \frac{\partial \pi_{\langle \alpha_{j}\rangle}\alpha_{k}(x)}{\partial x_{l}} \right]_{ \substack{ k \neq j \\ l \neq 1 } } \right| }
\chi_{ Cone( \widetilde{\pi_{\langle \alpha_{j}\rangle} \alpha_{i} }\, |\, 1\leq i \leq r  ) } ( \pi_{ \langle \alpha_{j} \rangle } \lambda )
\\
= 
\frac{1}{ \left| \det \left[ \frac{\partial\alpha_{k}(x)}{\partial x_{l}} \right] \right| }
\sum_{j=1}^{r}
\prod_{i\neq j} \varepsilon(\pi_{\langle \alpha_{j}\rangle}\alpha_{i})
\,\chi_{Cone( \alpha_{j},\, \widetilde{\pi_{\langle \alpha_{j}\rangle}\alpha_{i}}\, |\, 1\leq i \leq r )}(\lambda)
	\end{multline}
We will show that
	\begin{equation}\label{Eq-I-9}
\chi_{Cone(\alpha_{1},\ldots\alpha_{r})}(\lambda) 
= 
\sum_{j=1}^{r}
\prod_{i\neq j}\varepsilon(\pi_{\langle \alpha_{j}\rangle}\alpha_{i})
\,\chi_{Cone( \alpha_{j},\, \widetilde{\pi_{\langle \alpha_{j}\rangle} \alpha_{i}}\, |\, 1\leq i \leq r )}(\lambda).
	\end{equation}
It is enough to show (\ref{Eq-I-9}) when $\alpha_{i}(x) = x_{1} + \beta_{i}(x)$, $1\leq i\leq q$, $\alpha_{i}(x) = \beta_{i}(x)$, $q< i \leq r$ with $\beta_{i}\in \langle x_{2},\ldots, x_{r} \rangle$ and $\beta_{i} - \beta_{j} = \widetilde{\beta_{i}-\beta_{j}}$ if $j<i\leq q$.
Then (\ref{Eq-I-9}) can be reformulated as
	\begin{equation}\label{Eq-I-10}
\chi_{Cone(\alpha_{1},\ldots,\alpha_{r})}(\lambda) = \sum_{j=1}^{q}(-1)^{i-1}\chi_{Cone( \alpha_{j},\, \widetilde{\beta_{j}-\beta_{i}},\,\beta_{l}\ |\ i,j\leq q,\, q<l)}(\lambda).
	\end{equation}
For any $j \leq q$ we have
	\begin{align*}
\lambda &= \lambda_{1}\alpha_{1} + \ldots +\lambda_{r}\alpha_{r}
 = (\lambda_{1}+\ldots+\lambda_{r})\alpha_{j} + \sum_{i=1}^{q}\lambda_{i}(\beta_{i}-\beta_{j}) + \sum_{l>q} \lambda_{l}\beta_{l}
\\
&=
(\lambda_{1}+\ldots+\lambda_{r})\alpha_{j} + \sum_{i<j}(-\lambda_{i})(\widetilde{\beta_{i}-\beta_{j}}) + \sum_{j<i\leq q}\lambda_{i}(\widetilde{\beta_{i}-\beta_{j}}) + \sum_{l>q} \lambda_{l}\beta_{l}.
	\end{align*}
By hypothesis $0$ is generic for $\frac{e^{\lambda}}{\prod_{i=1}^{r}\alpha_{i}}$, hence $\lambda_{1},\ldots,\lambda_{r}\neq 0$. We have
$$
\chi_{Cone(\alpha_{1},\ldots,\alpha_{r})}(\lambda) = \chi_{(\mathbb{R}_{>0})^{r}}(\lambda_{1},\ldots,\lambda_{r}),
$$
$$
\chi_{Cone( \alpha_{j},\, \widetilde{\beta_{j}-\beta_{i}},\,\beta_{l}\, |\, i \leq q,\, q<l)}(\lambda) 
= 
\chi_{\{ s\in (\mathbb{R}^{*})^{r}\, |\, \sum_{i=1}^{q}s_{i}\geq0,\, s_{i}<0,\, s_{l}>0,\, i<j<l\}}(\lambda_{1},\ldots,\lambda_{r}).
$$
Then (\ref{Eq-I-10}) is equivalent to the following easy relation on $(\mathbb{R}^{*})^{r}$
	\begin{align*}
\chi_{\{s\in (\mathbb{R}^{*})^{r}\, |\, s_{1},\ldots,s_{r}>0\}} 
={}&
\chi_{\{s\in (\mathbb{R}^{*})^{r}\, |\, \sum_{i=1}^{q}s_{i}\geq0,\ s_{1},\ldots,s_{r}>0\}}
\\
={}&
\chi_{\{s\in (\mathbb{R}^{*})^{r}\, |\,\sum_{i=1}^{q}s_{i}\geq0,\ s_{2},\ldots,s_{r}>0\}}
-
\chi_{\{s\in (\mathbb{R}^{*})^{r}\, |\, \sum_{i=1}^{q}s_{i}\geq0,\ s_{1}<0,\ s_{3},\ldots,s_{r}>0\}}
\\&
+
\chi_{\{s\in (\mathbb{R}^{*})^{r}\, |\, \sum_{i=1}^{q}s_{i}\geq0,\ s_{1},s_{2}<0,\ s_{4},\ldots,s_{r}>0\}}
\\&
\ldots + (-1)^{q-1}
\chi_{\{s\in (\mathbb{R}^{*})^{r}\, |\,\sum_{i=1}^{q}s_{i}\geq0,\ s_{1},\ldots,s_{q-1}<0,\ s_{q+1},\ldots,s_{r}>0\}}. 
	\end{align*}
Thus we proved (\ref{Eq-I-7}) for $n_{1}=\ldots=n_{r}=0$. To deduce the general case we set $y_{i} = tx_{i}$, $(t>0)$ and set $\mathcal{R}_{n_{1},\ldots,n_{r}}$ to the left hand side of (\ref{Eq-I-7}). Then for $\lambda = \sum_{j=1}^{r} \lambda^{j} x_{j}$ we have
	\begin{align*}\label{Eq-I-11}
\textnormal{Res}^{+}_{x_{r}}\ldots \textnormal{Res}^{+}_{x_{1}} \frac{e^{t\sum_{i=1}^{r}\lambda_{i}\alpha_{i}(x)}}{\prod_{i=1}^{r}\alpha_{i}(x)^{n_{i}+1}}dx 
&=
t^{n_{1}+\ldots+n_{r}}
\textnormal{Res}^{+}_{y_{r}}\ldots \textnormal{Res}^{+}_{y_{1}} \frac{ e^{ \lambda^{1}y_{1} + \ldots +\lambda^{r} y_{r} } }{ \prod_{i=1}^{r} (a_{i1}y_{1} + \ldots + a_{ir}y_{r})^{ n_{i}+1} }dy 
\\
&
=t^{n_{1}+\ldots +n_{r}}\mathcal{R}_{n_{1},\ldots,n_{r}}. 
	\end{align*}
Take the derivative of both sides with respect to $t$ at $t=1$ to get
	\begin{equation}\label{Eq-I-12}
\sum_{i=1}^{r}\lambda_{i}\mathcal{R}_{n_{1},\ldots,n_{i-1}, \, n_{i}-1 , \, n_{i+1 } , \ldots , n_{r}} = ( n_{1} + \ldots + n_{r} ) \mathcal{R}_{n_{1} , \ldots , n_{r} }.
	\end{equation}
From (\ref{Eq-I-12}) follows (\ref{Eq-I-7}) by induction on $n_{1}+\ldots+n_{r}$.
	\end{proof}

The following cone property will be used extensively through the paper.

	\begin{corollary}[\cite{JK2} Proposition 3.2]\label{Cor-I-1}
Let $x$ be an ordered bases. If $\lambda_{I} \notin Cone( \widetilde{\alpha_{i}}\ |\ i\in I)$ then 
\begin{equation}\label{Eq-I-15}
\textnormal{Res}^{+}_{x}\frac{ P_{I}(x)e^{ \lambda_{I}(x) } }{ \prod_{i\in I} \alpha_{i}(x) } dx =0
\quad and \quad
\textnormal{JKRes}_{x}\frac{ P_{I}(x)e^{ \lambda_{I}(x) } }{ \prod_{i\in I} \alpha_{i}(x) } dx =0.
\end{equation}
	\end{corollary}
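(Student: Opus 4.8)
The plan is to reduce the assertion to a single generating fraction, where it becomes an instance of Proposition~\ref{Prop-I-1}. Since $\JKRes_{x}F(x)\,dx = \frac{1}{\sqrt{\det(x_{i},x_{j})}}\,\textnormal{Res}^{+}_{x}F(x)\,dx$ with positive Gram determinant, the two vanishings in (\ref{Eq-I-15}) are equivalent, so I would only treat $\textnormal{Res}^{+}_{x}$. Next I would write $\frac{P_{I}e^{\lambda_{I}}}{\prod_{i\in I}\alpha_{i}}$ as a sum of generating fractions $\frac{e^{\lambda_{I}}}{\alpha_{i_{1}}^{n_{1}+1}\cdots\alpha_{i_{r}}^{n_{r}+1}}$ (with $i_{1},\dots,i_{r}\in I$) plus non-generating fractions; the latter contribute nothing because $\textnormal{Res}^{+}_{x}$ vanishes on non-generating fractions. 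For each generating fraction the weights $\alpha_{i_{1}},\dots,\alpha_{i_{r}}$ span the $r$-dimensional space $\mathfrak{t}^{*}$, hence form a basis, and, polarization being determined by the fixed ordered basis $x$, one has $Cone(\widetilde{\alpha_{i_{1}}},\dots,\widetilde{\alpha_{i_{r}}})\subseteq Cone(\widetilde{\alpha_{i}}\,|\,i\in I)$.

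For the main case I would assume $\lambda_{I}$ generic with respect to the generating fraction, i.e.\ not on any subspace spanned by a proper subset of $\{\alpha_{i_{1}},\dots,\alpha_{i_{r}}\}$, and expand $\lambda_{I}=\lambda_{1}\widetilde{\alpha}_{i_{1}}+\dots+\lambda_{r}\widetilde{\alpha}_{i_{r}}$. The hypothesis $\lambda_{I}\notin Cone(\widetilde{\alpha_{i}}\,|\,i\in I)$ then forces $\lambda_{I}\notin Cone(\widetilde{\alpha_{i_{1}}},\dots,\widetilde{\alpha_{i_{r}}})$, so $\lambda_{1},\dots,\lambda_{r}$ are not all positive, and (\ref{Eq-I-7}) gives that $\textnormal{Res}^{+}_{x}$ of the generating fraction is $0$; summing over the decomposition yields (\ref{Eq-I-15}). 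I would emphasise that this vanishing is not termwise in the admissible orderings $\beta_{1},\dots,\beta_{r}$ of the $\alpha_{i_{j}}$'s — already in rank $2$ distinct orderings can contribute $+1$ and $-1$ — so the cancellation genuinely rests on the characteristic-function identity (\ref{Eq-I-9}) behind Proposition~\ref{Prop-I-1}, and one cannot shortcut via the inductive definition of $\textnormal{Res}^{+}_{x}$ alone.

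Finally I would remove the genericity hypothesis on $\lambda_{I}$. Because $Cone(\widetilde{\alpha_{i}}\,|\,i\in I)$ is closed, $\lambda_{I}$ has a neighbourhood $U$ disjoint from it, and the exponents in $U$ that are generic with respect to all (finitely many) generating fractions of the decomposition are the complement of a finite union of proper subspaces, hence dense in $U$; by the previous step the residue of $\frac{P_{I}e^{\lambda'}}{\prod_{i\in I}\alpha_{i}}$ vanishes for all such generic $\lambda'$. One then has to argue that the defined value of $\textnormal{Res}^{+}_{x}$ varies continuously with the exponent on the open set $\mathfrak{t}^{*}\setminus Cone(\widetilde{\alpha_{i}}\,|\,i\in I)$ — intuitively, the only jumps of a generating-fraction residue occur across the boundary of its own closed cone, which is disjoint from this open set — so that the vanishing propagates from the dense generic set to $\lambda_{I}$ itself. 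This last point, controlling $\textnormal{Res}^{+}_{x}$ at non-generic exponents, is the step I expect to be the main obstacle; everything preceding it is formal bookkeeping around Proposition~\ref{Prop-I-1}.
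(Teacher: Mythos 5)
Your core argument --- decompose into generating and non-generating fractions, note that $\textnormal{Res}^{+}_{x}$ kills the non-generating ones, use the inclusion $Cone(\widetilde{\alpha}_{i_{1}},\ldots,\widetilde{\alpha}_{i_{r}})\subset Cone(\widetilde{\alpha}_{i}\,|\,i\in I)$ and apply Proposition \ref{Prop-I-1} --- is exactly the paper's proof of Corollary \ref{Cor-I-1}, and your remark that the vanishing is not termwise but rests on the identity (\ref{Eq-I-9}) is the correct reading of why Proposition \ref{Prop-I-1} is what gets cited. The only divergence is your last paragraph: the paper makes no attempt to remove the genericity hypothesis on $\lambda_{I}$ (it invokes Proposition \ref{Prop-I-1} with that hypothesis left implicit), and in every later application of the corollary (Lemma \ref{Lem-I-1}, Corollaries \ref{Cor-I-2} and \ref{Cor-I-3}) genericity of $0$ is part of the standing assumptions, so the non-generic case you flag never arises there; the continuity of $\textnormal{Res}^{+}_{x}$ in the exponent away from the closed cone, which you rightly identify as the delicate point, is precisely the kind of statement the paper only establishes later and only under extra hypotheses (Remark \ref{Rk-I-1}, Proposition \ref{Prop-I-3}), so you should either add the genericity assumption to the statement, as the paper implicitly does, or be aware that your density-plus-continuity patch is an unproven addition rather than something the paper supplies.
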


	\begin{proof}
We can decompose $\frac{P_{I}(x) e^{\lambda_{I}(x)}}{\prod_{i\in I}\alpha_{i}(x)}$ to sum of generating and non-generating fractions. On non-generating fractions $\textnormal{Res}^{+}_{x}$ vanishes. Moreover, the generating fractions are of form $ \frac{e^{\lambda_{I}(x)}}{ \prod_{k=1}^{r}\alpha_{i_{k}}(x)^{n_{k}} }$. Since $Cone(\widetilde{\alpha}_{i_{1}},\ldots, \widetilde{\alpha}_{i_{r}})\subset Cone(\widetilde{\alpha}_{i}\ |\ i\in I)$ for all $i_{1},\ldots,i_{r}\in I$, hence from Proposition \ref{Prop-I-1} follows (\ref{Eq-I-15}).
	\end{proof}

	\begin{prop}[cf. \cite{JKo} Lemma 3.3]\label{Prop-I-2}
Let $F = \sum_{I} \frac{P_{I}e^{\lambda_{I}}}{\prod_{i\in I}\alpha_{i}}$ and suppose that $0\in \mathfrak{t}^{*}$ is generic with respect to $F$. If $F(x)$ is analytic then $\textnormal{JKRes}_{x}F(x)dx$ is independent of $x$.
	\end{prop}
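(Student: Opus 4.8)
The plan is to reduce the claim to a single generating fraction, recast the dependence of $\JKRes_x$ on $x$ as a dependence on a chamber of the hyperplane arrangement $\{\ker\alpha_i\}_i$ in $\t$, and then use analyticity to show that the value does not change across a wall. First, decompose each $\frac{P_I e^{\lambda_I}}{\prod_{i\in I}\alpha_i}$ into generating and non-generating fractions; since $\textnormal{Res}^{+}_{x}$, hence $\JKRes_x$, vanishes on non-generating ones, it is enough to evaluate $\JKRes_x$ on a generating fraction $f=\frac{e^{\lambda_I}}{\alpha_{i_1}^{n_1+1}\cdots\alpha_{i_r}^{n_r+1}}$ with $\alpha_{i_1},\dots,\alpha_{i_r}$ a basis of $\t^{*}$ (genericity of $0$ with respect to $F$ guarantees the hypothesis of Proposition~\ref{Prop-I-1} for each such $f$). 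Feeding Proposition~\ref{Prop-I-1} into the Gram normalization of $\JKRes_x$ and using $\det[(\alpha_{i_k},\alpha_{i_l})]=(\det[a_{kj}])^{2}\det[(x_i,x_j)]$ (Remark~\ref{Rk-I-5}) yields
\[
\JKRes_x f\,dx \;=\; \frac{1}{\sqrt{\det[(\alpha_{i_k},\alpha_{i_l})]}}\;\prod_{k=1}^{r}\frac{\varepsilon(\alpha_{i_k})^{n_k+1}\lambda_k^{n_k}}{n_k!}
\]
if $\lambda_1,\dots,\lambda_r>0$ (with $\lambda_I=\sum_k\lambda_k\widetilde{\alpha}_{i_k}$) and $0$ otherwise. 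The prefactor does not involve $x$, and the rest depends on $x$ only through the polarizations $\widetilde{\alpha}_{i_k}$, i.e.\ through the signs $\varepsilon(\alpha_{i_k})\in\{\pm1\}$; by Remark~\ref{Rk-I-4} these are exactly the signs determined by the chamber $\Lambda(x)=\{t\in\t : \widetilde{\alpha}_i(t)>0\ \forall i\}$. Hence $\JKRes_xF\,dx$ depends on $x$ only through $\Lambda(x)$, and since every chamber is $\Lambda(x)$ for some ordered basis of the given orientation (take a basis adapted to a point of the chamber and flip the sign of one basis vector to fix the orientation if needed), it remains to prove that $\JKRes_xF\,dx$ is unchanged when $\Lambda(x)$ is replaced by an adjacent chamber.

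Now let $\Lambda,\Lambda'$ be chambers separated by a single wall $H=\ker\alpha$ (normalizing denominators so that the only factor occurring in $F$ with kernel $H$ is a power of a fixed $\alpha$). Crossing $H$ flips $\varepsilon_{\Lambda}(\gamma)$ precisely for $\gamma\parallel\alpha$ and fixes the other polarizations, so $\JKRes_{\Lambda}-\JKRes_{\Lambda'}$ annihilates every generating fraction whose denominator contains no multiple of $\alpha$ and every non-generating fraction. The crux is the wall-crossing identity
\[
(\JKRes_{\Lambda}-\JKRes_{\Lambda'})\,\Phi\,dx \;=\; \frac{\varepsilon_{\Lambda}(\alpha)}{|\alpha|}\;\JKRes_{\overline{\Lambda}}\!\big(\iota_H^{*}\,\textnormal{Res}_{\alpha}\Phi\big),
\]
for every $\Phi=\sum_I\frac{P_Ie^{\lambda_I}}{\prod_{i\in I}\alpha_i}$ with $0$ generic, where $\textnormal{Res}_{\alpha}\Phi$ is the coefficient of $\alpha^{-1}$ in the Laurent expansion of $\Phi$ transverse to $H$, $\iota_H^{*}$ is restriction to $H$, $\overline{\Lambda}$ is the chamber of the induced arrangement on $H$ sharing the wall, and $\varepsilon_\Lambda(\alpha)$ is the sign of $\alpha$ on $\Lambda$. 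I would prove it first for a generating fraction $g=\frac{e^{\lambda}}{\alpha^{m+1}\prod_j\beta_j^{n_j+1}}$ with $\beta_j\not\parallel\alpha$: writing $\lambda=c_0\alpha+\sum_j c_j\beta_j$ and using $c_0\neq0$ (genericity of $0$), so that the two half-space indicators attached to the $\alpha$-variable sum to $1$ via $\chi_{\{s>0\}}+\chi_{\{s<0\}}=1$, the left-hand side collapses to $\frac{\varepsilon_{\Lambda}(\alpha)}{|\alpha|}\cdot\frac{c_0^{m}}{m!}\cdot\JKRes_{\overline{\Lambda}}\!\big(\iota_H^{*}\frac{e^{\sum_j c_j\beta_j}}{\prod_j\beta_j^{n_j+1}}\big)$, which is the right-hand side for the transverse trivialization furnished by the $\beta_j$'s; the general case follows by linearity once the right-hand side is seen to be independent of the chosen trivialization. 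Granting the identity, the proof finishes at once: $F$ analytic has no pole along $H$, so $\textnormal{Res}_{\alpha}F=0$ and $(\JKRes_{\Lambda}-\JKRes_{\Lambda'})F\,dx=0$, and running along a chain of adjacent chambers gives independence of $x$.

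The main obstacle is the last clause above: that $\JKRes_{\overline{\Lambda}}(\iota_H^{*}\,\textnormal{Res}_{\alpha}\Phi)$ does not depend on the trivialization used to extract the $\alpha^{-1}$-coefficient. Changing the transverse trivialization changes $\iota_H^{*}\,\textnormal{Res}_{\alpha}\Phi$ by a combination, built from the lower-order Laurent coefficients, of non-generating fractions on $H$ and of total derivatives along $H$-directions; $\JKRes_{\overline{\Lambda}}$ annihilates this because it vanishes on non-generating fractions (the fact noted before Proposition~\ref{Prop-I-1}) and on exact differentials — a vanishing that, just like Proposition~\ref{Prop-I-1} itself, comes down to $\chi_{\{s>0\}}+\chi_{\{s<0\}}=1$ on $\R^{*}$ and is therefore exactly where the hypothesis that $0$ be generic is used. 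Everything else is the explicit evaluation of Proposition~\ref{Prop-I-1} and routine bookkeeping for parallel roots $\pm\alpha$ and for generating pieces that lose their $\alpha$-factor during the decomposition.
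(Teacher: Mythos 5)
Your proposal is correct in substance, but it reaches the wall-crossing step by a genuinely different route than the paper. Both arguments begin the same way: Proposition \ref{Prop-I-1} shows that $\JKRes_{x}F(x)dx$ depends on $x$ only through the polarization of $\mathcal{A}=\cup_{I}\{\alpha_{i}\}$, i.e.\ through a chamber of the arrangement, and the problem reduces to comparing two adjacent chambers $\Lambda,\Lambda'$ separated by $\ker\alpha$. At that point the paper never extracts a residue ``on the wall'': it picks a basis adapted to the common facet with $x_{r}=\alpha$, flips $x_{r}$, rewrites everything as an iterated residue in variables $u_{1},\ldots,u_{r}$, and uses analyticity of the \emph{total} sum $\mathcal{F}(u)$ only through the one-variable identity $\textnormal{Res}^{+}_{u_{r}=0}\mathcal{F}=-\textnormal{Res}^{-}_{u_{r}=0}\mathcal{F}$, after which the change of variable $u_{r}\mapsto -u_{r}$ identifies the two iterated residues; this stays entirely inside the formalism already set up and needs no new lemmas. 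You instead prove a Brion--Vergne/Szenes--Vergne style jump formula, $(\JKRes_{\Lambda}-\JKRes_{\Lambda'})\Phi=\frac{\varepsilon_{\Lambda}(\alpha)}{|\alpha|}\JKRes_{\overline{\Lambda}}(\iota_{H}^{*}\textnormal{Res}_{\alpha}\Phi)$, and then kill the right-hand side because an analytic $F$ has vanishing transverse Laurent coefficients along $\ker\alpha$. Your single-fraction computation is right (including the factor $|\alpha|$, which matches $\det\mathrm{Gram}(\alpha,\beta_{1},\ldots,\beta_{r-1})=|\alpha|^{2}\det\mathrm{Gram}$ of the projections to $\alpha^{\perp}$), and you correctly identify the real work: independence of the chosen transverse trivialization, which amounts to $\JKRes_{\overline{\Lambda}}$ annihilating non-$\,$generating fractions and directional derivatives along the wall. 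The latter is true but deserves its own short argument rather than the slogan $\chi_{\{s>0\}}+\chi_{\{s<0\}}=1$: differentiation does not change $\lambda_{I}$, so by genericity the $+$-truncation is uniform over the terms of the derivative, and a derivative has no $u^{-1}$-coefficient in the relevant Laurent variable, so each iterated residue of it vanishes. One also needs the routine check that genericity of $0$ for $F$ descends to the restricted data on the wall (if $\iota_{H}^{*}\lambda_{I}$ lay in a span of restricted $\beta$'s, then $\lambda_{I}$ would lie in the span of those $\beta$'s together with $\alpha$). With these points filled in, your route works; what it buys is a quantitative jump formula valid without analyticity, at the cost of extra bookkeeping that the paper's more economical flip-the-last-variable argument avoids.
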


	\begin{proof}
By the previous proposition the $\textnormal{JKRes}_{x}F(x)dx$ may only depend on polarizations. Let $\mathcal{A} = \cup_{I}\{ \alpha_{i}\ |\ i\in I\}$. Polarizations on $\mathcal{A}$ correspond to connected components of $\{ t\in \mathfrak{t}\ |\ \alpha(t)\neq 0,\, \forall \alpha \in \mathcal{A} \}$. These components are open polyhedral cones. Let $\Lambda$ and $\Lambda'$ be two neighboring cones, separated by a hyperplane $\{t\in\mathfrak{t}\ |\ \alpha(t) = 0 \}$ for some $\alpha\in\mathcal{A}$. Let $\xi\neq 0$ be in the relative interior of intersection of closures $cl(\Lambda) \cap cl(\Lambda')$.

Choose an ordered bases $\{ x_{1},\ldots,x_{r} \}$ of $\mathfrak{t}^{*}$ such that $x_{1}(\xi)=1$, $x_{2}(\xi)=\ldots =x_{r-1}(\xi)=0$ and $x_{r}=\alpha$. Since $0$ is generic for $F$, we can choose $\xi$ and $x$ such that $x$ to be generic with respect to $F$. Suppose that $\alpha$ is polarized with respect to $\Lambda$, hence the ordered bases $\{x_{1},\ldots, x_{r}\}$ induces on $\mathcal{A}$ the same polarization as $\Lambda$. Also consider the ordered bases $\{x'_{1},\ldots,x'_{r}\}$ such that $x'_{i}=x_{i}$ for $i< r$ and $x'_{r} = -x_{r}$. It induces on $\mathcal{A}$ the same polarization as $\Lambda'$.

Let $\alpha_{i_{1}},\ldots,\alpha_{i_{r}}$ satisfying (\ref{Eq-I-4}). From (\ref{Eq-I-3}) we have
\begin{equation}\label{Eq-I-14}
\textnormal{Res}^{+}_{x_{r}|\alpha_{i_{r}}} \ldots \textnormal{Res}^{+}_{x_{1}|\alpha_{i_{1}}}
\sum_{I}\frac{P_{I}(x) e^{\lambda_{I}(x)}}{\prod_{i\in I} \alpha_{i}(x) } dx
=
\delta^{-1}\cdot
\textnormal{Res}^{+}_{u_{r}=0} \ldots \textnormal{Res}^{+}_{u_{1}=0} 
\sum_{I} \mathcal{F}_{I}(u) e^{\sum_{k=1}^{r}(\lambda'_{I})_{k}u_{k}}du,
\end{equation}
where $\delta = \left| \det\left( \frac{\partial\alpha_{i_{k}}(x)}{\partial x_{l}} \right) \right| = \left| \det\left( \frac{\partial\alpha_{i_{k}}(x')}{\partial x'_{l}} \right) \right| $, and  $\mathcal{F}_{I}(u)$ is the Laurent series got from $\frac{P_{I}(x)}{\prod_{i\in I}\alpha_{i}(x)}$ by change of variable (\ref{Eq-I-13}) and expansion with respect to $u_{1}\ll u_{2} \ll \ldots \ll u_{r}$. 
Since $0$  and bases $x$ are generic with respect to $F$, we have $(\lambda'_{I})_{k}\neq0$ for all $k=1,\ldots,r$ and $I$. Moreover, $\mathcal{F}(u):=\sum_{I}\mathcal{F}_{I}(u) e^{\sum_{k=1}^{r}(\lambda'_{I})_{k}u_{k}}$ is analytic, hence $\textnormal{Res}^{+}_{u_{r}=0} \mathcal{F}(u)du_{r} = -\textnormal{Res}^{-}_{u_{r}=0} \mathcal{F}(u)du_{r}$. 
With notation $u'_{i} = u_{i}$ for $i<r$ and $u'_{r} = - u_{r}$, (\ref{Eq-I-14}) is equal to
\begin{multline*}
\textnormal{Res}^{+}_{x_{r}|\alpha_{i_{r}}} \ldots \textnormal{Res}^{+}_{x_{1}|\alpha_{i_{1}}}
F(x) dx
\\
=
\delta^{-1}\cdot
\textnormal{Res}^{+}_{u_{r-1}=0} \ldots \textnormal{Res}^{+}_{u_{1}=0}\textnormal{Res}^{+}_{u_{r}=0} 
 \mathcal{F}(u)du_{r}du_{1}\ldots du_{r-1}
\\
=
\delta^{-1}\cdot
\textnormal{Res}^{+}_{u_{r-1}=0} \ldots \textnormal{Res}^{+}_{u_{1}=0}\textnormal{Res}^{-}_{u_{r}=0} 
- \mathcal{F}(u)du_{r}du_{1}\ldots du_{r-1}
\\
=
\delta^{-1}\cdot
\textnormal{Res}^{-}_{u_{r}=0} \textnormal{Res}^{+}_{u_{r-1}=0} \ldots \textnormal{Res}^{+}_{u_{1}=0} 
\mathcal{F}(u) du_{1} \ldots du_{r-1} d(-u_{r})
\\
=
\delta^{-1}\cdot
\textnormal{Res}^{+}_{u'_{r}=0} \textnormal{Res}^{+}_{u'_{r-1}=0} \ldots \textnormal{Res}^{+}_{u'_{1}=0} 
\mathcal{F}(u') du'_{1} \ldots du'_{r}
\\
=
\textnormal{Res}^{+}_{x'_{r}|\alpha_{i_{r}}} \textnormal{Res}^{+}_{x'_{r-1}|\alpha_{i_{r-1}}} \ldots \textnormal{Res}^{+}_{x'_{1}|\alpha_{i_{1}}} 
F(x')dx'
\end{multline*}
and summing up we get
\begin{multline*}
\textnormal{Res}^{+}_{x_{r}} \ldots \textnormal{Res}^{+}_{x_{1}}
F(x) dx
=
\sum \textnormal{Res}^{+}_{x_{r}|\alpha_{i_{r}}} \ldots \textnormal{Res}^{+}_{x_{1}|\alpha_{i_{1}}}
F(x) dx
\\
=
\sum \textnormal{Res}^{+}_{x'_{r}|\alpha_{i_{r}}} \textnormal{Res}^{+}_{x'_{r-1}|\alpha_{i_{r-1}}} \ldots \textnormal{Res}^{+}_{x'_{1}|\alpha_{i_{1}}} 
F(x')dx'
=
\textnormal{Res}^{+}_{x'_{r}} \ldots \textnormal{Res}^{+}_{x'_{1}}
F(x') dx'.
\end{multline*}
The bases $x$ and $x'$ have the same Gramm matrix $\det[(x_{i},x_{j})]_{i,j=1}^{r} = \det[(x'_{i},x'_{j})]_{i,j=1}^{r}$, thus
$$
\textnormal{JKRes}_{x} F(x) dx = \textnormal{JKRes}_{x'} F(x') dx'.
$$
	\end{proof}

Look back what we have showed so far. If $0$ is a generic for $F= \sum_{I} \frac{P_{I} e^{\lambda_{I}}}{\prod_{i\in I}\alpha_{i}}$ then by Proposition \ref{Prop-I-1} the $\textnormal{JKRes}_{x} F(x)dx$ depends only on the polarization induced by $x$ on $\cup_{I} \{ \alpha_{i} \,|\, i\in I \}$, not on the particular bases. We emphasis that $x$ may be non-generic. From the same proposition it also follows that $\textnormal{JKRes}_{x}F(x)e^{\rho(x)}dx$ depends continuously on small $\rho$. If in addition $F$ is analytic then $\textnormal{JKRes}_{x}F(x)dx$ does not depend at all on the bases $x$.

In examples we are interested may arise functions $F$ for which $0$ is not generic. The same result will hold if $x$ is a generic bases and $F$ has some additional properties (i.e. it comes from equivariant integration on a compact symplectic manifold or orbifold).

	\begin{lemma}\label{Lem-I-1}
Let $0$ be generic for $F=\sum_{I}\frac{P_{I}e^{\lambda_{I}}}{\prod_{i\in I}\alpha_{i}}$. If $F(x)$ is analytic and $0$ is not contained in the convex hull $conv(\lambda_{I}\, |\, I)$ then for all ordered bases $x$ we have
\begin{equation}\label{Eq-I-16}
\textnormal{Res}^{+}_{x}F(x)dx = 0.
\end{equation}
	\end{lemma}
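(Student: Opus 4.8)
The plan is to reduce the claim to Proposition~\ref{Prop-I-1} (equivalently Corollary~\ref{Cor-I-1}) by a cone argument, and to use analyticity to show that the only way a summand can contribute is if its exponent vector $\lambda_I$ lies in a certain cone — which will force $0$ into the convex hull, contradicting the hypothesis. First I would decompose $F = \sum_I \frac{P_I e^{\lambda_I}}{\prod_{i\in I}\alpha_i}$ into generating and non-generating fractions; by the remark preceding Proposition~\ref{Prop-I-1}, $\textnormal{Res}^+_x$ kills the non-generating ones, so only the generating fractions of the form $\frac{e^{\lambda_I(x)}}{\prod_{k=1}^r \alpha_{i_k}(x)^{n_k+1}}$ matter. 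By Corollary~\ref{Cor-I-1}, such a fraction contributes to $\textnormal{Res}^+_x F(x)dx$ only if $\lambda_I \in Cone(\widetilde{\alpha}_{i_1},\ldots,\widetilde{\alpha}_{i_r})$, i.e.\ only if $\lambda_I$ lies in the polarized cone $Cone(\widetilde{\alpha}\mid \alpha\in\mathcal{A})$ where $\mathcal{A}=\cup_I\{\alpha_i\mid i\in I\}$.

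The key point is then that $0$ being generic, together with analyticity of $F$, forces a cancellation unless all relevant $\lambda_I$ lie strictly inside a pointed cone whose apex is $0$. Concretely, I would argue as follows: since $F(x)$ is analytic, there is no genuine pole, so for any tuple $\alpha_{i_1},\ldots,\alpha_{i_r}$ satisfying (\ref{Eq-I-4}) the iterated residue $\textnormal{Res}^+_{u_r=0}\cdots\textnormal{Res}^+_{u_1=0}$ applied to the full sum $\sum_I \mathcal{F}_I(u)e^{\sum_k(\lambda'_I)_k u_k}$ can be computed after any reordering/sign flip of the $u_i$, exactly as in the proof of Proposition~\ref{Prop-I-2}. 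This flexibility means that a term survives only when \emph{every} $(\lambda'_I)_k$ has a fixed sign dictated by the polarization, i.e.\ only when $\lambda_I$ is a non-negative combination $\sum_k (\lambda'_I)_k \widetilde{\pi_{\langle\alpha_{i_1},\ldots,\alpha_{i_{k-1}}\rangle}\alpha_{i_k}}$ with all coefficients positive (using that $0$ is generic, so none of the $(\lambda'_I)_k$ vanishes and the $\pi\lambda_I$ term must vanish for a nonzero contribution, by the remark before the ``generating fraction'' definition). Hence each contributing $\lambda_I$ lies in the open cone spanned by polarized vectors, and the whole of $\textnormal{Res}^+_x F(x)dx$ is a sum over contributions indexed by $\lambda_I$ all lying in this single pointed cone $\mathcal{C} = Cone(\widetilde{\alpha}\mid\alpha\in\mathcal{A})$.

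Now if $0\notin conv(\lambda_I\mid I)$, by the separating hyperplane theorem there is $\xi\in\mathfrak{t}$ with $\lambda_I(\xi)>0$ for all $I$. I would like to use this $\xi$ to choose the polarization, but the polarization is what it is; instead the cleaner route is: the sum $\textnormal{Res}^+_x F(x)dx$ equals, by the analyticity-driven reordering argument above, a ``halved'' residue which by symmetry (flip the last variable, as in Proposition~\ref{Prop-I-2}) equals its own negative unless a boundary term intervenes — and the only possible boundary term is the characteristic-function jump that occurs precisely when some $(\lambda'_I)_k = 0$, which is excluded by genericity of $0$. More carefully, I expect the right formulation is: because $F$ is analytic, $\textnormal{Res}^+_x F(x)\,dx = \textnormal{Res}^+_{x^{(j)}}F(x^{(j)})\,dx^{(j)}$ for $x^{(j)}$ obtained from $x$ by negating $x_j$, for \emph{every} $j$ — this is the analytic independence already used in Proposition~\ref{Prop-I-2}, but now applied to each coordinate, giving full $\textnormal{sign}$-flip invariance. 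Iterating, $\textnormal{Res}^+_x F(x)dx$ is invariant under replacing the polarized cone $\mathcal{C}$ by any of its ``octant reflections''. But the hypothesis $0\notin conv(\lambda_I)$ means we can choose a reflection sending all $\lambda_I$ out of the (closed) reflected polarized cone simultaneously, and then Corollary~\ref{Cor-I-1} gives $0$ for that bases; by the flip-invariance the original is $0$ as well.

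The main obstacle will be making the last step precise — producing a \emph{single} global sign change (equivalently a single bases $x'$, differing from $x$ by negating a subset of coordinates) for which \emph{all} the $\lambda_I$ simultaneously fall outside $Cone(\widetilde{\alpha}^{\,x'}\mid\alpha\in\mathcal{A})$, so that Corollary~\ref{Cor-I-1} applies term by term. This amounts to checking that the separating functional $\xi$ (with $\lambda_I(\xi)>0$ for all $I$) can be realized, after a coordinate-reflection, as a functional negative on the polarized cone; since reflecting coordinates acts transitively enough on the chamber structure of $\{\alpha(t)\neq 0\}$ restricted to the relevant sublattice, this should go through, but the bookkeeping with the inductive projections $\pi_{\langle\alpha_{i_1},\ldots\rangle}$ in (\ref{Eq-I-3}) is where care is needed. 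I would handle it by induction on $r$, peeling off the first residue exactly as in (\ref{Eq-I-8}): analyticity kills the $\chi_{[0,\infty)}$ factor issues, the genericity of $0$ guarantees the projected configuration is still generic, and $0\notin conv(\lambda_I)$ passes to $0\notin conv(\pi_{\langle\alpha_j\rangle}\lambda_I)$ after noting the one-variable residue in $u_1$ contributes the factor $e^{(\lambda'_I)_1 u_1}$ whose sign is controlled — so the inductive hypothesis finishes it.
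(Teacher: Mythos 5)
Your overall strategy --- exploit the basis-change freedom that analyticity provides and then kill every term with the cone vanishing of Corollary \ref{Cor-I-1} --- is the same in spirit as the paper's, but your implementation of the basis change leaves a genuine gap, which you yourself flag in the last paragraph. You only allow bases obtained from the given $x$ by reordering and negating coordinates. For every such basis the closure of the polarized cone is one of the closed coordinate half-spaces of the original basis, bounded by some $\langle x_i \,|\, i\neq j\rangle$. The hypothesis $0\notin conv(\lambda_I\,|\,I)$ only provides a separating hyperplane in general position: the $\lambda_I$ may straddle \emph{every} coordinate hyperplane of $x$ (already for $r=2$, take $\lambda_1=-x_1+3x_2$, $\lambda_2=3x_1-x_2$, $\lambda_3=x_1+x_2$; the hull avoids $0$ but meets both coordinate axes' sides). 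Then no signed permutation of $x$ places all $\lambda_I$ outside the closed half-space containing all polarized vectors, so the term-by-term appeal to Corollary \ref{Cor-I-1} is simply not available in any of the bases you permit --- individual terms can have nonzero $\textnormal{Res}^{+}$, and only their sum vanishes. Your two proposed repairs do not close this: ``reflections act transitively enough on the chamber structure'' is exactly what fails (they reach only the coordinate half-spaces of $x$), and the induction peeling off $\textnormal{Res}^{+}_{x_1|\alpha_j}$ as in (\ref{Eq-I-8}) destroys both inductive hypotheses: the sub-sum of terms surviving the factor $\chi_{[0,+\infty)}(\lambda'_j)$ need no longer be analytic (the cancellations responsible for analyticity involve the discarded terms), and $0\notin conv(\lambda_I\,|\,I)$ does not imply $0\notin conv(\pi_{\langle\alpha_j\rangle}\lambda_I\,|\,I)$, since projecting along $\alpha_j$ can bring $0$ into the projected hull.

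The fix is simpler than what you attempt and is the paper's proof. Proposition \ref{Prop-I-2} gives, for analytic $F$ with $0$ generic, independence of $\textnormal{JKRes}_x F(x)dx$ on the ordered basis altogether --- hence independence of the vanishing of $\textnormal{Res}^{+}_x F(x)dx$, which differs from it by the positive factor $\sqrt{\det(x_i,x_j)}$ --- not merely invariance under sign flips of the given basis. So you may pass to a completely new basis adapted to the separating hyperplane: choose $\mathcal{H}\ni 0$ with $conv(\lambda_I\,|\,I)$ in one open half-space, take $x'=\{x'_1,\ldots,x'_r\}$ with $\langle x'_2,\ldots,x'_r\rangle=\mathcal{H}$ and $x'_1$ oriented so that every $\lambda_I$ has strictly negative $x'_1$-coefficient. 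Every vector polarized with respect to $x'$ has non-negative $x'_1$-coefficient, hence $\lambda_I\notin Cone(\widetilde{\alpha}_i\,|\,i\in I)$ for all $I$, Corollary \ref{Cor-I-1} annihilates each term, and Proposition \ref{Prop-I-2} transports the vanishing back to the arbitrary basis $x$ of the statement.
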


	\begin{proof}
By Proposition \ref{Prop-I-2}, it is enough to show (\ref{Eq-I-16}) for a particular ordered bases $x$. Since $0 \notin conv(\lambda_{I}\, |\, I )$, there is hyperplane $\mathcal{H}$ containing the set $conv(\lambda_{I}\, |\, I)$ in one of its open half spaces. Choose the ordered bases $x=\{ x_{1},\ldots,x_{r} \}$ such that $\mathcal{H} = \langle x_{2},\ldots,x_{r} \rangle$ and $conv(\lambda_{I}\, |\, I)\subset \{\sum_{k=1}^{r}a_{k}x_{k}\, |\, a_{1}<0\}$. But all polarized vectors lie in $\{\sum_{k=1}^{r}a_{k}x_{k}\, |\, a_{1}\geq0 \}$, therefore from Corollary \ref{Cor-I-1} follows (\ref{Eq-I-16}).
	\end{proof}

	\begin{lemma}\label{Lem-I-2}
Let $x=\{ x_{1},\ldots,x_{r} \}$ be an ordered bases and let $\alpha_{1},\ldots,\alpha_{k}$ satisfying (\ref{Eq-I-4}). For $i\leq k$ define $v_{i}$ to be the projection of $x_{i}$ to $\langle \alpha_{1},\ldots,\alpha_{k} \rangle$ along $\langle x_{k+1}, \ldots, x_{r} \rangle$ and for $i>k$ let $v_{i}= x_{i}$.
Write $F(x)$ in bases $v=\{v_{1},\ldots,v_{r}\}$ and denote it by $F(v)$. Then
\begin{equation}\label{Eq-I-17}
\textnormal{Res}^{+}_{x_{k}|\alpha_{k}} \ldots \textnormal{Res}^{+}_{x_{1}|\alpha_{1}} F(x)dx_{1}\ldots dx_{k} 
=
\textnormal{Res}^{+}_{v_{k}|\alpha_{k}} \ldots \textnormal{Res}^{+}_{v_{1}|\alpha_{1}} F(v)dv_{1}\ldots dv_{k}. 
\end{equation}
	\end{lemma}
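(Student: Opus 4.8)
\textbf{Proof proposal for Lemma \ref{Lem-I-2}.}

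The plan is to reduce the identity to an examination of the very definition of $\textnormal{Res}^{+}_{x_{k}|\alpha_{k}}\ldots\textnormal{Res}^{+}_{x_{1}|\alpha_{1}}$ via the base-change formula (\ref{Eq-I-3}), and to observe that passing from $x$ to $v$ changes none of the data that formula depends on. Concretely, I would first note that the system (\ref{Eq-I-3a}) built from $\alpha_{1},\ldots,\alpha_{k}$ expresses $u_{1},\ldots,u_{k}$ as linear functions of $x_{1},\ldots,x_{r}$; because $u_{j}$ is the polarization of $\pi_{\langle\alpha_{1},\ldots,\alpha_{j-1}\rangle}\alpha_{j}$, which lies in $\langle x_{j},\ldots,x_{r}\rangle$, condition (\ref{Eq-I-4}) guarantees that in fact each $u_{j}$ depends only on $x_{j},\ldots,x_{k}$ modulo $\langle x_{k+1},\ldots,x_{r}\rangle$, i.e.\ the matrix $B$ of (\ref{Eq-I-13}) is upper triangular with positive diagonal and its first $k$ columns see only $x_{1},\ldots,x_{k}$ up to the tail block. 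The key algebraic observation is then that $v_{i}$ for $i\le k$ is, by construction, the component of $x_{i}$ in $\langle\alpha_{1},\ldots,\alpha_{k}\rangle$ along $\langle x_{k+1},\ldots,x_{r}\rangle$, so that $x_{i} = v_{i} + (\text{something in }\langle x_{k+1},\ldots,x_{r}\rangle)$; hence $\widetilde{\pi_{\langle\alpha_{1},\ldots,\alpha_{j-1}\rangle}\alpha_{j}}$ evaluated in the $v$-coordinates produces exactly the same linear forms $u_{j}$, and the diagonal block of $B$ relating $(u_{1},\ldots,u_{k})$ to $(v_{1},\ldots,v_{k})$ coincides with the one relating them to $(x_{1},\ldots,x_{k})$, so $\delta$ is unchanged.

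Next I would run both sides of (\ref{Eq-I-17}) through formula (\ref{Eq-I-3}) (or rather its $\textnormal{Res}^{+}$ analogue, keeping track of the sign/positivity conditions on the $(\lambda'_{I})_{j}$, which are likewise determined solely by the forms $u_{1},\ldots,u_{k}$ and hence are the same in both coordinate systems). On the left, the iterated residue equals $\frac{1}{\delta}\textnormal{Res}^{+}_{u_{k}=0}\ldots\textnormal{Res}^{+}_{u_{1}=0}$ of the function obtained from $F$ by the substitution (\ref{Eq-I-13}) and expansion as $u_{1}\ll\cdots\ll u_{k}\ll x_{k+1},\ldots,x_{r}$; the residual free variables are $x_{k+1},\ldots,x_{r}$. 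On the right, the same recipe applied in the $v$-basis gives $\frac{1}{\delta}\textnormal{Res}^{+}_{u_{k}=0}\ldots\textnormal{Res}^{+}_{u_{1}=0}$ of the function obtained from $F$ by the corresponding substitution and expansion as $u_{1}\ll\cdots\ll u_{k}\ll v_{k+1},\ldots,v_{r}$; but $v_{i}=x_{i}$ for $i>k$, so the residual variables and the expansion order are literally identical. Since the passage $x_{i}\mapsto v_{i}$ ($i\le k$) only modifies $x_{i}$ by an element of $\langle x_{k+1},\ldots,x_{r}\rangle$, after the substitution $x_{i}=\beta'_{i}(u,\ldots)$ the two substituted expressions agree as Laurent series in $u_{1},\ldots,u_{k}$ over $\mathbb{R}[[x_{k+1},\ldots,x_{r}]]$, and therefore the iterated $u$-residues agree term by term.

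The main obstacle, and the place where I would be most careful, is bookkeeping of the polarizations: one must check that $\widetilde{\pi_{\langle\alpha_{1},\ldots,\alpha_{j-1}\rangle}\alpha_{j}}$ gives the \emph{same} vector $u_{j}$ (not merely a positive multiple) whether one reads coordinates off $x$ or off $v$ — this uses that $v$ and $x$ induce the same orientation-compatible splitting of $\langle\alpha_{1},\ldots,\alpha_{k}\rangle\oplus\langle x_{k+1},\ldots,x_{r}\rangle$, so the diagonal entries of $B$ stay positive and $\varepsilon(\cdot)$ factors are preserved — and that condition (\ref{Eq-I-4}) holds verbatim for $\alpha_{1},\ldots,\alpha_{k}$ relative to $v$, since $\langle v_{j},\ldots,v_{r}\rangle = \langle v_{j},\ldots,v_{k}\rangle\oplus\langle x_{k+1},\ldots,x_{r}\rangle$ and $\pi_{\langle\alpha_{1},\ldots,\alpha_{j-1}\rangle}\alpha_{j}\in\langle\alpha_{j},\ldots,\alpha_{k}\rangle$ pairs correctly with this flag by the definition of the $v_{i}$. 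Once these compatibilities are in place, the equality (\ref{Eq-I-17}) is immediate; if $\alpha_{1},\ldots,\alpha_{k}$ fail (\ref{Eq-I-4}) both sides are zero by definition, so there is nothing to prove in that case.
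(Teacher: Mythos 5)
Your proposal is correct and takes essentially the same route as the paper: both arguments rest on the observation that $\langle v_{l},\ldots,v_{r}\rangle = \langle x_{l},\ldots,x_{r}\rangle$ for every $l$ (since $v_{i}-x_{i}\in\langle x_{k+1},\ldots,x_{r}\rangle$), so the projected and polarized forms $u_{j}$, the positivity data $(\lambda'_{I})_{j}$, the triangular base-change determinant, and the expansion as $u_{1}\ll\cdots\ll u_{k}\ll x_{k+1},\ldots,x_{r}$ are literally identical in the two coordinate systems, reducing both sides of (\ref{Eq-I-17}) to the same iterated residue $\textnormal{Res}^{+}_{u_{k}=0}\ldots\textnormal{Res}^{+}_{u_{1}=0}$.
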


	\begin{proof}
We emphasis that $\pi_{\langle \alpha_{1},\ldots, \alpha_{l-1} \rangle} \alpha_{l}(x)$ denotes the projection of $\alpha_{l}$ to $\langle x_{l}, \ldots, x_{r} \rangle$ along $\langle \alpha_{1}, \ldots, \alpha_{l-1} \rangle$ and similarly, $\pi_{\langle \alpha_{1},\ldots, \alpha_{l-1} \rangle} \alpha_{l}(v)$ denotes the projection of $\alpha_{l}$ to $\langle v_{l}, \ldots, v_{r} \rangle$ along $\langle \alpha_{1}, \ldots, \alpha_{l-1} \rangle$. We have $\pi_{\langle \alpha_{1},\ldots, \alpha_{l-1} \rangle} \alpha_{l}(x) \in \langle \alpha_{1},\ldots,\alpha_{l} \rangle \cap \langle x_{l},\ldots, x_{r} \rangle$ and $\pi_{\langle \alpha_{1},\ldots, \alpha_{l-1} \rangle} \alpha_{l}(v) \in \langle \alpha_{1},\ldots,\alpha_{l} \rangle \cap \langle v_{l},\ldots, v_{r} \rangle,$
but 
	\begin{equation}\label{Eq-I-18}
\langle x_{l},\ldots, x_{r} \rangle = \langle v_{l},\ldots, v_{r} \rangle.
	\end{equation}
Therefore
$$
\pi_{\langle \alpha_{1},\ldots, \alpha_{l-1} \rangle} \alpha_{l}(x) 
=
\pi_{\langle \alpha_{1},\ldots, \alpha_{l-1} \rangle} \alpha_{l}(v),
$$
because they are the projection of $\alpha_{l-1}$ along $\langle \alpha_{1},\ldots,\alpha_{l-1} \rangle$ to (\ref{Eq-I-18}).
Consider the systems of equations
$$
u_{1} = \widetilde{\alpha}_{1}(x),\ \ldots,\ u_{k} = \widetilde{\pi_{\langle \alpha_{1},\ldots,\alpha_{k-1} \rangle}\alpha_{k}(x)},\ u_{k+1} = x_{k+1}, \ \ldots,\ u_{r}=x_{r} 
$$
and
$$
u_{1} = \widetilde{\alpha}_{1}(v),
\ \ldots,\ 
u_{k} = \widetilde{\pi_{\langle \alpha_{1},\ldots,\alpha_{k-1} \rangle}\alpha_{k}(v)},
\ 
u_{k+1} = v_{k+1},
\
\ldots,
\
u_{r}=v_{r}. 
$$
Write then in matrix form 
$$
(u_{1},\ldots,u_{r}) = B'\cdot (x_{1},\ldots,x_{r})^{t},
\qquad
(u_{1},\ldots,u_{r}) = B''\cdot (v_{1},\ldots,v_{r})^{t}.
$$
Remark that the first $r\times r$ minors of $B'$ and $B''$ agree, and denote it by $B_{r}$.
 Write $F(x)$ and $F(v)$ in bases $u$ and expand it with respect to $u_{1}\ll u_{2}\ll \ldots \ll u_{k} \ll u_{k+1},\ldots,u_{r}$ and denote the result by $\mathcal{F}'(u)$ and $\mathcal{F}''(u)$, respectively. We have $\mathcal{F}'(u)=\mathcal{F}''(u)$, thus
\begin{multline*}
\textnormal{Res}^{+}_{x_{k}|\alpha_{k}} \ldots \textnormal{Res}^{+}_{x_{1}|\alpha_{1}} F(x)dx_{1}\ldots dx_{k} 
= 
\det(B_{r})^{-1}
\textnormal{Res}^{+}_{u_{k}=0} \ldots \textnormal{Res}^{+}_{u_{1}=0} \mathcal{F}'(u)du_{1}\ldots du_{k}
\\
= \det(B_{r})^{-1}
\textnormal{Res}^{+}_{u_{k}=0} \ldots \textnormal{Res}^{+}_{u_{1}=0} \mathcal{F}''(u)du_{1}\ldots du_{k}
= 
\textnormal{Res}^{+}_{v_{k}|\alpha_{k}} \ldots \textnormal{Res}^{+}_{v_{1}|\alpha_{1}} F(v)dv_{1}\ldots dv_{k}. 
\end{multline*}
	\end{proof}

	\begin{definition}
Let $x$ be a generic bases with respect to $F = \sum_{I}\frac{ P_{I}e^{\lambda_{I}} }{ \prod_{i\in I}\alpha_{i} }$. The order of 
$$
\textnormal{Res}^{+}_{x_{r}|\alpha_{i_{r}}} \ldots \textnormal{Res}^{+}_{x_{1}|\alpha_{i_{1}}} \frac{P_{I}(x) e^{\lambda_{I}(x)} }{ \prod_{i\in I} \alpha_{i}(x) }dx,
$$ 
$(i_{1},\ldots, i_{r} \in I)$ is equal to $\textnormal{ord}(i_{1},\ldots,i_{r};I)=k$ if 
$$
\lambda_{I}(x) = \sum_{l=1}^{r} (\lambda'_{I})_{l} \widetilde{ \pi_{\langle \alpha_{i_{1}},\ldots, \alpha_{i_{l-1}} \rangle }\alpha_{i_{l}}}
$$ 
with $(\lambda'_{I})_{1},\ldots,(\lambda'_{I})_{k}\neq 0$ and $(\lambda'_{I})_{k+1} = \ldots = (\lambda'_{I})_{r} = 0$.
	\end{definition}

	\begin{remark}\label{Rk-I-1}
If $\textnormal{Res}^{+}_{x_{r}|\alpha_{i_{r}}} \ldots \textnormal{Res}^{+}_{x_{1}|\alpha_{i_{1}}} \frac{P_{I}(x) e^{\lambda_{I}(x)} }{ \prod_{i\in I} \alpha_{i}(x) }dx$ is of order $k$, then
\begin{equation}\label{Eq-I-2a}
\textnormal{Res}^{+}_{x_{r}|\alpha_{i_{r}}} \ldots \textnormal{Res}^{+}_{x_{1}|\alpha_{i_{1}}} \frac{P_{I}(x) e^{\lambda_{I}(x) + \rho(x)} }{ \prod_{i\in I} \alpha_{i}(x) }dx
\end{equation}
depends continuously on small $\rho \in \langle \alpha_{i_{1}}, \ldots, \alpha_{i_{k}} \rangle$ (small means that it stays of order $k$).
	\end{remark}

Let $(M,\omega)$ be a compact hamiltonian $T$-manifold with moment map $\mu:M\to \mathfrak{t}^{*}$. We choose a compatible triple $(\omega,g,\mathcal{I})$ on $M$, where $g$ Riemannian metric, $\mathcal{I}$ almost complex structure.
Consider
\begin{equation}\label{Eq-I-19}
\oint_{M} \eta e^{\omega - \mu} := \sum_{X_{I}\subset M^{T}} \int_{X_{I}}\frac{i^{*}_{X_{I}}(\eta e^{\omega - \mu}) }{e_{T}\mathcal{N}(X_{I}|M)} =: \sum_{I}\frac{P_{I} e^{\lambda_{I}}}{ \prod_{i\in I}\alpha_{i}}=F
\end{equation}
where $I$ indexes the fixed point component $X_{I}\subset M^{T}$ and $\alpha_{i}$, $i \in I$ are isotropy weights of $T$-action on the normal bundle $\mathcal{N}(X_{I}|M)$ of $X_{I}$ in $M$, and $\lambda_{I} = - \mu(X_{I})$.

\begin{definition}
Let $S\subset T$ be a subtorus and let $N\subset M^{S}$ be a connected component. The convex subpolytope $\mu(N)$ is called wall of the moment polytope $\mu(M)$.
\end{definition}

Let $\alpha_{i_{1}},\ldots,\alpha_{i_{r}}$ satisfy (\ref{Eq-I-4}). To any $\textnormal{Res}^{+}_{x_{r}|\alpha_{i_{r}}} \ldots \textnormal{Res}^{+}_{x_{1}|\alpha_{i_{1}}}\frac{ P_{I}(x)e^{ \lambda_{I}(x) } }{ \prod_{i\in I} \alpha_{i}(x) } dx$, $(i_{1},\ldots,i_{r}\in I)$ we associate a series of walls of $\mu(M)$ as follows. For any $l<r$ let $S_{l}\subset T$ subtorus such that $Lie(S_{l}) = \mathfrak{s}_{l}:=\cap_{j=1}^{l}\ker \alpha_{i_{j}}$. We can identify $Lie(T/S_{l})^{*} = \ker(\mathfrak{t}^{*}\to \mathfrak{s}^{*}_{l}) = \langle \alpha_{i_{1}},\ldots,\alpha_{i_{l}} \rangle$. Let $N_{l}$ be a the connected component of $M^{S_{l}}$ containing the fixed point component $X_{I}\subset M^{T}$. Then $N_{l}^{T} = N_{l}^{T/S_{l}}$ and $\mu(N_{l})\subset \lambda_{I} + \langle \alpha_{i_{1}},\ldots,\alpha_{i_{l}} \rangle$ having dimension $\dim \mu(N_{l}) = l$. We call $\lambda_{I} + \langle \alpha_{i_{1}},\ldots,\alpha_{i_{l}} \rangle$ the plane of $\mu(N_{l})$. Moreover, if $\textnormal{ord}(i_{1},\ldots,i_{r};I) \leq l$ then $0$ is in the plane of $\mu(N_{l})$.

	\begin{lemma}\label{Lem-I-3}
Let $x$ be a generic bases with respect to $F$ as in (\ref{Eq-I-19}). Denote $\mathcal{W}_{k}(M)$ the set of $k$-dimensional walls $\mu(N)$ of $\mu(M)$ containing $0$ in their plane. Then
$$
\sum_{\textnormal{ord}(i_{1},\ldots,i_{r};I)\leq k} 
\textnormal{Res}^{+}_{x_{r}|\alpha_{i_{r}}} \ldots \textnormal{Res}^{+}_{x_{1}|\alpha_{i_{1}}} \frac{P_{I}(x) e^{\lambda_{I}(x)} }{ \prod_{i\in I} \alpha_{i}(x) }dx
=
\sum_{ \mu(N) \in \mathcal{W}_{k}(M) }
\textnormal{Res}^{+}_{w} \textnormal{Res}^{+}_{v(N)} F_{N}(v(N),w) dv(N)dw,
$$
where $v_{1}(N),\ldots v_{k}(N)$ are the projections of $x_{1},\ldots,x_{k}$ to the plane $\mu(N)$ along $\langle x_{k+1}, \ldots, x_{r} \rangle$, $w_{i} = x_{i}$ for $i>k$, and $F_{N}(v(N),w)$ is equal to $\oint_{N}\frac{i^{*}_{N}(\eta e^{\omega - \mu})}{e_{T}(N|M)} = \sum_{X_{I}\subset N^{T}} \int_{X_{I}} \frac{ i^{*}_{X_{I}} ( \eta e^{ \omega - \mu } ) }{ e_{T}(X_{I}|M) } $ written in bases $\{v(N),w\}$ and expanded with respect to $v_{i}(N) \ll w_{j}$ for all $i \leq k < j$.
	\end{lemma}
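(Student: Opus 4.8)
The plan is to prove the identity wall by wall.

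First I would use the construction preceding the lemma to attach a wall to each term on the left: given a tuple $i_1,\ldots,i_r\in I$ satisfying (\ref{Eq-I-4}) with $\textnormal{ord}(i_1,\ldots,i_r;I)\leq k$, let $N$ be the connected component of $M^{S_k}$ through $X_I$, where $\textnormal{Lie}(S_k)=\mathfrak{s}_k=\bigcap_{j=1}^{k}\ker\alpha_{i_j}$, so that the plane of $\mu(N)$ is $\lambda_I+\langle\alpha_{i_1},\ldots,\alpha_{i_k}\rangle$. Since $\alpha_{i_1},\ldots,\alpha_{i_r}$ are linearly independent by (\ref{Eq-I-4}), the condition $\textnormal{ord}\leq k$ is equivalent to $\lambda_I\in\langle\alpha_{i_1},\ldots,\alpha_{i_k}\rangle$, hence to $0$ lying in that plane; so $\mu(N)\in\mathcal{W}_k(M)$ and the plane is the \emph{linear} subspace $V_N:=\mathfrak{s}_k^{\circ}=\langle\alpha_{i_1},\ldots,\alpha_{i_k}\rangle$, which depends only on $N$. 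Conversely, the tuples attached to a given $\mu(N)\in\mathcal{W}_k(M)$ are exactly those with $X_I\subset N^{T}$ and $\langle\alpha_{i_1},\ldots,\alpha_{i_k}\rangle=V_N$ (such tuples automatically have $\lambda_I=-\mu(X_I)\in V_N$ because $\mu_S$ vanishes on $N$, so $\textnormal{ord}\leq k$ is forced; genericity of $x$ is what makes the orders well defined and the walls in $\mathcal{W}_k(M)$ of plane-dimension exactly $k$). This rewrites the left-hand side as $\sum_{\mu(N)\in\mathcal{W}_k(M)}L_N$, with $L_N$ the sum of the terms attached to $N$, and it remains to check $L_N=\textnormal{Res}^{+}_{w}\textnormal{Res}^{+}_{v(N)}F_N(v(N),w)\,dv(N)\,dw$.

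Next I would fix $\mu(N)\in\mathcal{W}_k(M)$ with subtorus $S$, so $V_N=\textnormal{Lie}(T/S)^{*}$, and with $v_1(N),\ldots,v_k(N)$ the projections of $x_1,\ldots,x_k$ to $V_N$ along $\langle x_{k+1},\ldots,x_r\rangle$, $w=(x_{k+1},\ldots,x_r)$. For $X_I\subset N^{T}$, a weight $\alpha_i$ ($i\in I$) lies in $V_N$ iff $S$ acts trivially on the corresponding line of $\mathcal{N}(X_I\,|\,M)$, i.e.\ iff $\alpha_i$ is a weight of $\mathcal{N}(X_I\,|\,N)$, the $S$-fixed part of $\mathcal{N}(X_I\,|\,M)$; write $I_N\subset I$ for that index set. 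In a tuple attached to $N$ the first $k$ entries then lie in $I_N$ and span $V_N$, and by linear independence the last $r-k$ lie in $I\setminus I_N$. Using Lemma \ref{Lem-I-2} for the first $k$ residues and iterating the inductive definition of $\textnormal{Res}^{+}$, I would establish, for such a tuple,
$$
\textnormal{Res}^{+}_{x_r|\alpha_{i_r}}\cdots\textnormal{Res}^{+}_{x_1|\alpha_{i_1}}\frac{P_I e^{\lambda_I}}{\prod_{i\in I}\alpha_i}\,dx
=
\textnormal{Res}^{+}_{x_r|\pi\alpha_{i_r}}\cdots\textnormal{Res}^{+}_{x_{k+1}|\pi\alpha_{i_{k+1}}}\Big[\,\textnormal{Res}^{+}_{v_k(N)|\alpha_{i_k}}\cdots\textnormal{Res}^{+}_{v_1(N)|\alpha_{i_1}}\frac{P_I e^{\lambda_I}}{\prod_{i\in I}\alpha_i}(v,w)\,dv\,\Big]\,dw,
$$
where $\pi$ is the projection onto $\langle x_{k+1},\ldots,x_r\rangle$ along $V_N$; here $\lambda_I\in V_N$ is precisely what makes the nonnegativity conditions in the two ``$+$'' groups compatible, the outer ones reducing to $0\geq 0$.

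Then I would perform the two summations. Summing the bracketed expression over the non-equivalent tuples $i_1,\ldots,i_k$ from $I_N$ gives, by the definition of $\textnormal{Res}^{+}_{v(N)}$ — the only poles in $v$ of $\frac{P_I e^{\lambda_I}}{\prod_{i\in I}\alpha_i}(v,w)$ after the expansion $v\ll w$ being the $\alpha_i$ with $i\in I_N$ — the function $\textnormal{Res}^{+}_{v(N)}\frac{P_I e^{\lambda_I}}{\prod_{i\in I}\alpha_i}(v,w)\,dv$ of $w$, whose poles are the nonzero projections $\pi\alpha_i$, $i\in I\setminus I_N$; summing the outer residues over the tuples $i_{k+1},\ldots,i_r$ from $I\setminus I_N$ then produces $\textnormal{Res}^{+}_{w}$. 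Finally, summing over $X_I\subset N^{T}$ and using $F_N=\sum_{X_I\subset N^{T}}\int_{X_I}\frac{i^{*}_{X_I}(\eta e^{\omega-\mu})}{e_T(X_I\,|\,M)}=\sum_{X_I\subset N^{T}}\frac{P_I e^{\lambda_I}}{\prod_{i\in I}\alpha_i}$ — the Atiyah--Bott formula on the compact Hamiltonian $T$-manifold $N$ together with $e_T(X_I\,|\,M)=e_T(X_I\,|\,N)\cdot i^{*}_{X_I}e_T(N\,|\,M)$ — I would obtain $L_N=\textnormal{Res}^{+}_{w}\textnormal{Res}^{+}_{v(N)}F_N(v(N),w)\,dv(N)\,dw$, and summing over $\mathcal{W}_k(M)$ completes the proof.

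The hardest part will be the middle step and its bookkeeping: verifying that the iterated $\textnormal{Res}^{+}$ in the basis $x$ really factors as outer residues in $w$ (with the projected weights $\pi\alpha_{i_j}$) applied after the inner residues in $v(N)$, that the nonnegativity conditions split correctly between the two groups — which rests on $\lambda_I\in V_N$ — and that the correspondence ``tuple attached to $N$'' $\leftrightarrow$ ``(first $k$ entries from $I_N$)$\times$(last $r-k$ entries from $I\setminus I_N$)'' is compatible with the equivalence of tuples under which both $\textnormal{Res}^{+}$'s sum, so that every contribution is counted exactly once.
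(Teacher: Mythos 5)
Your proposal is correct and takes essentially the same route as the paper: attach to each order-$\leq k$ tuple its wall via the construction preceding the lemma, use Lemma \ref{Lem-I-2} to pass to the basis $\{v(N),w\}$, and match summands in both directions — the paper phrases your wall-by-wall identity $L_N=\textnormal{Res}^{+}_{w}\textnormal{Res}^{+}_{v(N)}F_N\,dv(N)\,dw$ simply as ``each such term is a summand of the right-hand side, and conversely''. The bookkeeping you single out (the positivity conditions splitting because $\lambda_I\in V_N$, and the factors with $i\notin I_N$ becoming analytic after the expansion $v\ll w$) is exactly what the paper leaves implicit, cf.\ Remark \ref{Rk-I-3}.
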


	\begin{proof}
Let 
	\begin{equation}\label{Eq-I-20}
\textnormal{Res}^{+}_{x_{r}|\alpha_{i_{r}}} \ldots \textnormal{Res}^{+}_{x_{1}|\alpha_{i_{1}}} \frac{P_{I}(x) e^{\lambda_{I}(x)} }{ \prod_{i\in I} \alpha_{i}(x) }dx
	\end{equation}
 be of order $l\leq k$, and let $N_{k}$ be its associated $k$-dimensional wall in $\mu(M)$ and $0$ is contained in the plane of $\mu(N_{k})$. Then 
$\frac{P_{I} e^{\lambda_{I}} }{ \prod_{i\in I} \alpha_{i} } = \int_{X_{I}} \frac{ i^{*}_{X_{I}}(\eta e^{\omega - \mu}) }{ e_{T}\mathcal{N}(X_{I}|M) }$, $X_{I}\subset N_{k}^{T}$ is the summand of $\oint_{N_{k}}\frac{i^{*}_{N_{k}}(\eta e^{\omega - \mu}) }{e_{T}\mathcal{N}(N_{k}|M) }$. By Lemma \ref{Lem-I-2}, (\ref{Eq-I-20}) is equal to
$$
\textnormal{Res}^{+}_{w_{r}|\alpha_{i_{r}}} \ldots \textnormal{Res}^{+}_{w_{k+1}|\alpha_{i_{k+1}}} \textnormal{Res}^{+}_{v_{k}(N_{k})|\alpha_{i_{k}}} \ldots  \textnormal{Res}^{+}_{v_{k}(N_{k})|\alpha_{i_{1}}} \frac{P_{I}(v(N_{k}),w) e^{\lambda_{I}(v(N_{k}),w)} }{ \prod_{i\in I} \alpha_{i}(v(N_{k}),w) }dv(N_{k})dw, 
$$
which is a summand of $\textnormal{Res}^{+}_{w}\textnormal{Res}^{+}_{v(N_{k})}F_{N_{k}}(v(N_{k}),w)dv(N_{k})dw$.

Conversely, if $\mu(N)$ is a $k$-dimensional wall of $\mu(M)$ with plane containing $0$, then any summand of $\textnormal{Res}^{+}_{w}\textnormal{Res}^{+}_{v(N)} F_{N}(v(N),w)dv(N)dw$ is of form (\ref{Eq-I-20}) having order $l\leq k$.
	\end{proof}

\begin{remark}\label{Rk-I-3}
Let $S\subset T$ be a subtorus and let $N \subset M^{S}$ be a fixed point component. Recall that we can identify $Lie( T/S )^{*} = \ker( \mathfrak{t}^{*} \to \mathfrak{s}^{*} )$. $\mathcal{N}( N|M )$ $S$-equivariantly splits to sum of line bundles $ \oplus_{j} L_{j} $. Then $ e_{T} ( L_{j} ) = \gamma_{j} + e_{T/S} ( L_{j} ) $, where $0 \neq \gamma_{j} \in \mathfrak{s}^{*} $. We may identify $\mathfrak{s}^{*}$ to $ \langle w_{k+1}, \ldots, w_{r} \rangle \subset \mathfrak{t}^{*} $. Hence, if we expand $ e_{T}( L_{j} )( v(N), w ) = \gamma_{j}( w ) + e_{T/S} ( L_{j} )( v(N) )$ as $ v_{i}(N) \ll w_{j}$, $i \leq k < j$, then $\frac{ 1 }{ e_{T} \mathcal{N}( N|M ) } $ is analytic in $v(N)$. Therefore $F_{N}( v(N), W ) $ defined in Lemma \ref{Lem-I-3} is analytic in $v(N)$ by compactness of $N$. 
\end{remark}

We have the following vanishing result.

	\begin{prop}\label{Prop-I-3}
Let $x$ be a generic bases with respect to $F$ as in (\ref{Eq-I-19}). Suppose that $0$ is not on any wall of $\mu(M)$, i.e $0$ is regular value of $\mu$.  Then for any $k<r$
	\begin{equation}\label{Eq-I-21}
\sum_{\deg(i_{1},\ldots,i_{r};I) = k}
\textnormal{Res}^{+}_{x_{r}|\alpha_{i_{r}}} \ldots \textnormal{Res}^{+}_{x_{1}|\alpha_{i_{1}}} \frac{P_{I}(x) e^{\lambda_{I}(x) + \rho(x)} }{ \prod_{i\in I} \alpha_{i}(x) }dx =0
	\end{equation}
for any $\rho$ in a small neighborhood of $0$.
	\end{prop}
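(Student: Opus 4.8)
The plan is to telescope in the order: setting $A_{k}(\rho):=\sum_{\textnormal{ord}(i_{1},\ldots,i_{r};I)\le k}\textnormal{Res}^{+}_{x}\frac{P_{I}(x)e^{\lambda_{I}(x)+\rho(x)}}{\prod_{i\in I}\alpha_{i}(x)}\,dx$, the left-hand side of (\ref{Eq-I-21}) equals $A_{k}(\rho)-A_{k-1}(\rho)$, so it suffices to prove $A_{j}(\rho)=0$ for all $0\le j<r$. The crucial observation is that the correspondence between order-$\le k$ tuples and summands of the wall sum used in the proof of Lemma \ref{Lem-I-3} is built only from the weights $\alpha_{i}$ and the fixed components $X_{I}$, and is insensitive to the exponents $\lambda_{I}$; consequently, multiplying every fraction by the common factor $e^{\rho}$ and reusing the change of variables of Lemma \ref{Lem-I-2} verbatim yields
$$
A_{k}(\rho)=\sum_{\mu(N)\in\mathcal{W}_{k}(M)}\textnormal{Res}^{+}_{w}\,\textnormal{Res}^{+}_{v(N)}\bigl(F_{N}e^{\rho}\bigr)(v(N),w)\,dv(N)\,dw .
$$

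Next I analyze a single wall term. Fix $\mu(N)\in\mathcal{W}_{k}(M)$ with $k<r$, and let $S\subset T$ be the subtorus with $N$ a component of $M^{S}$ and $V:=\langle\alpha_{i_{1}},\ldots,\alpha_{i_{k}}\rangle=\ker(\mathfrak{t}^{*}\to\mathfrak{s}^{*})$, which is the span of $v_{1}(N),\ldots,v_{k}(N)$. Since $0$ lies in the plane of $\mu(N)$, the $\mathfrak{s}^{*}$-component of $\mu$ vanishes on the connected set $N$ (it is the $S$-moment map, locally constant on $M^{S}$), so every exponent $\lambda_{J}=-\mu(X_{J})$ for $X_{J}\subset N^{T}$ lies in $V$, and $\textnormal{conv}(\lambda_{J}\mid X_{J}\subset N^{T})=-\mu(N)$ by the Atiyah--Guillemin--Sternberg convexity theorem for $N$. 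As $k<r$ the torus $S$ is nontrivial, so $\mu(N)$ is a proper wall of $\mu(M)$; since $0$ is a regular value it is not on this wall, hence $0\notin-\mu(N)$, a compact convex set. Splitting $\rho=\rho^{v}+\rho^{w}$ along $\mathfrak{t}^{*}=V\oplus\langle w_{k+1},\ldots,w_{r}\rangle$ and pulling the unit $e^{\rho^{w}(w)}$ out of the inner residue, the wall term becomes $\textnormal{Res}^{+}_{w}\bigl(e^{\rho^{w}(w)}\,\textnormal{Res}^{+}_{v(N)}(F_{N}e^{\rho^{v}})\bigr)$, so it is enough to show that $\textnormal{Res}^{+}_{v(N)}(F_{N}e^{\rho^{v}})\,dv(N)$ vanishes, computed in the $k$-dimensional space $V$ with the $w$'s treated as coefficients.

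For this I invoke Lemma \ref{Lem-I-1} on $V$ in the basis $v_{1}(N),\ldots,v_{k}(N)$ applied to $F_{N}e^{\rho^{v}}$: it is analytic in $v(N)$ by Remark \ref{Rk-I-3} (and $e^{\rho^{v}}$ is entire), its exponents are $\lambda_{J}+\rho^{v}$, whose convex hull $-\mu(N)+\rho^{v}$ avoids $0$ once $\rho$ is small enough (by compactness), and $0$ is generic for it provided $\rho^{v}$ misses a finite union of proper affine subspaces of $V$; Lemma \ref{Lem-I-1} then gives the vanishing for such $\rho$, and the remaining small $\rho$ are dealt with by continuity of the order-$k$ residues in the perturbation (Remark \ref{Rk-I-1}). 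Hence $A_{k}(\rho)=0$ for every $k<r$ --- for $k=0$ this is immediate since a regular value $0$ is not the image of any $T$-fixed point, so $\mathcal{W}_{0}(M)=\varnothing$ --- and the telescoping identity proves (\ref{Eq-I-21}).

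The main obstacle is the combinatorial bookkeeping: one has to verify carefully that the wall decomposition of Lemma \ref{Lem-I-3} survives multiplication by $e^{\rho}$ (this is why the exponent-independence of its correspondence matters) and that, after reducing a wall term to an iterated residue purely in the $v(N)$-directions, one is exactly in the situation of Lemma \ref{Lem-I-1}. The genericity hypothesis of Lemma \ref{Lem-I-1} is precisely what forces the small parameter $\rho$ into the statement: the origin need not be generic for $F$ itself, but $-\rho^{v}$ is generic for $F_{N}$ on $V$ for almost every small $\rho$, and a continuity argument closes the remaining gap.
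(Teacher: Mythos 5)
Your overall scaffolding matches the paper's: telescoping over the order, rewriting the order-$\le k$ sum as a sum over walls in $\mathcal{W}_{k}(M)$ via Lemma \ref{Lem-I-3} (which indeed survives the harmless factor $e^{\rho}$), observing that $\lambda_{J}\in V$ and $0\notin-\mu(N)$ for a wall through the origin, splitting $\rho=\rho^{v}+\rho^{w}$, and killing each wall term with Lemma \ref{Lem-I-1} once $0$ is generic for $F_{N}e^{\rho^{v}}$. The genuine gap is the very last step, the passage from generic $\rho^{v}$ to \emph{all} small $\rho$. You justify it by ``continuity of the order-$k$ residues in the perturbation (Remark \ref{Rk-I-1})'', but Remark \ref{Rk-I-1} only gives continuity for summands whose order is exactly $k$, i.e.\ whose first $k$ coefficients $(\lambda'_{I})_{1},\ldots,(\lambda'_{I})_{k}$ are already nonzero and stay nonzero. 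For $k$ larger than the minimal wall dimension, $\textnormal{Res}^{+}_{v(N)}F_{N}(v(N),w)e^{\rho^{v}}\,dv(N)$ also contains summands of order $l<k$, coming from sub-walls $\mu(N')\subset\mu(N)$ whose planes pass through $0$; for such a summand the coefficients $(\lambda'_{I})_{l+1},\ldots,(\lambda'_{I})_{k}$ vanish at $\rho^{v}=0$, and a generic perturbation makes them nonzero with a sign depending on the direction of $\rho^{v}$, so the indicator built into $\textnormal{Res}^{+}$ makes these terms jump: they are \emph{not} continuous at non-generic $\rho^{v}$. Hence the vanishing at generic $\rho^{v}$ does not by itself propagate to the limit.

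This is precisely the point where the paper runs an induction on the wall dimension: the sum of the lower-order summands is identified with $\sum_{\mu(N')\in\mathcal{W}_{l}(N),\,l<k}\textnormal{Res}^{+}_{w'}\textnormal{Res}^{+}_{v(N')}F_{N'}\,dv(N')\,dw'$ and shown to vanish for all small perturbations by the induction hypothesis (equation (\ref{Eq-I-23})); only then is the total wall residue continuous in $\theta$ (order-$k$ terms by Remark \ref{Rk-I-1}, lower-order sum being identically zero), and the limit over a generic perturbation $s\varrho_{N}\to0$ legitimately gives (\ref{Eq-I-24}). Your argument works as stated only for the smallest wall dimension $k_{0}$, where genericity of the bases $x$ guarantees that $0$ is already generic for $F_{N}$ in $v(N)$ and no limiting is needed; for $k>k_{0}$ you must add this induction (or an equivalent argument showing the lower-order part of the wall residue vanishes identically near $\rho=0$) to close the continuity step. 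Your $k=0$ remark ($\mathcal{W}_{0}(M)=\emptyset$) is fine but does not serve as the base of the induction actually needed, which is over wall dimensions inside each $N$.
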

	\begin{proof}
First, we will show by induction on $k$ that for any $\mu(N) \in \mathcal{W}_{k}(M)$, we have
	\begin{equation}\label{Eq-I-22}
\textnormal{Res}^{+}_{v(N)} F_{N}(v(N),w) e^{\rho(v(N),w)}dv(N) = 0
	\end{equation}
with $F_{N}(v(N),w)$ defined in Lemma \ref{Lem-I-3} for any $\rho$ in a small neighborhood of $0$.

Let $k_{0}$ be the smallest number such that $\mathcal{W}_{ k_{0} }( M ) \neq \emptyset$. By Lemma \ref{Lem-I-3} this is equal to the smallest order. Since $x$ is generic, we have $k_{0}>0$. Consider $w$ as a fixed parameter and remark that $0$ is generic for $F_{N}(v(N),w)$ as fraction in $v(N)$. Furthermore, $0$ is generic for $F_{N}(v(N),w) e^{\rho'(v(N))}$ for small $\rho'\in \langle v_{1}(N),\ldots,v_{k_{0}}(N) \rangle$. Since $0$ is not contained in the convex polytope $-\mu(N) + \rho'(v(N))$ and $F_{N}( v(N), w )$ is analytic in $v(N)$, Lemma \ref{Lem-I-1} we have $\textnormal{Res}^{+}_{v(N)}F_{N}(v(N),w) e^{\rho'(v(N))} dv(N) = 0$. Moreover, we can write any small $\rho$ as $\rho(v(N),w) = \rho'(v(N)) + \rho''(w)$, thus
$$
\textnormal{Res}^{+}_{v(N)}F_{N}(v(N),w) e^{\rho(v(N),w)} dv(N) 
= 
 \textnormal{Res}^{+}_{v(N)}F_{N}(v(N),w) e^{\rho'(v(N),w)} dv(N) e^{\rho''(w)} 
= 
0.
$$
Thus we have showed (\ref{Eq-I-22}) for $k=k_{0}$. For general $k<r$
$$
\textnormal{Res}^{+}_{v(N)} F_{N}(v(N),w)dv(N)
$$
can be written as sum of degree $k$ terms and lower degree terms. The sum of lower degree terms is equal to
$\sum_{\substack{\mu(N')\in \mathcal{W}_{l}(N),\, l<k}} \textnormal{Res}^{+}_{w'}\textnormal{Res}^{+}_{v(N')} F_{N'}(v(N'),w',w) dv(N') dw'$,
where $w'=\{v_{l+1}(N),\ldots,v_{k}(N)\}$.
By induction hypothesis,
	\begin{equation}\label{Eq-I-23}
\sum_{\substack{\mu(N')\in \mathcal{W}_{l}(N),\, l<k}} \textnormal{Res}^{+}_{w'}\textnormal{Res}^{+}_{v(N')} F_{N'}(v(N'),w',w) e^{\theta(v(N'),w')} dv(N') dw' 
= 
0
	\end{equation}
for all $\theta \in \langle v_{1}(N),\ldots,v_{k}(N) \rangle$ small. From (\ref{Eq-I-23}) and Remark \ref{Rk-I-1} follows that 
$$
\textnormal{Res}^{+}_{v(N)}F_{N}(v(N),w) e^{\theta(v(N))}dv(N)
$$ depends continuously on small $\theta$. Fix $\theta$ small and let $\varrho_{N}(v(N))$ such that $0$ is generic for $F_{N}(v(N),w) e^{\theta(v(N)) + \varrho_{N}(v(N))}$ as fraction in $v(N)$. Then
	\begin{equation}\label{Eq-I-24}
\textnormal{Res}^{+}_{v(N)} F_{N}(v(N),w)e^{\theta(v(N))}dv(N) 
=
\lim_{s\to 0} \textnormal{Res}^{+}_{v(N)}F_{N}(v(N),w) e^{\theta(v(N)) + s \varrho_{N}(v(N))} dv(N)
=
0,  
	\end{equation}  
by Lemma \ref{Lem-I-1}. We can write any small $\rho$ as $\rho(v(N),w) = \theta(v(N)) + \vartheta(w)$ and we have
$$
\textnormal{Res}^{+}_{v(N)} F_{N}(v(N),w) e^{\rho(v(N),w)} dv(N) 
=
\textnormal{Res}^{+}_{v(N)} F_{N}(v(N),w)e^{\theta(v(N))} dv(N) e^{\vartheta(w)}
=
0, 
$$
by (\ref{Eq-I-24}). In particular, if $\mu(N)\in \mathcal{W}_{k}(M)$, $k<r$ then for all $\rho$ small we have
	\begin{equation}\label{Eq-I-25}
\textnormal{Res}^{+}_{w} \textnormal{Res}^{+}_{v(N)} F_{N}(v(N),w) e^{\rho(v(N),w)} dv(N) dw 
=
0. 
	\end{equation}
From Lemma \ref{Lem-I-3} follows that for small $\rho$ we have
	\begin{multline*}
\sum_{\deg(i_{1},\ldots,i_{r};I)=k} 
\textnormal{Res}^{+}_{x_{r}|\alpha_{i_{r}}} \ldots \textnormal{Res}^{+}_{x_{1}|\alpha_{i_{1}}} \frac{P_{I}(x) e^{\lambda_{I}(x) + \rho(x) }}{ \prod_{i\in I} \alpha_{i}(x) } dx 
\\
=
\sum_{\mu(N)\in \mathcal{W}_{k}(M)}
\textnormal{Res}^{+}_{w} \textnormal{Res}^{+}_{v(N)} F_{N}(v(N),w) e^{\rho(v(N),w)} dv(N) dw
\\
-
\sum_{\mu(N)\in \mathcal{W}_{k-1}(M)}
\textnormal{Res}^{+}_{w} \textnormal{Res}^{+}_{v(N)} F_{N}(v(N),w) e^{\rho(v(N),w)} dv(N) dw
=0,
	\end{multline*}
by (\ref{Eq-I-25}). 
	\end{proof}

	\begin{prop}\label{Prop-I-4}
Let $x$ be a generic bases with respect to $F$ as in (\ref{Eq-I-19}). Suppose that $0$ is not on any wall of $\mu(M)$. Then
\begin{enumerate}
\item[(a)] $\textnormal{JKRes}_{x} F(x) e^{ \rho(x) } dx$ depends continuously on $\rho$ small.

\item[(b)] $\textnormal{JKRes}_{x}F(x)dx$ does not depend on the choice of generic bases $x$, i.e. if $y$ is another generic bases with respect to $F$, then $\textnormal{JKRes}_{x} F(x) dx = \textnormal{JKRes}_{y} F(y) dy$.
\end{enumerate}
	\end{prop}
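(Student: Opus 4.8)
\emph{Plan.} The idea is to obtain both statements from Proposition~\ref{Prop-I-3} (the collective vanishing of all sub-maximal order contributions when $0$ is a regular value of $\mu$) together with Proposition~\ref{Prop-I-2} and Remark~\ref{Rk-I-1}. The recurring structural input is that the function $F$ of (\ref{Eq-I-19}) is analytic: being the equivariant integral $\oint_M\eta e^{\omega-\mu}$ over the \emph{compact} manifold $M$, the polar divisors of the individual fixed--point terms cancel in the sum, so $F(x)$ is entire. This analyticity is the ``additional property'' that compensates for $0$ not being generic, and it is already what underlies Proposition~\ref{Prop-I-3}.

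\textbf{Part (a).} Since the normalizing factor $1/\sqrt{\det(x_i,x_j)}$ is independent of $\rho$, it suffices to show that $\rho\mapsto\textnormal{Res}^+_x F(x)e^{\rho(x)}dx$ is continuous near $0$. Expanding $\textnormal{Res}^+_x=\textnormal{Res}^+_{x_r}\cdots\textnormal{Res}^+_{x_1}$ into its summands, indexed by the fixed--point components $X_I$ and, for each, by the non-equivalent tuples $\alpha_{i_1},\ldots,\alpha_{i_r}$ satisfying (\ref{Eq-I-4}), I would group them according to their order $\textnormal{ord}(i_1,\ldots,i_r;I)=k$, computed at $\rho=0$ (i.e.\ with exponent $\lambda_I$). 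By Proposition~\ref{Prop-I-3}, for every $k<r$ the partial sum over the order-$k$ summands of $\textnormal{Res}^+_{x_r|\alpha_{i_r}}\cdots\textnormal{Res}^+_{x_1|\alpha_{i_1}}\frac{P_I(x)e^{\lambda_I(x)+\rho(x)}}{\prod_{i\in I}\alpha_i(x)}dx$ vanishes identically for $\rho$ in a fixed neighbourhood of $0$. Hence $\textnormal{Res}^+_x F(x)e^{\rho(x)}dx$ reduces to the sum of the order-$r$ summands alone. For such a summand the vectors $\alpha_{i_1},\ldots,\alpha_{i_r}$ span $\mathfrak{t}^*$, so the constraint $\rho\in\langle\alpha_{i_1},\ldots,\alpha_{i_r}\rangle$ of Remark~\ref{Rk-I-1} is vacuous, while the condition of being of order $r$ (all $(\lambda'_I)_l$ nonzero) is open in $\rho$; thus Remark~\ref{Rk-I-1} gives continuity of each such summand for all small $\rho$. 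Intersecting the finitely many neighbourhoods involved, $\textnormal{Res}^+_x F(x)e^{\rho(x)}dx$, and therefore $\textnormal{JKRes}_x F(x)e^{\rho(x)}dx$, is continuous in $\rho$ near $0$.

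\textbf{Part (b).} Let $x,y$ be two bases generic with respect to $F$. The condition ``$0$ is generic with respect to $Fe^{\rho}$'' fails only when $\rho$ lies on one of finitely many affine subspaces of dimension $\le r-1$, so I may choose a sequence $\rho_n\to 0$ with $0$ generic for $Fe^{\rho_n}$. Each $Fe^{\rho_n}$ is again analytic, so Proposition~\ref{Prop-I-2} applies and yields $\textnormal{JKRes}_x(Fe^{\rho_n})(x)dx=\textnormal{JKRes}_y(Fe^{\rho_n})(y)dy$ for all $n$. Letting $n\to\infty$ and invoking the continuity established in part (a), applied to the bases $x$ and $y$ separately, gives $\textnormal{JKRes}_x F(x)dx=\textnormal{JKRes}_y F(y)dy$.

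\textbf{Expected obstacle.} The genuine difficulty is entirely absorbed into Proposition~\ref{Prop-I-3}, which is already proved: individual sub-maximal order pieces of $\textnormal{Res}^+_x$ need not depend continuously on $\rho$, and only their collective vanishing (valid precisely because $0$ is a regular value of $\mu$) makes part (a) work. Once that is in hand, part (b) is a routine density-plus-continuity argument built on the classical basis-independence of Proposition~\ref{Prop-I-2}; the only points requiring a little care are that $F$ is analytic, so that Propositions~\ref{Prop-I-2} and~\ref{Prop-I-3} apply to $Fe^{\rho}$, and that the generic locus of $\rho$ is dense so that the approximating sequence $\rho_n$ exists.
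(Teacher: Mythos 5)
Your proposal is correct and follows essentially the same route as the paper: part (a) is exactly the combination of Proposition~\ref{Prop-I-3} (collective vanishing of the sub-maximal order terms for small $\rho$) with Remark~\ref{Rk-I-1} (continuity of the order-$r$ terms), and part (b) is the paper's limit argument, perturbing by a generic small $\rho$ so that Proposition~\ref{Prop-I-2} applies to the analytic function $Fe^{s\rho}$ and then letting $s\to 0$ via (a). Your write-up merely makes explicit the details the paper leaves implicit.
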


	\begin{proof}
\begin{enumerate}
\item[(a)] By Remark \ref{Rk-I-1} and Proposition \ref{Prop-I-3} we have that $\textnormal{Res}^{+}_{x_{r}} \ldots \textnormal{Res}^{+}_{x_{1}} F(x) e^{\rho(x)} dx$ depends continuously on small $\rho$.

\item[(b)] Let $\rho$ be small such that $0$ is generic for $F(x) e^{\rho(x)}$. Then 
	\begin{multline*}
\textnormal{JKRes}_{x} F(x) dx 
= 
\lim_{s\to 0} \textnormal{JKRes}_{x} F(x) e^{ s \rho(x) } dx
=
\lim_{s\to 0} \textnormal{JKRes}_{y} F(x) e^{ s \rho(y) } dy
=
\textnormal{JKRes}_{y} F(y) dy. 
\end{multline*}
\end{enumerate}
	\end{proof}

\subsection{Equivariant Jeffrey-Kirwan residue}\label{secEqRes}

The equivariant Jeffrey-Kirwan residue can be thought as a parametric version of the usual one, but the additional freedom in the choice of polarization gives more flexibility.

Let $\mathfrak{k}^{*}$ and $\mathfrak{s}^{*}$ be real vector spaces of dimension $q$ and $r-q$, respectively. Set $\mathfrak{t}^{*} = \mathfrak{k}^{*} \times \mathfrak{s}^{*}$. Let $s$ be a bases of $\mathfrak{s}^{*}$.

\begin{definition}
A $\mathfrak{k}^{*}$-pole (or simply a pole) in $\mathfrak{t}^{*}$ is a $q$-dimensional vector space $V$ such that $V\oplus \mathfrak{s}^{*} = \mathfrak{t}^{*}$.
\end{definition}

Let $x = \{ x_{1}, \ldots, x_{r} \}$ be an ordered bases of $\mathfrak{t}^{*}$. It induces polarization on each $V$ as follows. Let $\{x^{1}, \ldots, x^{r}\} \subset  \mathfrak{t}$ be its dual bases. Denote $\nu : \mathfrak{t} \to V^{*}$ the adjoint of the inclusion $V \hookrightarrow \mathfrak{t}^{*}$. Let $\{ v^{1} = \nu(x^{i_{1}}), \ldots, v^{q} = \nu (x^{i_{q}})\}$ be the bases of $V^{*}$ such that $i_{1} + \ldots + i_{q}$ is minimal and let $v = \{v_{1}, \ldots , v_{q}\} \subset  V$ be its dual bases. We call $v$ the bases of $V$ induced by $x$.

\begin{lemma*}
Let $\alpha\in V$ be a non-zero vector. Then $\alpha$ is polarized with respect to $x$ if and only if it is polarized with respect to $v$.
\end{lemma*}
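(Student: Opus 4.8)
The plan is to translate ``$\alpha$ is polarized'' into a statement about the coordinate vector of $\alpha$ and then feed in the minimality of $i_{1}+\ldots+i_{q}$ built into the definition of $v$; relabel throughout so that $i_{1}<\ldots<i_{q}$. First I would record how the two coordinate vectors are related. Write $\alpha=\sum_{j=1}^{r}a_{j}x_{j}$ and $\alpha=\sum_{l=1}^{q}b_{l}v_{l}$. Since $\nu$ is the adjoint of the inclusion $V\hookrightarrow\mathfrak{t}^{*}$, one has $\nu(\xi)(\beta)=\xi(\beta)$ for all $\xi\in\mathfrak{t}$ and $\beta\in V$; applying this with $\xi=x^{j}$ and $\beta=\alpha$ gives $a_{j}=x^{j}(\alpha)=\nu(x^{j})(\alpha)$ for every $j$. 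As $v^{l}=\nu(x^{i_{l}})$ is the element of $V^{*}$ dual to $v_{l}$, the same identity yields $b_{l}=v^{l}(\alpha)=\nu(x^{i_{l}})(\alpha)=a_{i_{l}}$. Hence the $v$-coordinate vector of $\alpha$ is the subvector $(a_{i_{1}},\ldots,a_{i_{q}})$ of its $x$-coordinate vector, and the lemma reduces to the claim that the first nonzero entry of $(a_{1},\ldots,a_{r})$ sits at one of the indices $i_{1},\ldots,i_{q}$.

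To prove that claim, let $k$ be minimal with $a_{k}\neq0$, which exists since $\alpha\neq0$. I would first note that $\nu(x^{k})\notin\langle\nu(x^{1}),\ldots,\nu(x^{k-1})\rangle$: otherwise $a_{k}=\nu(x^{k})(\alpha)$ would be the corresponding linear combination of $a_{1}=\ldots=a_{k-1}=0$, forcing $a_{k}=0$.

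The main point is then to upgrade this to $k\in\{i_{1},\ldots,i_{q}\}$, and this is where the minimality of $i_{1}+\ldots+i_{q}$ is used. Suppose $k\notin\{i_{1},\ldots,i_{q}\}$ and let $l_{0}$ be determined by $i_{l_{0}}<k<i_{l_{0}+1}$ (with the convention $i_{0}=0$); since $i_{1},\ldots,i_{l_{0}}<k$, the previous step gives $\nu(x^{k})\notin\langle\nu(x^{i_{1}}),\ldots,\nu(x^{i_{l_{0}}})\rangle$, so in particular $l_{0}<q$. Because $\nu$ is surjective, $\{\nu(x^{i_{1}}),\ldots,\nu(x^{i_{q}})\}$ is a basis of $V^{*}$; expanding $\nu(x^{k})$ in it, some coefficient at a position $l>l_{0}$ is nonzero, as otherwise $\nu(x^{k})$ would lie in $\langle\nu(x^{i_{1}}),\ldots,\nu(x^{i_{l_{0}}})\rangle$. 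Replacing $\nu(x^{i_{l}})$ by $\nu(x^{k})$ then gives another basis of $V^{*}$ of the prescribed form whose index set has strictly smaller sum, since $k<i_{l_{0}+1}\le i_{l}$, contradicting minimality. Thus $k=i_{m}$ for some $m$. I expect this exchange argument to be the only genuine obstacle; it is, however, just the standard optimality of the greedy algorithm for linear matroids.

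Finally, because $i_{1}<\ldots<i_{q}$ and $a_{1}=\ldots=a_{k-1}=0$, the identity $b_{l}=a_{i_{l}}$ gives $b_{l}=0$ for $l<m$ and $b_{m}=a_{i_{m}}=a_{k}\neq0$. Hence $\alpha$ is polarized with respect to $v$ precisely when $b_{m}>0$, i.e.\ when $a_{k}>0$, which is exactly the condition that $\alpha$ be polarized with respect to $x$; this establishes both implications simultaneously.
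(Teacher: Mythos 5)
Your proof is correct and follows essentially the same route as the paper: both arguments rest on the pairing identity $a_{j}=\alpha(\nu(x^{j}))$ (equivalently $b_{l}=a_{i_{l}}$) together with the greedy structure of the basis minimizing $i_{1}+\ldots+i_{q}$. The only difference is presentational: you merge the two implications into the single claim that the first nonzero $x$-coordinate of $\alpha$ occurs at one of the selected indices $i_{m}$, and you spell out the matroid exchange argument that the paper compresses into ``by minimality condition''.
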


\begin{proof}
Recall that $\alpha\in V$ is polarized with respect to $x$ if $\alpha(x^{1}) = \ldots = \alpha(x^{k-1}) = 0$ and $\alpha(x^{k})>0$ for some $k$. Then we have $\alpha( \nu(x^{1}) ) = \ldots = \alpha( \nu(x^{k-1}) ) = 0$ and $\alpha( \nu(x^{k}) )>0$.  Moreover, $\nu(x^{1}), \ldots, \nu(x^{k-1})$ cannot span $V^{*}$, hence by minimality condition $\nu(x^{k}) = v^{l}$ for some $l$ and $v^{1},\ldots,v^{l-1} \in \{ \nu(x^{1}), \ldots, \nu(x^{k-1}) \}$. Thus $\alpha$ is polarized by $v$.

Conversely, let $\alpha \in V$ be polarized by $v$, i.e. $\alpha(v^{1}) = \ldots = \alpha(v^{l-1}) = 0$ and $\alpha(v^{l})>0$. We have $\alpha(x^{i_{1}}) = \ldots = \alpha(x^{i_{l-1}}) = 0$ and $\alpha(x^{i_{l}}) > 0$. By minimality, for all $j<i_{l}$ we have $\nu(x^{j}) \in \langle \nu(x^{i_{1}}), \ldots, \nu(x^{i_{l-1}}) \rangle$, therefore $\alpha( \nu(x^{j}) ) = \alpha( x^{j} ) = 0$. Thus $\alpha$ is also polarized by $x$.
\end{proof}

\begin{definition}
Fix a scalar product on $\mathfrak{k}^{*}$ and on each $k^{*}$-pole $V$ we consider the pull-back scalar product via isomorphism $\textnormal{pr}_{\mathfrak{k}^{*}}|_{V}:V \to \mathfrak{k}^{*}$. Let $F = \sum_{I} \frac{ P_{I} e^{ \lambda_{I} } }{ \prod_{i\in I} \alpha_{i} }$ and we define its equivariant Jeffrey-Kirwan residue as
$$
\textnormal{EqRes}_{x}F(x) = \sum_{ V\ \mathfrak{k}^{*}\textnormal{-pole} } \textnormal{JKRes}_{v} \mathcal{F}(v,s) dv,
$$
where $v$ is the induced bases on $V$ by $x$ and $\mathcal{F}(v,s)$ is the expansion of $F(v,s)$ as $v \ll s$. 
\end{definition}

\begin{definition}
A fraction $\frac{P_{I}e^{\lambda_{I}}}{\prod_{i\in I}\alpha_{i}}$ is called $\mathfrak{k}^{*}$-generating if $\textnormal{pr}_{\mathfrak{k}^{*}}(\alpha_{i})$, $i\in I$ generate $\mathfrak{k}^{*}$.
\end{definition}

Any fraction $\frac{P_{I}e^{\lambda_{I}}}{\prod_{i\in I}}$ can be decomposed to sum of $\mathfrak{k}^{*}$-generating fractions of form $\frac{e^{\lambda_{I}}}{\prod\limits_{j\in J}\beta_{j}\prod\limits_{i=1}^{q}\alpha_{i}^{n_{i}+1}}$ with $\beta_{j}\in \mathfrak{s}^{*}$, and non-$\mathfrak{k}^{*}$-generating fractions. Remark that $\textnormal{EqRes}_{x}$ vanishes on non-$\mathfrak{k}^{*}$-generating fractions.

\begin{remark*}\label{Rk-I-2}
In the definition of $\textnormal{EqRes}$ it is enough to consider poles $V = \langle \alpha_{i_{1}}, \ldots, \alpha_{i_{q}} \rangle$ with $i_{1}, \ldots, i_{q} \in I$ since for other poles $\mathcal{F}(v,s)$ will be non-generating in $v$.
\end{remark*}

\begin{definition}
$0$ is generic with respect to $F$ if $0$ is generic with respect to $\mathcal{F}(v,s)$ in $v$ ($s$ is considered fixed parameter), where $v$ is the induced bases on $V$ for all poles $V = \langle \alpha_{i_{1}}, \ldots, \alpha_{i_{q}} \rangle$, $i_{1}, \ldots, i_{q} \in I$.
\end{definition}

\begin{definition}
$x$ is a generic bases with respect to $F$ if all induced bases $v$ on poles $V=\langle \alpha_{i_{1}}, \ldots, \alpha_{i_{q}} \rangle$, $i_{1}, \ldots, i_{q} \in I$, are generic with respect to $\mathcal{F}(v,s)$ ($s$ considered as fixed parameter). 
\end{definition}

\begin{lemma*} Let $x$ be an ordered bases of $\mathfrak{t}^{*}$,  $F = \sum_{I} \frac{P_{I} e^{\lambda_{I}}}{\prod_{i\in I} \alpha_{i}}$ and let $\mathcal{A} = \cup_{I} \{ \alpha_{i}\,|\, i\in I \}$. There exits bases $y$ which is generic with respect to $F$ and it induces the same polarization on $\mathcal{A}$ as $x$.
\end{lemma*}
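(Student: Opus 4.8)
The plan is to produce an ordered bases $y$ that induces the prescribed polarization on $\mathcal{A}$ and is generic with respect to $F$. The bases inducing the prescribed polarization form a nonempty open set $U$ in the space of ordered bases, while I will show that the $y\in U$ failing genericity with respect to $F$ form a finite union of measure-zero analytic subsets of $U$; since $U$ has positive Lebesgue measure in $\mathbb{R}^{r^{2}}$, a good $y$ exists.

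First I would pin down $U$. By Remark~\ref{Rk-I-4}, the polarization that $x$ induces on $\mathcal{A}$ is the one assigning to $\alpha\in\mathcal{A}$ the vector $\textnormal{sign}(\alpha(\xi))\,\alpha$ for any $\xi$ in the nonempty open cone $\Lambda=\{t\in\mathfrak{t}\,|\,\widetilde{\alpha}(t)>0\ \textnormal{for all}\ \alpha\in\mathcal{A}\}$, the polarizations $\widetilde{\alpha}$ being taken with respect to $x$. Any ordered bases $y=\{y_{1},\ldots,y_{r}\}$ whose first dual vector $y^{1}$ lies in $\Lambda$ induces the same polarization on $\mathcal{A}$: indeed $\alpha(y^{1})\neq0$ for all $\alpha\in\mathcal{A}$, so the polarization of $\alpha$ with respect to $y$ is determined by the sign of $\alpha(y^{1})$ alone and equals $\widetilde{\alpha}$. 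The set $U$ of such bases is open and nonempty. I would moreover shrink $U$ by the finitely many further open conditions $V\cap\langle y_{q+1},\ldots,y_{r}\rangle=0$, one for each pole $V=\langle\alpha_{i_{1}},\ldots,\alpha_{i_{q}}\rangle$ with $i_{1},\ldots,i_{q}\in I$ (only such poles enter the definition of $\textnormal{EqRes}_{x}$), and these are compatible with $y^{1}\in\Lambda$: picking $\xi\in\Lambda$ outside the finitely many proper subspaces $\textnormal{Ann}(V)\subset\mathfrak{t}$, setting $H=\textnormal{Ann}(\xi)$ (so that $H$ contains no pole), taking $y_{q+1},\ldots,y_{r}$ to span a subspace of $H$ transverse to every pole, completing to a bases $y_{2},\ldots,y_{r}$ of $H$, and choosing any $y_{1}$ with $y_{1}(\xi)=1$, one gets $y^{1}=\xi\in\Lambda$. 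So $U$ remains open and nonempty, and on it the bases $v=v(y,V)$ induced on each pole $V$ is the bases of $V$ dual to $(y^{1}|_{V},\ldots,y^{q}|_{V})$, a rational function of the entries of $y$.

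Next I would verify that genericity is a generic condition on $U$. Fix a pole $V$; writing $F$ in the splitting $\mathfrak{t}^{*}=V\oplus\mathfrak{s}^{*}$ and expanding as $v\ll s$, the denominator vectors and nonzero exponent vectors of $\mathcal{F}(v,s)=\sum_{I}\mathcal{F}_{I}(v,s)$, as a fraction in $v$, form a finite subset of $V$: the $\alpha_{i}$ that lie in $V$, together with the projections $\pi_{V}(\lambda_{I})$ of the $\lambda_{I}$ to $V$ along $\mathfrak{s}^{*}$. Non-genericity of the bases $v$ with respect to $\mathcal{F}(v,s)$ is a disjunction of finitely many incidences $\pi_{V}(\lambda_{I})\in\langle\gamma_{1},\ldots,\gamma_{k},v_{l_{1}},\ldots,v_{l_{q-1-k}}\rangle$ subject to $\pi_{V}(\lambda_{I})\notin\langle\gamma_{1},\ldots,\gamma_{k}\rangle$, with $\gamma_{1},\ldots,\gamma_{k}$ among the denominator vectors; each such incidence forces a subspace of $V$ of dimension at most $q-1$, hence proper, spanned by fixed vectors and some of the $v_{l}$, to contain a fixed vector lying outside a fixed subspace, and this cuts out a proper algebraic subset of the space of bases of $V$. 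The map $v(\,\cdot\,,V)$ is dominant onto the space of bases of $V$: on the larger cell $\{y\,|\,V\cap\langle y_{q+1},\ldots,y_{r}\rangle=0\}\supseteq U$ it is already surjective, as the family $y=(v_{1},\ldots,v_{q},s_{1},\ldots,s_{r-q})$ shows. Hence each incidence pulls back to a measure-zero analytic subset of $U$; running over the finitely many poles and incidences, the $y\in U$ that are non-generic with respect to $F$ form a finite union of measure-zero subsets, which cannot exhaust $U$, and any $y$ in the complement has the required properties.

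The step I expect to be the main obstacle is the bookkeeping relating $y$ to the induced bases $v(y,V)$ on the poles: because of the minimal-index selection, $v(y,V)$ depends on $y$ only piecewise-rationally, which is why I first restrict to the open subset of $U$ on which that selection is $\{1,\ldots,q\}$ for every pole at once. Granting this reduction, the delicate point is the dominance of $v(\,\cdot\,,V)$ onto the space of bases of $V$ --- it is exactly what makes each non-genericity incidence carve out a proper, rather than the whole, subset of $U$ --- and this dominance is witnessed by the explicit family $y=(v_{1},\ldots,v_{q},s_{1},\ldots,s_{r-q})$ exhibited above.
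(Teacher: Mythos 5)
Your proposal is correct and follows essentially the same route as the paper: identify the polarization induced by $x$ with a connected component $\Lambda$ via Remark \ref{Rk-I-4}, and choose a generic bases adapted to some $\xi\in\Lambda$. The only differences are that you relax the paper's exact adaptation ($y_{2}(\xi)=\ldots=y_{r}(\xi)=0$, i.e.\ $y^{1}$ proportional to $\xi$) to the open condition $y^{1}\in\Lambda$, and you spell out, via the measure-zero/dominance argument on the space of bases, the existence of a generic $y$ that the paper simply asserts.
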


\begin{proof}
By Remark \ref{Rk-I-4}, the polarization on $\mathcal{A}$ induced by $x$ corresponds to a connected component $\Lambda\subset \{ t \in \mathfrak{t} \,|\, \alpha(t)>0,\,\forall \alpha \in \mathcal{A} \}$. Let $\xi \in \Lambda$. We can choose a generic bases $y$ with respect to $F$ such that $y_{1}(\xi)>0$ and $y_{2}(\xi) = \ldots = y_{r}(\xi) = 0$. It will induce the same polarization as $x$.  
\end{proof}

\begin{prop}\label{Prop-I-6}
Let $(M, \omega)$ be a compact symplectic manifold with Hamiltonian $K \times S$-torus action and $\mu = \mu_{K} \times \mu_{S} : M \to \mathfrak{k}^{*} \times  \mathfrak{s}^{*}$ moment map. We suppose that $0\in \mathfrak{k}^{*}$ is a regular value of $\mu_{K}$ and let $F$ be as in (\ref{Eq-I-19}).
Let $x$ and $y$ be two ordered bases of $\mathfrak{t}^{*}$, generic with respect to $F$. Then
\begin{enumerate}
\item[(a)] $\textnormal{EqRes}_{x}F(x) = \textnormal{EqRes}_{y}F(y)$ (independence on polarization),
\item[(b)] $\textnormal{EqRes}_{x} F(x) e^{\rho(x)}$ depends continuously on $\rho\in \mathfrak{t}^{*}$ in a small neighborhood of $0$. 
\end{enumerate}
\end{prop}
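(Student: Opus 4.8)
The plan is to reduce both assertions, pole by pole, to Proposition~\ref{Prop-I-4}. By the remark that in $\textnormal{EqRes}$ only poles $V=\langle\alpha_{i_{1}},\ldots,\alpha_{i_{q}}\rangle$ with $i_{1},\ldots,i_{q}\in I$ contribute (for the others $\mathcal{F}(v,s)$ is non-$\mathfrak{k}^{*}$-generating in $v$ and $\textnormal{JKRes}_{v}$ kills it), we have $\textnormal{EqRes}_{x}F(x)=\sum_{V}\textnormal{JKRes}_{v(x)}\mathcal{F}(v,s)\,dv$, the sum over such $V$, with $v(x)$ the bases of $V$ induced by $x$. Fix such a $V$; let $\mathfrak{s}_{V}=\bigcap_{j}\ker\alpha_{i_{j}}\subset\mathfrak{t}$ be the annihilator of $V$ and $S_{V}\subset K\times S$ the corresponding subtorus (the $\alpha_{i_{j}}$ lie in the weight lattice). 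Since $V$ is a pole, $V\oplus\mathfrak{s}^{*}=\mathfrak{t}^{*}$, hence $\mathfrak{s}_{V}\cap\mathfrak{k}=0$; comparing dimensions, $\mathfrak{s}_{V}\oplus\mathfrak{k}=\mathfrak{t}$, the projection $\mathfrak{s}_{V}\to\mathfrak{s}$ is an isomorphism, and the composite $\mathfrak{k}\hookrightarrow\mathfrak{t}\twoheadrightarrow\mathfrak{t}/\mathfrak{s}_{V}$ is an isomorphism intertwining the $K$-action on $M^{S_{V}}$ with the residual $(K\times S)/S_{V}$-action. Dually, the projection $\textnormal{pr}_{V}:\mathfrak{t}^{*}\to V$ along $\mathfrak{s}^{*}$ followed by the isomorphism $\textnormal{pr}_{\mathfrak{k}^{*}}|_{V}$ used in the definition of $\textnormal{EqRes}$ equals $\textnormal{pr}_{\mathfrak{k}^{*}}$, so $(\textnormal{pr}_{V}\circ\mu)^{-1}(0)=\mu_{K}^{-1}(0)$.

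The key step is to identify $\mathcal{F}(v,s)$. Grouping the $T$-fixed components by the component $N$ of $M^{S_{V}}$ containing them, the Atiyah--Bott--Berline--Vergne formula gives $F=\sum_{N}F_{N}$ with $F_{N}=\int_{N}i_{N}^{*}(\eta e^{\omega-\mu})/e_{T}\mathcal{N}(N|M)$ (the $F_{N}$ of Lemma~\ref{Lem-I-3}), and therefore $\mathcal{F}(v,s)=\sum_{N}\mathcal{F}_{N}(v,s)$, where $\mathcal{F}_{N}$ is the expansion of $F_{N}$ as $v\ll s$. Because $\mathcal{N}(N|M)$ has no $S_{V}$-trivial summand, each of its $S_{V}$-weights has non-zero image in $\mathfrak{s}^{*}$; hence, exactly as in Remark~\ref{Rk-I-3}, after the expansion $1/e_{T}\mathcal{N}(N|M)$ becomes analytic in $v$, with coefficients rational in $s$ regular off a hyperplane arrangement $H\subset\mathfrak{s}^{*}$, while the remaining poles in $v$ are precisely the isotropy weights of $\mathcal{N}(X_{I}|N)$ and the exponents are the real points $-\textnormal{pr}_{V}\mu(X_{I})$. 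Consequently, for $s\notin H$, $\mathcal{F}_{N}(\cdot,s)$ is of the form (\ref{Eq-I-19}) attached to the compact Hamiltonian $\bigl((K\times S)/S_{V}\bigr)$-manifold $N$ with moment map $\textnormal{pr}_{V}\circ\mu|_{N}$, identified with $\mu_{K}|_{N}$; the variables $s$ enter only through the coefficients of the numerators, not through the exponents. Since $0\in\mathfrak{k}^{*}$ is a regular value of $\mu_{K}$, i.e.\ $K$ acts locally freely on $\mu_{K}^{-1}(0)$, the action-intertwining isomorphism $\mathfrak{k}\cong\mathfrak{t}/\mathfrak{s}_{V}$ shows $(K\times S)/S_{V}$ acts locally freely on $\mu_{K}^{-1}(0)\cap N=(\textnormal{pr}_{V}\circ\mu|_{N})^{-1}(0)$, so $0$ is not on any wall of $\textnormal{pr}_{V}\mu(N)$. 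Finally, that $x$ is generic with respect to $F$ means precisely that $v(x)$ is generic with respect to $\mathcal{F}(v,s)$; since the $\mathcal{F}_{N}(v,s)$ collect disjoint index sets, $v(x)$ is then generic with respect to each $\mathcal{F}_{N}(\cdot,s)$, and likewise for $y$.

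With this identification, Proposition~\ref{Prop-I-4} applies to each $\mathcal{F}_{N}(\cdot,s)$, $s\notin H$ being a fixed parameter. For (a), part (b) of that proposition gives $\textnormal{JKRes}_{v(x)}\mathcal{F}_{N}(v,s)\,dv=\textnormal{JKRes}_{v(y)}\mathcal{F}_{N}(v,s)\,dv$ for all $s\notin H$; summing over $N$ and over the poles $V$ yields $\textnormal{EqRes}_{x}F(x)=\textnormal{EqRes}_{y}F(y)$ for $s\notin H$, and since both sides are, in each coordinate of $s$, finite sums of rational functions times exponentials, agreement off a hyperplane arrangement forces identical agreement. For (b), decompose $\rho=\rho^{V}+\rho^{\mathfrak{s}}$ along $V\oplus\mathfrak{s}^{*}$; then $e^{\rho(v,s)}=e^{\rho^{V}(v)}e^{\rho^{\mathfrak{s}}(s)}$ and
\[
\textnormal{JKRes}_{v(x)}\bigl(\mathcal{F}_{N}(v,s)\,e^{\rho(v,s)}\bigr)\,dv
=e^{\rho^{\mathfrak{s}}(s)}\,\textnormal{JKRes}_{v(x)}\bigl(\mathcal{F}_{N}(v,s)\,e^{\rho^{V}(v)}\bigr)\,dv,
\]
which by Proposition~\ref{Prop-I-4}(a) depends continuously on $\rho^{V}$ near $0$, uniformly in the (finitely many) $N$ and $V$, while $e^{\rho^{\mathfrak{s}}(s)}$ depends analytically on $\rho^{\mathfrak{s}}$. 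As $\rho\mapsto(\rho^{V},\rho^{\mathfrak{s}})$ is linear, summing over $N$ and $V$ gives the asserted continuous dependence of $\textnormal{EqRes}_{x}F(x)\,e^{\rho(x)}$ on small $\rho$.

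The principal obstacle is the key step of the second paragraph: establishing that restriction to a $\mathfrak{k}^{*}$-pole followed by the $v\ll s$ expansion is exactly $\bigl((K\times S)/S_{V}\bigr)$-equivariant integration over the fixed submanifold $M^{S_{V}}$, with the $\mathfrak{s}^{*}$-variables riding along merely as parameters in the numerators, together with transporting the regularity of $0$ from $\mu_{K}$ on $M$ to $\mu_{K}|_{M^{S_{V}}}$. Once this is in place, (a) and (b) follow at once from Proposition~\ref{Prop-I-4}.
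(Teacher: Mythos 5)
Your proposal is correct and follows essentially the same route as the paper's proof: restrict to poles $V$ spanned by isotropy weights, identify the $v\ll s$ expansion with Atiyah--Bott--Berline--Vergne contributions of the components $N$ of the fixed locus of the subtorus annihilating $V$ (the paper's $H$, your $S_{V}$), use the analyticity in $v$ from Remark \ref{Rk-I-3} and the regularity of $0$ for $\mu_{K}$ to see $0$ lies on no wall of the induced moment polytope of $N$, and then invoke Proposition \ref{Prop-I-4}(a),(b). Your added details (the local-freeness transport of regularity, the hyperplane arrangement in $s$, and the splitting $\rho=\rho^{V}+\rho^{\mathfrak{s}}$) are refinements of steps the paper asserts more briefly, not a different method.
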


\begin{proof}
Let $V$ be a pole and let $v$ and $w$ be bases of $V$ induced by $x$ and $y$, respectively. Denote $\mathcal{F}(v,s)$ and $\mathcal{F}(w,s)$ respectively the expansions of $F(v,s)$ and $F(w,s)$ with respect to $v \ll s$ and $w \ll s$. It is enough to show that $\textnormal{JKRes}_{v} \mathcal{F}(v,s) dv = \textnormal{JKRes}_{w} \mathcal{F}(w,s) dw$.

Let $\mathcal{J}$ be  the set of those $I$ such that there are $i_{1}, \ldots, i_{q} \in I$ with $V = \langle \alpha_{i_{1}}, \ldots, \alpha_{i_{q}} \rangle$. Let $\mathcal{F}_{I}(v,s)$ be the expansion of $\frac{P_{I}(v,s) e^{\lambda_{I}(v,s)} }{ \prod_{i\in I}\alpha_{i}(v,s) }$ with respect to $v \ll s$. If $I \notin \mathcal{J}$ then 
$$
\textnormal{JKRes}_{v} \sum_{I\in \mathcal{J}} \mathcal{F}_{I}(v,s) dv = 0,$$ since $\mathcal{F}_{I}(v,s)$ is non-generating in $v$. Thus 
$$
\textnormal{JKRes}_{v} \mathcal{F}(v,s) ds = \textnormal{JKRes}_{v} \sum_{I \in \mathcal{J} } \mathcal{F}_{I}(v,s) dv.
$$

Let $H \subset K$ be the subtorus such that $Lie(H) = V^{\perp} = \{ t \in \mathfrak{t} \,|\, \vartheta(t) = 0,\ \forall \vartheta\in V  \}$. Denote $\mathcal{V}$ the set of $N \subset M^{H}$ fixed point components such that $\dim \mu(N) = q$. 
Remark that $\mu_{T/H}(N) \subset V = Lie(T/H)^{*}$ and $\mu_{K}(N) = \textnormal{pr}_{\mathfrak{k}^{*}}|_{V}(\mu_{T/H} (N))$.  $0$ is not on any proper wall of $\mu_{T/H}(N)$ since $0$ is a regular value of $\mu_{K}$.
If $I \in \mathcal{J}$ then $\frac{P_{I} e^{\lambda_{I}}}{\prod_{i\in I} \alpha_{i}}$ is a summand of 
\begin{equation}\label{Eq-I-26}
\sum_{N\in \mathcal{V}} \oint_{N} \frac{i^{*}_{N}(\eta e^{\omega - \mu})}{e_{T}\mathcal{N}(N|M)} = \sum_{N\in \mathcal{V}} \sum_{X_{I} \subset N^{T}} \int_{X_{I}} \frac{i^{*}_{X_{I}}(\eta e^{\omega - \mu})}{e_{T}\mathcal{N}(X_{I}|M)}.
\end{equation}
Moreover, if $X_{I} \subset N^{T} = N^{T/H}$ for some $N\in \mathcal{V}$ then $I\in \mathcal{J}$. By Remark \ref{Rk-I-3}, $e_{T}\mathcal{N}(N|M)$ is invertible if $v \ll s$. Hence  the expansion  $\mathcal{G}(v,s)$ of (\ref{Eq-I-26}) with respect to $v \ll s$  is analytic in $v$. Thus
\begin{multline*}
\textnormal{JKRes}_{v}\mathcal{F}(v,s) dv
= \textnormal{JKRes}_{v}\sum_{I\in \mathcal{J}} \mathcal{F}_{I}(v,s) dv 
= \textnormal{JKRes}_{v} \mathcal{G}(v,s) dv
=
\\
\textnormal{JKRes}_{w} \mathcal{G}(w,s) dw
= \textnormal{JKRes}_{w} \sum_{I\in \mathcal{J}} \mathcal{F}_{I}(w,s) dw
= \textnormal{JKRes}_{w}  \mathcal{F}(w,s) dw,
\end{multline*}
by Proposition \ref{Prop-I-4}.

By Proposition \ref{Prop-I-4} also follows that 
$$\textnormal{EqRes}_{x} F(x)e^{\rho(x)} = \sum_{V pole} \textnormal{JKRes}_{v} \mathcal{F}(v,s)e^{\rho(v,s)} dv = \sum_{V pole} \textnormal{JKRes}_{v} \mathcal{G}(v,s) e^{\rho(v,s)} dv$$
depends continuously on $\rho$ in a small neighborhood of $0$. 
\end{proof}

We have the following analogue of Proposition \ref{Prop-I-1}.

\begin{prop}\label{Prop-I-5}
Let $x$ be an ordered bases of $\mathfrak{t}^{*}$ and let $z = \{ z_{1}, \ldots, z_{q} \}$ be an orthornormal bases of $\mathfrak{k}^{*}$. Consider $\alpha_{1},\ldots,\alpha_{q}\in \mathfrak{t}^{*}$ and let $\lambda = \lambda_{0}(s) + \lambda_{1}\widetilde{\alpha}_{1} + \ldots + \lambda_{q} \widetilde{\alpha}_{q}$. Then
$$
\textnormal{EqRes}_{x} \frac{e^{\lambda(x)}}{\prod_{i=1}^{q} \alpha_{i}(x)^{n_{i}+1}} 
= 
\frac{ e^{\lambda_{0}(s)} }{ \left| \det \left( \frac{ \partial\alpha_{i}(z,s) }{ \partial z_{j} } \right) \right|} \prod_{i=1}^{q}\frac{ \varepsilon(\alpha_{i})^{n_{i}+1} \lambda_{i}^{n_{i}} }{ n_{i}! },
$$
if $\det \left( \frac{ \partial\alpha_{i}(z,s) }{ \partial z_{j} } \right) \neq 0$ and $\lambda_{1}, \ldots, \lambda_{q} > 0$, otherwise it yields zero.
\end{prop}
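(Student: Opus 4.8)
The plan is to reduce $\textnormal{EqRes}_x$ of this fraction to a single ordinary $\textnormal{JKRes}$ and then quote Proposition~\ref{Prop-I-1}. The first step disposes of the degenerate case. The matrix $\left(\frac{\partial\alpha_i(z,s)}{\partial z_j}\right)_{i,j}$ has $i$-th row the coordinates of $\textnormal{pr}_{\mathfrak{k}^*}(\alpha_i)$ in the orthonormal basis $z$, so $\det\left(\frac{\partial\alpha_i(z,s)}{\partial z_j}\right)=0$ precisely when $\textnormal{pr}_{\mathfrak{k}^*}(\alpha_1),\dots,\textnormal{pr}_{\mathfrak{k}^*}(\alpha_q)$ fail to span $\mathfrak{k}^*$, i.e.\ precisely when $e^{\lambda}/\prod_i\alpha_i^{n_i+1}$ is not $\mathfrak{k}^*$-generating; as $\textnormal{EqRes}_x$ vanishes on non-$\mathfrak{k}^*$-generating fractions, this settles the ``otherwise'' clause in that subcase.

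So assume $\det\left(\frac{\partial\alpha_i(z,s)}{\partial z_j}\right)\neq0$. Then $\textnormal{pr}_{\mathfrak{k}^*}|_V\colon V\to\mathfrak{k}^*$ is an isomorphism for $V:=\langle\alpha_1,\dots,\alpha_q\rangle$, hence $V\oplus\mathfrak{s}^*=\mathfrak{t}^*$ and $V$ is a $\mathfrak{k}^*$-pole; by the remark following the definition of $\textnormal{EqRes}_x$ it is the only contributing pole, since $V$ is the unique span of $q$ of the $\alpha_i$. With $v=\{v_1,\dots,v_q\}$ the basis of $V$ induced by $x$ we thus get $\textnormal{EqRes}_xF=\textnormal{JKRes}_v\mathcal{F}(v,s)\,dv$. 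Since each $\alpha_i$, and hence $\widetilde{\alpha}_i=\varepsilon(\alpha_i)\alpha_i$, lies in $V$ and so has trivial $\mathfrak{s}^*$-component, we have $\alpha_i(v,s)=\alpha_i(v)$ and $\lambda(v,s)=\lambda_0(s)+\sum_i\lambda_i\widetilde{\alpha}_i(v)$; thus $F(v,s)=e^{\lambda_0(s)}\,e^{\sum_i\lambda_i\widetilde{\alpha}_i(v)}\big/\prod_i\alpha_i(v)^{n_i+1}$ is already its own expansion as $v\ll s$ and the scalar $e^{\lambda_0(s)}$ pulls out:
$$
\textnormal{EqRes}_xF=e^{\lambda_0(s)}\,\textnormal{JKRes}_v\frac{e^{\sum_i\lambda_i\widetilde{\alpha}_i(v)}}{\prod_i\alpha_i(v)^{n_i+1}}\,dv.
$$

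It remains to evaluate this $\textnormal{JKRes}$. Applying Proposition~\ref{Prop-I-1} on the $q$-dimensional space $V$ with ordered basis $v$ and linearly independent $\alpha_1,\dots,\alpha_q$ — writing $\alpha_i=\sum_j a_{ij}v_j$ and $A=[a_{ij}]$ — gives $\textnormal{Res}^+_v$ of the fraction equal to $|\det A|^{-1}\prod_i\varepsilon(\alpha_i)^{n_i+1}\lambda_i^{n_i}/n_i!$ when all $\lambda_i>0$ and $0$ otherwise, so after dividing by $\sqrt{\det[(v_i,v_j)_V]}$ (with $(\cdot,\cdot)_V$ the pull-back scalar product on $V$) the only thing left to verify is the normalisation identity $|\det A|\,\sqrt{\det[(v_i,v_j)_V]}=\big|\det(\partial\alpha_i(z,s)/\partial z_j)\big|$. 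I regard this as the main (essentially the only non-formal) point; I would prove it by computing the Gram matrix of $\alpha_1,\dots,\alpha_q$ with respect to $(\cdot,\cdot)_V$ in two ways. On one side, $(\alpha_i,\alpha_j)_V=(\textnormal{pr}_{\mathfrak{k}^*}\alpha_i,\textnormal{pr}_{\mathfrak{k}^*}\alpha_j)_{\mathfrak{k}^*}=(BB^{t})_{ij}$, with $B=\left(\frac{\partial\alpha_i(z,s)}{\partial z_j}\right)$, because $z$ is orthonormal; on the other, $[(\alpha_i,\alpha_j)_V]=A\,[(v_i,v_j)_V]\,A^{t}$. Comparing determinants yields $\det(B)^2=\det(A)^2\det[(v_i,v_j)_V]$, and substituting gives the claimed formula. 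The cases where some $\lambda_i$ vanishes, which fall outside the genericity hypothesis of Proposition~\ref{Prop-I-1}, are then handled directly from the definition of $\textnormal{Res}^+_v$ — which here is an honest iterated one-dimensional residue, the denominator being $s$-independent — with the convention $0^0=1$, or by a limiting argument in $\lambda$.
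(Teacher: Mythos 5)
Your proposal is correct and follows essentially the same route as the paper: identify $V=\langle\alpha_{1},\ldots,\alpha_{q}\rangle$ as the unique (possibly) contributing pole, note the fraction is $s$-independent in the denominator so $e^{\lambda_{0}(s)}$ factors out, and apply Proposition \ref{Prop-I-1} on $V$ with the induced basis. Your Gram-matrix verification of the normalisation $|\det A|\sqrt{\det[(v_{i},v_{j})_{V}]}=\bigl|\det\bigl(\partial\alpha_{i}(z,s)/\partial z_{j}\bigr)\bigr|$ is just an explicit form of the paper's observation that the projections $z'_{i}$ of $z_{i}$ to $V$ along $\mathfrak{s}^{*}$ form an orthonormal basis for the pull-back scalar product with $\det\bigl(\partial\alpha_{i}(z',s)/\partial z'_{j}\bigr)=\det\bigl(\partial\alpha_{i}(z,s)/\partial z_{j}\bigr)$.
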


\begin{proof}
Let $V = \langle \alpha_{1}, \ldots, \alpha_{q} \rangle$. $V$ is a pole if and only if $\det \left( \frac{ \partial\alpha_{i}(z,s) }{ \partial z_{j} } \right) \neq 0$. Suppose that this is the case, thus we have 
\begin{equation}\label{Eq-I-27}
\textnormal{EqRes}_{x} \frac{e^{\lambda(x)}}{\prod_{i=1}^{q} \alpha_{i}(x)^{n_{i}+1}} 
= \textnormal{JKRes}_{v} \frac{e^{\lambda(v,s)}}{\prod_{i=1}^{q} \alpha_{i}(v)^{n_{i}+1}} 
 dv 
= \textnormal{JKRes}_{v} \frac{ e^{ \lambda_{0}(s) + \sum_{i=1}^{q} \lambda_{i} \widetilde{\alpha}_{i} } }{ \prod_{i=1}^{q} \alpha_{i}(v)^{n_{i}+1} } dv,
\end{equation}
where $v$ is the induced bases on $V$ by $x$. 
Denote $z'_{i}$ the projection of $z_{i}$ to $V$ along $\mathfrak{s}^{*}$. Remark that $z'$ is an orthonormal bases of $V$ and  $\det \left( \frac{ \partial\alpha_{i}(z,s) }{ \partial z_{j} } \right) = \det \left( \frac{ \partial\alpha_{i}(z',s) }{ \partial z'_{j} } \right) $. By Proposition \ref{Prop-I-1}, (\ref{Eq-I-27}) is equal to 
$
\left|\det \left( \frac{ \partial\alpha_{i}(z',s) }{ \partial z'_{j} } \right) \right| \prod_{i=1}^{q} \frac{\varepsilon(\alpha_{i})^{n_{i}+1} \lambda_{i}^{n_{i}+1} }{ n_{i}! },
$
if $\lambda_{1}, \ldots, \lambda_{q} > 0$, otherwise (\ref{Eq-I-27}) is zero.  
\end{proof}

We also have an analogue of Corollary \ref{Cor-I-1}.

	\begin{corollary}\label{Cor-I-2}
Let $x$ be an ordered bases of $\mathfrak{t}^{*}$ and let $0$ be generic with respect to $F=\frac{ P e^{\lambda} }{ \prod_{i\in I} \alpha_{i} }$, where $\lambda \neq 0$. If $\textnormal{pr}_{\mathfrak{k}^{*}}(\lambda) \notin Cone( \textnormal{pr}_{\mathfrak{k}^{*}}( \widetilde{\alpha}_{i} ) \,|\, i\in I )$ then $\textnormal{EqRes}_{x} F(x) = 0$.
	\end{corollary}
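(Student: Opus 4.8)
The plan is to follow the proof of Corollary~\ref{Cor-I-1}, substituting the $\mathfrak{k}^{*}$-adapted partial fraction decomposition for the ordinary one and Proposition~\ref{Prop-I-5} for Proposition~\ref{Prop-I-1}. First I would decompose $F=\frac{Pe^{\lambda}}{\prod_{i\in I}\alpha_{i}}$ into a sum of $\mathfrak{k}^{*}$-generating fractions of the form $\frac{e^{\lambda}}{\prod_{j\in J}\beta_{j}\prod_{l=1}^{q}\alpha_{i_{l}}^{n_{l}+1}}$ with $\beta_{j}\in\mathfrak{s}^{*}$ and $i_{1},\ldots,i_{q}\in I$, chosen so that the $\beta_{j}$ and $\alpha_{i_{l}}$ are among the original weights $\alpha_{i}$ and so that $V:=\langle\alpha_{i_{1}},\ldots,\alpha_{i_{q}}\rangle$ is a $\mathfrak{k}^{*}$-pole, together with non-$\mathfrak{k}^{*}$-generating fractions. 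Since $\textnormal{EqRes}_{x}$ vanishes on the latter and is additive, it suffices to show that $\textnormal{EqRes}_{x}$ annihilates each generating summand.

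Fix such a summand. As $\beta_{j}\in\mathfrak{s}^{*}$, the factor $\prod_{j\in J}\beta_{j}$ is independent of the coordinates on every pole, hence pulls out of $\textnormal{EqRes}_{x}$ as the scalar $\prod_{j\in J}\beta_{j}(s)^{-1}$ in $s$; and by the remark following the definition of $\textnormal{EqRes}$, the only pole contributing is $V=\langle\alpha_{i_{1}},\ldots,\alpha_{i_{q}}\rangle$ itself, all others yielding a fraction that is non-generating in the induced coordinates. Thus
$$
\textnormal{EqRes}_{x}\frac{e^{\lambda(x)}}{\prod_{j\in J}\beta_{j}(x)\prod_{l=1}^{q}\alpha_{i_{l}}(x)^{n_{l}+1}}
=
\frac{1}{\prod_{j\in J}\beta_{j}(s)}\,\textnormal{EqRes}_{x}\frac{e^{\lambda(x)}}{\prod_{l=1}^{q}\alpha_{i_{l}}(x)^{n_{l}+1}}.
$$
Writing $\lambda=\lambda_{0}(s)+\lambda_{1}\widetilde{\alpha}_{i_{1}}+\ldots+\lambda_{q}\widetilde{\alpha}_{i_{q}}$, which is legitimate since $V$ is a pole, Proposition~\ref{Prop-I-5} shows that the right-hand side vanishes unless $\lambda_{1},\ldots,\lambda_{q}>0$; genericity of $0$ with respect to $F$ ensures each $\lambda_{l}\neq0$, so there is no boundary ambiguity.

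Finally, applying $\textnormal{pr}_{\mathfrak{k}^{*}}$ to the decomposition of $\lambda$ and using $\textnormal{pr}_{\mathfrak{k}^{*}}(\lambda_{0}(s))=0$ gives $\textnormal{pr}_{\mathfrak{k}^{*}}(\lambda)=\sum_{l=1}^{q}\lambda_{l}\,\textnormal{pr}_{\mathfrak{k}^{*}}(\widetilde{\alpha}_{i_{l}})$. Since $\{\widetilde{\alpha}_{i_{1}},\ldots,\widetilde{\alpha}_{i_{q}}\}\subseteq\{\widetilde{\alpha}_{i}\,|\,i\in I\}$, we have $Cone(\textnormal{pr}_{\mathfrak{k}^{*}}(\widetilde{\alpha}_{i_{1}}),\ldots,\textnormal{pr}_{\mathfrak{k}^{*}}(\widetilde{\alpha}_{i_{q}}))\subseteq Cone(\textnormal{pr}_{\mathfrak{k}^{*}}(\widetilde{\alpha}_{i})\,|\,i\in I)$, so the hypothesis $\textnormal{pr}_{\mathfrak{k}^{*}}(\lambda)\notin Cone(\textnormal{pr}_{\mathfrak{k}^{*}}(\widetilde{\alpha}_{i})\,|\,i\in I)$ forces $\textnormal{pr}_{\mathfrak{k}^{*}}(\lambda)$ out of the smaller (closed) cone, i.e.\ not all $\lambda_{l}\geq0$. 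Hence each generating summand vanishes, and summing over them yields $\textnormal{EqRes}_{x}F(x)=0$. The step I expect to require the most care is the bookkeeping in the first paragraph: justifying that the $\mathfrak{k}^{*}$-generating decomposition can be taken with all $\alpha_{i_{l}}$ among $\{\alpha_{i}\,|\,i\in I\}$, so that the cone inclusion above is valid, and that the $\mathfrak{s}^{*}$-factors really act as coordinate-independent scalars for every induced basis; the remainder is a direct transcription of the non-equivariant argument.
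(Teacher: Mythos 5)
Your proposal is correct and follows essentially the same route as the paper: decompose into $\mathfrak{k}^{*}$-generating and non-$\mathfrak{k}^{*}$-generating fractions, note $\textnormal{EqRes}_{x}$ kills the latter, and apply Proposition \ref{Prop-I-5} together with $\textnormal{pr}_{\mathfrak{k}^{*}}(\lambda)=\sum_{l}\lambda_{l}\,\textnormal{pr}_{\mathfrak{k}^{*}}(\widetilde{\alpha}_{i_{l}})$ to rule out $\lambda_{1},\ldots,\lambda_{q}>0$. The extra bookkeeping you flag (denominators taken among the original $\alpha_{i}$, the $\mathfrak{s}^{*}$-factors acting as scalars in $s$) is exactly what the paper's terse argument implicitly uses, so your writeup is just a more detailed version of the same proof.
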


	\begin{proof}
With notations of Proposition \ref{Prop-I-5} we have $\textnormal{pr}_{\mathfrak{k}^{*}}(\lambda) = \lambda_{1} \textnormal{pr}_{\mathfrak{k}^{*}}( \widetilde{\alpha}_{1} ) + \ldots + \lambda_{q} \textnormal{pr}_{\mathfrak{k}^{*}}( \widetilde{\alpha}_{q} )$. Then the statement follows from decomposition to $\mathfrak{k}^{*}$-generating and non-generating fractions, and from Proposition \ref{Prop-I-5}.
	\end{proof}

	\begin{corollary}\label{Cor-I-3}
Let $x=\{x_{1},\ldots,x_{r}\}$ be an ordered bases of $\mathfrak{t}^{*} = \mathfrak{k}^{*} \times \mathfrak{s}^{*}$ such that $x_{1},\ldots,x_{q} \in \mathfrak{k^{*}}$ and $x_{q+1}, \ldots, x_{r} \in \mathfrak{s}^{*}$.
Let $0$ be generic with respect to $F=\frac{ P e^{\lambda} }{ \prod_{i\in I} \alpha_{i} }$, where $\lambda \notin \mathfrak{s}^{*}$. 
If $\lambda$ is not polarized with respect to $x$, i.e. $\widetilde{\lambda} = -\lambda$ 
 then $\textnormal{EqRes}_{x} F(x) = 0$.
	\end{corollary}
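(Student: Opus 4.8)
The plan is to deduce this directly from Corollary \ref{Cor-I-2} by checking that, under the specific shape of the basis $x$, the hypothesis $\widetilde{\lambda}=-\lambda$ together with $\lambda\notin\mathfrak{s}^{*}$ forces $\textnormal{pr}_{\mathfrak{k}^{*}}(\lambda)\notin Cone(\textnormal{pr}_{\mathfrak{k}^{*}}(\widetilde{\alpha}_{i})\,|\,i\in I)$.

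First I would record the elementary compatibility of polarizations between $\mathfrak{t}^{*}$ and $\mathfrak{k}^{*}$. Since $x_{1},\ldots,x_{q}$ is an ordered basis of $\mathfrak{k}^{*}$ and $x_{q+1},\ldots,x_{r}$ one of $\mathfrak{s}^{*}$, for $\beta=\sum_{i=1}^{r}b_{i}x_{i}$ we have $\textnormal{pr}_{\mathfrak{k}^{*}}(\beta)=\sum_{i=1}^{q}b_{i}x_{i}$, and whenever $\textnormal{pr}_{\mathfrak{k}^{*}}(\beta)\neq 0$ the first nonzero coefficient of $\beta$ occurs among $b_{1},\ldots,b_{q}$. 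Hence $\varepsilon(\beta)=\varepsilon(\textnormal{pr}_{\mathfrak{k}^{*}}(\beta))$ and $\textnormal{pr}_{\mathfrak{k}^{*}}(\widetilde{\beta})=\widetilde{\textnormal{pr}_{\mathfrak{k}^{*}}(\beta)}$, the right-hand polarization being taken with respect to the ordered basis $\{x_{1},\ldots,x_{q}\}$ of $\mathfrak{k}^{*}$. In particular each $\textnormal{pr}_{\mathfrak{k}^{*}}(\widetilde{\alpha}_{i})$, $i\in I$, is either $0$ or polarized with respect to $\{x_{1},\ldots,x_{q}\}$.

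Next I would use that the polarized cone of $\mathfrak{k}^{*}$ with respect to $\{x_{1},\ldots,x_{q}\}$ is convex: it is precisely the set of vectors nonnegative for the lexicographic order on coordinates, which is a vector-space order, so the cone is closed under addition and nonnegative scaling. Consequently $Cone(\textnormal{pr}_{\mathfrak{k}^{*}}(\widetilde{\alpha}_{i})\,|\,i\in I)$ is contained in this polarized cone. On the other hand, $\lambda\notin\mathfrak{s}^{*}$ gives $\textnormal{pr}_{\mathfrak{k}^{*}}(\lambda)\neq 0$, and $\widetilde{\lambda}=-\lambda$ says the first nonzero coefficient of $\lambda$ in the basis $x$ is negative; since that coefficient lies among the first $q$, the vector $\textnormal{pr}_{\mathfrak{k}^{*}}(\lambda)$ is not polarized with respect to $\{x_{1},\ldots,x_{q}\}$, hence lies outside the polarized cone, and therefore outside $Cone(\textnormal{pr}_{\mathfrak{k}^{*}}(\widetilde{\alpha}_{i})\,|\,i\in I)$. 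As $0$ is generic with respect to $F$ and $\lambda\neq 0$ (because $\lambda\notin\mathfrak{s}^{*}$), Corollary \ref{Cor-I-2} gives $\textnormal{EqRes}_{x}F(x)=0$.

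There is no real obstacle here: the proof is a short reduction. The only points needing care are the bookkeeping in the first step — that non-polarization of $\lambda$ in the full ordered basis $x$ descends to non-polarization of its $\mathfrak{k}^{*}$-component once that component is nonzero — and the (standard) convexity of the polarized cone, which is exactly what lets one pass from ``every $\textnormal{pr}_{\mathfrak{k}^{*}}(\widetilde{\alpha}_{i})$ is polarized'' to ``the whole conical hull is polarized''. With these in hand the statement is immediate from Corollary \ref{Cor-I-2}.
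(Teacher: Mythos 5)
Your proposal is correct and follows essentially the same route as the paper: both reduce to Corollary \ref{Cor-I-2} by observing that, for a basis split as $x_{1},\ldots,x_{q}\in\mathfrak{k}^{*}$, $x_{q+1},\ldots,x_{r}\in\mathfrak{s}^{*}$, projection to $\mathfrak{k}^{*}$ commutes with polarization, so $Cone(\textnormal{pr}_{\mathfrak{k}^{*}}(\widetilde{\alpha}_{i}))$ lies in the polarized cone while $\textnormal{pr}_{\mathfrak{k}^{*}}(\lambda)=-\widetilde{\textnormal{pr}_{\mathfrak{k}^{*}}(\lambda)}\neq 0$ does not. Your only addition is making explicit the convexity (lexicographic positivity) of the polarized cone, which the paper leaves implicit in its definition of the polarized cone.
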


	\begin{proof}
Remark that for any $\beta \in \mathfrak{t}^{*}$ we have $\textnormal{pr}_{\mathfrak{k}^{*}}(\widetilde{\beta}) = \widetilde{ \textnormal{pr}_{ \mathfrak{k}^{*} }(\beta) }$. Hence $Cone( \textnormal{pr}_{\mathfrak{k}^{*}}( \widetilde{\alpha}_{i} )  \,|\, i \in I) = Cone( \widetilde{ \textnormal{pr}_{\mathfrak{k}^{*}}( \alpha_{i} ) } \,|\, i \in I )$ is contained in the polarized cone of $\mathfrak{t}^{*}$. Since $\lambda\notin \mathfrak{s}^{*}$ and it is not polarized, hence  $\textnormal{pr}_{\mathfrak{k}^{*}}(\lambda) = - \textnormal{pr}_{\mathfrak{k}^{*}}(\widetilde{\lambda}) = - \widetilde{\textnormal{pr}_{\mathfrak{k}^{*}}(\lambda)} \neq 0$ is not contained in the polarized cone of $\mathfrak{t}^{*}$ and from Corollary \ref{Cor-I-2} follows that $\textnormal{EqRes}_{x}F(x) = 0$.
	\end{proof}


\section{Symplectic cut}

In this section we review in details the symplectic cut technique (\cite{Ler}, \cite{LMTW}, \cite{JKo}). We compute new fixed point data arising on the symplectic cut space: fixed point sets, Euler classes, orbifold multiplicities. Our setup is only a slightly different from the one in \cite{JKo}. The results of the section are summarized in the Atiyah-Bott-Berline-Vergne formula on the symplectic cut space (Theorem \ref{Thm-SC-B}) and they will be used in the subsequent section.

Consider $\mathbb{C}^{q}$ with  the standard symplectic form $\omega_{\mathbb{C}^{q}} = \frac{\sqrt{-1}}{2}\sum_{i=1}^{q} dz_{i} d\bar{z}_{i}$ and let $K=U(1)^{q}$ $q$-dimensional torus which act on $\mathbb{C}^{q}$ by weights $\gamma_{1},\ldots,\gamma_{q}\in \mathbb{Z}^{q}\simeq \mathfrak{k}^{*}_{\mathbb{Z}}$, i.e. $t\cdot (z_{1},\ldots, z_{q}) = (t^{\gamma_{1}}z_{1}, \ldots, t^{\gamma_{q}} z_{q})$. 
It is a Hamiltonian action with moment map $\psi:\mathbb{C}^{q} \to \mathfrak{k}^{*}$, $\psi(z) = \sum_{i=1}^{q} \gamma_{i}\frac{|z_{i}|^{2}}{2}$. 
Suppose that  $\gamma_{1},\ldots, \gamma_{q}$ are linearly independent.

Let $(M,\omega)$ be a symplectic manifold with Hamiltonian action of the $n$-dimensional torus $T$ with moment map $\mu_{T}:M\to \mathfrak{t}^{*}$. Let $K\subset T$ be a $q$-dimensional subtorus and denote its moment map by $\mu_{K}$. The product space $M\times \mathbb{C}^{q}$ is symplectic with symplectic form $\omega + \omega_{\mathbb{C}^{q}}$ and it admits Hamiltonian $T\times K$-action $(t,k)\cdot (m,z) := (t m, k^{-1} z)$ with moment map $(\mu_{T}, -\psi): M\times \mathbb{C}^{q} \to \mathfrak{t}^{*} \times \mathfrak{k}^{*}$. We consider the embedding $i_{diag}:K\to T\times K$, $k\mapsto (k,k^{-1})$ and we denote its image by $K_{diag}$, its Lie algebra by $\mathfrak{k}_{diag}$. The $K_{diag}$-moment map is equal to
$\Psi: M\times \mathbb{C}^{q} \to \mathfrak{k}^{*}_{diag}$, $\Psi= \mu_{K} - \psi$. Introduce notations
$\Gamma = \big\{ \sum_{i=1}^{q}a_{i}\gamma_{i}\in\mathfrak{k}^{*}\, |\, a_{i}\in \mathbb{R}_{\geq 0} \big\}$.

\begin{lemma*}
Suppose that $0\in \mathfrak{k}^{*}$ is a regular value of $\mu_{K}$. Then $0\in\mathfrak{k}_{diag}^{*}$ is a regular value of $\Psi$ if $\Gamma$ intersects the moment polytope $\mu_{K}(M)$ transversally, i.e. any face of $\Gamma$ intersects any wall of $\mu_{K}(M)$ transversally (\cite{JKo}).
\end{lemma*}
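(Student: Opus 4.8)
The plan is to analyze when $d\Psi$ fails to be surjective at a point $(m,z) \in \Psi^{-1}(0)$ and to translate that failure into a non-transversal intersection of a face of $\Gamma$ with a wall of $\mu_K(M)$. First I would recall that $\Psi = \mu_K - \psi$ and that $d\Psi_{(m,z)}$ is surjective onto $\mathfrak{k}^*_{diag}$ precisely when the image of $d(\mu_K)_m$ together with the image of $d\psi_z$ spans $\mathfrak{k}^*$ (using $\mathfrak{k}^*_{diag} \cong \mathfrak{k}^*$). The image of $d\psi_z$ is readily computed: since $\psi(z) = \sum_i \gamma_i |z_i|^2/2$, the differential $d\psi_z$ has image equal to $\langle \gamma_i \mid z_i \neq 0 \rangle$, i.e. the linear span of the face of $\Gamma$ on which $\psi(z)$ lies. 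So the constraint is that $d(\mu_K)_m(T_mM) + \langle \gamma_i \mid z_i \neq 0\rangle = \mathfrak{k}^*$.

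Next I would characterize the image of $d(\mu_K)_m$ at a point $m$ in terms of the walls of $\mu_K(M)$. By the standard structure theory of Hamiltonian torus actions, if $m$ lies on a stratum where the stabilizer of $m$ is a subtorus $S \subseteq K$, then $m$ belongs to a connected component $N \subset M^S$, and the image of $d(\mu_K)_m$ is exactly $\langle \text{Lie}(K/S)^*\rangle$ — the linear span of the wall $\mu_K(N)$ (translated to the origin), which is an affine subspace $\mu_K(m) + \text{Lie}(K/S)^*$. (When $S$ is trivial, $d(\mu_K)_m$ is already surjective and there is nothing to check; the regularity of $0$ for $\mu_K$ guarantees $0$ is not on any proper wall, but we still need the statement for arbitrary $m \in \Psi^{-1}(0)$, not just those with $\mu_K(m) = 0$.) Combining the two computations: $d\Psi_{(m,z)}$ is surjective iff the linear span of the wall through $m$ plus the linear span of the face of $\Gamma$ through $\psi(z) = \mu_K(m)$ fills $\mathfrak{k}^*$ — which is exactly the assertion that the face of $\Gamma$ and the wall of $\mu_K(M)$ meet transversally at the common point $\mu_K(m) = \psi(z)$.

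To finish, I would run the argument in both directions. If $\Gamma$ meets $\mu_K(M)$ transversally in the stated sense, then at every $(m,z) \in \Psi^{-1}(0)$ the point $p := \mu_K(m) = \psi(z)$ lies on some wall $\mu_K(N)$ and on some face $\Gamma_F$ of $\Gamma$; transversality of $\Gamma_F$ with $\mu_K(N)$ forces the spans to fill $\mathfrak{k}^*$, hence $d\Psi$ is surjective there, so $0$ is a regular value of $\Psi$. Conversely, if some face $\Gamma_F$ meets some wall $\mu_K(N)$ non-transversally at a point $p$, one picks $m \in N$ with $\mu_K(m) = p$ and $z$ with $\psi(z) = p$ supported on the coordinates indexing $\Gamma_F$; then $(m,z) \in \Psi^{-1}(0)$ and $d\Psi_{(m,z)}$ is not surjective, so $0$ is not a regular value. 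The main obstacle I anticipate is being careful about the stratification and the precise identification of $\mathrm{im}\, d(\mu_K)_m$ with the span of the wall through $m$ — this requires invoking the local normal form / symplectic slice theorem for Hamiltonian torus actions and checking that "wall of $\mu_K(M)$" as defined earlier (the image $\mu_K(N)$ of a fixed component $N$ of a subtorus) is exactly the right object; handling the transversality of affine subspaces (translating everything to linear spans at the common point $p$) is then routine linear algebra.
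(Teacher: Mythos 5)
Your argument is correct, and it is the standard proof of this statement: the paper itself gives no proof (the lemma is quoted from [JKo]), so there is nothing to compare against beyond noting that your route — computing $\operatorname{im} d\Psi_{(m,z)} = \operatorname{im} d(\mu_K)_m + \langle \gamma_i \mid z_i \neq 0\rangle$ and matching these summands with the wall through $\mu_K(m)$ and the face of $\Gamma$ containing $\psi(z)$ in its relative interior — is exactly the intended one. The only step needing care is the one you flag, and for the implication actually asserted it is even slightly easier than you suggest: one only needs the inclusion of the wall direction $\operatorname{span}(\mu_K(N) - \mu_K(m))$ inside $\operatorname{im} d(\mu_K)_m = \operatorname{ann}(\mathfrak{k}_m)$, which follows because $\langle \mu_K, \xi\rangle$ is constant on $N$ for $\xi \in \mathfrak{k}_m$ (the converse, where the reverse inclusion and the fullness of the wall in its affine hull are used, is not required by the lemma).
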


Remark that if any face of $\Gamma$ intersects any wall of $\mu_{K}(M)$ transversally then any face of $\textnormal{pr}^{-1}_{\mathfrak{k}^{*}}(\Gamma)$  intersects any wall of $\mu_{T}(M)$ transversally, where $\textnormal{pr}_{\mathfrak{k}^{*}}:\mathfrak{t}^{*} \to \mathfrak{k}^{*}$ canonical projection. From now on we suppose that the conditions of the lemma are fulfilled. Let $M_{0}=\mu^{-1}_{K}(0)/K$ be the symplectic quotient and denote the image of $m\in \mu_{K}^{-1}(0)$ under the quotient map by $[m]$.

\begin{definition}
The symplectic cut of $M$ with respect to the simplicial cone $\Gamma$ is defined as  
$$
M_{\Gamma} = \Psi^{-1}(0)/K_{diag}.
$$ 
We also denote the image of $(m,z)\in \Psi^{-1}(0)$ under the quotient map $\Psi^{-1}(0)\to M_{\Gamma}$ by $[m,z]$.
\end{definition}

\begin{remark*}
In general, $M_{\Gamma}$ is an orbifold even if $K$ acts freely on $\mu^{-1}(0)$, i.e $M_{0}$ is a manifold.	
\end{remark*}

The $T$-action on $M\times \mathbb{C}^{q}$ descends to $M_{\Gamma}$. It is Hamiltonian with moment map $M_{\Gamma}\to \mathfrak{t}^{*}$, $[m,z]\mapsto \mu_{T}(m)$ which we will also denote by $\mu_{T}$ (and $\mu_{K}$ for $K\subset T$). The image of moment maps equal to $\mu_{K}(M_{\Gamma}) = \mu_{K}(M)\cap \Gamma$ and $\mu_{T}(M_{\Gamma}) = \mu_{T}(M) \cap \textnormal{pr}_{\mathfrak{k}^{*}}^{-1}(\Gamma)$.

\subsection{$T$-fixed components on $M_{\Gamma}$}
\label{SC-Sec-1}	

	\begin{prop}\label{Prop-SC-A}
Let $(m,z) \in \Psi^{-1}(0)$, $m\in M$ and $z\in \mathbb{C}^{q}$. Denote $T_{m}^{\circ}$ and $K^{\circ}_{z}$ the unit components of isotropy groups of $m$ and $z$, respectively. Let $F_{m} \subset M^{T^{\circ}_{m}}$ and $F_{z} \subset (\mathbb{C}^{q})^{K^{\circ}_{z}}$ the connected components of fixed point sets containing $m$ and $z$, respectively. Then $[m,z] \in M_{\Gamma}^{T}$ if and only if $\mathfrak{t}_{m} \oplus \mathfrak{k}_{z} = \mathfrak{t}$, where $\mathfrak{t}_{m}$ and $\mathfrak{k}_{z}$ are the Lie algebras of the corresponding isotropy groups. Moreover, the connected component of $M_{\Gamma}^{T}$ containing $[m,z]$ is equal to 
$$
F_{[m,z]} = (F_{m} \times F_{z})/\!\!/K_{diag} = (F_{m} \times F_{z}) \cap \Psi^{-1}(0)/K_{diag}.
$$
\end{prop}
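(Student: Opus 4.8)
The plan is to pin down the residual $T$-action on $M_{\Gamma}$, translate the fixed-point condition into a statement about isotropy subgroups, pass to Lie algebras, and then identify the connected component by an open/closed/connected argument. First: since $T$ is abelian, the $T$-action $(t,1)\cdot(m,z)=(tm,z)$ on $M\times\C^{q}$ commutes with the $K_{diag}$-action, hence descends to $[m,z]\mapsto[tm,z]$ on $M_{\Gamma}$. Since $[m,z]=[m',z']$ exactly when $(m',z')$ lies in the $K_{diag}$-orbit of $(m,z)$, the point $[m,z]$ is $T$-fixed if and only if for every $t\in T$ there is $k\in K$ with $tm=km$ and $kz=z$; unwinding this says precisely that $T=T_{m}\cdot K_{z}$ as subgroups of $T$ (recall $K_{z}\subset K\subset T$), where $T_{m}$, $K_{z}$ denote the isotropy groups of $m$ and $z$.

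Next I pass to Lie algebras. If $T=T_{m}K_{z}$, taking Lie algebras of this compact subgroup gives $\t=\t_{m}+\mathfrak{k}_{z}$; conversely, if $\t=\t_{m}+\mathfrak{k}_{z}$ then $T_{m}^{\circ}K_{z}^{\circ}$ is a connected closed subgroup of the torus $T$ with Lie algebra $\t$, hence equals $T$, so $T_{m}K_{z}=T$. Moreover, since $0$ is a regular value of $\Psi$ the group $K_{diag}$ acts locally freely on $\Psi^{-1}(0)$, so the stabilizer of $(m,z)$, whose Lie algebra is $\t_{m}\cap\mathfrak{k}_{z}$, is finite; thus $\t_{m}\cap\mathfrak{k}_{z}=0$ for every $(m,z)\in\Psi^{-1}(0)$, and the condition $\t=\t_{m}+\mathfrak{k}_{z}$ is equivalent to $\t_{m}\oplus\mathfrak{k}_{z}=\t$. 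This proves the first assertion.

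For the second assertion, assume $\t_{m}\oplus\mathfrak{k}_{z}=\t$ and set $N=\big((F_{m}\times F_{z})\cap\Psi^{-1}(0)\big)/K_{diag}$. Since $T$ is abelian and $K$ is connected, $K$ preserves each component of $M^{T_{m}^{\circ}}$ and of $(\C^{q})^{K_{z}^{\circ}}$, so $F_{m}\times F_{z}$ is $K_{diag}$-invariant, $N$ is well defined, and $[m,z]\in N$. If $(m',z')\in(F_{m}\times F_{z})\cap\Psi^{-1}(0)$ then $\t_{m'}\supseteq\t_{m}$ and $\mathfrak{k}_{z'}\supseteq\mathfrak{k}_{z}$, so $\t_{m'}+\mathfrak{k}_{z'}=\t$ and $[m',z']\in M_{\Gamma}^{T}$ by the first part; the relation $\t_{m'}\cap\mathfrak{k}_{z'}=0$ then forces, by a dimension count, $\t_{m'}=\t_{m}$ and $\mathfrak{k}_{z'}=\mathfrak{k}_{z}$, hence $m'\in F_{m}$, $z'\in F_{z}$ lie in the same components ($F_{m'}=F_{m}$, $F_{z'}=F_{z}$), so $N\subset M_{\Gamma}^{T}$. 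If $[m'',z'']\in M_{\Gamma}^{T}$ is close to $[m',z']\in N$, lift it to a nearby representative $(m'',z'')$ in $\Psi^{-1}(0)$; since nearby points have smaller isotropy (slice theorem) we get $\t_{m''}\subseteq\t_{m'}=\t_{m}$ and $\mathfrak{k}_{z''}\subseteq\mathfrak{k}_{z}$, while $T$-invariance gives $\t_{m''}\oplus\mathfrak{k}_{z''}=\t$; a dimension count again forces equality, so $m''\in F_{m}$, $z''\in F_{z}$ and $[m'',z'']\in N$, whence $N$ is open in $M_{\Gamma}^{T}$. Finally, $F_{m}$ and $F_{z}$ are closed, so $(F_{m}\times F_{z})\cap\Psi^{-1}(0)$ is closed in $\Psi^{-1}(0)$, and its image under the proper quotient map is closed in $M_{\Gamma}$, hence $N$ is closed in $M_{\Gamma}^{T}$.

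It remains to check that $N$ is connected, and this is the genuinely non-routine step. The projection $(F_{m}\times F_{z})\cap\Psi^{-1}(0)\to F_{m}$ is an open surjection onto $(\mu_{K}|_{F_{m}})^{-1}(\Gamma_{S})$, where $\Gamma_{S}=Cone(\gamma_{i}\mid\gamma_{i}|_{\mathfrak{k}_{z}}=0)$ is a simplicial cone, with fibres the compact tori $\{z'\in F_{z}\mid\psi(z')=\mu_{K}(m')\}$, which are nonempty and connected because the relevant $\gamma_{i}$ are linearly independent; openness at the boundary strata where some $z_{i}$ vanish must be checked by hand. Hence $(F_{m}\times F_{z})\cap\Psi^{-1}(0)$ is connected if and only if $(\mu_{K}|_{F_{m}})^{-1}(\Gamma_{S})$ is, and the latter is connected since the preimage of a convex polyhedron under a torus moment map on a compact connected symplectic manifold is connected (\emph{convexity theorem}; here $F_{m}$ is a compact connected symplectic submanifold of $M$ and $\mu_{K}|_{F_{m}}$ is the $K$-moment map). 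Therefore $N$, being a continuous image of a connected set that is open, closed and nonempty in $M_{\Gamma}^{T}$, is exactly the connected component of $[m,z]$; the delicate inputs are this connectedness statement and the fact that $F_{m}$ is compact in the standing setup.
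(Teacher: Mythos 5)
Your proof of the fixed--point criterion and your open/closed argument coincide with the paper's: the equivalence $[m,z]\in M_{\Gamma}^{T}\Leftrightarrow T=T_{m}K_{z}\Leftrightarrow \mathfrak{t}_{m}\oplus\mathfrak{k}_{z}=\mathfrak{t}$ (with the direct sum coming from local freeness of $K_{diag}$ on $\Psi^{-1}(0)$), the observation that on $(F_{m}\times F_{z})\cap\Psi^{-1}(0)$ the isotropy algebras are forced to equal $\mathfrak{t}_{m}$ and $\mathfrak{k}_{z}$, and the ``isotropy can only decrease nearby'' argument for openness are all in the paper's proof; that you run the open--closed--connected argument inside $M_{\Gamma}^{T}$ while the paper runs it inside $\pi^{-1}(F_{[m,z]})$ is only a cosmetic difference. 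Where you genuinely diverge is the connectedness of $(F_{m}\times F_{z})\cap\Psi^{-1}(0)$: the paper disposes of it by citing \cite{A}, while you reduce it to connectedness of $(\mu_{K}|_{F_{m}})^{-1}(\Gamma_{S})$ via the projection with compact torus fibres. That reduction is sound, and the openness you leave ``to be checked by hand'' is not needed: the fibres are compact, so the projection is proper, hence closed, and a closed surjection with connected fibres over a connected base has connected total space.

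The genuine gap is your assertion that ``$F_{m}$ is a compact connected symplectic submanifold of $M$'' in the standing setup, which you then feed into the compact convexity theorem. Section 3 makes no compactness assumption on $M$, and in the paper's intended use $M$ is non-compact and $F_{m}$ is typically non-compact as well: for $z=0$ one has $\mathfrak{k}_{z}=\mathfrak{k}$, so $F_{m}$ is a component of the fixed locus of a corank-$q$ subtorus, e.g.\ a positive-dimensional linear subspace of $\mathcal{M}=T^{*}\mathcal{A}$ in the Hilbert scheme application. So the appeal to the compact convexity theorem does not apply as written, and without some connectedness input the statement can actually fail for a non-proper moment map, so this point cannot be waved away. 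The repair is to use a properness-based connectedness result: in the situations where the proposition is applied, the cut is performed along a moment map that is proper (and bounded below), so $\mu_{K}|_{F_{m}}$ is proper and the preimage of the convex cone $\Gamma_{S}$ is connected by the non-compact convexity theorems (cf.\ \cite{LMTW}, \cite{PW}); alternatively one cites \cite{A} as the paper does, which carries the same implicit compactness caveat but at least does not assert compactness of $F_{m}$ as a fact. With that substitution your argument is complete and essentially the paper's.
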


\begin{proof}
A point $[m,z]\in M_{\Gamma}$ is a $T$-fixed point if and only if for any $t\in T$ there is $k\in K$ such that $(tk\cdot m, k^{-1}\cdot z) = (m,z)$. That's it, if $T_{m} \subset T$ is the isotropy group of $m\in M$ and $K_{z}\subset K$ is the isotropy group of $z\in\mathbb{C}^{q}$ then $[m,z]\in M_{\Gamma}$ is a fixed point if and only if for all $t\in T$ there is $k\in K_{z}$ such that $s:=tk\in T_{m}$ or equivalently  $T_{m}\times K_{z} \to T$, $(s,k)\mapsto sk^{-1}$ is surjective or equivalently $\mathfrak{t}_{m}+ \mathfrak{k}_{z} = \mathfrak{t}$. Since $K_{diag}$ acts locally freely on $\Psi^{-1}(0)$, thus $\mathfrak{t}_{m}\cap \mathfrak{k}_{z} = \{ 0\}$.

Denote $\pi:\Psi^{-1}(0) \to M_{\Gamma}$ the quotient map. To check the inclusion $(F_{m}\times F_{z}) \cap \Psi^{-1}(0) \subset \pi^{-1}(F_{[m,z]})$, let $(m_{1},z_{1}) \in (F_{m}\times F_{z}) \cap \Psi^{-1}(0)$. Since the homomorphism $T^{\circ}_{m}\times K^{\circ}_{z}\to T$, $(s,k)\mapsto sk^{-1}$ is surjective, we can decompose any $t\in T$ as $t=sk^{-1}$ with $s\in T^{\circ}_{m}$ and $k\in K^{\circ}_{z}$ and we have
$
t\cdot [m_{1},z_{1}] = [sk^{-1}\cdot m_{1},z_{1}] = [s\cdot m_{1}, k^{-1}\cdot z_{1}] = [m_{1},z_{1}].
$
Therefore, 
$(F_{m}\times F_{z})/\!\!/K_{diag} \subset M_{\Gamma}^{T}$ and moreover by \cite{A} it is connected, containing $[m,z]$, hence $(F_{m}\times F_{z})/\!\!/K_{diag} \subset F_{[m,z]}$.

To show the reverse inclusion, let $(m_{2},z_{2})\in \pi^{-1}(F_{[m,z]})$ be in a small neighborhood of $(m_{1},z_{1})$ in $(F_{m}\times F_{z})\cap \Psi^{-1}(0)$. The inclusion $[m_{2},z_{2}]\in M_{\Gamma}^{T}$ implies that $\mathfrak{t}_{m_{2}} \oplus \mathfrak{k}_{z_{2}} = \mathfrak{t}$. Similarly, $(m_{1},z_{1})\in F_{m}\times F_{z}$ implies that $T^{\circ}_{m}\subset T_{m_{1}}$ and $K^{\circ}_{z}\subset K_{z_{1}}$, thus $\mathfrak{t} = \mathfrak{t}_{m} \oplus \mathfrak{k}_{z} \subset \mathfrak{t}_{m_{1}}\oplus \mathfrak{k}_{z_{1}} = \mathfrak{t}$, therefore $\mathfrak{t}_{m}=\mathfrak{t}_{m_{1}}$ and $\mathfrak{k}_{z}=\mathfrak{k}_{z_{1}}$. The isotropy groups locally decrease, thus we have $T_{m_{2}}\subset T_{m_{1}}$ and $K_{z_{2}}\subset K_{z_{1}}$.  For Lie algebras
$\mathfrak{t} = \mathfrak{t}_{m_{2}} \oplus \mathfrak{k}_{z_{2}} \subset \mathfrak{t}_{m_{1}} \oplus \mathfrak{k}_{z_{1}} = \mathfrak{t}$, hence $\mathfrak{t}_{m_{2}} = \mathfrak{t}_{m_{1}} = \mathfrak{t}_{m}$ and $\mathfrak{k}_{z_{2}} = \mathfrak{k}_{z_{1}} = \mathfrak{k}_{z}$ which yields $T^{\circ}_{m}\subset T_{m_{2}}$ and $K^{\circ}_{z}\subset K_{z_{2}}$, hence $(m_{2},z_{2})\in (F_{m}\times F_{z})\cap \Psi^{-1}(0)$. This shows that $(F_{m}\times F_{z})\cap \Psi^{-1}(0)$ is open in $\pi^{-1}(F_{[m,z]})$, but it is also closed being a $T^{\circ}_{m} \times K_{z}^{\circ}$ fixed point component of $\Psi^{-1}(0)$. From the connectedness of $\pi^{-1}(F_{[m,z]})$ we have that $(F_{m} \times F_{z}) \cap \Psi^{-1}(0) = \pi^{-1}(F_{[m,z]})$, i.e. $(F_{m}\times F_{z})/\!\!/K_{diag} = F_{[m,z]}$. 
\end{proof}

We introduce notations
$M_{int} = \{ m\in M\, |\, \mu_{K}(m) \in \textnormal{int}\,\Gamma\}$,
$M_{\Gamma,int} = \{ [m,z]\in M_{\Gamma}\, |\, \mu_{K}([m,z])\in \textnormal{int}\,\Gamma \}$,
$M_{\Gamma,0} = \{[m,z]\in M_{\Gamma}\, |\, z=0 \}.$  
We sort the fixed point components in three groups:
	\begin{enumerate}
	\item[(0)] fixed point components $H_{0} \subset M_{\Gamma,0}^{T}$. They are characterized by $\mu_{K}(H_{0})=0$ and they can be identified to $T$-fixed point components $F_{0}$ in $M_{0}$ via the $T$-equivariant diffeomorphism $\Xi_{0}:M_{0}\to M_{\Gamma,0}$, $[m]\mapsto [m,0]$.

	\item[(1)] fixed point components $H_{1} \subset M_{\Gamma,int}^{T}$, characterized by $\mu_{K}(H_{1})\in \textnormal{int}\,\Gamma$ and they corresponds to fixed point components of $M_{int}$ as follows. Let $K'\subset K$ be the maximal subgroup of $K$ acting trivially on $\mathbb{C}^{q}$. For any $m\in M_{int}$ we can write $\mu_{K}(m) = \frac{1}{2} \sum_{i=1}^{q}\mu_{K}(m)_{i}\gamma_{i}$ with $\mu_{K}(m)_{i}>0$ for all $i=1,\ldots,q$. The map $M_{int} \to M_{\Gamma,int}$, $m \mapsto \big[ m, \big(\sqrt{\mu_{K}(m)_{1}},\ldots, \sqrt{\mu_{K}(m)_{q}}\,\big) \big]$ induces $T$-equivariant isomorphism $\Xi_{int}:M_{int}/K' \to M_{\Gamma,int}$ of orbifolds. Under this map the fixed point components $H_{1}$ corresponds to suborbifolds $F_{1}/K$ of $M_{int}/K'$ where $F_{1}$ is a $T$-fixed point component of $M_{int}$ (remark that $K'$ acts trivially on $F_{1}$).

	\item[(2)] Other fixed point components $H \subset M_{\Gamma}^{T}$ characterized by $\mu_{K}(H)$ being non-zero and it lies on the boundary of $\Gamma$. If $K$ is $1$-dimensional then this type of components do not occur. 
	\end{enumerate}

\subsection{Choice of cohomology classes and their restrictions}

The projection $M\times \mathbb{C}^{q} \to M$ induces an isomorphism $H_{T}(M)\stackrel{\sim}{\longrightarrow} H_{T}(M\times \mathbb{C}^{q})$ and the homomorphism $T\times K_{diag}\to T$, $(t,k)\mapsto tk$ induces a homomorphism $H_{T}(M\times \mathbb{C}^{q}) \to H_{T\times K_{diag}}(M\times \mathbb{C}^{q})$. Their composition we denote by $\delta: H_{T}(M)\to H_{T\times K_{diag}}(M\times \mathbb{C}^{q}).$

\begin{definition}
Define the cut homomorphism $\Delta:H_{T}(M) \to H_{T}(M_{\Gamma})$, $\Delta= \kappa_{T}\circ \delta,$
where $\kappa_{T}: H_{T\times K_{diag}}(M\times \mathbb{C}^{q}) \to H_{T}(M_{\Gamma})$ is the $T$-equivariant Kirwan map. 
More explicitly, if $\alpha$ is an equivariantly closed form on $M$ then
$
\Delta(\alpha)(u) = \textnormal{Hor}_{\theta}(j^{*}\alpha(u+\Theta(u))),
$
(cf. \cite{BT}) where $u\in \mathfrak{t}$, $\theta$ is a $T$-invariant connection form on the principal $K_{diag}$-bundle $\Psi^{-1}(0)\to M_{\Gamma}$ with equivariant curvarture form $\Theta$ and $j:\Psi^{-1}(0)\to M \times \mathbb{C}^{q}$ is the inclusion.
\end{definition}

	\begin{prop*}\label{Prop-SC-B}
$\qquad$
{\renewcommand{\labelenumi}{(\arabic{enumi})}
\begin{enumerate}
\item We have $\Xi_{0}^{*}(\Delta(\alpha)|_{M_{\Gamma,0}}) = \kappa_{T/K}(\alpha)$ where $\kappa_{T/K}:H_{T}(M)\to H_{T/K}(M_{0})$ is the equivariant Kirwan map.
In particular, if $F_{0}\subset M_{0}^{T}$ and $H_{0}\subset M_{\Gamma,0}^{T}$ are fixed point components such that $\Xi_{0}(F_{0})=H_{0}$ then 
$
\Xi_{0}^{*}(\Delta(\alpha)|_{H_{0}}) = \kappa_{T/K}(\alpha)|_{F_{0}}.
$

\item As equivariant differential forms 
$\Xi_{0}^{*}(\Delta(\omega - \mu_{T})|_{M_{\Gamma,0}})$ is the reduced equivariant differential form on $M_{0}$.
\item\label{SC-Ass3} Let $F_{1}\subset M_{int}^{T}$ and $H_{1}\subset M_{\Gamma,int}^{T}$ be fixed components such that $\Xi_{int}(F_{1})=H_{1}$. Then
$
\Xi_{int}^{*}(\Delta(\alpha)|_{H_{1}}) = \alpha|_{F_{1}}.
$ In particular, $\Xi_{int}^{*}(\Delta(\omega - \mu_{T})|_{H_{1}}) = (\omega - \mu_{T})|_{F_{1}}$.
\end{enumerate}}	
	\end{prop*}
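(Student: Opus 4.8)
The three statements will all be read off from the explicit Cartan--model formula $\Delta(\alpha)(u)=\Hor_{\theta}\big(j^{*}\alpha(u+\Theta(u))\big)$ by pulling the representing form back along suitable lifts of $\Xi_{0}$, $\Xi_{int}$ to $\Psi^{-1}(0)$, and by choosing the connection $\theta$ conveniently (legitimate, since $\Delta(\alpha)$ does not depend on it). Throughout, $j\colon\Psi^{-1}(0)\to M\times\mathbb{C}^{q}$ is the inclusion, the form $\alpha$ in the formula is the pullback of $\alpha\in H_{T}(M)$ along $M\times\mathbb{C}^{q}\to M$, $\mathfrak{k}_{diag}$ is identified with $\mathfrak{k}\subset\mathfrak{t}$ via $k\mapsto(k,k^{-1})\mapsto k$, and $\Theta(u)=F_{\theta}+\theta(\widetilde{u})$ where $\widetilde u$ is the fundamental vector field of $u\in\mathfrak{t}$ on $\Psi^{-1}(0)$.

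\textbf{Parts (1) and (2).} Since $\Psi^{-1}(0)\cap(M\times\{0\})=\mu_{K}^{-1}(0)\times\{0\}$ and $K_{diag}$ acts there through its first factor, i.e. as $K$ on $\mu_{K}^{-1}(0)$, the inclusion $\iota\colon\mu_{K}^{-1}(0)\times\{0\}\hookrightarrow\Psi^{-1}(0)$ covers $\Xi_{0}\colon M_{0}\to M_{\Gamma,0}$. I would pull the representing form of $\Delta(\alpha)$ back along $\iota$. Setting $\theta_{0}=\iota^{*}\theta$, this is a $T$-invariant connection on the $K$-bundle $\mu_{K}^{-1}(0)\to M_{0}$ with equivariant curvature $\Theta_{0}=\iota^{*}\Theta$; moreover $\iota^{*}j^{*}\alpha=i_{0}^{*}\alpha$ with $i_{0}\colon\mu_{K}^{-1}(0)\hookrightarrow M$, and since $\iota^{*}\theta=\theta_{0}$ the horizontal projections are compatible, $\iota^{*}\circ\Hor_{\theta}=\Hor_{\theta_{0}}\circ\iota^{*}$. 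Hence $\Xi_{0}^{*}\big(\Delta(\alpha)|_{M_{\Gamma,0}}\big)(u)=\Hor_{\theta_{0}}\big(i_{0}^{*}\alpha(u+\Theta_{0}(u))\big)$, which is exactly the Cartan-model formula for the equivariant Kirwan map $\kappa_{T/K}\colon H_{T}(M)\to H_{T/K}(M_{0})$ computed with the connection $\theta_{0}$; restricting further to $F_{0}$ gives the ``in particular'' clause. For (2) take $\alpha=\omega-\mu_{T}$: on $\mu_{K}^{-1}(0)$ one has $\langle\mu_{T},\xi\rangle=\langle\mu_{K},\xi\rangle=0$ for $\xi\in\mathfrak{k}$, so the substitution $u\mapsto u+\Theta_{0}(u)$ leaves the moment-map term unchanged, while $\Hor_{\theta_{0}}(i_{0}^{*}\omega)=\pi_{0}^{*}\omega_{0}$ for the Marsden--Weinstein reduced form $\omega_{0}$; thus the result is the reduced equivariant form on $M_{0}$.

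\textbf{Part (3).} Over $M_{int}$ define $\sigma\colon M_{int}\to\Psi^{-1}(0)$ by $\sigma(m)=\big(m,(\sqrt{\mu_{K}(m)_{1}},\dots,\sqrt{\mu_{K}(m)_{q}})\big)$; since $\psi(\sqrt{\mu_{K}(m)})=\mu_{K}(m)$ one has $\Psi(\sigma(m))=0$, the map $\sigma$ is $K'$-invariant, $\sigma(M_{int})$ is a global slice for the (locally free) $K_{diag}$-action on $\Psi^{-1}(0)|_{M_{\Gamma,int}}$ transverse to the orbits, and $\sigma$ descends to $\Xi_{int}\colon M_{int}/K'\to M_{\Gamma,int}$. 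I would then choose the $T$-invariant $\theta$ so that over $M_{\Gamma,int}$ its horizontal distribution is the tangent distribution of this slice (extending arbitrarily elsewhere), whence $\sigma^{*}\theta=0$, so $\sigma^{*}F_{\theta}=0$ and $\sigma^{*}(\Hor_{\theta}\beta)=\sigma^{*}\beta$ for every form $\beta$, and also $\sigma^{*}j^{*}\alpha=\alpha|_{M_{int}}$. Restricting to the $T$-fixed component $F_{1}$, the fundamental vector field $\widetilde u$ of every $u\in\mathfrak{t}$ vanishes along $F_{1}$, hence $\theta(\widetilde u)$ vanishes on $\sigma(F_{1})$ and $\sigma|_{F_{1}}^{*}\Theta(u)=\sigma^{*}F_{\theta}|_{F_{1}}=0$. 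Combining these,
$$\Xi_{int}^{*}\big(\Delta(\alpha)|_{H_{1}}\big)=\sigma|_{F_{1}}^{*}\big(j^{*}\alpha(u+\Theta(u))\big)=\alpha|_{F_{1}}(u)=\alpha|_{F_{1}},$$
and specializing $\alpha=\omega-\mu_{T}$ (with no moment-map subtlety, as $\mu_{T}$ is constant on $F_{1}$) gives the last assertion.

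\textbf{Main obstacle.} The genuinely delicate points, rather than the above unwindings of the formula, are: (i) producing a $T$-invariant connection with the prescribed horizontal behaviour over $M_{\Gamma,0}$ resp. $M_{\Gamma,int}$ while respecting the orbifold structure when $K$ does not act freely, so that $K'$ is a nontrivial finite group and the ``slice'' $\sigma(M_{int})$ is only transverse to orbits orbifold-locally; and (ii) matching conventions so that the representative-level identities above agree with the chosen models for the Kirwan map $\kappa_{T/K}$ and for the reduced equivariant form, i.e. so that the equalities hold as differential forms and not merely up to equivariantly exact terms.
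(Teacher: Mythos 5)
Your proposal is correct and takes essentially the same approach as the paper: parts (1)--(2) via the bundle isomorphism $\mu_{K}^{-1}(0)\simeq\mu_{K}^{-1}(0)\times\{0\}$ covering $\Xi_{0}$ together with the pulled-back/restricted connection, and part (3) via the very same section $\sigma(m)=\big(m,\sqrt{\mu_{K}(m)_{1}},\ldots,\sqrt{\mu_{K}(m)_{q}}\big)$ and a connection for which $\sigma^{*}\theta=0$ and $\Theta(u)$ vanishes along $F_{1}$. The only cosmetic difference is that the paper works over $F_{1}$ alone, taking the Maurer--Cartan form of the trivialization $F_{1}\times K_{diag}/K'$, rather than prescribing the horizontal distribution over all of $M_{int}$ as you do.
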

	\begin{proof}
{\renewcommand{\labelenumi}{(\arabic{enumi})}
\begin{enumerate}

\item We have the following isomomorphism of principal $K$-bundles
$$
\xymatrix{
\mu^{-1}_{K}(0) \ar[r]^-{\sim} \ar[d] & \mu^{-1}_{K}(0)\times \{0\} \ar[d]
\\
M_{0} \ar[r]^{\sim}_{\Xi_{0}} & M_{\Gamma,0}. 
}
$$	
Let $j:\mu_{K}^{-1}(0) \hookrightarrow M$ and $j_{0}:\mu_{K}^{-1}\times \{0\} \hookrightarrow M\times \mathbb{C}^{q}$ inclusions. Let $\theta$ be a connection form on principal $K$-bundles $\mu_{K}^{-1}(0)\times \{0\}\to M_{\Gamma,0}$ and $\Xi_{0}^{*}\,\theta$ its pull-back to $\mu_{K}^{-1}(0)\to M_{0}$. From the definitions follows that
$$
\Xi_{0}^{*}(\Delta(\alpha)|_{M_{\Gamma,0}}) 
= \Xi_{0}^{*}\textnormal{Hor}_{\theta}(j_{0}^{*}\alpha(u+\Theta(u))) 
= \textnormal{Hor}_{\Xi^{*}_{0}\theta}(j_{0}^{*}\alpha(u+\Xi_{0}^{*}\Theta(u)))
= \kappa_{T/K}(\alpha).
$$

\item In particular, for $\alpha = \omega - \mu_{T}$ equivariant symplectic form we have equality of equivariant forms
$\Xi_{0}^{*}(\Delta(\omega -\mu_{T})|_{M_{\Gamma}}) = \kappa_{T/K}(\omega -\mu_{T})$.

\item We have the following commutative diagram
$$
\xymatrix{
F_{1} \ar[r]^-{\sigma} \ar[d] & \Psi^{-1}(0)\cap (F_{1}\times \mathbb{C}^{q}) \ar[d]
\\
F_{1}/K' \ar[r]_-{\Xi_{int}}^-{\sim} & H_{1}
}
$$
where $\sigma(m) = \big(m, (\sqrt{\mu_{K}(m)_{1}}, \ldots ,\sqrt{\mu_{K}(m)_{q}}\, )\big)$ is a global section of the principal $K_{diag}$-bundle $\Psi^{-1}(0)\cap (F_{1} \times \mathbb{C}^{q}) \to F_{1}$ and yields isomorphism $F_{1}\times K_{diag}/K' \simeq \Psi^{-1}(0)\cap (F_{1}\times\mathbb{C}^{q})$. 
Let $j_{1}$ be the composition of inclusion $\Psi^{-1}(0)\cap (F_{1}\times\mathbb{C}^{q}) \hookrightarrow M\times \mathbb{C}^{q}$ and projection $M\times \mathbb{C}^{q} \to M$. Let $\theta$ be the pull-back of the Maurer-Cartan form to the principal $K_{diag}$-bundle $\Psi^{-1}(0)\cap (F_{1}\times\mathbb{C}^{q}) \simeq F_{1}\times K_{diag}/K'$ and $\Theta$ be its equivariant curvature form. Then the restriction of the class $\Delta(\alpha)$ to $H_{1}$ is equal to
$$
\Delta(\alpha)(u)\big|_{H_{1}} = \textnormal{Hor}_{\theta}(j^{*}_{1}\alpha(u + \Theta(u)))
$$
Moreover, if $\{\xi^{1}, \ldots, \xi^{n}\}$ is a bases of $\t$ then $\Theta(u) = - \sum_{i=1}^{q}u_{i}\iota_{\underline{\xi}^{i}}\theta + \Theta = 0$ and $\sigma^{*}\theta =0$. Therefore $\sigma^{*}(\Delta(\alpha)|_{H_{1}}) = \alpha|_{F_{1}}$ implying the assertion (\ref{SC-Ass3}). Remark this is an equality of equivariant differential forms. In particular, for $\alpha = \omega - \mu_{T}$ equivariant symplectic form on $M$ we have the equality 
$\Xi_{int}^{*}(\Delta(\omega - \mu_{T})|_{H_{1}}) = (\omega - \mu_{T})|_{F_{1}}.$
\end{enumerate}}
	\end{proof}

\subsection{$T$-equivariant Euler classes of normal bundles of fixed point components}

	\begin{lemma}[cf. \cite{JKo} Proposition 2.2]\label{Lem-SC-3}
Let $R$ be a $K$-invariant symplectic submanifold of a Hamiltonian $K$-manifold $M$ with moment map $\mu:M\to\mathfrak{k}^{*}$. Suppose that $0$ is a regular value of $\mu$ and denote $M_{0}=\mu^{-1}(0)/K$. If $R\cap\mu^{-1}(0)\neq\emptyset$ then the symplectic quotient $R_{0}=(R\cap\mu^{-1}(0))/K$ exists and we have isomorphism of normal bundles 
$$
\N(R_{0}|M_{0})\simeq\N(R|M)/\!\!/K.$$
If there is a second $T$-action on $M$	which commutes with $K$ and $R$ is $T$-invariant, then the isomorphism of normal bundle is $T$-equivariant.
	\end{lemma}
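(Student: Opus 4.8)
The plan is to first check that $R_0$ is a well-defined symplectic orbifold, and then to produce the bundle isomorphism as the descent to the $K$-quotient of a tautological isomorphism of $K$-equivariant normal bundles over $R\cap\mu^{-1}(0)$.

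\emph{The quotient $R_0$ exists.} Since $R$ is a $K$-invariant symplectic submanifold, $\omega|_R$ is a $K$-invariant symplectic form and $\mu|_R$ is a moment map for the $K$-action on $R$, because $\iota_{\xi_R}(\omega|_R)=(\iota_{\xi_M}\omega)|_R=d\langle\mu,\xi\rangle|_R=d\langle\mu|_R,\xi\rangle$ for all $\xi\in\mathfrak{k}$, using $\xi_R=\xi_M|_R$. For $m\in\mu^{-1}(0)$ the annihilator in $\mathfrak{k}$ of $\mathrm{im}\,d(\mu|_R)_m$ is $\{\xi\in\mathfrak{k}\,|\,\xi_M(m)=0\}=\mathrm{Lie}(K_m)$, which is $\{0\}$ since $0$ is a regular value of $\mu$; hence $d(\mu|_R)_m$ is surjective, so $0$ is a regular value of $\mu|_R$ and $R\cap\mu^{-1}(0)=(\mu|_R)^{-1}(0)$ is a smooth submanifold on which $K$ acts locally freely. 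The $K$-equivariant embedding $R\cap\mu^{-1}(0)\hookrightarrow\mu^{-1}(0)$ then descends to an embedding of orbifolds $R_0\hookrightarrow M_0$, and $R_0$ carries the reduced symplectic form.

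\emph{The fiberwise isomorphism.} For $m\in R\cap\mu^{-1}(0)$ the inclusions $\mu^{-1}(0)\hookrightarrow M$ and $R\hookrightarrow M$ induce a natural linear map
$$
\N\big(R\cap\mu^{-1}(0)\,\big|\,\mu^{-1}(0)\big)_m=\frac{\ker d\mu_m}{T_mR\cap\ker d\mu_m}\longrightarrow\frac{T_mM}{T_mR}=\N(R|M)_m .
$$
It is injective by construction, and it is surjective because $\ker d\mu_m+T_mR=T_mM$: by the previous paragraph $d\mu_m(T_mR)=d(\mu|_R)_m(T_mR)=\mathfrak{k}^{*}$, so for any $v\in T_mM$ there is $w\in T_mR$ with $d\mu_m(w)=d\mu_m(v)$, whence $v=(v-w)+w$. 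This map is $K$-equivariant and depends smoothly on $m$, so it is an isomorphism of $K$-equivariant vector bundles over $R\cap\mu^{-1}(0)$. Now restricting $\N(R|M)$ to $R\cap\mu^{-1}(0)=(\mu|_R)^{-1}(0)$ and taking the $K$-quotient of the total space gives, by definition, $\N(R|M)/\!\!/K$; on the other side, since $K$ acts locally freely on $\mu^{-1}(0)$ and $R\cap\mu^{-1}(0)$ is $K$-invariant, the equivariant normal bundle $\N(R\cap\mu^{-1}(0)\,|\,\mu^{-1}(0))$ descends to $\N(R_0|M_0)$. Dividing the isomorphism above by $K$ yields $\N(R_0|M_0)\simeq\N(R|M)/\!\!/K$. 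If a second $T$-action commutes with $K$ and preserves $R$, then (as we may take $\mu$ to be $T$-invariant) $T$ preserves $\mu^{-1}(0)$ and $R\cap\mu^{-1}(0)$ and acts on $M_0$, $R_0$; all the maps above are manifestly $T$-equivariant, so the resulting isomorphism of normal bundles is $T$-equivariant.

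\emph{Main obstacle.} The substance of the argument sits entirely in the transversality identity $\ker d\mu_m+T_mR=T_mM$, which is where one genuinely uses that $0$ is a regular value of both $\mu$ and $\mu|_R$; the rest is formal bookkeeping with equivariant normal bundles and locally free quotients, the only care needed being the local freeness that makes the quotients orbifolds and makes normal bundles descend.
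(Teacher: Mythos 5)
Your proof is correct, and it reaches the conclusion by a somewhat more explicit route than the paper. Both arguments begin identically: since $0$ is a regular value of $\mu$, the group $K$ acts locally freely on $\mu^{-1}(0)$, hence on $R\cap\mu^{-1}(0)=(\mu|_{R})^{-1}(0)$, so $0$ is a regular value of $\mu|_{R}$ and $R_{0}$ exists. For the bundle isomorphism the paper assembles a $3\times 3$ commutative diagram of vector bundles over $\pi^{-1}(R_{0})$ (the two normal-bundle sequences in the rows, the $\underline{\mathfrak{k}}$-descent sequences in the columns) and invokes the nine lemma to obtain exactness of the last column, which yields the isomorphism after dividing by $K$, with $T$-equivariance read off from the equivariance of the whole diagram. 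You instead build the fiberwise map $\ker d\mu_{m}/(T_{m}R\cap\ker d\mu_{m})\to T_{m}M/T_{m}R$, prove it is an isomorphism from the transversality $\ker d\mu_{m}+T_{m}R=T_{m}M$ (a consequence of the surjectivity of $d(\mu|_{R})_{m}$), and then quote the standard descent of equivariant normal bundles along the locally free quotient. What your version buys is that the essential analytic input, the transversality of $R$ and $\mu^{-1}(0)$, is isolated and proved explicitly, whereas in the paper it is encoded in the commutativity and exactness of the diagram; what the paper's version buys is that the identification over $R\cap\mu^{-1}(0)$ and the descent to the quotient are handled in a single homological step. Your parenthetical assumption that $\mu$ may be taken $T$-invariant is harmless here: in the situations where the lemma is applied, $\mu$ is a component of the moment map of the full torus action and is therefore automatically $T$-invariant.
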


	\begin{proof}
$0$ is a regular value of $\mu$ is equivalent to the locally free action of $K$ on  $\mu^{-1}(0)$. Hence $K$ acts locally freely on the set $R\cap\mu^{-1}(0)=(\mu|_{R})^{-1}(0)$. Therefore, $0$ is a regular value of $\mu|_{R}$ and the symplectic quotient $R_{0}=(\mu|_{R})^{-1}(0)/K=(R\cap\mu^{-1}(0))/K$ exists.

Let $\pi:\mu^{-1}(0)\to M_{0}$ the quotient map. We have the following commutative diagram of vector bundles over $\pi^{-1}R_{0}$ with short exact sequences  in rows and in the first two columns
$$
\xymatrix{
		  & 0 \ar[d] & 0 \ar[d] & & 
 \\ 
0\ar[r] & \pi^{-1}(R_{0})\times \mathfrak{k} \ar[r]\ar[d] &  (\pi^{-1}(M_{0})\times \mathfrak{k})|_{\pi^{-1}(R_{0})}\ar[r]\ar[d] & 0 \ar[d] &
\\
0\ar[r] & TR|_{\pi^{-1}(R_{0})} \ar[r]\ar[d]& TM|_{\pi^{-1}(R_{0})} \ar[r]\ar[d]& \N(R|M)|_{\pi^{-1}(R_{0})}\ar[r]\ar[d] & 0
\\
0\ar[r] & \pi^{-1}(TR_{0}) \ar[r]\ar[d]& \pi^{-1}(TM_{0}|_{R_{0}}) \ar[r]\ar[d]& \pi^{-1}\N(R_{0}|M_{0}) \ar[r]\ar[d]& 0
\\
 & 0 & 0 & 0 &
}
$$
The $9$-lemma implies the exactness of the last column and the lemma follows. In the case of the second action the diagram is $T$-equivariantly commutative, hence the isomorphism of bundles will be also $T$-equivariant.
	\end{proof}

We have a finite cover $T_{m}\times K_{z} \to T$, $(s,k) \mapsto sk$, since $\mathfrak{t} = \mathfrak{t}_{m} \oplus \mathfrak{k}_{z}$. Consequently, for computation of $T$-equivariant Euler classes we may suppose that $T=T_{m}\times K_{z}$. 
From Proposition \ref{Prop-SC-A} and Lemma \ref{Lem-SC-3} follows that
\begin{equation}\label{Eq-SC-1}
\mathcal{N}(F_{[m,z]}|M_{\Gamma}) 
\simeq 
\mathcal{N}(F_{m}\times F_{z}|M\times \mathbb{C}^{q})/\!\!/_{K_{diag}} 
\simeq 
\big( \textnormal{pr}_{1}^{*} \mathcal{N}(F_{m}|M) \oplus \textnormal{pr}_{2} ^{*}\mathcal{N}(F_{z}|\mathbb{C}^{q}) \big) /\!\!/ K_{diag}.
\end{equation}
 Choosing a $T$-invariant compatible triplet $(\omega,g,\mathcal{I})$ on $M$ the normal bundles $\mathcal{N}(F_{m}|M)$ and $\mathcal{N}(F_{m}\times F_{z}|M\times\mathbb{C}^{q})$ become complex vector bundles. 

The normal bundle $\mathcal{N}(F_{m}|M)$ splits $T_{m}$-equivariantly to sum of complex line bundles $L'_{i} \to F_{m}$, i.e. $\mathcal{N}(F_{m}|M) = \bigoplus_{i\in I_{m}} L'_{i}$. This splitting is  $T=T_{m}\times K_{z}$-equivariant since the actions of $T_{m}$ and $K_{z}$ commute. Denote $L_{i}$ the pull-back of $L'_{i}$ along the projection $\textnormal{pr}_{1}:F_{m}\times F_{z}\to F_{m}$ which is $\phi_{1}:T\times K_{diag}\to T$, $(t,k)\mapsto tk$ intertwining. Therefore, $L_{i}\to F_{m}\times F_{z}$ is a complex $T\times K_{diag}$-line bundle and we have $T\times K_{diag}$-equivariant isomorphism of vector bundles 
\begin{equation}\label{Eq-SC-2}
\textnormal{pr}_{1}^{*}\mathcal{N}(F_{m}|M) = \bigoplus_{i\in I_{m}} L_{i}.
\end{equation}

For a fixed $z = ( z_{1}, \ldots, z_{q} ) \in \mathbb{C}^{q}$ we set $J_{z} = \{ j \,|\, z_{j} = 0 \}$.
 We have $K$-equivariant splitting $\mathcal{N}(F_{z}|\mathbb{C}^{q})=\bigoplus_{j\in J_{z}} F_{z}\times \mathbb{C}_{\gamma_{j}}$, where $K$ acts on $\mathbb{C}_{\gamma_{j}}$ with weight $\gamma_{j}$. 
The pull-back of the line bundle $F_{z}\times \mathbb{C}_{\gamma_{j}}\to F_{z}$ along the $\phi_{2}:T\times K_{diag}\to K$, $(t,k)\mapsto k^{-1}$ intertwining projection $\textnormal{pr}_{2}:F_{m}\times F_{z}\to F_{z}$
is equal to the $T\times K_{diag}$-line bundle $\mathcal{L}_{j} := (F_{m}\times F_{z})\times \mathbb{C}_{\gamma_{j}}\to F_{m}\times F_{z}$ 
and we have $T\times K_{diag}$-isomorphism of vector bundles
\begin{equation}\label{Eq-SC-3}
\textnormal{pr}_{2}^{*}\mathcal{N}(F_{z}|\mathbb{C}^{q}) = \bigoplus_{j\in J_{z}}\mathcal{L}_{j}.
\end{equation}

The direct sum decomposition $\mathfrak{t}_{m} \oplus \mathfrak{k}_{z} = \mathfrak{t}$ induces isomorphism $\mathfrak{t}^{*} \simeq \mathfrak{t}^{*}_{m} \times \mathfrak{k}^{*}_{z}$. Moreover, with identifications $
\mathfrak{t}^{*}_{m}=\{\lambda \in \mathfrak{t}^{*} \,|\, \lambda(\mathfrak{k}_{z})=0\}\subset \mathfrak{t}^{*}$ and 
$\mathfrak{k}^{*}_{z} = \{ \lambda \in \mathfrak{t}^{*} \,|\, \lambda(\mathfrak{t}_{m})=0 \}\subset \mathfrak{t}^{*}$ we can write $\mathfrak{t}^{*} = \mathfrak{t}^{*}_{m} \oplus \mathfrak{k}^{*}_{z}$. 
Denote $\varrho_{m} : \mathfrak{t}^{*}_{m} \hookrightarrow \mathfrak{t}^{*}$ and $\varrho_{z} : \mathfrak{k}^{*}_{z} \hookrightarrow \mathfrak{t}^{*}$ inclusions.
Similarly, from direct sum decomposition $\mathfrak{k}=(\mathfrak{k}\cap \mathfrak{t}_{m}) \oplus \mathfrak{k}_{z}$ we get $\mathfrak{k}^{*} = (\mathfrak{k}\cap \mathfrak{t}_{m})^{*} \oplus \mathfrak{k}^{*}_{z}$ with identifications
$
(\mathfrak{k}\cap \mathfrak{t}_{m})^{*} = \{ \beta \in \mathfrak{k}^{*}\ |\ \beta(\mathfrak{k}_{z}) = 0 \} \subset \mathfrak{k}^{*}$
and
$\mathfrak{k}_{z}^{*} = \{ \beta \in \mathfrak{k}^{*} \ |\ \beta(\mathfrak{k} \cap \mathfrak{t}_{m}) = 0 \} \subset \mathfrak{k}^{*}.
$ 
We consider $\rho_{m} : \mathfrak{t}^{*}_{m} \to \mathfrak{k}^{*}$ and the inclusion $\rho_{z}: \mathfrak{k}^{*}_{z} \to \mathfrak{k}^{*}$ satisfying commutative diagrams
$$
\xymatrix{ \mathfrak{t}_{m}^{*} \ar[d] \ar[r]^{\varrho_{m}} \ar[rd]^{\rho_{m}} & \mathfrak{t}^{*} \ar[d] 
\\ 
(\mathfrak{k} \cap \mathfrak{t}_{m})^{*} \ar[r]  & \mathfrak{k}^{*}}
\qquad\textnormal{and}\qquad
\xymatrix{ \mathfrak{k}_{z}^{*}  \ar[r]^{\varrho_{z}} \ar[rd]^{\rho_{z}} & \mathfrak{t}^{*} \ar[d] 
\\ 
 & \mathfrak{k}^{*}}.
$$

The $T_{m}\times K_{z}$-equivariant Euler class of $L_{i}$ has of form $e_{T_{m}
\times K_{z}}(L_{i}) = \alpha_{i} + e_{K_{z}}(L_{i})$ for some $\alpha_{i}\in \mathfrak{t}_{m}$ and therefore the $T\times K_{diag}$-equivariant Euler class is equal to
$$
e_{T\times K_{diag}}(L_{i}) = \phi_{1}^{*}(e_{T}(L_{i})) = \varrho_{m}(\alpha_{i}) + \rho_{m}(\alpha_{i}) + (\varrho_{z}\times\rho_{z})
(e_{K_{z}}(L_{i})).$$ 
More explicitly, for any $u \in \mathfrak{t}$ and $v \in \mathfrak{k}$ we have
$$
e_{T\times K_{diag}}(L_{i})(u,v) = \alpha_{i} \big( \textnormal{pr}_{\mathfrak{t}_{m}}(u +v) \big) + e_{K_{z}}(L_{i}) \big( \textnormal{pr}_{\mathfrak{k}_{z}}(u + v) \big), 
$$
where $\textnormal{pr}_{\mathfrak{t}_{m}}:\mathfrak{t} \to \mathfrak{t}_{m}$ and $\textnormal{pr}_{\mathfrak{k}_{z}}:\mathfrak{t} \to \mathfrak{k}_{z}$ are projection along $\mathfrak{k}_{z}$ and  $\mathfrak{t}_{m}$, respectively. Moreover, we have
$$
e_{T\times K_{diag}}(\mathcal{L}_{j}) = \phi_{2}^{*}e_{K}(\mathbb{C}_{\gamma_{j}}) = -\gamma_{j}.
$$

Let $\kappa: H_{K_{diag}}(F_{m}\times F_{z}) \to H(F_{[m,z]})$ and $\kappa_{T}: H_{T\times K_{diag}}(F_{m}\times F_{z}) \to H_{T}(F_{[m,z]})$ be the usual and $T$-equivariant Kirwan maps, respectively. Let $\theta$ be a $T$-invariant connection form on the principal $K_{diag}$-bundle $(F_{m} \times F_{z})\cap \Psi^{-1}(0) \to F_{[m,z]}$. Consider the map $\mathfrak{t} \to \mathfrak{k}$ given by $u \mapsto \iota_{\underline{u}}\theta$.
	\begin{remark*}
	{\renewcommand{\labelenumi}{(\roman{enumi})}
	\begin{enumerate}
\item If $u\in \mathfrak{t}_{m}$ then the fundamental vector field on $F_{m}\times F_{z}$ equals $\underline{u}=0$.
\item If $u\in \mathfrak{k}_{z}$ then $\iota_{\underline{u}}\theta = u$ because $K_{z}\subset T$ acts on $F_{m}\times F_{z}$ as $K_{z}\subset K_{diag}$.

\item For all $u\in \mathfrak{t}$ we have $\iota_{\underline{u}}\theta = \textnormal{pr}_{k_{z}}u$.
	\end{enumerate}}
	\end{remark*}

\noindent We compute $\kappa(e_{K_{diag}}(L_{i})) = \textnormal{Hor}_{\theta}[ \alpha_{i}(\textnormal{pr}_{\mathfrak{t}_{m}}d\theta) + e_{K_{z}}(L_{i})(\textnormal{pr}_{\mathfrak{k}_{z}}d\theta)]$ and 
	\begin{align*}
\kappa_{T}(e_{T\times K_{diag}}(L_{i}))(u) 
&{}= 
\textnormal{Hor}_{\theta} \big[ \alpha_{i} \big( \textnormal{pr}_{\mathfrak{t}_{m}} ( u - \iota_{\underline{u}}\theta + d\theta ) \big) + e_{K_{z}}(L_{i}) \big( \textnormal{pr}_{\mathfrak{k}_{z}}(u-\iota_{\underline{u}}\theta + d\theta)\big) \big]
\\
&{}=
\textnormal{Hor}_{\theta} \big[ \alpha_{i} \big( \textnormal{pr}_{\mathfrak{t}_{m}}(u - \textnormal{pr}_{\mathfrak{k}_{z}}u + d\theta ) \big) + e_{K_{z}}(L_{i}) \big( \textnormal{pr}_{\mathfrak{k}_{z}}(u - \textnormal{pr}_{\mathfrak{k}_{z}}u + d\theta ) \big) \big]
\\
&{}= 
\alpha_{i}(\textnormal{pr}_{\mathfrak{t}_{m}}u) + \textnormal{Hor}_{\theta} \big[ \alpha_{i}(\textnormal{pr}_{\mathfrak{t}_{m}}d\theta) + e_{K_{z}}(L_{i})(\textnormal{pr}_{\mathfrak{k}_{z}} d\theta) \big]
\\
&{}=
\alpha_{i}(\textnormal{pr}_{\mathfrak{t}_{m}}u) + \kappa \big( e_{K_{diag}}(L_{i} ) \big),
	\end{align*}	
thus
\begin{equation}
\kappa_{T}(e_{T\times K_{diag}}(L_{i})) = \varrho_{m}(\alpha_{i}) + \kappa(e_{K_{diag}}(L_{i})).
\end{equation}
Similarly,
$$
\kappa_{T}(e_{T\times K_{diag}}(\mathcal{L}_{j}))(u) =
\textnormal{Hor}_{\theta} \big[ -\gamma_{j}(-\iota_{\underline{u}}\theta + d\theta ) \big]
=
\gamma_{j}(\textnormal{pr}_{\mathfrak{k}_{z}}u) - \textnormal{Hor}_{\theta} \big[ \gamma_{j}(d\theta) \big]
=
\gamma_{j}(\textnormal{pr}_{\mathfrak{k}_{z}}u) - \kappa(\gamma_{j}),
$$
thus
\begin{equation}\label{Eq-SC-5}
\kappa_{T}(e_{T\times K_{diag}}(\mathcal{L}_{j})) = \varrho_{z}(\gamma_{j}) - \kappa(\gamma_{j}).
\end{equation}
From equations (\ref{Eq-SC-1}-\ref{Eq-SC-5}) we have
\begin{align}
e_{T}(\mathcal{N}(F_{[m,z]}|M_{\Gamma})) 
&{}=\prod_{i\in I_{m}}\kappa_{T}(e_{T\times K_{diag}}(L_{i}))\prod_{j\in J_{z}}(e_{T\times K_{diag}}(\mathcal{L}_{j})) 
\notag
\\
&{}= 
\prod_{i\in I_{m}} \big[ \varrho_{m}(\alpha_{i}) + \kappa(e_{K_{diag}}(L_{i})) \big] \prod_{j\in J_{z}} \big[ \varrho_{z}(\gamma_{j}) - \kappa(\gamma_{j}) \big].
\label{SC-Eq-5b}
\end{align}

\begin{remark*}
\begin{enumerate}
\item  We have $\mathfrak{k}_{z} = \cap_{j \notin J_{z}} \ker \gamma_{j}$ and $F_{z} = \{ w \in \mathbb{C}^{q} \,|\,  w_{j} =0,\,\forall j \in J_{z} \}$. Moreover, $\Gamma_{z}: = Cone( \gamma_{j} \,|\, j \notin J_{z} ) = \psi(F_{z})$, hence  $\mu_{T}(F_{[m,z]}) = \mu_{T}(F_{m}) \cap \chi^{-1}(\Gamma_{z})$.

\item The set $\{\varrho_{z}(\gamma_{j}) \,|\, j\in J_{z}\}$ is a bases of $\mathfrak{k}^{*}_{z}$ and $\mu_{T}(F_{m}) \subset \mu_{T}(m) + \mathfrak{k}_{z}^{*} \subset \mathfrak{t}^{*}$. 

\item The underlying vector space of $\chi^{-1}(\Gamma_{z})$ is equal to $\mathfrak{t}^{*}_{m}$.
\end{enumerate}
\end{remark*}

	\begin{remark*}
In case of $T=K\times S$ denote $\sigma:\mathfrak{t}^{*}\to \mathfrak{s}^{*}$ the  restriction and introduce maps $\sigma_{m}:\mathfrak{t}^{*}_{m} \to \mathfrak{s}^{*}$, $\sigma_{m}=\sigma\circ\varrho_{m}$ and $\sigma_{z}:\mathfrak{k}^{*}_{z} \to \mathfrak{s}^{*}$, $\sigma_{z}=\sigma\circ\varrho_{z}$. Then $\varrho_{m} = \rho_{m}+\sigma_{m}$ and $\varrho_{z} = \rho_{z} + \sigma_{z}$. Hence,
	\begin{align}
e_{K\times S}\mathcal{N}(F_{[m,z]}|M_{\Gamma}) 
&= 
\prod_{i\in I_{m}} \big[ \rho_{m}(\alpha_{i}) + \sigma_{m}(\alpha_{i}) + \kappa(e_{K_{diag}}(L_{i})) \big] \prod_{j\in J_{z}} \big[ \rho_{z}(\gamma_{j}) + \sigma_{z}(\gamma_{j}) - \kappa(\gamma_{j}) \big]
\notag
\\
&{}=
\prod_{i\in I_{m}} \big[ \rho_{m}(\alpha_{i}) + \kappa_{S}(e_{K_{diag}}(L_{i})) \big] \prod_{j\in J_{z}} \big[ \rho_{z}(\gamma_{j}) - \kappa_{S}(\gamma_{j}) \big].
	\end{align}
Moreover, the splitting $\mathfrak{t} = \mathfrak{k} \oplus \mathfrak{s}$ induces isomorphism $\mathfrak{t}^{*} \simeq \mathfrak{k}^{*} \times \mathfrak{s}^{*}$ and we have identification 
$
\mathfrak{s}^{*} = \{ \lambda\in \mathfrak{t}^{*}\ |\ \lambda(\mathfrak{k})=0 \} \subset \mathfrak{t}^{*}.
$
For $z=0$ we have $\mathfrak{k}_{0} = \mathfrak{k}$, thus $\mathfrak{t}_{m}^{*} = \mathfrak{s}^{*}$, hence $\rho_{m}=0$, $\rho_{z} = 1_{\mathfrak{k}^{*}}$ and
\begin{align}
e_{K\times S}(\mathcal{N}(F_{[m,0]}|M_{\Gamma})) 
&{}= 
\prod_{i\in I_{m}} \big[ \sigma_{m}(\alpha_{i}) + \kappa(e_{K_{diag}}(L_{i})) \big] \prod_{j=1}^{q} \big[ \gamma_{j} -\kappa_{S}(\gamma_{j}) \big] 
\notag
\\
&{}=
e_{S} \big( F_{[m,0]}|M_{0} \big)
\prod_{j=1}^{q} \big[ \gamma_{j} - \kappa_{S}(\gamma_{j}) \big]. 
\label{SC-Eq-4b}
\end{align}
\end{remark*}

\subsection{Orbifold multiplicities}

As remarked before $M_{\Gamma}$ is an orbifold in general. Next we will compute the orbifold multiplicities  of its fixed point components as suborbifolds.

	\begin{prop}\label{SC-Prop-A}
If $M$ is compact and $K$ acts effectively on it then $\textnormal{mult}(M_{\Gamma})=1$ and $\textnormal{mult}(M_{\Gamma,0})=1$. Moreover, let $\tau_{1},\ldots, \tau_{q}$ be a $\mathbb{Z}$-bases of $\mathfrak{k}^{*}_{\mathbb{Z}}$. Set $\delta_{\gamma} = |\det([\gamma_{ij}]_{i,j=1}^{q})|$, where $\gamma_{i} = \sum_{j=1}^{q}\gamma_{ij}\tau_{j}$. Then $\textnormal{mult}(F'_{1}) = |K'| = \delta_{\gamma}$.
	\end{prop}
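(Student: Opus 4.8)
The plan is to treat the three assertions separately, reducing each multiplicity to the order of the generic isotropy group of the relevant quotient presentation. I first dispose of the lattice identity $|K'|=\delta_{\gamma}$: by construction $K$ acts on $\C^{q}$ through the homomorphism $f\colon K\to U(1)^{q}$, $f(k)=(k^{\gamma_{1}},\dots,k^{\gamma_{q}})$, whence $K'=\ker f$ and $K'$ acts trivially on $\C^{q}$; since $\gamma_{1},\dots,\gamma_{q}$ are linearly independent the integer matrix $[\gamma_{ij}]$ is invertible over $\R$, so $f$ is a surjective morphism of $q$-tori and (Smith normal form / Pontryagin duality) $|K'|$ equals the index of $\langle\gamma_{1},\dots,\gamma_{q}\rangle$ in $\mathfrak{k}^{*}_{\Z}$, i.e. $|\det[\gamma_{ij}]|=\delta_{\gamma}$.

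The geometric core is the assertion: \emph{if a torus $K$ acts effectively on a connected manifold $N$ with moment map $\mu\colon N\to\mathfrak{k}^{*}$ and $0$ is a regular value of $\mu$, then $\{n\in\mu^{-1}(0):K_{n}=\{e\}\}$ is open and dense in $\mu^{-1}(0)$.} Openness is the usual fact that the principal orbit type is open for a compact (hence proper) group action. For density I would argue by contradiction: if some nonempty open $\mathcal{U}\subset\mu^{-1}(0)$ consisted of points with nontrivial stabilizer, then by the principal orbit type theorem (and abelianity of $K$) we may shrink $\mathcal{U}$ to the open dense subset on which the stabilizer equals a fixed $e\neq H\subseteq K$, so $\mathcal{U}\subset N^{H}$. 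Then for $n\in\mathcal{U}$ we have $T_{n}\mu^{-1}(0)=\ker d\mu_{n}\subseteq T_{n}N^{H}$, hence $d\mu_{n}$ is injective on the normal space $N_{n}(N^{H})$, which is nonzero since $N^{H}\subsetneq N$ (effectiveness). But $\mu$ is $K$-invariant, hence $H$-invariant, so on a $K$-equivariant slice its restriction to $N_{n}(N^{H})$ is an $H$-invariant map vanishing at $n$, whose differential there is an $H$-equivariant linear map from $N_{n}(N^{H})$ --- a real $H$-module with no nonzero fixed vector --- into the trivial $H$-module $\mathfrak{k}^{*}$, hence zero, contradicting injectivity.

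Granting this, $\textnormal{mult}(M_{\Gamma})=1$ follows by applying it with $N=M\times\C^{q}$, $\mu=\Psi$ and the torus $K_{diag}\cong K$ (which acts effectively on $M\times\C^{q}$ since it does on $M$); as $0$ is a regular value of $\Psi$ by the standing hypotheses, $K_{diag}$ acts generically freely on $\Psi^{-1}(0)$, so $M_{\Gamma}$ is generically a manifold. Likewise $\textnormal{mult}(M_{\Gamma,0})=1$ follows by applying it with $N=M$, $\mu=\mu_{K}$, $K$: then $K$ acts generically freely on $\mu_{K}^{-1}(0)$, so $M_{0}$ is generically a manifold, and $M_{\Gamma,0}\cong M_{0}$ via $\Xi_{0}$. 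For $\textnormal{mult}(F'_{1})$ I would instead compute the $K_{diag}$-isotropy directly along the component $F'_{1}$ (the components of type $(1)$): by Proposition \ref{Prop-SC-A} it is represented by $(F_{1}\times\psi^{-1}(c_{1}))\cap\Psi^{-1}(0)$ modulo $K_{diag}$, where $F_{1}\subset M^{T}$ is a $T$-fixed component of $M_{int}$ and $c_{1}=\mu_{K}(F_{1})\in\textnormal{int}\,\Gamma$; since $c_{1}$ is interior, $\psi^{-1}(c_{1})$ is a product of circles on which every coordinate is nonzero and $(F_{1}\times\psi^{-1}(c_{1}))\cap\Psi^{-1}(0)=F_{1}\times\psi^{-1}(c_{1})$. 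The stabilizer of $(m,z)$ in this set is $\{k\in K:k\cdot m=m\}\cap\{k:f(k)=e\}$; the first factor is all of $K$ because $m$ is $T$-fixed and $K\subset T$, while the second is exactly $K'$ because no coordinate of $z$ vanishes. Hence every point has stabilizer $K'$, and $\textnormal{mult}(F'_{1})=|K'|=\delta_{\gamma}$.

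I expect the main obstacle to be the density statement in the key lemma --- ruling out that $\mu^{-1}(0)$ is locally contained in a fixed-point set $N^{H}$ --- which is precisely where the regularity of the moment map and the effectiveness of the action are genuinely used; the remaining steps are routine bookkeeping with the structures already built in the section.
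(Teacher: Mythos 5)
Your proposal is correct, and for the central point it takes a genuinely different route from the paper. For $\textnormal{mult}(M_{\Gamma})=\textnormal{mult}(M_{\Gamma,0})=1$ the paper argues through the orbit-type stratification of the ambient compact manifold: effectiveness forces the minimal isotropy group to be trivial, so $K$ acts freely on the dense principal stratum, and a free point is then transported into the relevant zero level set either through the open piece $M_{int}$ (giving effectivity of $M_{\Gamma,int}\simeq M_{int}/K'$) or through the quoted equivariant local triviality $\mu^{-1}(U)\simeq\mu^{-1}(0)\times U$ near the regular value $0$. You instead prove a self-contained lemma that for an effective Hamiltonian torus action with $0$ a regular value the free locus is open and dense in $\mu^{-1}(0)$: if an open piece of the level set lay in $N^{H}$ with $H\neq e$, then $\ker d\mu_{n}\subset T_{n}N^{H}$ would make $d\mu_{n}$ injective on an $H$-invariant complement of $T_{n}N^{H}$, while $K$-invariance of $\mu$ and averaging over $H$ annihilate $d\mu_{n}$ on that complement, which is nonzero by effectiveness and connectedness. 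Applying this to $(M\times\C^{q},\Psi,K_{diag})$ and to $(M,\mu_{K},K)$ yields both multiplicity-one statements uniformly; your route avoids the local model $\mu^{-1}(U)\simeq\mu^{-1}(0)\times U$ (asserted without proof in the paper) and does not use compactness of $M$, at the cost of the slice/averaging argument, and, like the paper, it implicitly uses connectedness of $M$. Your treatment of $\textnormal{mult}(F'_{1})=|K'|=\delta_{\gamma}$ is in substance the paper's: the paper reads the multiplicity off $H_{1}\simeq F_{1}/K'$ with $K'$ acting trivially on $F_{1}$, you compute the constant $K_{diag}$-isotropy $K'$ directly on $F_{1}\times\psi^{-1}(c_{1})$, and both identify $|K'|=|\det[\gamma_{ij}]|$ as the kernel order (degree, equivalently lattice index) of the weight homomorphism $K\to U(1)^{q}$.
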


\begin{proof}
Denote $M_{H} = \{ m\in M \ |\ K_{m} = H \}$ and $M_{(H)} = \{ m\in H \ |\ K_{m}\textnormal{ is conjugate to $H$ in $K$} \}$. $M_{(H)}$ is a submanifold for all $H\subset K$ and $M_{H}=M_{(H)}$ since $K$ is abelian. 
The compactness of $M$ implies that only finite number of subgroups of $K$ may appear as isotropy subgroups, hence we have a finite stratification $M=\biguplus_{H} M_{(H)}$. The inclusion defines a partial order on the set of isotropy groups. Moreover, it has a unique minimal element $H_{min}$. The corresponding stratum $M_{(H_{min})}$ is open and dense submanifold of $M$, called the principal stratum. Remark that $H_{min}$ being the unique minimal isotropy group, it acts trivially on $M$. (cf. \cite{GKG})
Therefore, if the action of $K$ is effective then $H_{min}$ is the trivial subgroup and $K$ acts freely on the principal stratum. In particular $K$ acts freely on $M_{int}\cap M_{(H_{min})}\neq\emptyset$ implying that $M_{\Gamma,int} \simeq M_{int}/K'$ is an effective orbifold. $M_{\Gamma,int} \subset M_{\Gamma}$ is open thus $M_{\Gamma}$ is also effective and therefore $\textnormal{mult}(M_{\Gamma}) = 1$.

There is an open neighborhood $U$ of $0\in\mathfrak{k}^{*}$ such that $\mu^{-1}(U)$ is $K$-equivariantly diffeomorphic to $\mu^{-1}(0)\times U$. Since $\mu^{-1}(U)$ is open and $M_{(H_{min})}$ is dense we have that $\mu^{-1}(U)\cap M_{(H_{min})}\neq\emptyset$ implying that $\mu^{-1}(0)\cap M_{(H_{min})}\neq\emptyset$ and consequently $M_{0}$ is also an effective orbifold. 
We emphasis that this does not mean that the orbifold multiplicities of $T$-fixed point components $H_{0}$ in $M_{\Gamma,0}$ are one.

The fixed point components $H_{1}$ of $M_{\Gamma,int}$ as orbifolds are isomorphic to $F_{1}/K' \subset M_{int}/K'$ for some fixed point component $F_{1}$ of $M_{int}$, thus $\textnormal{mult}(H_{1}) = \textnormal{mult}(F_{1}/K')= |K'|$. 
The action of $K\simeq \mathbb{R}^{q}/\mathbb{Z}^{q}$ on $\mathbb{C}^{q}$ gives a homomorphism $\mathbb{R}^{q}/\mathbb{Z}^{q} \to \mathbb{R}^{q}/\mathbb{Z}^{q}$, $(u_{1}, \ldots, u_{q}) \mapsto u_{1}\gamma_{1} + \ldots + u_{q}\gamma_{q}$, which has degree $|\det([\gamma_{ij}]_{i,j=1}^{q})|=K'$.
\end{proof}

\subsection{Atiyah-Bott-Berline-Vergne theorem on $M_{\Sigma}$}

The orbifold version of the Atiyah-Bott-Berline-Vergne localization is as follows \cite{Me}. 
	\begin{thm*}\label{SC-Thm-A}
Let $X$ be an orbifold with a $T$-action. For $\beta \in H_{T}(X)$ we have
$$\frac{1}{\textnormal{mult}(X)}\int_{X}\beta 
= \sum_{F\subset X^{T}}\frac{1}{\textnormal{mult}(F)}\int_{F}\frac{i^{*}_{F}\beta}{e_{T}\N(F|X)},$$
where $\textnormal{mult}(X)$ and $\textnormal{mult}(F)$ are the orbifold multiplicities of $X$ and $F$, and $\mathcal{N}(F|X)$ is the equivariant normal orbibundle of $F$ in $X$. 
	\end{thm*}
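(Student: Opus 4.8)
This is the orbifold Atiyah--Bott--Berline--Vergne theorem of \cite{Me}, so the plan is to reproduce its proof in the Cartan model of $T$-equivariant orbifold de Rham cohomology, with care for the two multiplicity normalizations. First I would fix a $T$-invariant Riemannian metric $g$ on $X$ and choose $\xi\in\mathfrak{t}$ generic, so that the zero locus of the fundamental vector field $\underline{\xi}$ is exactly $X^{T}$. For each fixed component $F\subset X^{T}$ the equivariant tubular neighbourhood theorem for orbifolds provides a $T$-invariant neighbourhood $U_{F}$, diffeomorphic to a disc orbibundle inside the normal orbibundle $\N(F|X)$, with sphere orbibundle boundary $S_{F}=\partial U_{F}$.

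Next I would run the exactness argument of Berline--Vergne. On $X\setminus\bigcup_{F}U_{F}$ the $1$-form $\theta:=\iota_{\underline{\xi}}g$ has $\iota_{\underline{\xi}}\theta=|\underline{\xi}|^{2}>0$, so $d_{\xi}\theta$ has invertible form-degree-zero component and $\beta=d_{\xi}\!\big(\theta\wedge\tfrac{i^{*}\beta}{d_{\xi}\theta}\big)$ there, where $\tfrac{1}{d_{\xi}\theta}$ is expanded as a geometric series in the nilpotent part $\tfrac{d\theta}{|\underline{\xi}|^{2}}$. Applying Stokes' theorem to the compact orbifold with boundary $X\setminus\bigcup_{F}U_{F}$ and letting the tube radius shrink to zero, the interior integral $\int_{X\setminus\bigcup U_{F}}\beta$ becomes, up to sign, $\sum_{F}\lim_{\varepsilon\to0}\int_{S_{F}}\theta\wedge\tfrac{i^{*}\beta}{d_{\xi}\theta}$, while $\int_{\bigcup U_{F}}\beta\to0$; hence $\tfrac{1}{\textnormal{mult}(X)}\int_{X}\beta$ equals, up to sign, a sum of such sphere-orbibundle integrals, each divided by $\textnormal{mult}(X)$.

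Then I would evaluate the individual terms. As the tube shrinks, each $S_{F}$-integral localizes to a neighbourhood of $F$; writing $\theta$ and $d\theta$ through a Chern--Weil representative of a connection on $\N(F|X)$ and integrating over the sphere-orbibundle fibres reproduces the classical identity $\lim_{\varepsilon\to0}\int_{S_{F}}\theta\wedge\tfrac{i^{*}\beta}{d_{\xi}\theta}=c_{F}\int_{F}\tfrac{i^{*}_{F}\beta}{e_{T}\N(F|X)}$, with $e_{T}\N(F|X)$ the $T$-equivariant orbibundle Euler class. The constant works out to $c_{F}=\pm\,\textnormal{mult}(X)/\textnormal{mult}(F)$: the disc orbibundle $U_{F}$ is open in $X$, hence has generic multiplicity $\textnormal{mult}(X)$, whereas its zero section $F$ has multiplicity $\textnormal{mult}(F)$, and orbifold fibre integration over $S_{F}\to F$ carries exactly this ratio. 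Substituting and dividing by $\textnormal{mult}(X)$ yields the asserted formula.

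The main obstacle is this last bookkeeping: one must check that the finite isotropy groups that occur generically along $F$ act on the normal fibres compatibly with the normalization of $e_{T}\N(F|X)$, so that no factor beyond $\textnormal{mult}(X)/\textnormal{mult}(F)$ survives, and one must treat components $F$ of varying dimension and multiplicity uniformly. A less computational alternative is the abstract localization argument with $\mathbb{Q}$-coefficients: a compact orbifold is a rational homology manifold, so after inverting the finitely many weights occurring in the $\N(F|X)$ the restriction $H_{T}(X)\to H_{T}(X^{T})$ becomes an isomorphism; combining the identities $i^{*}_{F}(i_{F})_{!}\gamma=e_{T}\N(F|X)\cdot\gamma$ and $\int_{X}(i_{F})_{!}\gamma=\tfrac{\textnormal{mult}(X)}{\textnormal{mult}(F)}\int_{F}\gamma$ with the decomposition $\beta=\sum_{F}(i_{F})_{!}\big(\tfrac{i^{*}_{F}\beta}{e_{T}\N(F|X)}\big)$ in localized cohomology then gives the theorem, reducing it once more to the same multiplicity comparison.
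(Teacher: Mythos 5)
The paper does not actually prove this statement: it is quoted as the orbifold version of the Atiyah--Bott--Berline--Vergne theorem with a citation to \cite{Me}, and is then only applied (to $M_{\Gamma}$, yielding Theorem \ref{Thm-SC-B}). So there is no in-paper proof to compare against; what you have written is a proof of the black-box input. Your sketch is the standard Berline--Vergne/Stokes argument transported to orbifolds, and it is sound in outline: invariant metric, generic $\xi$ with zero set $X^{T}$, equivariant exactness of $\beta$ off the fixed locus via $\theta=\iota_{\underline{\xi}}g$, orbifold Stokes on the complement of shrinking tubes, and evaluation of the sphere-orbibundle integrals; the localized-cohomology alternative you mention is also a legitimate route.

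The one step that carries all of the orbifold content is the constant $c_{F}$, which you assert rather than derive, and it is worth recording why it equals $\textnormal{mult}(X)/\textnormal{mult}(F)$: at a generic point of $F$ the local group has order $\textnormal{mult}(F)$, acts trivially on the $F$-directions, and acts on the uniformized normal fibre with kernel of order $\textnormal{mult}(X)$ (the principal isotropy of $X$, attained on a dense set of nearby points off $F$); hence the fibre of $S_{F}\to F$ is a sphere divided by a group of order $\textnormal{mult}(F)/\textnormal{mult}(X)$, and underlying-space fibre integration produces exactly the factor $\textnormal{mult}(X)/\textnormal{mult}(F)$ --- \emph{provided} $e_{T}\N(F|X)$ means the Euler class of the uniformized normal orbibundle, which is the convention the paper uses when it computes Euler classes such as (\ref{SC-Eq-5b}). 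With that normalization pinned down, and the routine remark that one proves the identity evaluated at generic $\xi$ and extends by polynomiality, your argument is complete; without it, the factor you call $c_{F}$ could silently migrate between the multiplicity prefactor and the Euler class, which is precisely the discrepancy one sees between different statements of the orbifold localization formula in the literature.
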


We apply the above theorem for $M_{\Gamma}$. It will summarize the results of this section.

	\begin{thm}\label{Thm-SC-B}
Suppose that $T=K\times S$ and $M_{\Sigma}$ compact. For any $\beta\in H_{T}(M)$ we have
\begin{align*}
\int_{M_{\Gamma}} \Delta(\beta e^{\omega - \mu_{T}}) 
={}& 
\oint_{M_{0}} \frac{ \kappa_{S}(\beta e^{\omega - \mu_{T}}) }{ \prod_{j=1}^{q} [\gamma_{j} - \kappa_{S}(\gamma_{j}) ] }
+ 
\sum_{\substack{F \subset M^{T} \\ \mu_{K}(F)\in \Gamma}} \frac{1}{\delta_{\gamma}} \int_{F} \frac{i^{*}_{F}(\beta e^{\omega - \mu_{T}})}{e_{T}\mathcal{N}(F|M)} 
\\&{}
+ \sum_{\substack{H \subset M_{\Gamma}^{T} \\ \mu_{K}(H)\in \partial\Gamma\setminus\{0\} } } \frac{1}{\textnormal{mult}(H)}\int_{H} \frac{i^{*}_{H}\Delta(\beta e^{\omega - \mu_{T}})}{e_{T}\mathcal{N}(H|M_{\Gamma})},
\end{align*}
where the number $\delta_{\gamma}$ is defined in Proposition \ref{SC-Prop-A} and we used notation
$$
\oint_{M_{0}} \frac{\kappa_{S}(\beta e^{\omega - \mu_{T}}) }{\prod_{j=1}^{q}[\gamma_{j} - \kappa_{S}(\gamma_{j})]} 
:= 
\sum_{F_{0}\subset M_{0}^{S}} \frac{1}{\textnormal{mult}(F_{0})} \int_{F_{0}} \frac{i^{*}_{F_{0}}\kappa_{S}(\beta e^{\omega - \mu_{T}}) }{e_{S}(F_{0}|M_{0})\prod_{j=1}^{q} \big[\gamma_{j} - i^{*}_{F_{0}}\kappa_{S}(\gamma_{j}) \big]},
$$
 $\textnormal{mult}(F_{0})$ is the multiplicity of $F_{0}$ as suborbifold of $M_{0}$.
	\end{thm}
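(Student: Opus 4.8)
The plan is to apply the orbifold Atiyah--Bott--Berline--Vergne localization theorem stated just above to the orbifold $M_{\Gamma}$ and the class $\Delta(\beta e^{\omega-\mu_{T}})\in H_{T}(M_{\Gamma})$, and then to evaluate each fixed-point contribution using the structural results already established in this section. By Proposition~\ref{SC-Prop-A} (replacing $K$ by its effective quotient, which does not change $M_{\Gamma}$) one has $\textnormal{mult}(M_{\Gamma})=1$, so localization gives
\[
\int_{M_{\Gamma}}\Delta(\beta e^{\omega-\mu_{T}})
=\sum_{F\subset M_{\Gamma}^{T}}\frac{1}{\textnormal{mult}(F)}\int_{F}\frac{i_{F}^{*}\Delta(\beta e^{\omega-\mu_{T}})}{e_{T}\mathcal{N}(F|M_{\Gamma})}.
\]
I would then split this sum according to the trichotomy $(0),(1),(2)$ of Subsection~\ref{SC-Sec-1}; here the hypothesis that $0\in\mathfrak{k}^{*}$ is a regular value of $\mu_{K}$ ensures these families are disjoint and exhaust $M_{\Gamma}^{T}$ (in particular no $T$-fixed component of $M$ maps to $0$). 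The type~$(2)$ components, characterised by $\mu_{K}(H)\in\partial\Gamma\setminus\{0\}$, need no further treatment: they constitute verbatim the third sum in the statement.

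For the type~$(0)$ components, each $H_{0}$ equals $\Xi_{0}(F_{0})$ for a unique $S$-fixed component $F_{0}\subset M_{0}^{S}$ (here $T$ acts on $M_{0}=\mu_{K}^{-1}(0)/K$ through $S=T/K$), and since $\Xi_{0}$ is a $T$-equivariant diffeomorphism, $\textnormal{mult}(H_{0})=\textnormal{mult}(F_{0})$. I would pull the corresponding summand back along $\Xi_{0}$: by parts (1)--(2) of Proposition~\ref{Prop-SC-B} one gets $\Xi_{0}^{*}\big(\Delta(\beta e^{\omega-\mu_{T}})|_{H_{0}}\big)=i_{F_{0}}^{*}\kappa_{S}(\beta e^{\omega-\mu_{T}})$, while by equation~(\ref{SC-Eq-4b}) the Euler class factors as $\Xi_{0}^{*}e_{T}\mathcal{N}(H_{0}|M_{\Gamma})=e_{S}(F_{0}|M_{0})\prod_{j=1}^{q}\big[\gamma_{j}-i_{F_{0}}^{*}\kappa_{S}(\gamma_{j})\big]$. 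Summing over $F_{0}$ and comparing with the notation defined in the statement reproduces exactly the first term $\oint_{M_{0}}\frac{\kappa_{S}(\beta e^{\omega-\mu_{T}})}{\prod_{j=1}^{q}[\gamma_{j}-\kappa_{S}(\gamma_{j})]}$.

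For the type~$(1)$ components, each $H_{1}$ equals $\Xi_{int}(F_{1}/K')$ for a unique $T$-fixed component $F_{1}\subset M^{T}$ with $\mu_{K}(F_{1})$ in the interior of $\Gamma$, and Proposition~\ref{SC-Prop-A} gives $\textnormal{mult}(H_{1})=|K'|=\delta_{\gamma}$. For such a component the point $z\in\mathbb{C}^{q}$ has no vanishing coordinate, hence $J_{z}=\emptyset$ and $\mathfrak{k}_{z}=0$, so $\mathfrak{t}_{m}=\mathfrak{t}$, $\varrho_{m}=\mathrm{id}$, and equation~(\ref{SC-Eq-5b}) collapses to $e_{T}\mathcal{N}(F_{[m,z]}|M_{\Gamma})=\prod_{i\in I_{m}}\big[\alpha_{i}+\kappa(e_{K_{diag}}(L_{i}))\big]$. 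Pulling this back along the section $\sigma$ of Proposition~\ref{Prop-SC-B}(3) yields $i_{F_{1}}^{*}e_{T}\mathcal{N}(F_{1}|M)$, and $\sigma^{*}\big(\Delta(\beta e^{\omega-\mu_{T}})|_{H_{1}}\big)=i_{F_{1}}^{*}(\beta e^{\omega-\mu_{T}})$ by the same proposition; since $\sigma$ presents $H_{1}$ as the multiplicity-$|K'|$ quotient $F_{1}/K'$, the localization summand equals $\frac{1}{\delta_{\gamma}}\int_{F_{1}}\frac{i_{F_{1}}^{*}(\beta e^{\omega-\mu_{T}})}{e_{T}\mathcal{N}(F_{1}|M)}$, which is the second term. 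Assembling the three families yields the claimed identity.

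Thus the theorem is essentially a bookkeeping assembly of facts already proved in this section, so I expect no serious obstacle. The points that require genuine care are (i) checking that the three families really partition $M_{\Gamma}^{T}$ without overlap --- this is exactly where regularity of $0$ for $\mu_{K}$ enters --- and (ii) keeping the orbifold-multiplicity conventions consistent between the localization formula, the diffeomorphism $\Xi_{0}$, and the presentation $H_{1}\cong F_{1}/K'$, so that the prefactors $1/\textnormal{mult}(F_{0})$ and $1/\delta_{\gamma}$ come out precisely as written.
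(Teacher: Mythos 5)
Your proposal is correct and follows essentially the same route as the paper: the paper gives no separate argument for this theorem beyond applying the orbifold Atiyah--Bott--Berline--Vergne formula to $M_{\Gamma}$ and assembling the trichotomy of fixed components from Subsection \ref{SC-Sec-1} together with Propositions \ref{SC-Prop-A}, the restriction results, and the Euler class computations (\ref{SC-Eq-5b}), (\ref{SC-Eq-4b}) --- exactly the bookkeeping you carry out. The only point worth tightening is that excluding $T$-fixed components of $M$ mapping to $\partial\Gamma\setminus\{0\}$ uses the standing transversality assumption of $\Gamma$ with the walls of $\mu_{K}(M)$ (a point wall cannot meet a proper face transversally), not regularity of $0$ alone.
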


	\begin{corollary*}\label{Cor-SC-A}
Let $K$ be one dimensional, then $\Gamma=\mathbb{R}_{\geq0}\gamma$ with $\gamma\in \mathfrak{k}_{\mathbb{Z}}\simeq \mathbb{Z}$. If $M_{\Gamma}$ is compact then for all $\beta\in H_{T}(M)$ we have
$$
\int_{M_{\Gamma}}\Delta(\beta e^{\omega - \mu_{T}}) 
=
\oint_{M_{0}} \frac{\kappa_{T/K}(\beta e^{\omega - \mu_{T}})}{\gamma - \kappa_{T/K}(\gamma)} 
+ 
\frac{1}{|\gamma|}\sum_{\substack{F\subset M^{T} \\ \mu_{K}(F)\in\Gamma }} \int_{F} \frac{i^{*}_{F}(\beta e^{\omega - \mu_{T}})}{e_{T}\mathcal{N}(F|M)}.
$$
	\end{corollary*}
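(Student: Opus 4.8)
The plan is to obtain this statement as the specialization of Theorem \ref{Thm-SC-B} to the case $q = \dim K = 1$, checking that two of the three terms simplify to the claimed expressions while the third disappears. I would begin by invoking Theorem \ref{Thm-SC-B} verbatim: since $T = K\times S$ and $M_{\Gamma}$ is compact, for every $\beta\in H_{T}(M)$ we get the three-term identity, now with a single weight $\gamma_{1}$. In the notation of Section \ref{SC-Sec-1}, $\gamma_{1}$ is precisely the weight $\gamma$ of the $K$-action on $\mathbb{C}$, and $\Gamma = \mathbb{R}_{\geq 0}\gamma$.

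The key point where one-dimensionality enters is the vanishing of the last sum. The cone $\Gamma = \mathbb{R}_{\geq 0}\gamma$ is a half-line inside the one-dimensional space $\mathfrak{k}^{*}$, so $\partial\Gamma\setminus\{0\} = \emptyset$; hence there is no $T$-fixed component $H\subset M_{\Gamma}^{T}$ with $\mu_{K}(H)\in\partial\Gamma\setminus\{0\}$. This is exactly the observation already recorded in the classification of $T$-fixed components in Section \ref{SC-Sec-1}: components of type $(2)$ do not occur when $K$ is $1$-dimensional. Consequently the boundary sum in Theorem \ref{Thm-SC-B} is empty and drops out.

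It then remains to identify the constants in the two surviving terms. Applying Proposition \ref{SC-Prop-A} with $q=1$: choosing a $\mathbb{Z}$-basis $\tau_{1}$ of $\mathfrak{k}^{*}_{\mathbb{Z}}$ and writing $\gamma = \gamma_{11}\tau_{1}$ gives $\delta_{\gamma} = |\det([\gamma_{11}])| = |\gamma|$, so the factor $1/\delta_{\gamma}$ in front of the fixed-point sum over $M$ becomes $1/|\gamma|$. For the first term, the identification $T/K \cong S$ coming from $T = K\times S$ makes the equivariant Kirwan maps coincide, $\kappa_{S} = \kappa_{T/K}$, and with $q=1$ the product $\prod_{j=1}^{q}[\gamma_{j} - \kappa_{S}(\gamma_{j})]$ reduces to $\gamma - \kappa_{T/K}(\gamma)$; thus $\oint_{M_{0}}\kappa_{S}(\beta e^{\omega-\mu_{T}})/\prod_{j}[\gamma_{j}-\kappa_{S}(\gamma_{j})]$ becomes $\oint_{M_{0}}\kappa_{T/K}(\beta e^{\omega-\mu_{T}})/(\gamma - \kappa_{T/K}(\gamma))$, the $\oint_{M_{0}}$ notation retaining its meaning from Theorem \ref{Thm-SC-B}. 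Assembling these three observations yields the stated formula. The proof is essentially bookkeeping; the only step that genuinely uses $\dim K = 1$ — and hence the only place where something could fail — is the vanishing of the type-$(2)$ boundary contribution, which holds because a ray has trivial relative boundary away from its apex.
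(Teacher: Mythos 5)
Your proposal is correct and is exactly the route the paper intends: the corollary is stated as an immediate specialization of Theorem \ref{Thm-SC-B} to $q=1$, where the boundary sum is empty (the paper itself records in the classification of fixed components that type-(2) components do not occur for one-dimensional $K$), $\delta_{\gamma}=|\gamma|$ by Proposition \ref{SC-Prop-A}, and $\kappa_{S}=\kappa_{T/K}$ under $T=K\times S$. Nothing further is needed.
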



\section{Equivariant Jeffrey-Kirwan theorem}

Motivated by \cite{PW} and \cite{HP2} we define integrations of equivariant cohomology classes on non-compact manifolds formaly by the Atiyah-Bott-Berline-Vergne localization formula provided that the fixed point locus is compact. 
By the same symplectic cut technique as in \cite{JKo} we prove our main result: an equivariant analog of the Jeffrey-Kirwan theorem for non-compact symplectic quotients under assumption the our spaces admit an auxiliary proper and bounded below moment map (cf. \cite{PW}, \cite{HP1}, \cite{Martens}). First we prove an equivariant version of Jeffrey-Kirwan theorem for compact torus quotients. The way we defined the polarization and equivariant residue it makes the theorem more general than the straightforward generalization of the ordinary Jeffrey-Kirwan theorem (cf. \cite{Martens} Theorem 3). For non-compact spaces we first apply symplectic cut with respect to the proper, bounded below moment map, making them compact. Then we apply our theorem for particular polarization and we show that the new fixed point loci introduced by the cut will not contribute.  We use Martin's method \cite{Martin} to proceed from torus quotient to non-abelian quotients. We conclude the section with hyperK\"ahler versions of the main result.

Let $G$ be a compact Lie group  and let $S$ be a torus of rank $q$. Let $(M,\omega)$ be a symplectic (non-compact) manifold with  Hamiltonian $G\times S$-action and denote by $\mu_{G\times S}:M \to \mathfrak{g}^{*}\times \mathfrak{s}^{*}$ its moment map.
 For any subgroup $H\subset G\times S$ the corresponding moment map is $\mu_{H} = (\mathfrak{g}^{*}\times \mathfrak{s}^{*}\to \mathfrak{h}^{*})\circ \mu_{G\times S}$. In particular, $\mu_{G}$ and $\mu_{S}$ denote the $G$- and $S$-moment maps.

We assume that there is an one dimensional torus $K$ in the center of $G\times S$  with non-surjective proper moment map $\mu_{K}$.  For any $\gamma \in \mathfrak{k}^{*}_{\mathbb{Z}}$ we can write $\mu_{K}=\varphi\cdot \gamma$ with $\varphi:M \to \mathbb{R}$. By \cite{PW} we have either $\textnormal{Im}\,\varphi = (-\infty, \eta]$ or $\textnormal{Im}\,\varphi = [\eta,+\infty)$ for some $\eta\in \mathbb{R}$. In the first case $\varphi$ is proper and bounded above, while in the second it is proper and bounded below.
\vskip1ex
\noindent {\it Proper and bounded below assumption.} We choose $\gamma$ such that the corresponding $\varphi$ to be proper and bounded below. 
\vskip1ex

Let $T$ be a maximal torus of $G$ of rank $r$ such that $K\subset T\times S$. 
Moreover, suppose that  $0\in (\mathfrak{g}^{*})^{G}$ is a regular value of $\mu_{G}$. Let $M/\!\!/G = \mu^{-1}_{G}(0)/G$ be the symplectic quotient and we assume that it is non-compact.

	\begin{remark*}
	\begin{enumerate}
	
\item Properness of $\mu_{K}$ implies that $\mu_{T\times K}$ and $\mu_{T\times S}$ are also proper.

\item $K\nsubseteq G$ otherwise $\mu_{G}$ would be proper and the symplectic quotient $M/\!\!/G$ would be compact.

\item For any $F\subset M^{T\times S}$ fixed point component $\mu_{K}(F)$ is a point, therefore $F$ is compact since $\mu_{K}$ is proper.
	\end{enumerate}
	\end{remark*}
\vskip1ex
\noindent {\it Compactness assumption.} We suppose that $M^{T\times S}$ is compact or equivalently, there are finite number of fixed point components $F\subset M^{T\times S}$. 	
\vskip1ex
 We recall the following result (\cite{MS} Proposition 5.6.)
	\begin{prop*}\label{EJK-Prop-1}
If $M$ is a compact $T\times S$-Hamiltonian manifold then $M^{T\times S}$ is non-empty.
	\end{prop*}

We also recall the following proposition from \cite{PW}.

	\begin{prop*}
Let $T$ be a torus and $K\subset T$ be an one dimensional subtorus. If $M$ is a $T$-Hamiltonian manifold such that the $K$-moment map $\varphi:M\to \mathbb{R}$ proper and bounded below then	$M^{T}$ is non-empty. 
	\end{prop*}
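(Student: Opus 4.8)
The plan is to reduce to the compact case by passing to the minimum set of $\varphi$ and invoking the existence of fixed points for a Hamiltonian torus action on a compact symplectic manifold. Since $K\subset T$ is one‑dimensional, choose $\xi\in\mathfrak{k}$ pairing to $1$ with $\gamma$; then $\varphi=\langle\mu_{T},\xi\rangle$ (viewing $\xi\in\mathfrak{k}\subset\mathfrak{t}$), so $d\varphi=\iota_{\underline{\xi}}\omega$, where $\underline{\xi}$ is the fundamental vector field of $\xi$. Because $\varphi$ is proper and bounded below, its infimum $c$ is attained: the set $\varphi^{-1}([c,c+1])$ is compact by properness and non‑empty by definition of $c$, and the restriction of $\varphi$ to it attains a minimum, which must equal $c$. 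Set $M_{\min}=\varphi^{-1}(c)$, a non‑empty closed subset of $M$, and compact since it is the $\varphi$‑preimage of the compact set $\{c\}$.

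Next I would check that $M_{\min}$ is a $T$‑invariant symplectic submanifold of $M$. As $T$ is abelian it preserves $\mu_{K}$, hence $\varphi$, so $M_{\min}$ is $T$‑invariant. Every $p\in M_{\min}$ is a critical point of $\varphi$, so $0=d\varphi_{p}=(\iota_{\underline{\xi}}\omega)_{p}$, and non‑degeneracy of $\omega$ forces $\underline{\xi}_{p}=0$; thus $M_{\min}\subset M^{K}$. Conversely $d\varphi$ vanishes identically along $M^{K}$ (again because $\underline{\xi}=0$ there), so $\varphi$ is locally constant on $M^{K}$ and $M_{\min}$ is open in $M^{K}$. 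Being also closed in $M^{K}$, the set $M_{\min}$ is a union of connected components of the fixed locus $M^{K}$; since $M^{K}$ is a symplectic submanifold of $M$ (fixed locus of a compact, in particular symplectic, group action), so is $M_{\min}$.

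Then I would restrict the Hamiltonian structure: $M_{\min}$ is a compact symplectic manifold on which $T$ acts in Hamiltonian fashion, with moment map $\mu_{T}|_{M_{\min}}$ (equivalently, its composition with $\mathfrak{t}^{*}\to(\mathfrak{t}/\mathfrak{k})^{*}$, since $K$ acts trivially on $M_{\min}$). Applying the preceding proposition (\cite{MS} Proposition 5.6, i.e. existence of fixed points of a Hamiltonian torus action on a compact symplectic manifold) to each connected component of $M_{\min}$ yields $(M_{\min})^{T}\neq\emptyset$; and $(M_{\min})^{T}=M_{\min}\cap M^{T}\subset M^{T}$, so $M^{T}\neq\emptyset$. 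The only genuinely delicate point is the identification of $M_{\min}$ with a union of connected components of $M^{K}$ — equivalently, that the attained minimal level of $\varphi$ is a non‑degenerate critical level with smooth preimage; the remaining ingredients (properness plus boundedness below forcing the infimum to be achieved on a compact set, symplecticity of fixed loci, and existence of fixed points in the compact case) are all standard or already recorded in the excerpt.
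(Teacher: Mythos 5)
Your argument is correct and complete: the paper itself gives no proof of this proposition (it is recalled from \cite{PW}), and your reduction to the compact case — the infimum of $\varphi$ is attained on the compact level set $M_{\min}=\varphi^{-1}(c)$, which lies in $M^{K}$ since minimum points are critical and $d\varphi=\iota_{\underline{\xi}}\omega$, and is in fact a union of connected components of $M^{K}$ because $\varphi$ is locally constant on $M^{K}$; then $M_{\min}$ is a compact symplectic $T$-Hamiltonian manifold, so the compact-case proposition (\cite{MS} Proposition 5.6) gives $(M_{\min})^{T}\subset M^{T}$ non-empty — is exactly the standard route and uses only facts already recorded in the text. One small remark: the point you flag as delicate is already fully settled by your open-and-closed argument; no Morse--Bott nondegeneracy of the minimal level is needed, since being a union of components of the fixed locus $M^{K}$ by itself guarantees smoothness and symplecticity of $M_{\min}$.
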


\begin{definition}
Let $M$ be a non-compact manifold with a $G$-action. Let $T\subset G$ be its maximal torus. For $\alpha \in H_{G}(M)$ we define
$$
\oint_{M} \alpha := \sum_{F\subset M^{T}} \int_{M}\frac{i^{*}_{F}\alpha}{ e_{T}\mathcal{N}(F|M)}, 
$$
under assumption that $M^{T}$ is compact (cf. \cite{HP2}). 
\end{definition}

	\begin{prop}\label{Prop-JK-2}
If $M^{T\times S}$ compact then $(M/\!\!/G)^{S}$ is also compact.	
	\end{prop}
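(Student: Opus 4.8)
The plan is to endow $M/\!\!/G$ with a proper, bounded below auxiliary function inherited from $\mu_{K}$, reduce the statement to a boundedness estimate on the $S$-fixed locus, and then settle that estimate by locating the $S$-fixed points of $M/\!\!/G$ inside the moment polyhedron of $M$.

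First I would set up the residual structures. Since $S$ commutes with $G$, the $S$-action descends to $M/\!\!/G$ and the $G$-invariant map $\mu_{S}|_{\mu_{G}^{-1}(0)}$ descends to a moment map $\overline{\mu}_{S}\colon M/\!\!/G\to\mathfrak{s}^{*}$. The central torus $K$ also acts on $M/\!\!/G$; because $K$ is central and $K\not\subseteq G$ (otherwise $\mu_{G}$ would be proper, forcing $M/\!\!/G$ compact), its image $K':=\textnormal{pr}_{S}(K)$ is a one-dimensional subtorus of $S$, the induced action of $K$ on $M/\!\!/G$ coincides with the $K'$-action, and $\mu_{K}$ descends to $\overline{\mu}_{K'}\colon M/\!\!/G\to\mathbb{R}$. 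As the orbit map $\pi\colon\mu_{G}^{-1}(0)\to M/\!\!/G$ of the compact group $G$ is proper and $\mu_{K}|_{\mu_{G}^{-1}(0)}$ is proper and bounded below (a restriction of $\mu_{K}$ to a closed set), $\overline{\mu}_{K'}$ is proper and bounded below. Since $(M/\!\!/G)^{S}$ is closed in $M/\!\!/G$, it then suffices to show that $\overline{\mu}_{K'}$ is bounded on $(M/\!\!/G)^{S}$: indeed $(M/\!\!/G)^{S}\subseteq\overline{\mu}_{K'}^{-1}([a,b])$, which is compact by properness, and a closed subset of a compact set is compact.

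Next I would analyse a single fixed point. Fix $[m]\in(M/\!\!/G)^{S}$ with lift $m\in\mu_{G}^{-1}(0)$, and write $v_{K}=v_{T}+v_{S}\in\mathfrak{t}\oplus\mathfrak{s}$ for a generator of $K\subseteq T\times S$. From $\mu_{G}(m)=0$ we get $\mu_{T}(m)=0$, hence
\[
\overline{\mu}_{K'}([m])=\mu_{K}(m)=\langle\mu_{T}(m),v_{T}\rangle+\langle\mu_{S}(m),v_{S}\rangle=\langle\mu_{S}(m),v_{S}\rangle .
\]
Being $S$-fixed, $[m]$ forces $\mathrm{Stab}_{G\times S}(m)$ to surject onto $S$; since $0$ is a regular value of $\mu_{G}$, the intersection of this stabiliser with $G$ is finite, so its identity component $\widehat{S}$ is a $q$-dimensional torus mapping isogenously onto $S$. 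After replacing $m$ by $g_{0}m$ for a suitable $g_{0}\in G$ (which alters neither $\mu_{G}(m)=0$ nor $\mu_{S}(m)$), I may assume $\widehat{S}\subseteq T\times S$; then $\widehat{S}\cap T$ is finite, i.e. $\widehat{\mathfrak{s}}\cap\mathfrak{t}=0$ inside $\mathfrak{t}\oplus\mathfrak{s}$, and $m$ lies on a connected component $N$ of $M^{\widehat{S}}$ whose $\mu_{T\times S}$-image has dimension $r$ and direction space the annihilator $\widehat{\mathfrak{s}}^{\circ}\subseteq(\mathfrak{t}\oplus\mathfrak{s})^{*}$.

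Finally I would close the loop using the moment polyhedron. By the Remark following the compactness assumption $\mu_{T\times S}$ is proper, so $\mu_{T\times S}(M)$ is a closed convex polyhedron (non-compact convexity, cf. \cite{LMTW}); its vertices are the $\mu_{T\times S}$-images of the components of $M^{T\times S}$, finite in number by the compactness assumption, so $\mu_{T\times S}(M)$ has only finitely many faces. The image $\mu_{T\times S}(N)$ is one such face, of dimension $r$, and by the previous step its direction space $\widehat{\mathfrak{s}}^{\circ}$ is transverse to $\{0\}\times\mathfrak{s}^{*}$ (because $\widehat{\mathfrak{s}}\cap\mathfrak{t}=0$); hence $\mu_{T\times S}(N)\cap(\{0\}\times\mathfrak{s}^{*})$ is a single point. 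Thus $\mu_{T\times S}(m)=(0,\mu_{S}(m))$ takes only one value per such face, hence finitely many values as $[m]$ runs over $(M/\!\!/G)^{S}$, and in particular $\overline{\mu}_{K'}([m])=\langle\mu_{S}(m),v_{S}\rangle$ stays bounded, which finishes the proof. The step I expect to be the main obstacle is making this last paragraph fully rigorous: one must know that $\mu_{T\times S}(M)$ is a genuine polyhedron with finitely many faces (which is exactly where properness of $\mu_{T\times S}$ together with compactness of $M^{T\times S}$ are both needed) and that the image of a connected component of a subtorus-fixed locus is a face of it — routine for compact Hamiltonian manifolds, but here requiring the non-compact convexity machinery, or else a direct reduction via the symplectic cut of Section~3.
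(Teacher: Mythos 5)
Your first two paragraphs are sound and run parallel to the paper's own strategy: the descended function $\varphi'$ on $M/\!\!/G$ is indeed proper and bounded below, so it suffices to pin down (or at least bound) the values of $\mu_{S}$ on lifts of $S$-fixed points; and your analysis of a lift $m$ — the stabiliser component $\widehat{S}\subset T\times S$ of dimension $q$ with $\widehat{\mathfrak{s}}\cap\mathfrak{t}=0$, the containment $\mu_{T\times S}(N)\subset\mu_{T\times S}(m)+\widehat{\mathfrak{s}}^{\circ}$, and the transversality of $\widehat{\mathfrak{s}}^{\circ}$ with $\{0\}\times\mathfrak{s}^{*}$ forcing $\mu_{T\times S}(m)$ to be the unique intersection point — is correct and is essentially the same geometric mechanism the paper uses (there phrased with one-dimensional stabilisers $T'\subset T\times K$, the affine hyperplane $\mathcal{H}$, and properness of $\mu_{T\times K}$).

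The gap is in the last paragraph, exactly where you flagged it, and it is not merely a matter of quoting non-compact convexity. Your finiteness argument rests on the claim that $\mu_{T\times S}(N)$, for $N$ a connected component of $M^{\widehat{S}}$, is a \emph{face} of the moment polyhedron $\mu_{T\times S}(M)$, so that finitely many faces give finitely many admissible affine planes. This is false even for compact Hamiltonian torus manifolds: images of fixed components of subtori are walls, and walls can lie in the interior of the moment image. For instance, let $T^{2}$ act on $S^{2}\times S^{2}\times S^{2}$ with the first circle rotating the first two factors and the second circle rotating the third; the component $\{n\}\times\{s\}\times S^{2}$ of the fixed set of the first circle maps to $\{0\}\times[-1,1]$, an interior wall of the square, not a face. (There is also a secondary worry that for an unbounded image "finitely many vertices" does not by itself yield "finitely many faces".) So nothing in your argument bounds the number of distinct planes $\mu_{T\times S}(m)+\widehat{\mathfrak{s}}^{\circ}$, which is the whole point. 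The paper closes this step with a different, elementary finiteness argument that you could substitute directly: there are only finitely many candidate stabiliser algebras $\widehat{\mathfrak{s}}$, because each is an intersection of kernels of isotropy weights at a component of the compact fixed locus (weights are locally constant there and the components are finite in number); and for each such subtorus $\widehat{S}$ the fixed manifold $M^{\widehat{S}}$ has only finitely many components, since $\varphi$ restricted to any component is proper and bounded below, so by the quoted Prato--Wu proposition that component contains a component of $M^{T\times S}$. With these two finiteness statements (plus properness of $\mu_{T\times K}$, as in the paper's estimate $\pi^{-1}((M/\!\!/G)^{K})\subset G\cdot\bigcup_{T'\in\mathcal{T}}M^{T'}\cap\mu_{T}^{-1}(0)$), your argument goes through; as written, the appeal to faces of the polyhedron does not.
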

\begin{proof} Since $(M/\!\!/G)^{S} \subset (M/\!\!/G)^{K}$ it suffices to show that $(M/\!\!/G)^{K}$ is compact.
Every connected component of $M^{K}$ is compact since $\varphi$ is proper. Moreover, $M^{K}$ has finitely many connected components because each of them contains a connected component of $M^{T\times S}$ and $M^{T\times S}$ has finitely many component by assumption. In particular, $M^{T\times K}\subset M^{K}$ is also compact.

Denote $\pi:\mu^{-1}_{G}(0) \to M/\!\!/G$ the quotient map. If $m\in (M/\!\!/G)^{K}$ and $p\in \pi^{-1}(m)\subset M$ then the isotropy group $(G\times K)_{p}$ is one dimensional. Its unit component $(G\times K)^{\circ}_{p}$ is an one dimensional torus. Recall that isotropy groups in the same orbit are conjugate. Thus, there is $q\in (G\times K)\cdot p$ such that $(G\times K)^{\circ}_{q} \subset T\times K$.  We remark that $(G\times K)^{\circ}_{q}\nsubseteq T$ and 
$\pi^{-1}(m) = G\cdot p \subset G\cdot M^{(G\times K)^{\circ}_{q}}.$ 
Let $\mathcal{T}$ be the set of all one dimensional subtori $T'\subset T\times K$ with properties
{\renewcommand{\labelenumi}{(\alph{enumi})}
\begin{enumerate}
\item $T'\nsubseteq T$ and
\item there is $F'\subset M^{T'}$ connected component such that for all $T''$ torus with $T'\subset T''\subset T\times K$, $T''\nsubseteq T$ we have that $(F')^{T''}$ is strictly smaller than $F'$.
\end{enumerate}}
	
The set $\mathcal{T}$ is finite. Indeed, let $F'$ be as in $(b)$. $F'$ contains a $T\times K$-fixed point. Choosing $T\times K$-invariant compatible triple we can consider $T\times K$-weights $\alpha_{i}$ on $T_{x}M$ for all $T\times K$-fixed points $x$. Recall that we have the same weight on $T_{x}M$ for all $x$ in a connected component of $M^{T\times K}$. The Lie algebra $\mathfrak{t}'$ must be of form $\cap_{\textnormal{some }i}\ker \alpha_{i}$, thus there is finite number of possible Lie subalgebras of $\mathfrak{t}\oplus \mathfrak{k}$ as $\mathfrak{t}'$.

We have inclusions of closed subsets
$$
\pi^{-1}\left((M/\!\!/G)^{K}\right) \subset G\cdot \left( \bigcup_{T'\in \mathcal{T}} M^{T'}\cap \mu^{-1}_{G}(0) \right) \subset G\cdot \left( \bigcup_{T'\in\mathcal{T}} M^{T'} \cap \mu^{-1}_{T}(0) \right).
$$
We conclude our proof by showing that $M^{T'} \cap \mu^{-1}_{T}(0)$ is compact for any $T'\in \mathcal{T}$. Let $F'\subset M^{T'}$ be a connected component. Recall that $(\mu_{T\times K})(F')$ lies in an affine hyperplane $\mathcal{H}$ of $\mathfrak{t}^{*}\times \mathfrak{k}^{*}$, where $\mathcal{H}$ is the inverse image of the point $\mu_{T'}(F')$ under $\mathfrak{t}^{*}\times  \mathfrak{k}^{*} \to (\mathfrak{t'})^{*}$. $\mathcal{H}\cap (0\times \mathfrak{k}^{*})$ is finite since $T'\nsubseteq T$. Finally,
$$
F'\cap \mu^{-1}_{T}(0)\ \subset\ F'\cap \mu_{T\times K}^{-1}(0\times \mathfrak{k}^ {*})\ \subset\ \mu_{T\times K}^{-1}(\mathcal{H}\cap (0\times \mathfrak{k}^{*}))
$$
and latter set is compact because $\mu_{T\times K}$ is proper. $M^{T'}$ has finitely many connected components since each of them contains a connected component of $M^{T\times S}$, hence $M^{T'}\cap \mu_{T}^{-1}(0)$ is compact.
	\end{proof}

The invariant function $\varphi:M\to \mathbb{R}$ descends to $M/\!\!/G$ and we denote it by $\varphi'$.
Consider the projection  $\textnormal{pr}_{\mathfrak{k}^{*}}:\mathfrak{t}^{*}\times\mathfrak{s}^{*}\to \mathfrak{k}^{*} = \mathbb{R}\gamma$ and define 
\begin{equation}\label{Eq-JK-14}
\pi:\mathfrak{t}^{*}\times \mathfrak{s}^{*} \to \mathbb{R},
\quad
\textnormal{pr}_{\mathfrak{k}^{*}}(x) = \pi(x)\cdot \gamma.
\end{equation}
Let $T' \subset T\times S$ be a subtorus and let $N \subset M^{T'}$ be a fixed point component. Recall that $\mu_{T\times S}(N)$ is a wall of $\mu_{T\times S}(M)$ and it lies on the affine subspace $W_{\mu_{T\times S}(N)} = \mu_{T\times S}(F) + (\mathfrak{t} \oplus \mathfrak{s}/\mathfrak{t}' )^{*}$ of $\mathfrak{t}^{*}\times \mathfrak{s}^{*}$, where $F\subset N^{T\times S}$ and we identify $(\mathfrak{t} \oplus \mathfrak{s}/\mathfrak{t}' )^{*}$ with the subspace $\{ \tau \in \mathfrak{t}^{*} \times \mathfrak{s}^{*}  \,|\, \tau(\mathfrak{t}')=0 \}$. Moreover, $(\mathfrak{t} \oplus \mathfrak{s}/\mathfrak{t}' )^{*}$ is spanned by the isotropy $T\times S$-weight vectors of $N$ at $F$.

	\begin{lemma}\label{Lem-JK-1}
There exists regular value $\varepsilon\in \mathbb{R}$ of $\varphi$ such that
\begin{enumerate}
\item[(a)] For all $\mu_{T\times S}(N)$ wall of $\mu_{T\times S}(M)$ such that $W_{\mu_{T\times S}(N)} \cap 0\times \mathfrak{s}^{*} = \{p\}$ we have $\pi(p)<\varepsilon$. 

\item[(b)] $M^{T\times S} \subset \varphi^{-1}(-\infty,\varepsilon]$.

\item[(c)] $\varepsilon$ is a regular value of $\varphi':M/\!\!/G \to \mathbb{R}$. 

\item[(d)] $(M/\!\!/G)^{S} \subset (\varphi')^{-1}(-\infty,\varepsilon]$.
\end{enumerate}
	\end{lemma}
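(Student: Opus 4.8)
The plan is to produce $\varepsilon$ as a sufficiently large real number that is simultaneously a regular value of $\varphi$ and of $\varphi'$. The point is that conditions (a), (b), (d) each amount to requiring $\varepsilon$ to exceed a certain finite supremum, while (c) is a genericity condition that will be handled by Sard's theorem. So I would first produce three real numbers $\varepsilon_{1},\varepsilon_{2},\varepsilon_{3}$ governing (b), (a), (d) respectively, then choose $\varepsilon$ larger than all of them and generic.

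For (b): by the compactness assumption $M^{T\times S}$ is compact, so $\varphi$ is bounded on it, and I set $\varepsilon_{1} := \sup_{M^{T\times S}}\varphi < \infty$. (In fact $\varphi$ is locally constant along $M^{K}\supseteq M^{T\times S}$, since $d\mu_{K}$ vanishes along $M^{K}$, so $\varepsilon_{1}$ is just the maximum of finitely many values.) For (d): by Proposition~\ref{Prop-JK-2} the fixed locus $(M/\!\!/G)^{S}$ is compact, hence $\varepsilon_{3}:=\sup_{(M/\!\!/G)^{S}}\varphi' < \infty$.

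For (a), the main step is to show that only finitely many affine subspaces $W_{\mu_{T\times S}(N)}$ occur, as $N$ ranges over fixed point components of subtori $T'\subset T\times S$. Here $N$ is itself a $T\times S$-Hamiltonian manifold whose $K$-moment map $\varphi|_{N}$ is proper (restriction of a proper map to a closed subset) and bounded below, so by the proposition recalled above $N^{T\times S}\neq\emptyset$; hence $N$ contains a connected component $F$ of $M^{T\times S}$. Consequently $W_{\mu_{T\times S}(N)}=\mu_{T\times S}(F)+(\mathfrak{t}\oplus\mathfrak{s}/\mathfrak{t}')^{*}$ is determined by one of the finitely many values $\mu_{T\times S}(F)$ together with $\mathfrak{t}'=\mathrm{Lie}(T')$, which must equal $\bigcap_{i\in J}\ker\alpha_{i}$ for a subset $J$ of the finitely many isotropy $T\times S$-weights $\alpha_{i}$ at $F$; thus there are finitely many possibilities. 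For each such $W$ for which $W\cap(0\times\mathfrak{s}^{*})$ is a single point $p$, this yields a real number $\pi(p)$, and I let $\varepsilon_{2}$ be the maximum of these finitely many numbers (set $\varepsilon_{2}=0$ if no such wall occurs).

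Finally I would set $\varepsilon_{0}=\max(\varepsilon_{1},\varepsilon_{2},\varepsilon_{3})$. Since $\varphi$ is $G$-invariant (being determined by the moment map of the central torus $K$) and $0$ is a regular value of $\mu_{G}$, the descended function $\varphi'$ is smooth on the symplectic orbifold $M/\!\!/G$, so by Sard's theorem the non-regular values of $\varphi$ and of $\varphi'$ each form a set of measure zero in $\mathbb{R}$; their union therefore misses a full-measure subset of $(\varepsilon_{0},+\infty)$, which is in particular nonempty. Choosing $\varepsilon$ in that subset, (c) holds by construction, while (a), (b), (d) hold because $\varepsilon$ strictly exceeds $\varepsilon_{2}$, $\varepsilon_{1}$, $\varepsilon_{3}$ respectively. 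The only genuine difficulty is the finiteness claim behind (a); once one notes that every $T'$-fixed component meets $M^{T\times S}$—which is exactly where the proper, bounded-below hypothesis enters—the rest reduces to the finiteness of $M^{T\times S}$ and of the weight data at each fixed component, and the remainder is a routine combination of compactness and Sard.
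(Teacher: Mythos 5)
Your argument is essentially the paper's: finiteness of $\varphi(M^{T\times S})$ and $\varphi'((M/\!\!/G)^{S})$ (via the compactness assumption and Proposition \ref{Prop-JK-2}) handles (b) and (d), finiteness of the wall planes $W_{\mu_{T\times S}(N)}$ — because each $N$ contains a component of $M^{T\times S}$ by the proper, bounded-below moment map — handles (a), and then any sufficiently large $\varepsilon$ works. The only (harmless) deviation is in (c): the paper observes that $\varphi$ and $\varphi'$ have only finitely many critical values, so every large value is automatically regular, whereas you invoke Sard to pick a common regular value above the threshold; both are valid.
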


	\begin{proof}
$\varphi(M^{T\times S})$ and $\varphi'((M/\!\!/G)^{S})$ are finite by compactness assumption and Proposition \ref{Prop-JK-2}. It follows that $\varphi$ and $\varphi'$ have only finitely many critical values. Moreover, $N$ contains a fixed point component of $M^{T\times S}$ for any wall $\mu_{T\times S}(N)$ of $\mu_{T\times S}(M)$. Thus we have only finitely many $W_{\mu_{T\times S}(N)}$ affine subspaces, hence it yields finitely many values $\pi(p)$ with $W_{\mu_{T\times S}(N)} \cap 0\times \mathfrak{s}^{*} = \{p\}$. Any value $\varepsilon$ bigger than all above will satisfy the required properties.
	\end{proof}

Let $\varepsilon$ be as in Lemma \ref{Lem-JK-1}. Consider the symplectic cut $X:=M_{\Gamma}$, where $\Gamma = \varepsilon - \mathbb{R}\gamma \in \mathfrak{k}^{*}$. Remark that as set $X = \varphi^{-1}(-\infty,\varepsilon)\uplus \varphi^{-1}(\varepsilon)/K$.
	Since $\varphi$ is proper and bounded below $X$ is a compact Hamiltonian $G\times S$-manifold. Let $\Delta_{X}:H_{T\times S}(M) \to H_{T\times S}(X)$ be the cut homomorphism. 
Denote by $\omega_{X}$ and $\phi_{G\times S}$, respectively the induced symplectic form and $G\times S$-moment map on $X$. We also use notation $\phi_{H}$ for moment maps corresponding to subgroups $H\subset G\times S$.
	\begin{lemma*}
$0 \in \mathfrak{g}^{*}$ is a regular value of $\phi_{G}:X\to \mathfrak{g}^{*}$.	
	\end{lemma*}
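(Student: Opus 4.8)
The plan is to show that the symplectic cut $X = M_\Gamma$ does not disturb the regularity of $0$ for the $G$-moment map, by a stratification argument over the three types of fixed-point pieces described in Section \ref{SC-Sec-1}. Recall $X$ decomposes set-theoretically as $X = \varphi^{-1}(-\infty,\varepsilon) \uplus \varphi^{-1}(\varepsilon)/K$, i.e.\ as $X_{int} \uplus X_{\partial}$ in the notation of the cut. First I would observe that on $X_{int} = \varphi^{-1}(-\infty,\varepsilon)$, which is $G\times S$-equivariantly an open dense subset of $X$ identified with an open subset of $M$ (modulo the trivial factor $K'$, which here is trivial since $K$ is one-dimensional and acts effectively on $\mathbb{C}$), the $G$-moment map $\phi_G$ agrees with $\mu_G$ restricted to $\varphi^{-1}(-\infty,\varepsilon)$. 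Since $0$ is a regular value of $\mu_G$ on all of $M$, it is a regular value of $\phi_G$ on $X_{int}$, so it remains only to check points of $\phi_G^{-1}(0)$ lying in the cut locus $\varphi^{-1}(\varepsilon)/K$.

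Next I would reduce the claim on the cut locus to a statement about $\mu_G$ on $M$ itself. A point $[m,z] \in X$ with $z\neq 0$ has $\phi_G([m,z]) = \mu_G(m)$ (the $G$-action on $X$ is induced from the $G$-action on the first factor, and $G$ commutes with $K$, so the $G$-moment map descends unchanged); thus $[m,z]\in\phi_G^{-1}(0)$ forces $m\in\mu_G^{-1}(0)$. Regularity of $0$ for $\phi_G$ at $[m,z]$ means $G$ acts locally freely on $\phi_G^{-1}(0)$ near $[m,z]$, equivalently the isotropy $\mathfrak{g}_{[m,z]} = 0$. Now $[m,z] = [gm, g^{-1}\text{(twisted)} z]$-type relations show the stabilizer of $[m,z]$ in $G$ is conjugate into the stabilizer of $m$ in $G$ together with the $K_{diag}$-direction; but $K$ is central and $K\not\subset G$, so the $\mathfrak{g}$-component of the stabilizer of $[m,z]$ equals $\mathfrak{g}_m$, which vanishes because $0$ is a regular value of $\mu_G$ on $M$. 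I would spell this out using the explicit description of $X = \Psi^{-1}(0)/K_{diag}$ and the fact that $K_{diag}$ acts locally freely on $\Psi^{-1}(0)$: the $G$-stabilizer of $[m,z]$ is the image in $G$ of $\{(g,k)\in G\times K : (g,k)\cdot(m,z) = (m,z)\}$ under $(g,k)\mapsto g$, and since $K$ acts effectively with $\mathfrak{k}\cap\mathfrak{g} = 0$, this projects isomorphically (at the Lie algebra level) onto $\mathfrak{g}_m$.

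The one genuinely delicate point — and the main obstacle — is handling the fixed-point components of type (0), namely $X_{\Gamma,0}$ (here $\mu_K = \varepsilon$), because there the cut performs an actual symplectic reduction by $K$ and one must be sure that reducing by $K$ does not create new $G$-stabilizers. But $\varepsilon$ was chosen in Lemma \ref{Lem-JK-1} to be a regular value of $\varphi$, so $K$ acts locally freely on $\varphi^{-1}(\varepsilon)$, and since $K$ is central in $G\times S$ the reduction $\varphi^{-1}(\varepsilon)/K$ carries a residual $G$-action whose stabilizers are again just the $G$-stabilizers downstairs; combined with $K\cap G$ being trivial (Lie-algebra-wise $K\not\subset G$), the argument of the previous paragraph applies verbatim. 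Finally I would assemble: $\phi_G^{-1}(0) \subset X$ meets only $X_{int}$ and the $z=0$ locus $X_{\Gamma,0}$ (a point with $z\neq 0$ but on $\partial\Gamma$ cannot occur since $\Gamma = \varepsilon - \mathbb{R}_{\geq 0}\gamma$ is a half-line with only the one boundary point $\varepsilon$), and on each piece the $G$-isotropy is trivial, so $0$ is a regular value of $\phi_G$. Alternatively, and perhaps more cleanly, one can phrase the whole argument via Proposition \ref{Prop-SC-A}: a point of $X^{G}$-type with nontrivial $\mathfrak{g}$-isotropy would force $\mathfrak{g}_m \neq 0$ for the corresponding $m\in M$, contradicting regularity of $\mu_G$; I would pick whichever phrasing the surrounding text makes most natural.
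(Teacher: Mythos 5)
Your reduction to the cut locus and your treatment of points $[m,z]$ with $z\neq 0$ are fine: there an infinitesimal stabilizer $(\xi,\eta)\in\mathfrak{g}\oplus\mathfrak{k}$ of $(m,z)$ must satisfy $\gamma(\eta)z=0$, so $\eta=0$ and $\xi\in\mathfrak{g}_m$, which vanishes by regularity of $0$ for $\mu_G$. The gap is exactly at the point you flag as delicate, the locus $z=0$, i.e. $\varphi^{-1}(\varepsilon)/K$. Your key claim there --- that because $K$ is central and $\mathfrak{k}\cap\mathfrak{g}=0$ the residual $G$-action on the reduction $\varphi^{-1}(\varepsilon)/K$ has ``stabilizers that are just the $G$-stabilizers downstairs'' --- is false in general. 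The $G$-stabilizer of a class $[m]$ is $\{g\in G \,|\, g\cdot m\in K\cdot m\}$, whose Lie algebra is the projection to $\mathfrak{g}$ of the full isotropy algebra $(\mathfrak{g}\oplus\mathfrak{k})_m$, and this can be strictly larger than $\mathfrak{g}_m$: take $G=K=S^1$ acting on $\mathbb{C}^2$, with $G$ rotating only the first coordinate and $K$ acting diagonally; at $m=(z_1,0)$, $z_1\neq0$, one has $G_m=\{1\}$ and $K_m=\{1\}$, yet the stabilizer of $[m]$ in $\mathbb{C}^2/K$ is all of $G$, even though $K$ is central and $\mathfrak{k}\cap\mathfrak{g}=0$. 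So new $\mathfrak{g}$-isotropy can appear on the cut locus whenever a combination of a $\mathfrak{g}$-direction and the $\mathfrak{k}$-direction fixes $m$, and neither regularity of $0$ for $\mu_G$ nor regularity of $\varepsilon$ for $\varphi$ on $M$ (the only properties you invoke) rules this out.

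What is actually needed, and what the paper proves, is the joint condition: regularity of $0$ for $\phi_G$ along $\varphi^{-1}(\varepsilon)/K$ is equivalent to $G\times K$ acting locally freely on $\mu_G^{-1}(0)\cap\varphi^{-1}(\varepsilon)$, i.e. to $(0,\varepsilon)$ being a regular value of $\mu_G\times\varphi:M\to\mathfrak{g}^*\times\mathbb{R}$, which in turn is equivalent to $\varepsilon$ being a regular value of $\varphi':M/\!\!/G\to\mathbb{R}$. That last property is precisely Lemma \ref{Lem-JK-1}(c), which was arranged in advance for this purpose; your proof never appeals to part (c), and without it the argument does not close. To repair your proof, replace the ``stabilizers descend unchanged'' step on the cut locus by the observation that, since $\mathfrak{k}_m=0$ there, the projection $(\mathfrak{g}\oplus\mathfrak{k})_m\to\mathfrak{g}$ is injective, so triviality of the residual $\mathfrak{g}$-isotropy at $[m]$ is equivalent to $(\mathfrak{g}\oplus\mathfrak{k})_m=0$, i.e. to local freeness of $G\times K$ at $m$, and then quote Lemma \ref{Lem-JK-1}(c) as above.
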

	\begin{proof}
It is enough to check that $0 \in \mathfrak{g}^{*}$ is a regular value of $\phi_{G}$ on $\varphi^{-1}(\varepsilon)/K$ which is equivalent to $G$ acting locally freely on $\phi^{-1}_{G}(0)\cap \varphi^{-1}(\varepsilon)/K $. This latter holds if and only if $G\times K$ acts locally freely on $\varphi^{-1}(\varepsilon) \cap \mu^{-1}_{G}(0)$	, i.e. when $(0,\, \varepsilon) \in  \mathfrak{g}^{*}\times \mathbb{R}$ is a regular value of $\mu_{G}\times \varphi:M \to \mathfrak{g}^{*}\times \mathbb{R}$. By a similar argument this holds exactly when $\varepsilon$ is a regular value of $\varphi':M/\!\!/G \to \mathbb{R}$, which holds by Lemma \ref{Lem-JK-1}(c).
	\end{proof}

\subsection{Abelian version}
First we consider the $G=T$ abelian case. We have the following generalization of the compact Jeffrey-Kirwan theorem.

	\begin{thm}\label{Thm-JK-1}
Let $X$ be a compact Hamiltonian $T\times S$-manifold with moment map $\phi_{T \times S} = \phi_{T}\times \phi_{S}:X \to \mathfrak{t}^{*} \times \mathfrak{s}^{*}$. Suppose that $0\in \mathfrak{t}^{*}$ is a regular value of $\phi_{T}$ and let $X/\!\!/T = \phi_{T}^{-1}(0)/T$ be the symplectic quotient. 
Then for any $\alpha_{X} \in H_{T\times S}(X)$ and any generic ordered bases $x$ with respect to $\oint_{X} \alpha_{X} e^{\omega_{X} - \phi_{T} - \phi_{S}}$ we have
$$
\oint_{X/\!\!/T} \kappa_{S}(\alpha_{X} e^{\omega_{X} - \phi_{T} -\phi_{S}}) = \textnormal{EqRes}_{x}\, \left( \frac{1}{vol(T)} \oint_{X} \alpha_{X} e^{\omega_{X} - \phi_{T} - \phi_{S}} \right)(x),
$$
where $\kappa_{S}:H_{T\times S}(X) \to H_{S}(X/\!\!/T)$ is the $S$-equivariant Kirwan map and $vol(T)$ is computed with scalar product used for $\textnormal{EqRes}$.
	\end{thm}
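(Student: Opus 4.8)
The plan is to carry out the symplectic cut argument of \cite{JKo}, using the cone vanishing properties of $\textnormal{EqRes}$ established in Section~2. First I would invoke Proposition~\ref{Prop-I-6}: since $X$ is compact and $0\in\mathfrak t^{*}$ is a regular value of $\phi_{T}$, the quantity $\textnormal{EqRes}_{x}\bigl(\tfrac1{vol(T)}\oint_{X}\alpha_{X}e^{\omega_{X}-\phi_{T}-\phi_{S}}\bigr)$ is independent of the generic basis $x$ and depends continuously on small translations $e^{\rho}$. Hence one may replace $0$ by a small generic regular value and pass to the limit at the end, and one is free to pick $x$ conveniently. With $x$ fixed I would choose a $\mathbb{Z}$-basis $\gamma_{1},\dots,\gamma_{r}$ of $\mathfrak t^{*}_{\mathbb{Z}}$ such that the simplicial cone $\Gamma=\textnormal{Cone}(\gamma_{1},\dots,\gamma_{r})$ is adapted to the polarization induced by $x$ (each polarized weight occurring in $\oint_{X}$ pairs positively with the interior of the cone dual to $\Gamma$) and such that every wall of $\phi_{T}(X)$ meets every face of $\Gamma$ transversally; this is possible by the transversality lemma of Section~3 together with a small rational perturbation, since $\phi_{T}(X)$ has only finitely many walls. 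Then I form the symplectic cut $X_{\Gamma}$ of $X$ with respect to $\Gamma$, i.e.\ $\Psi^{-1}(0)/T_{diag}$ inside $X\times\mathbb{C}^{r}$, which is a compact $(T\times S)$-orbifold because $X$ is compact.

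Next I would apply Theorem~\ref{Thm-SC-B} to $X_{\Gamma}$, whose reduction by $T$ is $X/\!\!/T$: this expresses $\int_{X_{\Gamma}}\Delta_{X}(\alpha_{X}e^{\omega_{X}-\phi_{T}-\phi_{S}})$ as the sum of the reduction term $\oint_{X/\!\!/T}\kappa_{S}(\alpha_{X}e^{\omega-\phi})/\prod_{j}[\gamma_{j}-\kappa_{S}\gamma_{j}]$, the contributions $\tfrac1{\delta_{\gamma}}\sum_{\phi_{T}(F)\in\Gamma}\int_{F}i_{F}^{*}(\alpha_{X}e^{\omega-\phi})/e_{T\times S}\mathcal N(F|X)$ of the original $T\times S$-fixed components lying over $\Gamma$, and the boundary (type (2)) contributions. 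I would then compute the same integral a second way: since $X_{\Gamma}$ is the $T_{diag}$-cut of $X\times\mathbb{C}^{r}$ and $\oint_{\mathbb{C}^{r}}e^{\omega_{\mathbb{C}}-\psi}=\pm 1/\prod_{j}\gamma_{j}$, the explicit, linearly weighted $\mathbb{C}^{r}$-factor makes the localization of $X_{\Gamma}$ computable by a direct computation, and $\int_{X_{\Gamma}}\Delta_{X}(\beta e^{\omega-\phi})$ gets identified with an iterated-residue operation applied to $\tfrac1{vol(T)}\oint_{X}\beta e^{\omega-\phi}$, in which the factor $1/\prod_{j}\gamma_{j}$ cancels the $\prod_{j}[\gamma_{j}-\kappa_{S}\gamma_{j}]$ of the reduction term; because $x$ is adapted to $\Gamma$, this iterated residue is precisely $\textnormal{EqRes}_{x}$ by Proposition~\ref{Prop-I-6}(a).

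It remains to see that the two groups of correction terms do not survive the residue. Each $F$ with $\phi_{T}(F)\in\Gamma$ contributes a generating fraction with exponent $\lambda_{F}=-\phi_{T\times S}(F)$; for the components with $\phi_{T}(F)\neq 0$ the cone $\Gamma$ was arranged so that $\lambda_{F}$ lies outside the polarized cone attached to $x$, whence its residue vanishes by Corollary~\ref{Cor-I-1} (equivalently Corollary~\ref{Cor-I-2}), while the components over $0$ reassemble, via Lemma~\ref{Lem-SC-3} and functoriality of the Kirwan map, into the reduction term itself; the boundary (type (2)) terms vanish by the wall vanishing of Proposition~\ref{Prop-I-3}, applied after perturbing $0$ by $\rho$. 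Comparing the two evaluations of $\int_{X_{\Gamma}}\Delta_{X}(\,\cdot\,)$ and letting $\rho\to 0$ then yields the theorem. The hard part is exactly this last matching: making the iterated residue produced by the symplectic cut coincide with $\textnormal{EqRes}_{x}$ while keeping precise control of polarizations and genericity, and checking that the unwanted fixed-point data land outside the relevant cones — this is where Propositions~\ref{Prop-I-3}, \ref{Prop-I-4}, \ref{Prop-I-6} and Corollaries~\ref{Cor-I-1}, \ref{Cor-I-2} all enter. Structurally the same identity can also be seen by decomposing $(X/\!\!/T)^{S}=\bigsqcup_{H}X^{H}/\!\!/T$ over the finitely many complements $H$ of $T$ in $T\times S$ that occur as stabilizer identity components, applying Atiyah--Bott--Berline--Vergne on the compact $X/\!\!/T$, and matching the $H$-summand with the pole $V=\ker(\mathfrak t^{*}\times\mathfrak s^{*}\to\mathfrak h^{*})$ term of $\textnormal{EqRes}_{x}$ by means of Lemma~\ref{Lem-SC-3}, the identity $\oint_{X^{H}}\bigl(\,\cdot\,/e_{T\times S}\mathcal N(X^{H}|X)\bigr)=\oint_{X}(\,\cdot\,)$, and the $S$-acting-trivially case of the theorem applied to each $X^{H}$.
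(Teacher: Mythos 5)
Your overall scaffolding (cut along a cone adapted to the polarization, apply Theorem \ref{Thm-SC-B} to get $I=I_{red}+I_{old}+I_{new}$, then kill the unwanted terms with the cone-vanishing properties of $\textnormal{EqRes}$) matches the paper, but the central step of your argument is not available as you state it, and this is a genuine gap rather than a presentational one. You propose to ``compute the same integral a second way'' and to identify $\int_{X_{\Gamma}}\Delta_{X}(\alpha_X e^{\omega_X-\phi_{T\times S}})$ directly with an iterated residue of $\tfrac{1}{vol(T)}\oint_{X}\alpha_X e^{\omega_X-\phi_{T\times S}}$, with the factor $1/\prod_j\gamma_j$ from the $\mathbb{C}^{r}$-factor ``cancelling'' $\prod_j[\gamma_j-\kappa_S(\gamma_j)]$ in the reduction term. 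No such direct evaluation or cancellation exists: the classes $\kappa_S(\gamma_j)$ are nontrivial elements of $H_S(X/\!\!/T)$, the reduction term is a sum of geometric-series expansions in them, and the cut-space integral is never computed in closed form. What the paper actually does is apply $\textnormal{EqRes}$ to \emph{both sides} of the localization identity on $X_{\Gamma}$ in \emph{two opposite polarizations} $y$ and $z=-y$, and use Proposition \ref{Prop-I-6}(a) (legitimate because $X_{\Gamma}$ is compact and $0$ is a regular value of the cut moment map, which forces the transversality condition $(\Gamma 2)$) to equate the two computations. With polarization $y$ one kills $I_{red}$ and $I_{new}$ and identifies $\textnormal{EqRes}_y I_{old}e^{\rho}$ with $\tfrac1{\delta_\gamma}\textnormal{EqRes}_y\oint_X(\cdots e^{\rho})$ (Lemmas \ref{Lem-JK-3}, \ref{Lem-JK-4}); with polarization $z$ one kills $I_{old}+I_{new}$ (Lemma \ref{Lem-JK-2}), and the quotient integral is extracted from $I_{red}$ only through the residue computation of Lemma \ref{Lem-JK-5}: expand $1/\prod_j(\gamma_j-\kappa_S(\gamma_j))$ as a series, evaluate each term by Proposition \ref{Prop-I-5}, and take the limit $\rho\to 0$ so that only the $k_1=\cdots=k_r=0$ term survives, producing the factor $vol(T)/\delta_\gamma$. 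Without this two-polarization mechanism and the $\rho\to0$ limit, your outline has no bridge between the cut-space identity and the quotient integral.

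Two further points would fail as written. First, you attribute the vanishing of the boundary (type (2)) contributions to Proposition \ref{Prop-I-3}; that proposition concerns walls of the moment polytope through $0$ and does not apply to components $H$ with $\phi_T(H)\in\partial\Gamma\setminus\{0\}$. Their vanishing under $\textnormal{EqRes}_y$ requires the separate geometric argument of Lemma \ref{Lem-JK-4} (showing $\lambda+\rho\notin Cone(\widetilde\alpha'_i,\widetilde\gamma'_j)$ by analyzing the minimum of the functional $\langle\cdot\,,t^1\rangle$ on $\phi_{T\times S}(F_x)\cap\chi^{-1}(\Gamma)$ and producing a $\gamma'_j$ with $\widetilde\gamma'_j=-\gamma'_j$), and under $\textnormal{EqRes}_z$ it uses Corollary \ref{Cor-I-3} together with the choice of $\rho$ in (\ref{Eq-JK-2}). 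Second, your alternative sketch via the decomposition of $(X/\!\!/T)^S$ into quotients $N/\!\!/T$ for $N\subset X^{H}$ is structurally how $\textnormal{EqRes}$ is designed (cf.\ the proof of Proposition \ref{Prop-I-6} and Remark \ref{Rk-I-3}), but to run it as a proof you would need a Jeffrey--Kirwan theorem applicable to the classes $i_N^{*}(\cdot)/e_{T\times S}\mathcal{N}(N|X)$, which are not equivariant cohomology classes of $N$ but Laurent-type expansions in the $\mathfrak{s}^{*}$-parameters; justifying that step is essentially the same difficulty you have not resolved, so it cannot be invoked as a shortcut.
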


	\begin{proof}
The proof goes the same way as in \cite{JKo}, only the very end is different.
Let $y = \{ t_{1}, \ldots, t_{r}, s_{1}, \ldots, s_{q}\}$ be a generic bases such that $\{t_{1},\ldots,t_{r}\}$ and $\{s_{1},\ldots,s_{q}\}$ are bases of $\mathfrak{t}^{*}$ and $\mathfrak{s}^{*}$, respectively. Denote $\{t^{1}, \ldots, t^{r}, s^{1}, \ldots, s^{q}\} \subset \mathfrak{t} \oplus \mathfrak{s}$ its dual bases. 
Introduce notations $(t^{1})^{<0} = \{ a_{1}t_{1}+ \ldots + a_{r+q}s_{q} \in \mathfrak{t}^{*} \times \mathfrak{s}^{*} \,|\, a_{1}<0 \}$. We define similarly the sets $(t^{1})^{\leq0}$, $(t^{1})^{\geq0}$ and $(t^{1})^{>0}$.
Since $y$ is generic we have $\langle t_{2}, \ldots, t_{r}, s_{1}, \ldots, s_{q} \rangle \cap \phi_{T \times S}(X^{T \times S}) = \emptyset$.

Let $\Gamma = Cone(\gamma_{1},\ldots,\gamma_{r}) \subset \mathfrak{t}^{*}$ be a rational simplicial cone such that
\begin{enumerate}

\item[($\Gamma1$)] $\gamma_{1}, \ldots, \gamma_{r} \in (t^{1})^{<0} \cap \mathfrak{t}^{*}_{\mathbb{Z}}$

\item[($\Gamma2$)] $\Gamma$ intersects any wall of $\phi_{T}(X)$ transversally, 

\item[($\Gamma3$)] $(t^{1})^{<0} \cap \phi_{T}(X^{T}) \subset \Gamma$, hence $(t^{1})^{<0} \cap \phi_{T \times S}(X^{T \times S}) \subset \chi^{-1}(\Gamma)$, where $\chi:\mathfrak{t}^{*} \times \mathfrak{s}^{*} \to \mathfrak{t}^{*}$ projection.

\end{enumerate}
We make symplectic cut with respect to $\Gamma$ and let  $X_{\Gamma}$. We also denote the $T$- and $S$-moment maps by $\phi_{T}$ and $\phi_{S}$, respectively.

We apply Theorem \ref{Thm-SC-B} on $X_{\Gamma}$:
\begin{align}
\oint_{X_{\Gamma}} \Delta (\alpha_{X} e^{\omega_{X} - \phi_{T \times  S}})
={}&
\oint_{X_{0}} \frac{\kappa_{S}(\alpha_{X} e^{\omega_{X} - \phi_{T \times S}}) }{\prod_{j=1}^{r}(\gamma_{j} - \kappa_{S}(\gamma_{j}))}
\label{Eq-JK-1a}
\\{}&
+
\sum_{\substack{F\subset X^{T \times S} \\ \phi_{T}(F)\in \Gamma}} \frac{1}{\delta_{\gamma}} \int_{F} \frac{i^{*}_{F}(\alpha_{X} e^{\omega_{X} - \phi_{T \times S}})}{e_{T\times S}\mathcal{N}(F|X)} 
\label{Eq-JK-1b}
\\&{}
+ 
\sum_{\substack{H \subset X_{\Gamma}^{T \times S} \\ \phi_{T}(H)\in \partial\Gamma\setminus\{0\} } } \frac{1}{\textnormal{mult}(H)}\int_{H} \frac{i^{*}_{H}\Delta(\alpha_{X} e^{\omega_{X} - \phi_{T \times S}})}{e_{T\times S}\mathcal{N}(H | X_{\Gamma})},
\label{Eq-JK-1c}
\end{align}
where $\Delta: H_{T\times S}(X) \to H_{T\times S}(X_{\Gamma})$ cut homomorphism.
Denote the left hand side of (\ref{Eq-JK-1a}) by $I$ and the right hand side by $I_{red}$. Moreover, denote the sum in (\ref{Eq-JK-1b}) and (\ref{Eq-JK-1c}) by $I_{old}$ and $I_{new}$, respectively. The short version of the above equality is $I= I_{red} + I_{old} + I_{new}$. 

Let $\rho \in \textnormal{int}\, \Gamma$ be generic in a small neighborhood of $0$ such that 
\begin{equation}\label{Eq-JK-2}
\langle \rho, -t^{1} \rangle < \langle \phi_{T} (F'), -t^{1} \rangle 
\end{equation}
for all $F' \subset X_{\Gamma}^{T \times S}$ with $\phi_{T}(F') \neq 0$.
Denote $z$ the ordered bases $\{-t_{1},\ldots,-t_{r},-s_{1},\ldots,-s_{q}\}$.

\begin{lemma}\label{Lem-JK-2}
$
\textnormal{EqRes}_{z}I_{old}(z) e^{\rho(z)} + \textnormal{EqRes}_{z} I_{new}(z) e^{\rho(z)} = 0.
$
\end{lemma}

\begin{proof}
Any summand of $I_{old} e^{\rho} + I_{new} e^{\rho}$ is of form $\frac{P e^{\lambda}}{\prod_{i}\alpha_{i}}$ with $\lambda = -\phi_{T \times S}(F') + \rho$ for some $F' \subset X_{\Gamma}^{T \times S}$ such that $\phi_{T}(F') \in \Gamma\setminus \{0\}$. From (\ref{Eq-JK-2}) and $(\Gamma1)$ follows that $\lambda \in (t^{1})^{>0}$, thus it is not polarized with respect to $z$ and by Corollary \ref{Cor-I-3} we have  $\textnormal{EqRes}_{z}\frac{P(z)e^{\lambda(z)}}{\prod_{i} \alpha_{i}(z)}=0$.
\end{proof}

\begin{lemma}\label{Lem-JK-3}
\begin{enumerate}

\item[(a)] $\textnormal{EqRes}_{y} I_{red}(y) e^{\rho(y)} = 0$. 

\item[(b)] $\textnormal{EqRes}_{y} I_{old}(y) e^{\rho(y)} = \frac{1}{\delta_{\gamma}} \textnormal{EqRes}_{y} \big( \oint_{X} \alpha_{X} e^{\omega_{X} - \phi_{T \times S} + \rho} \big)(y)$.

\end{enumerate}
\end{lemma}

\begin{proof} The proof goes similarly as for the previous lemma.
\begin{enumerate}

\item[(a)] Any summand of $I_{red}e^{\rho}$ is of form $\frac{P e^{\rho}}{\prod_{i} \alpha_{i}}$ and recall that $\rho \in (t^{1})^{<0}$ by $(\Gamma1)$. Hence $\rho$ is not polarized with respect to $y$ and from Corollary \ref{Cor-I-3} follows the first part of the lemma.

\item[(b)] If $F \subset X^{T \times S}$ such that $\langle \phi_{T}(F), t^{1} \rangle >0$ then its contribution $\int_{F} \frac{ i^{*}_{F}(\alpha_{X} e^{\omega_{X} - \phi_{T \times S}}) }{ e_{T \times S} \mathcal{N}(F | X) } = \frac{P e^{\lambda}}{\prod_{i} \alpha_{i}}$ with $\lambda = - \phi_{T \times S}(F) \in (t^{1})^{<0}$. Thus $(\Gamma1)$ and  $\rho \in \Gamma$ implies that $\lambda + \rho \in (t^{1})^{<0}$, which is not polarized with respect to $y$. By Corollary \ref{Cor-I-3} and $(\Gamma3)$ we get
$$
\textnormal{EqRes}_{y} \left( \frac{1}{\delta_{\gamma}} \oint_{X}\alpha_{X} e^{\omega_{X} - \phi_{T \times S} + \rho} \right)(y) 
= 
\textnormal{EqRes}_{y} \bigg( \sum_{ \substack{F \subset X^{T \times S}\\ \phi_{T}(F)\in \Gamma }} \frac{1}{\delta_{\gamma}} \int_{F} \frac{i^{*}_{F}( \alpha_{X} e^{\omega_{X} - \phi_{T \times S} + \rho})}{e_{T \times S} \mathcal{N}(F|X)} \bigg)(y).
$$

\end{enumerate}
\end{proof}

\begin{lemma}\label{Lem-JK-4}
$\textnormal{EqRes}_{y} I_{new}(y) e^{\rho(y)} = 0$.
\end{lemma}

\begin{proof}
Denote $U=T \times S$. Any summand of $I_{new}$ is of form $\frac{P e^{\lambda}}{\prod_{i\in I'} \alpha'_{i} \prod_{j \in J_{z}} \gamma'_{j}}$, where $\lambda = -\phi_{T \times S}(H)$ for some $H = F_{[x,z]} \subset X^{T \times S}_{\Gamma}$ with $\phi_{T}(H) \in \partial \Gamma \setminus \{0\}$ and $\alpha'_{i} \in \mathfrak{u}^{*}_{x}$, $\gamma'_{j} \in \mathfrak{t}^{*}_{z}$ is the projection of $\gamma_{j}$ to $\mathfrak{t}^{*}_{z}$ along $\mathfrak{u}^{*}_{x}$ for all $j\in J_{z}$. Recall that $\mathfrak{u}^{*} = \mathfrak{u}^{*}_{x} \oplus \mathfrak{t}^{*}_{z}$. Denote $\widetilde{\alpha}'_{i}$ and $\widetilde{\gamma}'_{j}$ the polarizations of $\alpha'_{i}$ and $\gamma'_{j}$ with respect to $y$. We will show that $\lambda + \rho \notin Cone(\widetilde{\alpha}'_{i},\, \widetilde{\gamma}'_{j} \,|\, i\in I',\,j\in J_{z})$ 
or equivalently
\begin{equation}\label{Eq-JK-3}
0 \notin \phi_{T\times S}(H) - \rho + Cone(\widetilde{\alpha}'_{i},\, \widetilde{\gamma}'_{j} \,|\, i\in I',\,j\in J_{z})
\end{equation}
and the lemma will follow from Corollary \ref{Cor-I-2}. We have $\rho \in \chi^{-1}(\textnormal{int}\, \Gamma)$, $\phi_{T\times S}(H) \in \mathfrak{u}^{*}_{x}$ and $\chi^{-1}(\textnormal{int}\,\Gamma) \subset \mathfrak{u}^{*}_{x} + \textnormal{int}\, Cone(\gamma'_{j}  \,|\, j\in J_{z})$, hence
$$
0 \in \phi_{T \times S}(H) -\rho + \mathfrak{u}^{*}_{x} + \textnormal{int}\,Cone(\gamma'_{j} \,|\, j\in J_{z}).
$$
Moreover,
$Cone(\widetilde{\alpha}'_{i},\,\widetilde{\gamma}'_{j}  \,|\, i\in I', j\in J_{z})\subset u^{*}_{x} + Cone(\widetilde{\gamma}'_{j}  \,|\, Ęj\in J_{z})$, hence it is enough to show that $Cone(\widetilde{\gamma}'_{j}  \,|\, Ęj\in J_{z}) \cap\textnormal{int}\, Cone(\gamma'_{j}  \,|\, Ęj\in J_{z})  = \emptyset$. These cones are simplicial, thus enough to show that $\widetilde{\gamma}'_{j} = - \gamma'_{j}$ for some $j \in J_{z}$. We consider the functional $\tau:\phi_{T\times S}(F_{x}) \to \mathbb{R}$, $p \mapsto \langle p, t^{1} \rangle$. Since the intersection of the vector space $\mathfrak{u}^{*}_{x}$ and the convex polytope $\phi_{T \times S}(F_{x})$ is transversal in $\phi_{T \times S}(H)$, hence $\phi_{T \times S}(H)$ is in the interior of $\phi_{T \times S}(F_{x})$. By $(\Gamma3)$ and convexity of $\phi_{T \times S}(F_{x})$ all minimal points of $\tau$ lie in $\textnormal{int}\, \Gamma$, thus $\phi_{T \times S}(H)$ cannot be a minimal point of $\tau|_{\phi_{T \times S}(F_{x})\cap \Gamma}$ and since $\phi_{T \times S}(F_{x}) \cap \Gamma \subset \phi_{T \times S}(H) + Cone(\gamma'_{j} \,|\, j\in J_{z})$ there must be $j \in J_{z}$ such that $\langle \gamma'_{j},t^{1} \rangle < 0$, implying $\widetilde{\gamma}'_{j} = - \gamma'_{j}$.
\end{proof}

\begin{lemma}\label{Lem-JK-5}
$\displaystyle\lim_{\varepsilon \to 0} \textnormal{EqRes}_{z} I_{red}(z) e^{\varepsilon \rho(z)} = \frac{vol(T)}{\delta_{\gamma}} \oint_{X_{0}} \kappa_{S}(\alpha_{X} e^{\omega_{X} - \phi_{T \times S}})$.
\end{lemma}

\begin{proof}
If $D \subset X_{0}^{S}$ is a fixed point component, then the corresponding contribution to $I_{red}$ equals 
\begin{multline}\label{Eq-JK-4}
\frac{1}{\textnormal{mult}(D)} \int_{D} \frac{i^{*}_{D}\kappa_{S}(\alpha_{X} e^{\omega_{X} -\phi_{T\times S}})}{e_{S}\mathcal{N}(D | X_{0}) \prod_{j=1}^{r}(\gamma_{j} - i^{*}_{D}\kappa_{S}(\gamma_{j}))}
\\
=
\frac{1}{\textnormal{mult}(D)}
\int_{D} \frac{i^{*}_{D}\kappa_{S}(\alpha_{X} e^{\omega_{X} -\phi_{T\times S}})}{e_{S}\mathcal{N}(D | X_{0}) \prod_{j=1}^{r}(\gamma_{j} + \eta_{j} - i^{*}_{D}\kappa(\gamma_{j}))}
\\
=
\frac{1}{\textnormal{mult}(D)}
\sum_{k_{1},\ldots,k_{r}\geq0}
\int_{D} \frac{i^{*}_{D}\kappa_{S}(\alpha_{X} e^{\omega_{X} -\phi_{T\times S}})}{e_{S}\mathcal{N}(D | X_{0})} \prod_{j=1}^{r} \frac{i^{*}_{D}\kappa(\gamma_{j})^{k_{j}}}{(\gamma_{j} + \eta_{j})^{k_{j}+1}}
\\
=
\sum_{k_{1},\ldots,k_{r}\geq0}
\frac{ P_{ k_{1},\ldots,k_{r} } e^{ -\phi_{S}(D)}  }{ \prod_{j=1}( \gamma_{j} + \eta_{j} )^{k_{j}+1} },
\end{multline}
where $\textnormal{mult}(D)$ is the multiplicity of $D$ as suborbifold of $X_{0}$, 
$\eta_{j} \in \mathfrak{s}^{*}$ such that $i^{*}_{D}\kappa_{S}(\gamma_{j}) = i^{*}_{D}\kappa(\gamma_{j}) - \eta_{j}$ and $P_{k_{1},\ldots,k_{r}}$ is a rational fraction in $s$ such that
$$
P_{ k_{1},\ldots k_{r} } e^{- \phi_{S}(D)} 
= 
\frac{1}{\textnormal{mult}(D)}
\int_{D} \frac{i^{*}_{D}\kappa_{S}(\alpha_{X} e^{\omega_{X} -\phi_{T\times S}})}{e_{S}\mathcal{N}(D | X_{0})} \prod_{j=1}^{r} i^{*}_{D}\kappa(\gamma_{j})^{k_{j}}.
$$ 
By Proposition \ref{Prop-I-5} and $(\Gamma 1)$ we have
\begin{equation}\label{Eq-JK-5}
\textnormal{EqRes}_{z} \frac{ P_{ k_{1},\ldots,k_{r} }(s)  e^{ -\phi_{S}(D)(s) + \rho(z)} }{ \prod_{j=1}( \gamma_{j}(z) + \eta_{j}(z) )^{k_{j}+1} }
=
\frac{P_{k_{1},\ldots,k_{r}}(s)  e^{ -\phi_{S}(D)(s) - \sum_{j=1}^{r}\rho_{j}\eta_{j}(s) } }{ \sqrt{\det[(t_{a},t_{b})]_{a,b}} \cdot \big| \det \big( \frac{ \partial \gamma_{j} }{ \partial t_{i} } \big) \big| } 
\cdot 
\frac{ \rho_{1}^{k_{1}} \cdots \rho_{r}^{k_{r}} }{ k_{1}!\cdots k_{r}! },
\end{equation}
where $\rho =  \rho_{1}(\gamma_{1} + \eta_{1}) + \ldots \rho_{r}(\gamma_{r} + \eta_{r}) - \sum_{j=1}^{r} \rho_{j}\eta_{j}$. 
Let $\{ \tau_{1}, \ldots, \tau_{r} \}$ be a bases of $\mathfrak{t}^{*}_{\mathbb{Z}}$ and $\{ \nu_{1}, \ldots, \nu_{r} \}$ be an orthonormal bases of $\mathfrak{t}^{*}$. Then by Remark \ref{Rk-I-5} we have
\begin{equation}\label{Eq-JK-6}
\sqrt{\det[(t_{a},t_{b})]_{a,b}} \cdot \left| \det \left( \frac{ \partial \gamma_{j} }{ \partial t_{i} } \right) \right| 
= 
\left| \det \left( \frac{ \partial \gamma_{j} }{ \partial \nu_{i} } \right) \right| 
= 
\left| \det \left( \frac{ \partial \gamma_{j} }{ \partial \tau_{l} } \right) \right| \cdot \left| \det \left( \frac{ \partial \tau_{l} }{ \partial \nu_{i} } \right) \right| 
= 
\frac{\delta_{\gamma}}{ vol(T)}.
\end{equation}
Remark that $P_{0,\ldots,0}e^{-\phi_{S}(D)} = \frac{1}{\textnormal{mult}(D)} \int_{D} \frac{i^{*}_{D} \kappa_{S}(\alpha_{X} e^{\omega_{X} -\phi_{T \times S} }) }{ e_{S}\mathcal{N}(D|X_{0})}$, thus by equations (\ref{Eq-JK-4}),  (\ref{Eq-JK-5}) and  (\ref{Eq-JK-6}) we have 
\begin{multline*}
\lim_{\varepsilon \to 0} \textnormal{EqRes}_{z} I_{red}(z) e^{\varepsilon\rho(z)} 
= 
\\
\frac{vol(T)}{\delta_{\gamma}}
\lim_{ \varepsilon \to 0 }   \sum_{ k_{1}, \ldots, k_{r} \geq 0 }  
P_{ k_{1}, \ldots, k_{r} }(s) \frac{ (\varepsilon \rho_{1})^{k_{1}} \cdots (\varepsilon \rho_{r})^{k_{r}} }{ k_{1}! \cdots k_{r}! } e^{ -\phi_{S}(D)(s) - \sum_{j=1}^{r} \varepsilon \rho_{j} \eta_{j}(s) }  
\\
=
\frac{vol(T)}{\delta_{\gamma}}
P_{0,\ldots,0} e^{-\phi_{S}(D)} 
=
\frac{vol(T)}{\delta_{\gamma}} \oint_{X_{0}} \kappa_{S}(\alpha_{X} e^{\omega_{X} - \phi_{T \times S}}).
\end{multline*}
\end{proof}

From Proposition \ref{Prop-I-6}(a) we have $\textnormal{EqRes}_{y} I(y) e^{\rho(y)} = \textnormal{EqRes}_{z} I(z) e^{\rho(z)}$ and by Lemmas \ref{Lem-JK-2}, \ref{Lem-JK-3}(a) and \ref{Lem-JK-4} it yields
$\textnormal{EqRes}_{y}I_{old}(y) e^{\rho}(y) = \textnormal{EqRes}_{z} I_{red}(z) e^{\rho(z)}$.   Taking limit as $\rho \to 0$ we get 
$$
\lim_{\varepsilon \to 0} \frac{1}{\delta_{\gamma}} \textnormal{EqRes}_{y} \left( \oint_{X} \alpha_{X} e^{ \omega_{X} - \phi_{T\times S} +\varepsilon \rho } \right)(y) = \lim_{\varepsilon \to 0} \textnormal{EqRes}_{z} I_{red}(z) e^{\varepsilon \rho(z)}
$$
by Lemma \ref{Lem-JK-3}(b), hence we arrive to
$$
\frac{1}{vol(T)} \textnormal{EqRes}_{y} \left( \oint _{X} \alpha_{X} e^{\omega_{X} - \phi_{T \times S}} \right)(y) 
= 
\oint_{X_{0}} \kappa_{S}(\alpha_{X} e^{\omega_{X} - \phi_{T \times S}}),
$$ 
by Lemma \ref{Lem-JK-5} and Proposition \ref{Prop-I-6}(b).
We use again Proposition \ref{Prop-I-6}(a) to get
\begin{align*}
\frac{1}{vol(T)} \textnormal{EqRes}_{x} \left( \oint_{X} \alpha_{X} e^{\omega_{X} - \phi_{T \times S}} \right) (x)
&{}=
\frac{1}{vol(T)} \textnormal{EqRes}_{y} \left( \oint_{X} \alpha_{X} e^{\omega_{X} - \phi_{T \times S}} \right) (y) 
\\&{}
= 
\oint_{X_{0}} \kappa_{S}(\alpha_{X} e^{\omega_{X} - \phi_{T \times S}}).
\end{align*}
	\end{proof}

The abelian version of our main theorem is as follows.

	\begin{thm}\label{Thm-JK-2}
Let $y =\{y_{1}, \ldots, y_{r+q}\}$ be an ordered bases of $\mathfrak{t}^{*} \times \mathfrak{s}^{*}$ such that $\pi(y_{1})>0$ and $y_{2},\ldots,y_{r+q} \in \ker \pi$. Then
\begin{equation}\label{Eq-JK-7}
\oint_{M/\!\!/T} \kappa_{S}(\alpha e^{ \omega - \mu_{ T\times S } }) = \textnormal{EqRes}_{x} \left( \frac{1}{vol(T)} \oint_{M} \alpha e^{ \omega - \mu_{T\times S} } \right)(x),
\end{equation}
where $x$ is a generic bases with respect to $\oint_{M} \alpha e^{\omega - \mu_{T \times S}}$, inducing the same polarization as $y$ on isotropy $T\times S$-weights of $M$.
	\end{thm}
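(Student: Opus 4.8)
The plan is to reduce to the compact Theorem \ref{Thm-JK-1} by means of the single symplectic cut introduced above. Fix $\gamma$ primitive, so that the multiplicity $\delta_{\gamma}$ of Proposition \ref{SC-Prop-A} equals $1$, and choose $\varepsilon$ as in Lemma \ref{Lem-JK-1}, requiring in addition that $\varepsilon>0$ and that $\varepsilon$ avoid the (finitely many, by the compactness assumption and Proposition \ref{Prop-JK-2}) values of $\varphi$ on $M^{T\times S}$ and of $\varphi'$ on $(M/\!\!/T)^{S}$, so that the inclusions in Lemma \ref{Lem-JK-1}(b),(d) become strict. Then $X=M_{\Gamma}$ is a compact Hamiltonian $T\times S$-manifold, $0\in\mathfrak{t}^{*}$ is a regular value of $\phi_{T}$ (the lemma preceding this subsection), and $\Delta_{X}:H_{T\times S}(M)\to H_{T\times S}(X)$ sends $\alpha e^{\omega-\mu_{T\times S}}$ to $\Delta_{X}(\alpha)e^{\omega_{X}-\phi_{T\times S}}$. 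Applying Theorem \ref{Thm-JK-1} to $X$ with $\alpha_{X}=\Delta_{X}(\alpha)$ and the given generic bases $x$ yields
$$
\oint_{X/\!\!/T}\kappa_{S}\bigl(\Delta_{X}(\alpha e^{\omega_{X}-\phi_{T\times S}})\bigr)=\textnormal{EqRes}_{x}\Bigl(\tfrac{1}{vol(T)}\oint_{X}\Delta_{X}(\alpha e^{\omega_{X}-\phi_{T\times S}})\Bigr)(x),
$$
and it remains to match the two sides with the quantities attached to $M$ and $M/\!\!/T$.

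For the right-hand side, Theorem \ref{Thm-SC-B} together with the description of $(M_{\Gamma})^{T\times S}$ from Section 3 (type $(2)$ absent since $K$ is one-dimensional, type $(1)$ equal to $M^{T\times S}$ since $\varphi(M^{T\times S})<\varepsilon$, and $\delta_{\gamma}=1$) gives the identity of formal fraction-sums
$$
\oint_{X}\Delta_{X}(\alpha e^{\omega_{X}-\phi_{T\times S}})=\oint_{M}\alpha e^{\omega-\mu_{T\times S}}+R,\qquad R:=\oint_{M_{0}}\frac{\kappa_{S}(\alpha e^{\omega-\mu_{T\times S}})}{\gamma-\kappa_{S}(\gamma)},
$$
where $M_{0}=\mu_{K}^{-1}(\varepsilon\gamma)/K$ is the (compact) cut locus. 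I then want $\textnormal{EqRes}_{x}(R)=0$. Writing $R$ by Atiyah-Bott-Berline-Vergne on $M_{0}$, each summand is a fraction whose exponent is $\lambda=-\mu_{T\times S}(F_{0})$ for a $T\times S$-fixed component $F_{0}$ of $M_{0}$; since $F_{0}$ lies over the cut level, $\pi(\lambda)=-\varepsilon\,\pi(\gamma)<0$, so $\lambda$ is not polarized with respect to $y$, and because any lift of $F_{0}$ has $(T\times S)$-isotropy of dimension at least $r+q-1$, Lemma \ref{Lem-JK-1}(a) forces $\textnormal{pr}_{\mathfrak{t}^{*}}(\lambda)\neq0$; hence $\textnormal{pr}_{\mathfrak{t}^{*}}(\lambda)$ is nonzero and not polarized, so it does not lie in $Cone(\textnormal{pr}_{\mathfrak{t}^{*}}(\widetilde{\alpha}_{i}))$, and Corollary \ref{Cor-I-2} (after a small generic perturbation, whose limit exists by Proposition \ref{Prop-I-6}(b) since $R$ is integration over the compact $M_{0}$, and using Proposition \ref{Prop-I-6}(a) to pass from $y$ to $x$) gives $\textnormal{EqRes}_{x}(R)=0$. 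Consequently the right-hand side above equals $\textnormal{EqRes}_{x}\bigl(\tfrac{1}{vol(T)}\oint_{M}\alpha e^{\omega-\mu_{T\times S}}\bigr)(x)$.

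For the left-hand side, $X/\!\!/T$ is the symplectic cut of $M/\!\!/T$ at level $\varepsilon$: on its open dense part $\{\varphi'<\varepsilon\}$ the class $\kappa_{S}(\Delta_{X}(\alpha e^{\omega_{X}-\phi_{T\times S}}))$ restricts to $\kappa_{S}(\alpha e^{\omega-\mu_{T\times S}})$ (by the restriction statements of Section 3 and naturality of the Kirwan map), and its $S$-fixed locus there is all of $(M/\!\!/T)^{S}$ by Lemma \ref{Lem-JK-1}(d), with the same normal bundles; the other $S$-fixed components lie on the cut locus $M_{0}/\!\!/T$. Localization therefore gives
$$
\oint_{X/\!\!/T}\kappa_{S}\bigl(\Delta_{X}(\alpha e^{\omega_{X}-\phi_{T\times S}})\bigr)=\oint_{M/\!\!/T}\kappa_{S}\bigl(\alpha e^{\omega-\mu_{T\times S}}\bigr)+L,
$$
where $L$ collects the cut-locus contributions. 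Since the normal line bundle of $M_{0}/\!\!/T$ in $X/\!\!/T$ has $S$-equivariant Euler class $\gamma-\kappa_{S}(\gamma)$ (the reduced form of \eqref{SC-Eq-4b}), we have $L=\oint_{M_{0}/\!\!/T}\kappa_{S}\bigl(\tfrac{\kappa_{S}(\alpha e^{\omega-\mu_{T\times S}})}{\gamma-\kappa_{S}(\gamma)}\bigr)$, and applying Theorem \ref{Thm-JK-1} to the compact $M_{0}$ (on which the residual $T$ acts with $0$ a regular value of its moment map, by the argument of the lemma preceding this subsection) with the class $\tfrac{\kappa_{S}(\alpha e^{\omega-\mu_{T\times S}})}{\gamma-\kappa_{S}(\gamma)}$ identifies $L$ with $\textnormal{EqRes}_{x}\bigl(\tfrac{1}{vol(T)}R\bigr)(x)=0$. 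Combining the three displays gives $\oint_{M/\!\!/T}\kappa_{S}(\alpha e^{\omega-\mu_{T\times S}})=\textnormal{EqRes}_{x}\bigl(\tfrac{1}{vol(T)}\oint_{M}\alpha e^{\omega-\mu_{T\times S}}\bigr)(x)$.

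The step I expect to be most delicate is the bookkeeping that identifies the cut-locus pieces $R$ and $L$: one must check that the residual torus actions on $M_{0}$ and on $M_{0}/\!\!/T$, the compatibility of the iterated Kirwan maps, and the factorization of the cut-direction Euler class as $\gamma-\kappa_{S}(\gamma)$ all match up exactly (up to the finite cover implicit in writing the big torus as $K$ times a complement), so that Theorem \ref{Thm-JK-1} applied to $M_{0}$ produces precisely $\textnormal{EqRes}_{x}(\tfrac{1}{vol(T)}R)$; alongside this, one must verify in the polarization argument that the hypotheses of Corollary \ref{Cor-I-2} and Proposition \ref{Prop-I-6} hold uniformly over all the summands of $R$.
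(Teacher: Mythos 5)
Your reduction to the compact case is set up the same way as the paper's proof (cut at $\varepsilon$, apply Theorem \ref{Thm-JK-1} to $X=M_{\Gamma}$, then compare with Atiyah-Bott-Berline-Vergne on $X/\!\!/T$), but the proof collapses at the two vanishing claims $\textnormal{EqRes}_{x}(R)=0$ and $L=0$. These are false in general, and the paper never asserts them: its key step, the lemma establishing (\ref{Eq-JK-12}), proves instead that the two \emph{generally nonzero} cut-locus terms are equal, so that they cancel between (\ref{Eq-JK-8}) and (\ref{Eq-JK-9}). To see that $L$ cannot vanish identically, note that $\oint_{X/\!\!/T}\kappa_{S}\Delta_{X}(\alpha e^{\omega_{X}-\phi_{T\times S}})$ is an honest equivariant integral over a compact orbifold, hence regular in the $\mathfrak{s}$-parameters, whereas the formal integral $\oint_{M/\!\!/T}\kappa_{S}(\alpha e^{\omega-\mu_{T\times S}})$ over the non-compact quotient typically has genuine denominators (already $\oint_{\mathbb{C}}e^{\omega-\mu_{S}}$ produces a $1/\sigma$, and compare the factor $(N+1)^{n}/(N\sigma)^{n}$ in Theorem \ref{Thm-HS-1}); since $L$ is their difference, it is nonzero whenever such poles occur. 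Your identification $L=\textnormal{EqRes}_{x}\bigl(\tfrac{1}{vol(T)}R\bigr)$ is the right statement to aim for (it is exactly (\ref{Eq-JK-12})), but the conclusion must be that these equal terms cancel, not that they vanish.

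The flaw in the polarization argument is the passage from ``$\lambda$ is not polarized with respect to $y$'' to ``$\textnormal{pr}_{\mathfrak{t}^{*}}(\lambda)\notin Cone(\textnormal{pr}_{\mathfrak{t}^{*}}(\widetilde{\alpha}_{i}))$''. Corollary \ref{Cor-I-2} requires the condition on the projections to the residue directions $\mathfrak{t}^{*}$, and Corollary \ref{Cor-I-3} converts non-polarization into that condition only when the ordered basis is adapted to the splitting $\mathfrak{t}^{*}\times\mathfrak{s}^{*}$, so that projection commutes with polarization. Here $y$ is adapted to $\mathbb{R}\gamma\oplus\ker\pi$ and $\gamma\notin\mathfrak{t}^{*}$ (recall $K\nsubseteq T$), so $\pi(\lambda)<0$ says nothing about where $\textnormal{pr}_{\mathfrak{t}^{*}}(\lambda)$ sits relative to the projected cone; in fact the weights at a cut fixed component include $-\varrho_{0}(\gamma)$ (cf.\ the decomposition $e_{T\times S}\mathcal{N}(M_{\varepsilon}|X)=-\varrho_{0}(\gamma)+e_{T\times S/K}\mathcal{N}(M_{\varepsilon}|X)$), whose $\mathfrak{t}^{*}$-projection supplies exactly the directions needed for $\textnormal{pr}_{\mathfrak{t}^{*}}(\lambda)$ to lie in the cone. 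This is why the paper splits the poles of the cut-locus term: poles not contained in $\ker\pi$ do vanish, but only via the wall argument using Lemma \ref{Lem-JK-1}(a), which yields $-\mu_{T\times S}(H)=a_{1}v_{1}+\ldots+a_{r}v_{r}-p$ with $p\in\mathfrak{s}^{*}$ and $a_{1}<0$ inside the pole itself; poles contained in $\ker\pi$ survive and, by Theorem \ref{Thm-JK-1} applied to $M_{\varepsilon}$ with $\varrho_{0}(\gamma)$ treated as a nonzero parameter, reproduce precisely the downstairs contribution over $M_{0,\varepsilon}$. Your argument, by contrast, treats all poles uniformly and would erase a contribution that is genuinely present.
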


	\begin{proof}
By Lemma \ref{Lem-JK-1}(b) we have $X^{T\times S} = M^{T\times S} \uplus M_{\varepsilon}^{T\times S}$, where $M_{\varepsilon} = \varphi^{-1}(\varepsilon)/K$. Theorem \ref{Thm-JK-1} on $X$ yields
\begin{align}\label{Eq-JK-8}
\oint_{X_{0}} \kappa_{S}\Delta_{X}(\alpha e^{\omega - \mu_{T\times S}}) 
={}& 
\textnormal{EqRes}_{x} \frac{1}{vol(T)} \left( \oint_{X} \Delta_{X}(\alpha e^{\omega - \mu_{T \times S}}) \right)(x)
\notag
\\
={}&
\textnormal{EqRes}_{x} \frac{1}{vol(T)} \left( \oint_{M} \alpha e^{\omega - \mu_{T \times S}} \right)(x) 
\\
&{}+
\textnormal{EqRes}_{x} \frac{1}{vol(T)} \left( \oint_{M_{\varepsilon}} \frac{ \kappa_{T\times S/K}( \alpha e^{\omega - \mu_{T \times S}} ) }{ e_{T\times S}\mathcal{N}(M_{\varepsilon}|X) } \right)(x), 
\notag
\end{align}
where $\kappa_{T\times S/K}:H_{T\times S}(M) \to H_{T\times S/K}(M_{\varepsilon}) \subset H_{T\times S}(M_{\varepsilon})$.  We can write $X_{0}$ as symplectic cut with respect to the cone $\Gamma = \varepsilon - \mathbb{R}\gamma \subset \mathfrak{k}^{*}$ on $M_{0}$, i.e. $X_{0} = (M_{0})_{\Gamma}$ and $X_{0} = (\varphi')^{-1}(-\infty,\varepsilon)\uplus (\varphi')^{-1}(\varepsilon)/K$ as set.
By Lemma \ref{Lem-JK-1}(c) and (d) we have $X_{0}^{T\times S} = M_{0}^{T\times S} \uplus M_{0,\varepsilon}^{T\times S}$, where $M_{0} = M/\!\!/T$ and $M_{0,\varepsilon} = M_{0}/\!\!/_{\varepsilon}K = (\varphi')^{-1}(\varepsilon)/K$. The Atiyah-Bott-Berline-Vergne theorem on $X_{0}$ gives
\begin{equation}\label{Eq-JK-9}
\oint_{X_{0}} \kappa_{S}\Delta_{X}(\alpha e^{\omega - \mu_{T \times S}}) 
= 
\oint_{M_{0}} \kappa_{S}(\alpha e^{\omega - \mu_{T\times S}}) 
+ 
\oint_{M_{0,\varepsilon}} \frac{ i^{*}_{M_{0,\varepsilon}}\kappa_{S}\Delta_{X}(\alpha e^{\omega - \mu_{T \times S}}) }{ e_{S}\mathcal{N}(M_{0,\varepsilon}|X_{0}) },
\end{equation}
and combining with equation (\ref{Eq-JK-8}) we get
\begin{align}\label{Eq-JK-13}
\oint_{M_{0}} \kappa_{S}(\alpha e^{\omega - \mu_{T\times S}})  
={}& 
\textnormal{EqRes}_{x} \frac{1}{vol(T)} \left( \oint_{M} \alpha e^{\omega - \mu_{T \times S}} \right)(x) 
\notag
\\
&{}+
\textnormal{EqRes}_{x} \frac{1}{vol(T)} \left( \oint_{M_{\varepsilon}} \frac{ \kappa_{T\times S/K}( \alpha e^{\omega - \mu_{T \times S}} ) }{ e_{T\times S}\mathcal{N}(M_{\varepsilon}|X) } \right)(x)
\\
&{}-
\oint_{M_{0,\varepsilon}} \frac{ i^{*}_{M_{0,\varepsilon}}\kappa_{S}\Delta_{X}(\alpha e^{\omega - \mu_{T \times S}}) }{ e_{S}\mathcal{N}(M_{0,\varepsilon}|X_{0}) }.
\notag
\end{align}
We conclude the proof by the following lemma.
\end{proof}

	\begin{lemma*}\label{Lem-JK-6}
\begin{equation}\label{Eq-JK-12}
\oint_{M_{\varepsilon,0}} \frac{i^{*}_{0,\varepsilon} \kappa_{S}\Delta_{X}(\alpha e^{\omega - \mu_{T\times S}}) }{ e_{S}\mathcal{N}(M_{0,\varepsilon}|X_{0}) }
=
\textnormal{EqRes}_{x} \frac{1}{vol(T)} \left( \oint_{M_{\varepsilon}} \frac{\kappa_{T\times S/K}(\alpha e^{\omega - \mu_{T\times S}}) }{ e_{T\times S}\mathcal{N}(M_{\varepsilon}|X)}  \right)(x).
\end{equation}
	\end{lemma*}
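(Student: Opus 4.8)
The plan is to recognise the left-hand side of \eqref{Eq-JK-12} as the contribution of the boundary reduced space $M_{0,\varepsilon}$ to $\oint_{X_{0}}$, and to produce it by applying Theorem \ref{Thm-JK-1} to the \emph{compact} manifold $M_{\varepsilon}=\varphi^{-1}(\varepsilon)/K$ reduced by $T$. Properness of $\varphi$ makes $\varphi^{-1}(\varepsilon)$ compact, so $M_{\varepsilon}$ is a compact Hamiltonian $T\times S$-orbifold ($K$ acting trivially through the quotient); and the same argument that shows $0\in\mathfrak{g}^{*}$ is a regular value of $\phi_{G}$ on $X$ --- i.e. that $(0,\varepsilon)$ is a regular value of $\mu_{T}\times\varphi$, equivalently Lemma \ref{Lem-JK-1}(c) --- shows that $0\in\mathfrak{t}^{*}$ is a regular value of the induced moment map $\phi_{T}\colon M_{\varepsilon}\to\mathfrak{t}^{*}$, so $M_{\varepsilon}/\!\!/T$ is defined.

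First I would pin down the identifications of spaces and bundles. Since $K$ is central, reduction of $M$ by $K$ at level $\varepsilon\gamma$ and by $T$ at level $0$ commute, giving
$$
M_{\varepsilon}/\!\!/T=\bigl(\mu_{T}^{-1}(0)\cap\varphi^{-1}(\varepsilon)\bigr)/(T\times K)=M_{0,\varepsilon}.
$$
Moreover $X_{0}=X/\!\!/T$, the submanifold $M_{\varepsilon}\subset X$ is $T$-invariant and symplectic, and $\phi_{T}^{-1}(0)\cap M_{\varepsilon}$ descends exactly to $M_{0,\varepsilon}\subset X_{0}$. Hence Lemma \ref{Lem-SC-3}, applied to $M_{\varepsilon}\subset X$ and the reduction by $T$, gives an $S$-equivariant isomorphism $\mathcal{N}(M_{0,\varepsilon}\,|\,X_{0})\simeq\mathcal{N}(M_{\varepsilon}\,|\,X)/\!\!/T$, hence $e_{S}\mathcal{N}(M_{0,\varepsilon}\,|\,X_{0})=\kappa_{S}^{M_{\varepsilon}}\bigl(e_{T\times S}\mathcal{N}(M_{\varepsilon}\,|\,X)\bigr)$, where $\kappa_{S}^{M_{\varepsilon}}\colon H_{T\times S}(M_{\varepsilon})\to H_{S}(M_{0,\varepsilon})$ is the Kirwan map reducing $M_{\varepsilon}$ by $T$.

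Next I would compare numerators. Regarding $X=M_{\Gamma}$ as the symplectic cut along the shifted ray $\Gamma=\varepsilon-\mathbb{R}\gamma$, the locus $M_{\varepsilon}$ is the ``$z=0$'' stratum, so the restriction properties of the cut homomorphism (the shifted analogue of Proposition \ref{Prop-SC-B}) give $i_{M_{\varepsilon}}^{*}\Delta_{X}(\alpha e^{\omega-\mu_{T\times S}})=\kappa_{T\times S/K}(\alpha e^{\omega-\mu_{T\times S}})$, up to the constant factor $e^{-\varepsilon\gamma}$ produced by $\mu_{K}\equiv\varepsilon\gamma$ on $\varphi^{-1}(\varepsilon)$, which is harmless (absorb it into the equivariant symplectic class, or use linearity of Theorem \ref{Thm-JK-1} in the class). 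Restriction to the $T$-invariant submanifold $M_{\varepsilon}$ also commutes with the Kirwan map, $i_{0,\varepsilon}^{*}\circ\kappa_{S}^{X}=\kappa_{S}^{M_{\varepsilon}}\circ i_{M_{\varepsilon}}^{*}$ on $H_{T\times S}(X)$ (here $\kappa_{S}^{X}\colon H_{T\times S}(X)\to H_{S}(X_{0})$), since $\phi_{T}^{-1}(0)\cap M_{\varepsilon}$ is the $T$-invariant submanifold of $\phi_{T}^{-1}(0)$ lying over $M_{0,\varepsilon}$ and one may use the horizontal-projection description of $\kappa$ with the restricted connection. Combining these with the normal-bundle identity and with multiplicativity of $\kappa_{S}^{M_{\varepsilon}}$ on the invertible fractions gives
$$
\kappa_{S}^{M_{\varepsilon}}\!\left(\frac{\kappa_{T\times S/K}(\alpha e^{\omega-\mu_{T\times S}})}{e_{T\times S}\mathcal{N}(M_{\varepsilon}\,|\,X)}\right)=\frac{i_{0,\varepsilon}^{*}\kappa_{S}^{X}\Delta_{X}(\alpha e^{\omega-\mu_{T\times S}})}{e_{S}\mathcal{N}(M_{0,\varepsilon}\,|\,X_{0})}.
$$
Applying Theorem \ref{Thm-JK-1} to $M_{\varepsilon}$ with $\alpha_{X}e^{\omega_{X}-\phi_{T}-\phi_{S}}:=\kappa_{T\times S/K}(\alpha e^{\omega-\mu_{T\times S}})/e_{T\times S}\mathcal{N}(M_{\varepsilon}\,|\,X)$ and with the bases $x$ (replacing it, if needed, by a bases inducing the same polarization that is generic for this class, using Proposition \ref{Prop-I-6}(a)), the left-hand side equals $\oint_{M_{0,\varepsilon}}$ of the displayed fraction, i.e. the left-hand side of \eqref{Eq-JK-12}, and the right-hand side is precisely the right-hand side of \eqref{Eq-JK-12}.

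The hardest part is the third paragraph: one must check carefully that $\Delta_{X}$, the Kirwan map on $X$, and restriction to the boundary stratum are mutually compatible --- that ``reduction in stages'' holds at the level of equivariant differential forms, including its effect on the equivariant symplectic class (Proposition \ref{Prop-SC-B}(2)) and the identification of $M_{\varepsilon}$ with the ``$z=0$'' stratum of the \emph{shifted} cut $M_{\Gamma}$. Once this compatibility is in place the remaining steps are formal.
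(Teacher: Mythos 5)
Your first half is essentially the paper's own reduction: the identifications $M_{\varepsilon}/\!\!/_{0}T \simeq M_{0,\varepsilon} \simeq (\mu_{T}^{-1}(0)\cap\varphi^{-1}(\varepsilon))/(T\times K)$, the commutation of restriction with Kirwan maps ($i^{*}_{M_{0,\varepsilon}}\kappa_{S}\Delta_{X} = \kappa'_{T\times S/T\times K}\circ\kappa_{T\times S/K}$), and Lemma \ref{Lem-SC-3} giving $\kappa'_{T\times S/T\times K}(e_{T\times S}\mathcal{N}(M_{\varepsilon}|X)) = e_{S}\mathcal{N}(M_{0,\varepsilon}|X_{0})$ are exactly how the paper rewrites the left-hand side, reducing (\ref{Eq-JK-12}) to the analogue of (\ref{Eq-JK-11}). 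The gap is in your final step, where you feed $\kappa_{T\times S/K}(\alpha e^{\omega-\mu_{T\times S}})/e_{T\times S}\mathcal{N}(M_{\varepsilon}|X)$ into Theorem \ref{Thm-JK-1} as if it were a class $\alpha_{X}\in H_{T\times S}(M_{\varepsilon})$ and declare that the resulting residue "is precisely the right-hand side of (\ref{Eq-JK-12})". It is not a class: $K$ acts trivially on $M_{\varepsilon}$ but with weight $-\gamma$ on $\mathcal{N}(M_{\varepsilon}|X)$, so $e_{T\times S}\mathcal{N}(M_{\varepsilon}|X) = -\varrho_{0}(\gamma) + e_{T\times S/K}\mathcal{N}(M_{\varepsilon}|X)$ is invertible only after localization, and its inverse inserts the new weight $\varrho_{0}(\gamma)\notin\ker\pi$ into the denominator at every fixed component. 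Consequently $\textnormal{EqRes}_{x}$ of the right-hand side of (\ref{Eq-JK-12}) runs over two kinds of poles: those contained in $\ker\pi$ (spanned by isotropy weights of $M_{\varepsilon}$ itself) and those involving $\varrho_{0}(\gamma)$. Any legitimate application of Theorem \ref{Thm-JK-1} to $M_{\varepsilon}$ — which requires first using $H_{T\times S}(M_{\varepsilon})\simeq H_{T\times S/K}(M_{\varepsilon})\otimes H_{K}(pt)$, expanding with $v\ll\varrho_{0}(\gamma)$, $v\ll s$, and treating $\varrho_{0}(\gamma)$ as a nonzero real parameter, a setup you do not carry out — only produces the contribution of the poles in $\ker\pi$.

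What is missing is therefore the vanishing of the poles $V\nsubseteq\ker\pi$, and this is not formal: in the paper it is a separate lemma whose proof is genuinely geometric. One takes $V$ spanned by $\varrho_{0}(\gamma)$ and weights $\varrho_{m}(\eta_{i})$ at a component $H=F_{[m,0]}\subset M_{\varepsilon}^{T\times S}$, identifies $\mu_{T\times S}(H)+V$ with the supporting plane of the wall $\mu_{T\times S}(Y)$ for the corresponding component $Y\subset M^{U}$ (using that $\varphi|_{Y}$ is proper and bounded below so $Y^{T\times S}\neq\emptyset$), and then invokes the choice of $\varepsilon$ in Lemma \ref{Lem-JK-1}(a) to conclude $\pi(\mu_{T\times S}(H)-p)>0$, hence $-\mu_{T\times S}(H)$ has negative first coordinate in the (generic) induced bases $v$, so Corollary \ref{Cor-I-1}-type vanishing applies. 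Nothing in your argument substitutes for this: Theorem \ref{Thm-JK-1} was proved only for honest equivariant classes on a compact space, and its proof controls poles through the moment geometry of that space alone, whereas the extra poles here are governed by how $\varepsilon$ sits relative to the walls of $\mu_{T\times S}(M)$ — precisely the content of Lemma \ref{Lem-JK-1}(a). Until you add both the parameter treatment of $\varrho_{0}(\gamma)$ and this vanishing argument, the proposal does not establish (\ref{Eq-JK-12}).
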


	\begin{proof}
By symplectic cut construction of $X_{0}$ we have
\begin{equation}\label{Eq-JK-10}
i^{*}_{M_{0,\varepsilon}} \kappa_{S} \Delta_{X} (\alpha e^{\omega - \mu_{T \times S}}) = \kappa_{T\times S/T\times K}(\kappa_{S} (\alpha e^{\omega - \mu_{T\times S}})),
\end{equation}
where $\kappa_{T\times S/T\times K} : H_{S}(M_{0}) = H_{T\times S/T}(M_{0}) \to H_{T\times S/T\times K}( M_{0,\varepsilon} )$. Moreover $M_{\varepsilon}/\!\!/_{0}T$ and $M_{0,\varepsilon} = M_{0}/\!\!/_{\varepsilon}T$ are naturally diffeomorphic to $\mu_{T}^{-1}(0)\cap \varphi^{-1}(\varepsilon)/T\times K$ and the following diagram commutes
$$
\xymatrix{
H_{T\times S}(M) \ar[d]_{\kappa_{S}} \ar[rr]^{\kappa_{T\times S/K}} 
&& 
H_{T\times S/K}(M_{\varepsilon}) \ar[d]^{\kappa'_{T\times S/T\times K}}
\\
H_{S}(M_{0}) \ar[rr]_{\kappa_{T\times S/T\times K}} 
&&
H_{T\times S/T\times K}(M_{0,\varepsilon})
}.
$$
Thus (\ref{Eq-JK-10}) equals to $\kappa'_{T\times S/T\times K}(\kappa_{T \times S/K}(\alpha e^{\omega - \mu_{T\times S}}))$. Furthermore, by Lemma \ref{Lem-SC-3} we have
$$
\kappa'_{T\times S/T\times K}(e_{T\times S}\mathcal{N}(M_{\varepsilon}|X))
=
e_{T\times S/T\times K}\mathcal{N}(M_{0,\varepsilon}|X_{0}) 
=
e_{S}\mathcal{N}(M_{0,\varepsilon}|X_{0}). 
$$ 
Hence the statement of the lemma is equivalent to
\begin{equation}\label{Eq-JK-11}
\oint_{M_{\varepsilon,0}} \kappa'_{T\times S/T\times K} \left( \frac{ \kappa_{T\times S/K} (\alpha e^{\omega - \mu_{T\times S}}) }{ e_{T\times S}\mathcal{N}(M_{\varepsilon}|X) } \right)
=
\textnormal{EqRes}_{x} \frac{1}{vol(T)} \left( \oint_{M_{\varepsilon}} \frac{\kappa_{T\times S/K} (\alpha e^{\omega -\mu_{T\times S}}) }{ e_{T\times S}\mathcal{N}(M_{\varepsilon}|X) } \right)(x)
\end{equation}
By Remark \ref{Rk-I-6} it is enough to consider poles spanned by isotropy $T\times S$-weight vectors at a fixed point component. Thus poles of $\oint_{X} \Delta_{X}(\alpha e^{\omega - \mu_{T\times S}})$ may be divided in two groups:
\begin{enumerate}
\item[(i)] poles contained in $\ker \pi$,

\item[(ii)] poles of $\oint_{M} \alpha e^{\omega - \mu_{T\times S}}$ not in $\ker \pi$.
\end{enumerate}
We conclude the proof of the lemma with the following two lemmas. 
	\end{proof}

	\begin{lemma*}\label{Lem-JK-7}
$$
\sum_{ \textnormal{poles }V\nsubseteq \ker \pi } \textnormal{JKRes}_{v}\frac{1}{vol(T)} \left( \oint_{M_{\varepsilon}} \frac{\kappa_{T\times S/K}(\alpha e^{\omega - \mu_{T\times S}}) }{e_{T\times S}\mathcal{N}(M_{\varepsilon}|X)} \right)(v,s) = 0,
$$
where $v$ is the bases induced by $x$ on $V$.
	\end{lemma*}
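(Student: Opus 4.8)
The plan is to show that each pole $V \not\subseteq \ker\pi$ contributes zero individually, by exhibiting a weight of $M_\varepsilon$ whose polarization with respect to $v$ forces the iterated residue to vanish via Corollary \ref{Cor-I-2}. The geometry to exploit is that $M_\varepsilon = \varphi^{-1}(\varepsilon)/K$ sits on the boundary hyperplane $\pi = \varepsilon$ of the cut region, and the new normal bundle $\mathcal{N}(M_\varepsilon|X)$ carries the weight $\gamma$ of the $K$-direction (up to sign) — this is exactly the $\gamma_j - \kappa_S(\gamma_j)$ type factor coming from the symplectic cut, as in Theorem \ref{Thm-SC-B} and equations (\ref{SC-Eq-4b})–(\ref{SC-Eq-5b}). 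Since $V \not\subseteq \ker\pi$, the projection $\mathrm{pr}_{\mathfrak{k}^*}$ restricted to $V$ is nonzero, and the relevant $\lambda$ (coming from $-\phi_{T\times S}$ of a fixed point component of $M_\varepsilon$, shifted) will have $\pi$-component equal to $\varepsilon > 0$, hence $\mathrm{pr}_{\mathfrak{k}^*}(\lambda)$ is a positive multiple of $\gamma$.

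First I would expand $\oint_{M_\varepsilon} \frac{\kappa_{T\times S/K}(\alpha e^{\omega - \mu_{T\times S}})}{e_{T\times S}\mathcal{N}(M_\varepsilon|X)}$ via Atiyah–Bott–Berline–Vergne on $M_\varepsilon$ (an orbifold), writing it as $\sum_I \frac{P_I e^{\lambda_I}}{\prod_{i\in I}\alpha_i}$ where each $\lambda_I = -\phi_{T\times S}(F)$ for $F \subset M_\varepsilon^{T\times S}$ and the product over $i\in I$ now includes the weight $-\gamma$ of the $K$-normal direction of $M_\varepsilon$ in $X$. Because $F \subset \varphi^{-1}(\varepsilon)/K$, we have $\pi(\mu_{T\times S}(F)) = \varepsilon$, so $\pi(\lambda_I) = -\varepsilon < 0$. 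Next I would fix a pole $V \not\subseteq \ker\pi$ and the induced basis $v$; by Remark \ref{Rk-I-2} we need only treat poles $V = \langle \alpha_{i_1},\ldots,\alpha_{i_q}\rangle$ spanned by actual weights. Writing $\lambda_I = \sum \lambda_k \widetilde{\alpha}_{i_k} + \lambda_0(s)$ in the pole coordinates and projecting by $\mathrm{pr}_{\mathfrak{k}^*}$, the condition for nonvanishing (Corollary \ref{Cor-I-2}) is $\mathrm{pr}_{\mathfrak{k}^*}(\lambda_I) \in \mathrm{Cone}(\mathrm{pr}_{\mathfrak{k}^*}(\widetilde{\alpha}_{i})\,|\,i\in I)$. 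The key point is that all weights $\alpha_i$ of the cut space $X$ restricted to the region near $M_\varepsilon$ have $\pi(\widetilde{\alpha}_i) \geq 0$ when polarized according to the chosen basis $y$ (with $\pi(y_1) > 0$), since the moment image of $X$ lies in $\mathrm{pr}_{\mathfrak{k}^*}^{-1}(\Gamma)$ with $\Gamma = \varepsilon - \mathbb{R}\gamma$; hence $\pi$ is bounded above by $\varepsilon$ on $X$ and the normal weight at $M_\varepsilon$ points inward, i.e. $\pi(\widetilde{\gamma\text{-weight}}) > 0$. Therefore the cone $\mathrm{Cone}(\mathrm{pr}_{\mathfrak{k}^*}(\widetilde{\alpha}_i))$ lies in the closed half-line $\pi \geq 0$ of $\mathfrak{k}^* = \mathbb{R}\gamma$, while $\mathrm{pr}_{\mathfrak{k}^*}(\lambda_I)$ has $\pi$-value $-\varepsilon < 0$ and is nonzero since $V \not\subseteq \ker\pi$ ensures the relevant component survives projection. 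By Corollary \ref{Cor-I-2}, $\textnormal{JKRes}_v$ of each such term is zero, and summing over the fixed components and over the poles $V\not\subseteq\ker\pi$ gives the claim.

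The main obstacle I anticipate is the bookkeeping around the \emph{genericity shift}: the fraction $\oint_{M_\varepsilon}\frac{\kappa_{T\times S/K}(\cdots)}{e_{T\times S}\mathcal{N}(M_\varepsilon|X)}$ need not have $0$ generic, so Corollary \ref{Cor-I-2} cannot be applied verbatim. The remedy is to introduce a small generic perturbation $e^{s\rho}$ (with $\rho$ small in $\ker\pi$ so as not to disturb the $\pi$-sign argument), apply Corollary \ref{Cor-I-2} to the perturbed fraction where $0$ is generic, and then pass to the limit $s\to 0$ using Proposition \ref{Prop-I-6}(b), which guarantees $\textnormal{EqRes}$ depends continuously on $\rho$ in a neighborhood of $0$ — this works because the compactness of $X$ and regularity of the moment map at the relevant walls (ensured by the transversality condition ($\Gamma 2$) and Lemma \ref{Lem-JK-1}) put us exactly in the hypotheses of that proposition. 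A secondary point requiring care is verifying that $\mathrm{pr}_{\mathfrak{k}^*}(\lambda_I) \neq 0$: this follows because $V\not\subseteq \ker\pi$ means $\mathrm{pr}_{\mathfrak{k}^*}|_V \neq 0$, and the expansion $\lambda_I = \sum\lambda_k\widetilde{\alpha}_{i_k} + \lambda_0(s)$ with $\lambda_0(s)\in\mathfrak{s}^*\subseteq\ker\pi$ shows $\pi(\lambda_I) = \pi(\sum\lambda_k\widetilde{\alpha}_{i_k}) = -\varepsilon$, which is already nonzero, so no further argument is needed.
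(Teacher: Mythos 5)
Your reduction to a per-pole vanishing statement and your use of the localization expansion over $M_{\varepsilon}^{T\times S}$ match the paper's strategy, but the sign argument at the core of your proof has a genuine gap. You write $\lambda_{I}=\sum_{k}\lambda_{k}\widetilde{\alpha}_{i_{k}}+\lambda_{0}(s)$ and claim $\lambda_{0}(s)\in\mathfrak{s}^{*}\subseteq\ker\pi$, so that $\pi(\lambda_{I})=-\varepsilon<0$ forces some $\lambda_{k}\le 0$. But $\mathfrak{s}^{*}\subseteq\ker\pi$ holds only if $\mathfrak{k}\subseteq\mathfrak{t}$, which is exactly what is excluded here ($K\nsubseteq G$; in the Hilbert scheme application $K$ is the auxiliary circle $S$ itself, so $\pi$ is nonzero on all of $\mathfrak{s}^{*}$). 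Hence $\pi(\lambda_{0}(s))$ is uncontrolled, and the comparison ``$\pi(\lambda_{I})<0$ while $\pi(\widetilde{\alpha}_{i})\ge 0$'' (the latter being true of \emph{every} polarized vector, so it carries no information) does not exclude $\lambda_{I}\in\lambda_{0}(s)+Cone(\widetilde{\alpha}_{i_{k}})$. Relatedly, Corollary \ref{Cor-I-2} and Proposition \ref{Prop-I-5} involve the projection onto the non-parametric factor of the EqRes decomposition — here $\mathfrak{t}^{*}$, projected along $\mathfrak{s}^{*}$ — not the one-dimensional projection $\mathrm{pr}_{\mathfrak{k}^{*}}$, and $\pi$ does not interact well with $\mathrm{pr}_{\mathfrak{t}^{*}}$, so you cannot substitute one for the other.

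What is missing is precisely the geometric input that the paper uses and that your proof never invokes: the largeness condition on $\varepsilon$ from Lemma \ref{Lem-JK-1}(a). The paper identifies the $\mathfrak{s}^{*}$-part of $-\phi_{T\times S}(H)$ concretely: the plane $\mu_{T\times S}(H)+V$ is the supporting plane of a wall $\mu_{T\times S}(Y)$ of the \emph{original} manifold $M$ (here $Y\subset M^{U}$ for the subtorus $U$ cut out by the weights $\eta_{2},\dots,\eta_{r}$, and $Y^{T\times S}\neq\emptyset$ because $\varphi|_{Y}$ is proper and bounded below, so this wall is among the finitely many controlled when $\varepsilon$ was chosen). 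Writing $\{p\}=(\mu_{T\times S}(H)+V)\cap(\{0\}\times\mathfrak{s}^{*})$, Lemma \ref{Lem-JK-1}(a) gives $\pi(p)<\varepsilon$, hence $\mu_{T\times S}(H)-p\in V$ has positive $\pi$-value and is polarized; therefore in $-\mu_{T\times S}(H)=a_{1}v_{1}+\dots+a_{r}v_{r}-p$ the leading coefficient satisfies $a_{1}<0$ (genericity of $v$ gives $a_{1}\neq 0$), and the cone property then kills the residue at $V$. Without this control of the $\mathfrak{s}^{*}$-part via the choice of $\varepsilon$, the asserted vanishing can fail, so the bound ``$\pi\le\varepsilon$ on $X$'' alone, which is all your argument uses, is not sufficient. (Your genericity-perturbation remarks are a secondary matter; the decisive missing step is the wall-plane argument above.)
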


	\begin{proof}
$V$ is a pole of $\oint_{M}\alpha e^{\omega - \mu_{T\times S}}$ since $V\nsubseteq \ker \pi$. Let $H \subset M_{\varepsilon}^{T\times S}$ fixed point component. Recall $H$ is of form $F_{[m,0]}$ for some $m\in M$. We may suppose that $V$ is spanned by isotropy $T\times S$-weight vectors $\beta_{1},\ldots,\beta_{r}$ at $H$. Since $V= \langle \beta_{1}, \ldots, \beta_{r} \rangle \nsubseteq \ker\pi$ we may also suppose that $\beta_{1} = \varrho_{0}(\gamma) \in \mathfrak{t}^{*} \times \mathfrak{s}^{*}$ and $\beta_{i} = \varrho_{m}(\eta_{i})$ where $\eta_{i} \in (\mathfrak{t}\oplus\mathfrak{s})^{*}_{m}$ for all $i=2,\ldots,r$. Let $U \subset T\times S$ be the subtorus such that $Lie(U) = \cap_{i=2}^{r}\ker \eta_{i} \subset \mathfrak{t}\oplus \mathfrak{s}$. Let $Y \subset M^{U}$ be the fixed component containing $F_{m}$. Recall that $\mu_{T\times S}(F_{m}) = \mu_{T\times S}(H) + \langle \varrho_{0}(\gamma) \rangle$ and $Y^{T\times S} \neq \emptyset$ since $\varphi|_{Y}$ is proper and bounded below. Hence the supporting affine plane of $\mu_{T\times S}(Y)$ is equal to 
$$
\mu_{T\times S}(H) + \langle \varrho_{0}(\gamma), \eta_{2},\ldots, \eta_{r} \rangle 
=
\mu_{T\times S}(Y^{T\times S}) + \langle \varrho_{0}(\gamma), \varrho_{m}(\eta_{2}),\ldots, \varrho_{m}(\eta_{r}) \rangle
=
\mu_{T\times S}(H) + V, 
$$
because $\beta_{i} = \varrho_{m}(\eta_{i})$ equals $\eta_{i}$ modulo $\langle \varrho_{0}(\gamma) \rangle$. By Lemma \ref{Lem-JK-1}(a) if 
$$
\{p\}:= \big( \mu_{T\times S}(H) + V \big) \cap \big( \{0\} \times \mathfrak{s}^{*} \big)
$$
then $\pi(p) < \varepsilon = \pi(\mu_{T\times S}(H))$, i.e. $\pi(\mu_{T\times S}(H) - p) > 0$, therefore $\mu_{T\times S}(H) - p \in V$ is polarized. Hence
$$
-\mu_{T\times S}(H) = a_{1}v_{1}+ \ldots + a_{r}v_{r} - p
$$
with $p \in \mathfrak{s}^{*}$ and $a_{1}<0$ since $v$ is generic. Thus
$$
\textnormal{JKRes}_{v}\frac{1}{vol(T)} \left( \oint_{M_{\varepsilon}} \frac{\kappa_{T\times S/K} (\alpha e^{\omega - \mu_{T\times S}})}{e_{T \times  S}\mathcal{N}(M_{\varepsilon}|X)}\right)(v,s) = 0.
$$
	\end{proof}

	\begin{lemma*}\label{Lem-JK-8}
\begin{multline*}
\sum_{\textnormal{poles }V \subset \ker\pi} \textnormal{JKRes}_{v} \frac{1}{vol(T)} \left( \oint_{M_{\varepsilon}} \frac{\kappa_{T\times S/K}(\alpha e^{\omega-\mu_{T\times S}}) }{ e_{T \times  S}\mathcal{N}(M_{\varepsilon}|X)} \right)(v,s)
\\
=
\oint_{M_{\varepsilon,0}} \kappa'_{T\times S/T\times K} \left( \frac{\kappa_{T\times S/K}(\alpha e^{\omega - \mu_{T\times S}}) }{ e_{T\times S}\mathcal{N}(M_{\varepsilon}|X) } \right)
\end{multline*}
	\end{lemma*}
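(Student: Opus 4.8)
The plan is to apply the abelian equivariant Jeffrey--Kirwan theorem (Theorem~\ref{Thm-JK-1}) to the cut locus $M_{\varepsilon}=\varphi^{-1}(\varepsilon)/K$ itself, and then to restrict the resulting equivariant residue to the poles inside $\ker\pi$ using Lemma~\ref{Lem-JK-7}. First I would record that $M_{\varepsilon}$ satisfies the hypotheses of Theorem~\ref{Thm-JK-1}: it is a compact Hamiltonian $T\times S$-orbifold on which $K\subset T\times S$ acts trivially; $0\in\mathfrak{t}^{*}$ is a regular value of the induced moment map $\phi_{T}:M_{\varepsilon}\to\mathfrak{t}^{*}$, which is exactly the statement proved above (for the abelian case $G=T$) that $0$ is a regular value of $\phi_{G}$ on the cut locus $\varphi^{-1}(\varepsilon)/K$; and the $T$-symplectic quotient $M_{\varepsilon}/\!\!/T$ is canonically identified with $\mu_{T}^{-1}(0)\cap\varphi^{-1}(\varepsilon)/(T\times K)=M_{\varepsilon,0}$ through the commutative square used in the proof of Lemma~\ref{Lem-JK-6}, under which the $S$-equivariant Kirwan map appearing in Theorem~\ref{Thm-JK-1} becomes $\kappa'_{T\times S/T\times K}$.

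Second I would check that the integrand $\beta:=\dfrac{\kappa_{T\times S/K}(\alpha e^{\omega-\mu_{T\times S}})}{e_{T\times S}\mathcal{N}(M_{\varepsilon}|X)}$ has the form $\alpha_{M_{\varepsilon}}\,e^{\omega_{M_{\varepsilon}}-\phi_{T\times S}}$ required by Theorem~\ref{Thm-JK-1}. This follows from the analogue of Proposition~\ref{Prop-SC-B}(2) for the symplectic cut of $M$ by $K$ at level $\varepsilon$ --- of which $M_{\varepsilon}$ is exactly the reduced (``$z=0$'') piece --- so that $\kappa_{T\times S/K}(\omega-\mu_{T\times S})$ restricts on $M_{\varepsilon}$ to the reduced equivariant symplectic form up to an absorbable constant shift, while $\kappa_{T\times S/K}(\alpha)$ and $1/e_{T\times S}\mathcal{N}(M_{\varepsilon}|X)$ together make up $\alpha_{M_{\varepsilon}}$ (a localized class, which causes no trouble since $\oint_{M_{\varepsilon}}\beta$ is literally the cut-locus contribution to the Atiyah--Bott--Berline--Vergne expansion of $\oint_{X}\Delta_{X}(\alpha e^{\omega-\mu_{T\times S}})$, and $\oint$ and $\textnormal{EqRes}$ are compatible with localization). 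Theorem~\ref{Thm-JK-1} then gives
$$
\oint_{M_{\varepsilon,0}}\kappa'_{T\times S/T\times K}(\beta)
=
\textnormal{EqRes}_{x}\!\left(\frac{1}{vol(T)}\oint_{M_{\varepsilon}}\beta\right)(x).
$$

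Finally I would restrict the equivariant residue to poles inside $\ker\pi$. By Lemma~\ref{Lem-JK-7} the poles $V\not\subset\ker\pi$ contribute zero to the right-hand side above, so
$$
\textnormal{EqRes}_{x}\!\left(\frac{1}{vol(T)}\oint_{M_{\varepsilon}}\beta\right)(x)
=
\sum_{\textnormal{poles }V\subset\ker\pi}\textnormal{JKRes}_{v}\!\left(\frac{1}{vol(T)}\oint_{M_{\varepsilon}}\beta\right)(v,s),
$$
and combining the two displays is precisely the claimed equality. Geometrically this is consistent: $\ker\pi=Lie\big((T\times S)/K\big)^{*}$, the isotropy weights of $\mathcal{N}(F|M_{\varepsilon})$ for $F\subset M_{\varepsilon}^{T\times S}$ vanish on $\mathfrak{k}$ and so lie in $\ker\pi$, while the one transverse weight --- that of $\mathcal{N}(M_{\varepsilon}|X)$ --- has nonzero $\pi$-value and therefore becomes an analytic factor on every pole $V\subset\ker\pi$.

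The step I expect to be the main obstacle is the verification in the second paragraph: pinning down precisely the equivariant differential form $\kappa_{T\times S/K}(\omega-\mu_{T\times S})|_{M_{\varepsilon}}$, in particular the bookkeeping of the constant shift produced by the nonzero level $\varepsilon\gamma$ and the translation between $(T\times S)/K$- and $T\times S$-equivariant data, together with making sure that the $v\ll s$ expansion underlying $\textnormal{EqRes}_{x}$ on $M_{\varepsilon}$ agrees term by term with the one implicit in the left-hand side of the lemma, so that the two equivariant residues literally coincide rather than merely agreeing up to vanishing terms.
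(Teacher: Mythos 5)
Your proposal is correct and follows essentially the paper's own route: apply Theorem \ref{Thm-JK-1} to the compact cut locus $M_{\varepsilon}$, identify $M_{\varepsilon}/\!\!/T$ with $M_{\varepsilon,0}$ and the Kirwan map with $\kappa'_{T\times S/T\times K}$ via the commutative square from Lemma \ref{Lem-JK-6}. The only difference is at the step you yourself flag as the obstacle: where you treat $1/e_{T\times S}\mathcal{N}(M_{\varepsilon}|X)$ as a ``localized class'' and then invoke Lemma \ref{Lem-JK-7} to discard the poles $V\nsubseteq\ker\pi$, the paper instead writes $e_{T\times S}\mathcal{N}(M_{\varepsilon}|X)=-\varrho_{0}(\gamma)+e_{T\times S/K}\mathcal{N}(M_{\varepsilon}|X)$, uses $H_{T\times S}(M_{\varepsilon})=H_{T\times S/K}(M_{\varepsilon})\otimes H_{K}(pt)$, and expands $v\ll\varrho_{0}(\gamma)$, $v\ll s$, so that with $\varrho_{0}(\gamma)$ regarded as a nonzero real parameter the integrand becomes an honest class in $H_{T\times S/K}(M_{\varepsilon})$ and only poles inside $\ker\pi$ occur, making the appeal to Lemma \ref{Lem-JK-7} unnecessary within this lemma.
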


	\begin{proof}[Proof]
We decompose $\mathfrak{t}^{*}\times \mathfrak{s}^{*} = \langle \varrho_{0}(\gamma) \rangle \oplus ((\mathfrak{t}\oplus\mathfrak{s})/\mathfrak{k})^{*}$, 
hence we have 
$$
e_{T\times S}\mathcal{N}(M_{\varepsilon}|X) = - \varrho_{0}(\gamma)  + e_{T\times S/K}\mathcal{N}(M_{\varepsilon}|X).
$$
Moreover, on $M_{\varepsilon}$ only $T\times S/K$ acts effectively, hence $H_{T\times S}(M_{\varepsilon}) = H_{T\times S/K}(M_{\varepsilon}) \otimes H_{K}(pt)$. 
We expand fractions with respect to $v\ll \varrho_{0}(\gamma)$ and $v\ll s$, hence  considering $\varrho_{0}(\gamma)$ as non-zero real parameter then $\oint_{M_{\varepsilon}}\frac{\kappa_{T\times S/K}(\alpha e^{\omega - \mu_{T\times S}})}{ e_{T\times S}\mathcal{N}(M_{\varepsilon}|X) }$ is well-defined as integral of equivariant cohomology class in $H_{T\times S/K}(M_{\varepsilon})$.
Finally, the lemma follows from Theorem \ref{Thm-JK-1} (considering $\varrho_{0}(\gamma)$ as non-zero real parameter.)
\end{proof}


\subsection{Non-abelian version}

We deduce the non-abelian version of Theorem \ref{Thm-JK-2} following closely \cite{Martin}. We adapt their technique to the equivariant setting which are carried out in Lemma \ref{Lem-JK-9} and \ref{Lem-JK-10}. In this subsection we use a different notational system for Kirwan maps as before.

Suppose that $0\in \mathfrak{g}^{*}$ is a regular value of $\mu_{T}:M\to \mathfrak{t}^{*}$, hence it is also a regular value of $\phi_{T}:X \to \mathfrak{t}^{*}$. Denote $\mathfrak{v}^{*} = \ker(\mathfrak{g}^{*} \to \mathfrak{t}^{*})$. Choice of positive roots fixes the orientation of $\mathfrak{v}$ and $\mathfrak{v}^{*}$ such that 
$$\mathfrak{v}^{*} \simeq \oplus_{\beta}\mathbb{C}_{(\beta)},
$$ 
where the sum is over the negative roots and $\mathbb{C}_{(\beta)}$ is the one-dimensional representation of $T$ on which it acts by weight $\beta$. The restriction of $\phi_{G}$ to $\phi^{-1}_{T}(0)$ defines an $T\times S$-equivariant map $\sigma':\phi^{-1}_{T}(0) \to \mathfrak{v}^{*}$ ($S$ acts trivially on $\mathfrak{v}^{*}$) which induces an $S$-equivariant section $\sigma$ of the associated bundle $E^{-}:=(X\times \mathfrak{v}^{*})/\!\!/T \to X/\!\!/T$. $0\in \mathfrak{g}^{*}$ is regular  value of $\phi_{G}$ is equivalent to $\sigma$ being a section transverse to the zero section. Denote $Z:=\phi_{G}^{-1}(0) \subset X$, hence $Z/T$ is a submanifold of $X/\!\!/T$. Consider the following diagram
$$
\xymatrix{
Z/T \ar[d]^{\pi} \ar@{^{(}->}[r]^{i} & X/\!\!/T 
\\
Z/G = X/\!\!/G
}
$$
The vertical subbundle $\ker d\pi$ of $T(Z/T)$ is isomorphic to $E^{+}|_{Z/T}$, where $E^{+} = (X\times \mathfrak{v})/\!\!/T$. Denote $\kappa'_{T}:H_{T\times S}(X) \to H_{S}(X/\!\!/T)$ and $\kappa'_{G}:H_{G\times S}(X) \to H_{S}(X/\!\!/G)$  the abelian and non-abelian Kirwan maps. For $\alpha \in H_{G\times S}(X) \simeq H_{T\times S}(X)^{W}$ we compute
\begin{align*}
\int_{X/\!\!/G}\kappa'_{G}(\alpha) 
&{}=  
\frac{1}{|W|} \int_{Z/T}\pi^{*}\kappa_{G}(\alpha) e_{S}(E^{+}|_{Z/T})
&& \textnormal{by Lemma \ref{Lem-JK-9}}
\\
&{}=\frac{1}{|W|}\int_{Z/T} i^{*}(\kappa'_{T}(\alpha) e_{S}(E^{+}))
&& \pi^{*}\kappa'_{G} = i^{*}\kappa'_{T}
\\
&{}=
\frac{1}{|W|} \int_{X/\!\!/T}\kappa'_{T}(\alpha) e_{S}(E^{+}) e_{S}(E^{-})
&& \textnormal{by Lemma \ref{Lem-JK-10}}
\\ 
&{}=
\frac{1}{|W|} \int_{X/\!\!/T}\kappa'_{T} ( \alpha\, \varpi ),
&&
\end{align*}
where $\varpi$ is the product of all roots.

Recall that by symplectic cut with respect to $\varphi$ on $M/\!\!/T$ and $M/\!\!/G$ we have
$$
\oint_{M/\!\!/T} \kappa_{T}(\alpha  \varpi )
=
\int_{X/\!\!/T}\kappa'_{T}\Delta_{X}(\alpha  \varpi )
-
\int_{M_{\varepsilon}/\!\!/T} \frac{ \kappa''_{T\times K}(\alpha  \varpi )  }{ e_{S}\mathcal{N}(M_{\varepsilon}/\!\!/T \,|\, X/\!\!/T) }
$$
and
$$
\oint_{M/\!\!/G} \kappa_{T}(\alpha )
=
\int_{X/\!\!/G}\kappa'_{G}\Delta_{X}(\alpha )
-
\int_{M_{\varepsilon}/\!\!/G} \frac{ \kappa''_{G\times K}(\alpha  )  }{ e_{S}\mathcal{N}(M_{\varepsilon}/\!\!/G \,|\, X/\!\!/G) },
$$
where $\kappa''_{T\times K} : H_{T\times S}(M) \to H_{T\times S/T\times K}(M_{\varepsilon}/\!\!/T)$ and $\kappa''_{G\times K} : H_{G\times S}(M) \to H_{G\times S/G\times K}(M_{\varepsilon}/\!\!/G)$ are the Kirwan maps. Thus
\begin{align*}
\frac{1}{|W|} \oint_{M/\!\!/T} \kappa_{T}(\alpha \varpi)
&{}=
\frac{1}{|W|} \int_{X/\!\!/T}\kappa'_{T}\Delta_{X}(\alpha  \varpi )
-
\frac{1}{|W|} \int_{M_{\varepsilon}/\!\!/T} \frac{ \kappa''_{T\times K}(\alpha  \varpi)  }{ e_{S}\mathcal{N}(M_{\varepsilon}/\!\!/T \,|\, X/\!\!/T) }
\\
&{}=
\int_{X/\!\!/G}\kappa'_{G}\Delta_{X}(\alpha  )
-
 \int_{M_{\varepsilon}/\!\!/G} \frac{ \kappa''_{G\times K}(\alpha  )  }{ e_{S}\mathcal{N}(M_{\varepsilon}/\!\!/T \,|\, X/\!\!/T) }
\\
&{}= \oint_{M/\!\!/G}\kappa_{G}(\alpha ).
\end{align*}
If $0$ is not a regular value of $\mu_{T}$ then choose a regular value $\rho$ close $0$. As observed in \cite{Martin} we have
$$
\int_{X/\!\!/G} \kappa'_{G}(\alpha) 
= 
\lim_{s\to 0}\frac{1}{|W|} \int_{X/\!\!/_{s\rho} T} \kappa'_{T} ( \alpha\,\varpi)
$$
and similarly we have
$$
\oint_{M/\!\!/G}\kappa_{G}(\alpha e^{\omega - \mu_{G\times S}}) 
= 
\lim_{s\to 0} \frac{1}{|W|} \oint_{M/\!\!/_{s\rho}T} \kappa_{T}(\alpha e^{\omega - \mu_{T\times S}} \varpi).
$$

We change notation of the Kirwan map $\kappa_{G}$ to $\kappa_{S}:H_{G\times S}(M) \to H_{S}(M/\!\!/G)$ to make the statement of our main result compatible with Theorem \ref{Thm-JK-2}.

	\begin{thm}\label{Thm-JK-3}
Let $y =\{y_{1}, \ldots, y_{r+q}\}$ be an ordered bases of $\mathfrak{t}^{*} \times \mathfrak{s}^{*}$ such that $\pi(y_{1})>0$ and $y_{2},\ldots,y_{r+q} \in \ker \pi$. Then
\begin{equation}
\oint_{M/\!\!/G} \kappa_{S}(\alpha e^{ \omega - \mu_{ G\times S } }) 
= 
\lim_{s\to 0}
\textnormal{EqRes}_{x} \left( \frac{\varpi}{|W|vol(T)} \oint_{M} \alpha e^{ \omega - \mu_{T\times S} +s\rho } \right)(x),
\end{equation}
where $x$ is a generic bases with respect to $\oint_{M} \alpha e^{\omega - \mu_{T \times S}}$, inducing the same polarization as $y$ on isotropy $T\times S$-weights of $M$ and $\rho$ is a regular value of $\mu_{T}$ close to $0$.
	\end{thm}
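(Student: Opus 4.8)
The plan is to deduce the formula from the abelian statement Theorem~\ref{Thm-JK-2} together with the Weyl-integration argument of \cite{Martin}, which in the form required here is precisely the computation carried out in the paragraphs preceding the theorem. I would start from the identity obtained there,
$$\oint_{M/\!\!/G}\kappa_{S}(\alpha e^{\omega-\mu_{G\times S}})=\lim_{s\to 0}\frac{1}{|W|}\oint_{M/\!\!/_{s\rho}T}\kappa_{S}\bigl(\alpha\,\varpi\,e^{\omega-\mu_{T\times S}}\bigr),$$
where $\rho\in\t^{*}$ is a regular value of $\mu_{T}$ close to $0$ (no limit is needed when $0$ is itself regular), $s\rho$ is generic for small $s\neq 0$, and $\kappa_{S}\colon H_{T\times S}(M)\to H_{S}(M/\!\!/_{s\rho}T)$ is the $S$-equivariant Kirwan map at level $s\rho$; the fixed locus $(M/\!\!/_{s\rho}T)^{S}$ is compact by the argument of Proposition~\ref{Prop-JK-2} applied with $G$ taken to be $T$ and at the level $s\rho$, so the formal integral on the right is defined. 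It then remains to compute, for each such $s$, that abelian integral.

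The second step is to apply Theorem~\ref{Thm-JK-2} at level $s\rho$. Concretely, replace the moment map $\mu_{T\times S}$ by its translate $\mu_{T\times S}-s\rho$, viewing $\rho\in\t^{*}\hookrightarrow\t^{*}\times\s^{*}$: then $\mu_{T}^{-1}(s\rho)$ becomes the zero level set of the translated map, the equivariant symplectic form $\omega-\mu_{T\times S}$ becomes $\omega-\mu_{T\times S}+s\rho$, and $\mu_{K}$ changes only by the constant $s\,\textnormal{pr}_{\mathfrak{k}^{*}}(\rho)$, so the proper-and-bounded-below and compactness hypotheses are untouched and the homomorphism $\pi$ of (\ref{Eq-JK-14}) is unchanged; in particular the bases $y$ of the statement, with $\pi(y_{1})>0$ and $y_{2},\dots,y_{r+q}\in\ker\pi$, still selects the polarization for which the fixed components created by the symplectic cut do not contribute. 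Theorem~\ref{Thm-JK-2} applied to this translated data with the class $\alpha\varpi$ in place of $\alpha$ gives
$$\oint_{M/\!\!/_{s\rho}T}\kappa_{S}\bigl(\alpha\,\varpi\,e^{\omega-\mu_{T\times S}+s\rho}\bigr)=\textnormal{EqRes}_{x}\!\left(\frac{1}{vol(T)}\oint_{M}\alpha\,\varpi\,e^{\omega-\mu_{T\times S}+s\rho}\right)\!(x)$$
for any $x$ generic with respect to $\oint_{M}\alpha e^{\omega-\mu_{T\times S}}$ (equivalently, for generic small $s$, with respect to the translated fraction) and inducing the same polarization on the isotropy $T\times S$-weights of $M$ as $y$. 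Since $\varpi$ is pulled back from $H(BT)$ one has $\varpi\cdot\oint_{M}\alpha e^{\cdots}=\oint_{M}\alpha\varpi e^{\cdots}$, and the scalar $\tfrac{1}{|W|vol(T)}$ may be pulled out of $\textnormal{EqRes}_{x}$; taking $\tfrac{1}{|W|}$ of the last display, combining with Martin's identity, and letting $s\to 0$ then yields $\lim_{s\to 0}\textnormal{EqRes}_{x}\bigl(\frac{\varpi}{|W|vol(T)}\oint_{M}\alpha e^{\omega-\mu_{T\times S}+s\rho}\bigr)(x)$, once one checks that $\lim_{s\to 0}$ of the left-hand side of that display equals $\lim_{s\to 0}$ of Martin's summand.

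That last check is the only genuine obstacle. One must reconcile $\kappa_{S}(\,\cdot\,e^{\omega-\mu_{T\times S}+s\rho})$ with $\kappa_{S}(\,\cdot\,e^{\omega-\mu_{T\times S}})$ at level $s\rho$: these differ by the factor $\kappa_{S}(e^{s\rho})=e^{s\rho(\Theta)}$, the Chern--Weil correction attached to the curvature $\Theta$ of the principal $T$-bundle $\mu_{T}^{-1}(s\rho)\to M/\!\!/_{s\rho}T$. For $s$ ranging in a fixed small interval meeting no wall this bundle, hence the class $\rho(\Theta)$, is independent of $s$, so $e^{s\rho(\Theta)}\to 1$ as $s\to 0$ and the correction drops out in the limit; together with the description of $\oint_{M/\!\!/G}$ as the $s\to 0$ limit of the abelian reductions this forces the two sides to agree. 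For $s\neq 0$ everything is unambiguous because genericity of $s\rho$ means $0$ lies on no wall of the translated moment polytope, and then by Proposition~\ref{Prop-I-6}(b) the quantity $\textnormal{EqRes}_{x}\bigl(\oint_{M}\alpha\varpi e^{\omega-\mu_{T\times S}+s\rho}\bigr)(x)$ is well defined and depends continuously on the small perturbation $s\rho$. The remaining points, namely that the cut level $\varepsilon$ of Lemma~\ref{Lem-JK-1} and the genericity of $x$ can be chosen uniformly for small $s$, are routine, only finitely many walls and critical values being involved, and no geometric input beyond the abelian case is needed.
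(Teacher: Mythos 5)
Your proposal follows essentially the same route as the paper: the equivariant Martin-type reduction $\oint_{M/\!\!/G}\kappa_{S}(\alpha e^{\omega-\mu_{G\times S}})=\lim_{s\to 0}\tfrac{1}{|W|}\oint_{M/\!\!/_{s\rho}T}\kappa_{S}(\alpha\,\varpi\,e^{\omega-\mu_{T\times S}})$ obtained via Lemmas \ref{Lem-JK-9}, \ref{Lem-JK-10} and the symplectic-cut comparison, followed by an application of the abelian Theorem \ref{Thm-JK-2} at the shifted level $s\rho$ with the class $\alpha\varpi$, and the limit $s\to 0$. The only point where you go beyond the paper's write-up is making explicit the level-$s\rho$ bookkeeping, namely that $\kappa_{S}(e^{s\rho})=e^{s\rho(\Theta)}\to 1$ within a chamber of regular values, a reconciliation the paper leaves implicit; this is a correct and harmless refinement, not a different argument.
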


We conclude this subsection by proving the following lemmas.

\begin{lemma}\label{Lem-JK-9}
$$
\int_{Z/G} \kappa'_{G}(\alpha) 
= 
\frac{1}{|W|} \int_{Z/T} \pi^{*}\kappa'_{G}(\alpha) e_{S}(\ker d\pi).
$$
\end{lemma}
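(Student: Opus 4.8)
The plan is to use the fact that $\pi : Z/T \to Z/G$ is a fiber bundle with fiber $G/T$ and structure group the Weyl group $W$ acting on the flag variety, and to integrate along the fibers. First I would observe that $Z = \phi_G^{-1}(0)$ carries a free (or locally free) $G$-action, so $Z/T \to Z/G$ is a fiber bundle whose fiber over a point $[z]$ is $G_z\backslash G / T \cong G/T$ when the action is free; the vertical tangent bundle $\ker d\pi$ is precisely the associated bundle $(Z \times \mathfrak{v})/T = E^+|_{Z/T}$ identified in the text, where $\mathfrak{v} = \mathfrak{g}/\mathfrak{t}$ carries the adjoint $T$-action. The key computation is then the projection (push-forward) formula
$$
\int_{Z/T} \pi^*\eta \cdot e_S(\ker d\pi) = \left(\int_{G/T} e(TG/T)\right) \cdot \int_{Z/G} \eta = \chi(G/T)\cdot \int_{Z/G}\eta
$$
for any $\eta \in H_S(Z/G)$, together with the classical fact that the Euler characteristic of the flag manifold $G/T$ equals $|W|$. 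Applying this with $\eta = \kappa'_G(\alpha)$ gives the claim after dividing by $|W|$.

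The steps, in order, are: (1) identify $\pi : Z/T \to Z/G$ as a fiber bundle with fiber $G/T$ and verify that $\ker d\pi \cong E^+|_{Z/T}$ as $S$-equivariant bundles, which is already recorded in the surrounding discussion; (2) recall/set up the fiberwise-integration (Gysin) map $\pi_* : H_S^*(Z/T) \to H_S^{*-\dim(G/T)}(Z/G)$ and the projection formula $\pi_*(\pi^*\eta \cdot \zeta) = \eta \cdot \pi_*\zeta$; (3) compute $\pi_* e_S(\ker d\pi)$: since $\ker d\pi$ restricts on each fiber $G/T$ to its tangent bundle $T(G/T)$ (the $S$-action being trivial on the fiber, as $S$ acts trivially on $\mathfrak v$), we get $\pi_* e_S(\ker d\pi) = \left(\int_{G/T} e(T(G/T))\right)\cdot 1 = \chi(G/T) = |W|$; (4) combine: $\int_{Z/T}\pi^*\kappa'_G(\alpha)\, e_S(\ker d\pi) = |W|\int_{Z/G}\kappa'_G(\alpha)$, and rearrange. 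One should also note that since all spaces here are quotients by locally free actions, everything goes through in the orbifold category with the appropriate multiplicities, but these cancel between numerator and denominator.

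The main obstacle I anticipate is step (3): one must be careful that the $S$-equivariant Euler class $e_S(\ker d\pi)$, when integrated along the fiber, genuinely collapses to the non-equivariant Euler number $\chi(G/T)$ rather than some class of positive $S$-degree. This works precisely because $S$ acts trivially on the fibers $G/T$ (it commutes with $G$ and acts only on $X$, not on the auxiliary $\mathfrak v$), so the restriction of the $S$-equivariant bundle $\ker d\pi$ to a fiber is $S$-trivially the tangent bundle of $G/T$; hence its top fiberwise Euler class has $S$-degree zero and integrates to $\chi(G/T)$. The identity $\chi(G/T) = |W|$ is standard — it follows from the Bruhat decomposition of $G/T$ into $|W|$ even-dimensional cells, or from the Weyl character/Hopf-index argument. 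A secondary point worth a line is that the integral $\int_{Z/G}$ on the non-compact $X/\!\!/G$ is understood via the formal ABBV definition, and the fiber-integration identity is compatible with localization to fixed points, so the formula descends from the compact-fiber computation without difficulty.
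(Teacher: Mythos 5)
Your proposal is correct and hinges on the same key fact as the paper's proof, namely that the Euler class of the vertical bundle integrates over the flag--manifold fiber to $\chi(G/T)=|W|$, but it packages the argument differently. The paper first applies the (orbifold) Atiyah--Bott--Berline--Vergne formula on both $Z/G$ and $Z/T$, observes that $F=\pi^{-1}(H)$ is an $S$-fixed component fibering over $H$ with fiber $G/T$ and that $\mathcal{N}(F\,|\,Z/T)\simeq\pi^{*}\mathcal{N}(H\,|\,Z/G)$, and then does the fiber integration componentwise over each compact fixed component; you instead work globally with the $S$-equivariant Gysin map and the projection formula $\pi_{*}\bigl(\pi^{*}\eta\cdot e_{S}(\ker d\pi)\bigr)=\eta\cdot\pi_{*}e_{S}(\ker d\pi)$. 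Since $X$ here is the compact symplectic cut, $Z/G=X/\!\!/G$ is compact and the global push-forward is available, so your route is legitimate; the paper's localize-first version has the advantage of using only fixed-point data, which is the mechanism used throughout the paper and the one that survives verbatim in the non-compact setting where $\oint$ is \emph{defined} by the fixed-point sum.

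The one point you should repair is the justification in your step (3). It is not accurate that ``$S$ acts trivially on the fibers'': a generic fiber of $\pi$ is not $S$-invariant, and over an $S$-fixed point $[z]\in Z/T$ the hypothesis is only that the $T$-orbit of $z$ is preserved by $S$, so $S$ acts on the fiber $(\ker d\pi)_{[z]}\cong\mathfrak{v}$ through a homomorphism $S\to T$ followed by the adjoint action, which can be nontrivial; the remark in the text that $S$ acts trivially on $\mathfrak{v}^{*}$ refers to the linear representation, not to the induced equivariant structure of the associated bundle over the $S$-fixed locus. The conclusion $\pi_{*}e_{S}(\ker d\pi)=|W|$ nevertheless holds for degree reasons: $e_{S}(\ker d\pi)$ has cohomological degree $\dim G/T$, so its push-forward lies in $H^{0}_{S}(Z/G)$, is locally constant, and hence equals its non-equivariant reduction $\pi_{*}e(\ker d\pi)=\chi(G/T)=|W|$; equivalently, in the localized picture the terms of $i_{F}^{*}e_{S}(\ker d\pi)$ of ordinary degree below $\dim G/T$ are annihilated by fiber integration. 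With that substitution your argument is complete.
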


\begin{proof}
$\pi:Z/T \to Z/G$ is fibration with fiber $G/T$ and $\pi$ is $S$-equivariant. For $H \subset (Z/G)^{S}$ fixed point component we have $\pi^{-1}(H) = F \subset (Z/T)^{S}$ is also a fixed point component. Moreover, we have $S$-equivariant isomorphism of normal bundles $\mathcal{N}(F|Z/T) \simeq \pi^{*}\mathcal{N}(H|Z/G)$. Finally, we compute
\begin{align*}
\int_{Z/G} \kappa'_{G}(\alpha) 
&{}= 
\sum_{H\subset (Z/G)^{S}} \int_{H} \frac{ i^{*}_{H}\kappa'_{G}(\alpha) }{ e_{S}\mathcal{N}(H|Z/G) }
\\
&{}=
\frac{1}{|W|} \sum_{F\subset (Z/T)^{S}} \int_{F} \frac{i_{F}^{*}\pi^{*}\kappa'_{G}(\alpha) e(\ker d\pi) }{ e_{S}\pi^{*}\mathcal{N}(H|Z/G) }
\end{align*}
\begin{align*}
&{}=
\frac{1}{|W|} \sum_{F\subset (Z/T)^{S}} \int_{F} \frac{i_{F}^{*}\pi^{*}\kappa'_{G}(\alpha) e_{S}(\ker d\pi) }{ e_{S}\mathcal{N}(F|Z/T) }
\\
&{}=
\frac{1}{|W|} \int_{Z/T} \pi^{*}\kappa'_{G}(\alpha) e_{S}(\ker d\pi).
\end{align*}
\end{proof}

\begin{lemma}\label{Lem-JK-10}
Let $E \to X$ be an $S$-equivariant vector bundle over a compact space $X$. Let $\sigma$ be an $S$-equivariant section, transverse to the zero section. Denote $Z = \sigma^{-1}(0)$ the zero set of the section and $i_{Z}:Z \hookrightarrow X$ the inclusion. For any  $\eta \in H_{S}(X)$ we have
$$
\int_{Z} i_{Z}^{*}\eta = \int_{X} \eta e_{S}(E).
$$
\end{lemma}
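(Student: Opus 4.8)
The plan is to reduce to the classical fact that for a transverse section $\sigma$ of $E\to X$ the zero locus $Z=\sigma^{-1}(0)$ represents the Euler class $e_S(E)$ in $S$-equivariant cohomology, i.e. the pushforward $(i_Z)_*1 = e_S(E)\cap[X]$, and then apply the projection formula. Concretely, first I would replace $\sigma$ by the family $t\sigma$ for $t\in[0,1]$ and observe transversality is an open condition; alternatively work directly with the Thom class. The cleanest route: let $\tau\in H^{\mathrm{rk}E}_S(E, E\setminus 0)$ be the equivariant Thom class of $E$, so that its restriction to the zero section $X\hookrightarrow E$ is $e_S(E)$. Since $\sigma$ is transverse to the zero section, $\sigma^*\tau$ is supported in a tubular neighborhood of $Z$ and equals the equivariant Thom class of the normal bundle $\mathcal N(Z|X)$ (using that $d\sigma$ induces an $S$-equivariant isomorphism $\mathcal N(Z|X)\cong i_Z^*E$ by transversality). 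Thus $\sigma^*\tau = (i_Z)_*1$ in $H_S(X)$, where $(i_Z)_*$ is the equivariant Gysin pushforward. On the other hand, since $\sigma$ is homotopic through $S$-equivariant bundle maps to the zero section (via $t\mapsto t\sigma$, which lands in $(E,E\setminus 0)$ only at $t=1$ — so more carefully one uses that $\sigma^*\tau$ and $0^*\tau=e_S(E)$ agree because $\sigma$ and the zero section are $S$-equivariantly homotopic as \emph{maps} $X\to E$, and $\tau$ is a class on the pair that we pull back as a class on $E$), we get $\sigma^*\tau = e_S(E)$ in $H_S(X)$. Combining, $(i_Z)_*1 = e_S(E)$.

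With that identity in hand, the lemma is immediate from the $S$-equivariant projection formula: for any $\eta\in H_S(X)$,
$$
\int_Z i_Z^*\eta = \int_X (i_Z)_*(i_Z^*\eta) = \int_X \eta\cdot (i_Z)_*1 = \int_X \eta\, e_S(E),
$$
where the first equality uses that integration over $Z$ factors through pushforward to $X$ followed by integration over $X$ (functoriality of the pushforward and $X$ compact), and the middle equality is the projection formula $(i_Z)_*(i_Z^*\eta\cdot 1)=\eta\cdot(i_Z)_*1$. Since $X$ is compact, all integrals $\int_X(-)$ are the well-defined pushforwards to a point, and $Z$ is compact too (closed in compact $X$), so everything is legitimate.

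I would carry this out in the order: (1) recall the equivariant Thom class and its restriction to the zero section; (2) use transversality to identify $\mathcal N(Z|X)\cong i_Z^*E$ and deduce $\sigma^*\tau=(i_Z)_*1$; (3) use the $S$-equivariant homotopy between $\sigma$ and the zero section (as maps to the total space) to deduce $\sigma^*\tau = e_S(E)$, hence $(i_Z)_*1=e_S(E)$; (4) apply the projection formula and integrate. The main obstacle is step (3): one must be a little careful that pulling back the Thom class (a relative class on $(E,E\setminus 0)$) along $\sigma$ gives a well-defined absolute class on $X$ supported near $Z$, and that this class is independent of the transverse section chosen — equivalently, that $\sigma^*\tau$ computes $e_S(E)$ regardless of $\sigma$. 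This is standard (it is the defining property of the Euler class as the self-intersection of the zero section), but in the equivariant and possibly orbifold setting it is worth stating that $X$ compact and $E$ an honest $S$-equivariant vector bundle suffice; no properness or fixed-point hypotheses are needed here, and indeed the lemma is purely topological.
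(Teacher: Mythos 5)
Your proposal is correct, but it takes a different route from the paper. You prove the identity $(i_{Z})_{*}1 = e_{S}(E)$ directly, via the $S$-equivariant Thom class and the equivariant homotopy between $\sigma$ and the zero section, and then conclude by the projection formula; this is the standard "Euler class is Poincar\'e dual to the zero locus of a transverse section" argument, carried out equivariantly (e.g.\ through the Borel construction), and all the steps you flag as delicate — the identification $\mathcal{N}(Z|X)\simeq i_{Z}^{*}E$ from transversality, and the passage from the relative class $\sigma^{*}\tau$ to an absolute class equal to $e_{S}(E)$ — go through as you describe. The paper instead localizes both sides to the $S$-fixed locus by the Atiyah--Bott--Berline--Vergne formula: it writes $\int_{Z}i_{Z}^{*}\eta$ as a sum over components $H\subset Z^{S}$, uses a chain of equivariant normal-bundle decompositions to derive $e_{S}\mathcal{N}(H|Z)=j^{*}e_{S}\mathcal{N}(F|X)/j^{*}e_{S}(E')$, applies the \emph{ordinary} (non-equivariant) version of the lemma on each fixed component $F\subset X^{S}$ (for the section of $(E|_{F})^{S}$ cutting out $H$), and reassembles the result into $\int_{X}\eta\,e_{S}(E)$ by ABBV again. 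Your argument is more conceptual and self-contained, yields the reusable identity $(i_{Z})_{*}1=e_{S}(E)$, and does not require any analysis of the fixed-point data; the paper's argument has the virtue of working entirely at the level of fixed-point contributions, which is the language in which all integrals in the paper are defined and manipulated (in particular it is the form of the statement that transfers verbatim to the formal $\oint$ integrals over non-compact spaces with compact fixed locus, where a global Thom-class/Poincar\'e-duality argument is not directly available). Both proofs use compactness of $X$ and transversality in the same way, so there is no gap in your approach.
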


\begin{proof}
Let $F\subset X^{S}$ and $H \subset F\cap Z \subset Z^{S}$ be fixed point components. Denote $i_{F}:F \hookrightarrow X $, $i_{H}:H\hookrightarrow Z$, and $j:H\hookrightarrow F$ the inclusions.
By transversality of $\sigma$ we have equivariant isomorphism of vector bundles 
\begin{equation}\label{Eq-JK-15}
E|_{Z} \simeq \mathcal{N}(Z|X)
\end{equation}
We have equivariant decomposition $E|_{F} = (E|_{F})^{S} \oplus E'$, hence $E|_{H} = (E|_{H})^{S} \oplus E'|_{H}$. Moreover, by  (\ref{Eq-JK-15})
\begin{equation}\label{Eq-JK-16}
\mathcal{N}(Z|X)|_{H} \simeq (E|_{H})^{S} \oplus E'|_{H}
\end{equation}
The inclusions $H\subset Z \subset X$ gives
\begin{equation}\label{Eq-JK-17}
\mathcal{N}(H|X) \simeq \mathcal{N}(H|Z) \oplus \mathcal{N}(Z|X)|_{H},
\end{equation}
moreover the inclusions $H\subset F\subset X$ yields
\begin{equation}\label{Eq-JK-18}
\mathcal{N}(H|X) \simeq \mathcal{N}(H|F) \oplus \mathcal{N}(F|X)|_{H},
\end{equation}
and finally we also have a decomposition
\begin{equation}\label{Eq-JK-19}
\mathcal{N}(H|X) \simeq \mathcal{N}(H|X)^{S} \oplus \mathcal{N}(H|X)'.
\end{equation}
Since $\mathcal{N}(H|F) \simeq \mathcal{N}(H|X)^{S}$ by equations (\ref{Eq-JK-18}) and (\ref{Eq-JK-19}) we have
\begin{equation}\label{Eq-JK-20}
\mathcal{N}(F|X)|_{H} \simeq \mathcal{N}(H|X)',
\end{equation}
and equations (\ref{Eq-JK-16}), (\ref{Eq-JK-17}) and (\ref{Eq-JK-19}) give
\begin{equation}\label{Eq-JK-21}
\mathcal{N}(H|X)' \simeq \mathcal{N}(H|Z) \oplus E'|_{H}.
\end{equation}
The isomorphisms (\ref{Eq-JK-20}) and (\ref{Eq-JK-21}) implies 
\begin{equation}\label{Eq-JK-22}
e_{S}\mathcal{N}(H|Z) =  \frac{ j^{*} e_{S}\mathcal{N}(F|X) }{ j^{*}e_{S}(E') }
\end{equation}
Finally, we compute
\begin{align*}
\int_{Z} i^{*}_{Z}\eta 
&{}=
\sum_{H \subset Z^{S}} \int_{H} \frac{ i^{*}_{H}\eta }{ e_{S}\mathcal{N}(H|Z) }
&& \\
&{}=
\sum_{H \subset Z^{S}} \int_{H} \frac{ (i^{*}_{H}\eta) j^{*}e_{S}(E') }{ j^{*}e_{S}\mathcal{N}(F|X) }
&& \textnormal{by (\ref{Eq-JK-22})}
\\
&{}=
\sum_{F \subset X^{S}} \int_{F} \frac{ (i^{*}_{F}\eta) e_{S}(E') e((E|_{F})^{S}) }{e_{S}\mathcal{N}(F|X)}
&& \textnormal{by ordinary Poincar\'e duality}
\\
&{}=
\sum_{F \subset X^{S}} \int_{F} \frac{ (i^{*}_{F}\eta) e_{S}(E|_{F}) }{e_{S}\mathcal{N}(F|X)}
&& e((E|_{F})^{S}) = e_{S}((E|_{F})^{S})
\\
&{}=
\int_{X} \eta\, e_{S}(E).
&&
\end{align*}
\end{proof}

\subsection{HyperK\"ahler version} 

We formulate an analogue of Theorem \ref{Thm-JK-3} for hyperK\"ahler quotients. First we compare torus hyperK\"ahler quotients to symplectic quotients then by  \cite{HP2} we conclude the formula for general hyperK\"ahler quotients.

Let $M$ be a hyperK\"ahler manifold with real symplectic form $\omega_{\mathbb{R}}$ and complex symplectic form $\omega_{\mathbb{C}}$. Let $M$ admit an action of a compact Lie group $G$ which acts on it in a hyper-Hamiltonian manner with hyperK\"ahler moment map $\mu=(\mu_{\mathbb{R}}, \mu_{\mathbb{C}}): M \to \mathfrak{g}^{*}\oplus\mathfrak{g}^{*}_{\mathbb{C}}$. We also consider an additional Hamiltonian $S$-action on $(M,\omega_{\mathbb{R}})$ which commutes with the $G$-action and its moment map is denoted by $\mu_{S}:M\to \mathfrak{s}^{*}$. We assume that $\mathfrak{g}^{*}_{\mathbb{C}}$ is a $G\times S$-representation (coadjoint action for $G$) and $\mu_{\mathbb{C}}$ is $G\times S$-equivariant. We also need that for a maximal torus $T\subset G$ with Lie algebra $\mathfrak{t}$ we have $(\mathfrak{t}^{*}_{\mathbb{C}})^{S} = \{0\}$. Similarly to the symplectic case we suppose that there is an one dimensional subtorus $K$ in the center of $G\times S$ with moment map $\mu_{K}:(M,\omega_{\mathbb{R}}) \to \mathfrak{k}^{*}$ such that $\mu_{K} = \varphi\gamma$ where $\gamma \in \mathfrak{k}^{*}_{\mathbb{Z}}$ and $\varphi:M\to \mathbb{R}$ is proper and bounded below. Finally, let $(\xi,0)\in(\mathfrak{g}^{*}\oplus\mathfrak{g}^{*}_{\mathbb{C}})^{G}$ be a regular value of $\mu$.

First we discuss the abelian case, therefore let $G=T$.
As before we construct $X$ as symplectic cut of $M$ with respect to $\varphi$. Denote by $\phi_{T\times S}:X \to \mathfrak{t}^{*}\times \mathfrak{s}^{*}$ the $T\times S$-moment map induced by $\mu_{\mathbb{R}}\times \mu_{S}$
and by $\psi:X \to \mathfrak{t}^{*}_{\mathbb{C}}$ the equivariant map induced by $\mu_{\mathbb{C}}$. Suppose that $\xi$ is a regular value of $\mu_{\mathbb{R}}$, hence it is also a regular value of $\phi_{T}$. Let $\tilde{\psi}:X/\!\!/_{\xi}T \to \mathfrak{t}^{*}_{\mathbb{C}}$ the $S$-equivariant map induced by $\psi$ and denote $i:Z\hookrightarrow X/\!\!/_{\xi}T$ the inclusion where $Z:=\tilde{\psi}^{-1}(0)$. We have $Z^{S} = (X/\!\!/_{\xi}T)^{S}$ by $(\mathfrak{t}^{*}_{\mathbb{C}})^{S} = \{0\}$ and remark that $Z\times \mathfrak{t}^{*}_{\mathbb{C}} \simeq \mathcal{N}(Z|X/\!\!/_{\xi}T)$.  For $\alpha'\in H_{T\times S}(X)$ we compute
\begin{align*}
\int_{Z}i^{*}_{Z}\kappa'_{T}(\alpha') 
&{}= 
\sum_{F\subset Z^{S}} \int_{F} \frac{i^{*}_{F}\kappa'_{T}(\alpha')}{e_{S}\mathcal{N}(F|Z)}
\\
&{}= \sum_{F\subset Z^{S}} \int_{F} \frac{i^{*}_{F}\kappa'_{T}(\alpha') e_{S}\mathcal{N}(Z|X/\!\!/_{\xi}T)}{e_{S}\mathcal{N}(F|X/\!\!/_{\xi}T)}
\\
&{}=\sum_{F\subset Z^{S}} \int_{F} \frac{i^{*}_{F}\kappa'_{T}(\alpha') e_{S}(Z\times \mathfrak{t}^{*}_{\mathbb{C}})}{e_{S}\mathcal{N}(F|X/\!\!/_{\xi}T)}
\end{align*}
\begin{align*}
&{}=\int_{X/\!\!/_{\xi}T} \kappa_{T}'(\alpha') \vartheta
\\
&{}=\int_{X/\!\!/_{\xi}T} \kappa_{T}'(\alpha' \vartheta),
\end{align*}
where $\vartheta$ is the product of $S$-weight on $\mathfrak{t}^{*}_{\mathbb{C}}$ and remark that $\kappa'_{T} : H_{T\times S}(X) \to H_{S}(X/\!\!/_{\xi}T)$ is $H_{S}(pt)$-linear. 
As before for $\alpha \in H_{T\times S}(M)$ we have
\begin{align*}
\oint_{M/\!\!/\!\!/\!\!/_{(\xi,0)}T} \kappa_{T}(\alpha) 
&{}=
\int_{Z}i^{*}_{Z} \kappa'_{T}\Delta_{X}(\alpha) 
- 
\int_{Z\cap M_{\varepsilon}/\!\!/_{\xi}T} \frac{i^{*}_{Z\cap M_{\varepsilon}/\!\!/_{\xi}T}\kappa''_{T\times K}(\alpha) }{e_{S}\mathcal{N}(Z\cap M_{\varepsilon}/\!\!/_{\xi}T\,|\,Z)}
\\
&{}=
\int_{X/\!\!/_{\xi}T}\kappa'_{T}(\Delta_{X}(\alpha)\vartheta)
-
\int_{M_{\varepsilon}/\!\!/T} \frac{\kappa''_{T\times K}(\alpha \vartheta)}{e_{S}\mathcal{N}(M_{\varepsilon}/\!\!/_{\xi}T\,|\,X/\!\!/_{\xi}T)}
\\
&{}=
\oint_{M/\!\!/_{\xi}T}\kappa_{T}(\alpha\vartheta),
\end{align*}
where $\kappa''_{T\times K}:H_{T\times S}(M) \to H_{T\times S/T\times K}(M_{\varepsilon}/\!\!/_{\xi}T)$.
If $\xi$ is not a regular value of $\mu_{\mathbb{R}}$ then we perturb $\xi$ to a regular value and we take the limit. Let $\rho\in\mathfrak{t}^{*}$ be in a small neighborhood of $0$ such that $\xi + \rho$ is a regular value of $\mu_{\mathbb{R}}$. Then
$$
\oint_{M/\!\!/\!\!/\!\!/_{(\xi,0)} T} \kappa_{T}(\alpha) = \lim_{s\to 0} \oint_{M/\!\!/_{\xi}T} \kappa_{T}(\alpha \vartheta).
$$ 

The general case can be deduced as follows. We introduce notation $\mu^{T}_{\mathbb{R}}:M\to \mathfrak{t}^{*}$ for the abelian real moment map, it is the composition of $\mu_{\mathbb{R}}$ with the projection $\mathfrak{g}^{*}\to \mathfrak{t}^{*}$.
Suppose that $\xi$ is a regular value of $\mu^{T}_{\mathbb{R}}:M\to \mathfrak{t}^{*}$ the abelian real moment map, otherwise we perturb $\xi$ to a regular value of $\mu^{T}_{\mathbb{R}}$. Then by Theorem 2.2 of \cite{HP2} we have the following relation
$$
\oint_{M/\!\!/\!\!/\!\!/_{(\xi,0)}G} \kappa_{G}(\alpha) 
= 
\frac{1}{|W|} \oint_{M/\!\!/\!\!/\!\!/_{(\xi,0)}T} \kappa_{T}(\alpha\, \varpi_{\mathbb{R}}\, \varpi_{\mathbb{C}})
=
\frac{1}{|W|} \oint_{M/\!\!/_{\xi}T} \kappa_{T}(\alpha\, \vartheta\, \varpi_{\mathbb{R}}\, \varpi_{\mathbb{C}}),
$$
where $\vartheta$ is the product of $S$-weight on $\mathfrak{t}^{*}_{\mathbb{C}}$, $\varpi_{\mathbb{R}}=\varpi$ is the product of roots of $G$ and $\varpi_{\mathbb{C}}$ is the product of $T\times S$-weights on $\mathfrak{v}^{*}_{\mathbb{C}} = \ker(\mathfrak{g}^{*}_{\mathbb{C}} \to \mathfrak{t}^{*}_{\mathbb{C}})$. To make the notations compatible with Theorem \ref{Thm-JK-3} we change notation of the Kirwan map from $\kappa_{G}$ to $\kappa_{S}:H_{G\times S}(M) \to H_{S}(M/\!\!/\!\!/\!\!/_{(\xi,0)}G)$

	\begin{thm}\label{Thm-JK-4}
Let $y =\{y_{1}, \ldots, y_{r+q}\}$ be an ordered bases of $\mathfrak{t}^{*} \times \mathfrak{s}^{*}$ such that $\pi(y_{1})>0$ and $y_{2},\ldots,y_{r+q} \in \ker \pi$. Then
\begin{equation}
\oint_{M/\!\!/\!\!/\!\!/_{(\xi,0)}G} \kappa_{S}(\alpha e^{ \omega_{\mathbb{R}} - \mu_{\mathbb{R} } - \mu_{S} + \xi }) 
= 
\lim_{s\to 0}
\textnormal{EqRes}_{x} \left( \frac{\vartheta\,\varpi_{\mathbb{R}}\,\varpi_{\mathbb{C}}}{|W|vol(T)} \oint_{M} \alpha e^{ \omega_{\mathbb{R}} - \mu_{\mathbb{R}}^{T} - \mu_{S} + \xi + s\rho } \right)(x),
\end{equation}
where $x$ is a generic bases with respect to $\oint_{M} \alpha e^{\omega_{\mathbb{R}} - \mu_{\mathbb{R}}^{T} - \mu_{S} +\xi}$, inducing the same polarization as $y$ on isotropy $T\times S$-weights of $M$, $\varpi$ is the product of roots, $\vartheta\varpi$ is the product of $T\times S$-weights on $\mathfrak{g}^{*}_{\mathbb{C}}$, and $\rho\in \mathfrak{t}^{*}$ small such that $\xi+\rho$ is a regular value of $\mu_{\mathbb{R}}^{T}$. 
	\end{thm}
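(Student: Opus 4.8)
The plan is to reduce the statement to the abelian symplectic localization formula of Theorem~\ref{Thm-JK-2}, using the comparison between hyperK\"ahler and symplectic torus quotients carried out in the discussion immediately preceding the statement. Throughout, the formal integrals $\oint$ are well defined: $M^{T\times S}$ is compact by the compactness assumption, and the fixed loci of $M/\!\!/\!\!/\!\!/_{(\xi,0)}G$, of $M/\!\!/_{\xi}T$ and of the auxiliary symplectic cut are compact by the same arguments as in Proposition~\ref{Prop-JK-2}.

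First I would treat the case where $\xi$ is a regular value of the abelian real moment map $\mu_{\mathbb{R}}^{T}$. Forming the symplectic cut $X$ of $M$ with respect to $\varphi$, the induced $S$-equivariant map $\tilde{\psi}:X/\!\!/_{\xi}T\to\mathfrak{t}_{\mathbb{C}}^{*}$ is transverse to $0$ --- this uses that $(\xi,0)$ is a regular value of the hyperK\"ahler moment map --- and since $(\mathfrak{t}_{\mathbb{C}}^{*})^{S}=\{0\}$ one has $Z^{S}=(X/\!\!/_{\xi}T)^{S}$ for $Z=\tilde{\psi}^{-1}(0)$ together with $\mathcal{N}(Z\,|\,X/\!\!/_{\xi}T)\simeq Z\times\mathfrak{t}_{\mathbb{C}}^{*}$. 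Applying Lemma~\ref{Lem-JK-10} to this section, and checking --- as in Lemmas~\ref{Lem-JK-7} and~\ref{Lem-JK-8} for the symplectic case --- that the fixed-point data created by the cut cancels against the boundary integral over $M_{\varepsilon}/\!\!/_{\xi}T$, one obtains $\oint_{M/\!\!/\!\!/\!\!/_{(\xi,0)}T}\kappa_{T}(\alpha)=\oint_{M/\!\!/_{\xi}T}\kappa_{T}(\alpha\,\vartheta)$; Theorem~2.2 of~\cite{HP2} then upgrades this to
$$
\oint_{M/\!\!/\!\!/\!\!/_{(\xi,0)}G}\kappa_{S}\big(\alpha\,e^{\omega_{\mathbb{R}}-\mu_{\mathbb{R}}-\mu_{S}+\xi}\big)
=
\frac{1}{|W|}\oint_{M/\!\!/_{\xi}T}\kappa_{S}\big(\alpha\,\vartheta\,\varpi_{\mathbb{R}}\,\varpi_{\mathbb{C}}\,e^{\omega_{\mathbb{R}}-\mu_{\mathbb{R}}^{T}-\mu_{S}+\xi}\big),
$$
the shift by $\xi$ recording that $M/\!\!/_{\xi}T=(\mu_{\mathbb{R}}^{T})^{-1}(\xi)/T$.

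Next I would apply Theorem~\ref{Thm-JK-2} to the right-hand side. Regularity of $\xi$ for $\mu_{\mathbb{R}}^{T}$ means $0$ is a regular value of the shifted moment map $\mu_{\mathbb{R}}^{T}-\xi$, so with a generic ordered bases $x$ inducing the polarization of $y$ on the isotropy $T\times S$-weights, Theorem~\ref{Thm-JK-2} applied to the class $\alpha\,\vartheta\,\varpi_{\mathbb{R}}\,\varpi_{\mathbb{C}}\in H_{T\times S}(M)$ gives
$$
\oint_{M/\!\!/_{\xi}T}\kappa_{S}\big(\alpha\,\vartheta\,\varpi_{\mathbb{R}}\,\varpi_{\mathbb{C}}\,e^{\omega_{\mathbb{R}}-\mu_{\mathbb{R}}^{T}-\mu_{S}+\xi}\big)
=
\textnormal{EqRes}_{x}\Big(\frac{1}{vol(T)}\oint_{M}\alpha\,\vartheta\,\varpi_{\mathbb{R}}\,\varpi_{\mathbb{C}}\,e^{\omega_{\mathbb{R}}-\mu_{\mathbb{R}}^{T}-\mu_{S}+\xi}\Big)(x).
$$
Since $\vartheta,\varpi_{\mathbb{R}},\varpi_{\mathbb{C}}$ are constant equivariant classes they pull out of $\oint_{M}$, and the scalar $1/|W|$ may be brought inside $\textnormal{EqRes}_{x}$; this yields the stated formula with $s=0$. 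If $\xi$ is not a regular value of $\mu_{\mathbb{R}}^{T}$, I would replace $\xi$ by $\xi+s\rho$ with $\rho$ as in the statement, apply the above at each such regular value, and let $s\to 0$: the left-hand side is unchanged, while the right-hand side converges by continuity of $\textnormal{EqRes}_{x}(\,\cdot\,e^{s\rho})$ in $s$ (Proposition~\ref{Prop-I-6}(b)), exactly as in the perturbation argument already used for Theorem~\ref{Thm-JK-3}.

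The main obstacle is the comparison step in the second paragraph: one must keep careful track of the several Kirwan maps living on $X$, on $X/\!\!/_{\xi}T$ and on the cut boundary $M_{\varepsilon}/\!\!/_{\xi}T$, verify that the transversality hypothesis of Lemma~\ref{Lem-JK-10} holds on all of $X/\!\!/_{\xi}T$ (including the cut region, where $\varphi$ proper and bounded below forces the relevant $T\times S$-fixed components to be non-empty, so that the supporting planes are as expected), and confirm that the extra fixed-point contributions introduced by the symplectic cut exactly cancel the integral over $M_{\varepsilon}/\!\!/_{\xi}T$. Once this comparison is in place, the remainder is a direct substitution into Theorem~\ref{Thm-JK-2}.
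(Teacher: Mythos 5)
Your proposal is correct and follows essentially the same route as the paper: compare the hyperK\"ahler torus quotient with the symplectic torus quotient of the cut space via the trivialized normal bundle $Z\times\mathfrak{t}^{*}_{\mathbb{C}}$ (yielding the factor $\vartheta$ and the cancellation of the cut-boundary term over $M_{\varepsilon}/\!\!/_{\xi}T$), abelianize with Theorem~2.2 of \cite{HP2} to pick up $\varpi_{\mathbb{R}}\varpi_{\mathbb{C}}/|W|$, substitute into the abelian Theorem~\ref{Thm-JK-2} at the shifted level $\xi$, and handle non-regular $\xi$ by perturbing to $\xi+s\rho$ and using continuity of $\textnormal{EqRes}$ (Proposition~\ref{Prop-I-6}(b)). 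The only cosmetic difference is that you invoke Lemma~\ref{Lem-JK-10} where the paper performs the equivalent Atiyah-Bott-Berline-Vergne computation directly, which is immediate here since $Z^{S}=(X/\!\!/_{\xi}T)^{S}$.
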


\section{Applications}

\subsection{Cohomology ring of Hilbert scheme of points in the plane} 
As an application of Theorem \ref{Thm-JK-4} we compute the (equivariant) cohomology ring of $\textnormal{Hilb}^{n}(\mathbb{C}^{2})$ the Hilbert scheme of point on the plane (cf. \cite{LS}, \cite{Va}) using the construction of \cite{Na} to get $\textnormal{Hilb}^{n}(\mathbb{C}^{2})$ as a hyperK\"ahler quotient. To compute the resulting residue we develop a language of diagrams which make the computation more intuitive.

Consider $\mathcal{A} = \End(\mathbb{C}^{n}) \oplus \Hom(\mathbb{C}, \mathbb{C}^{n})$ with $U(n)$ action given by 
$$
g\cdot (A,a) = (g^{-1}Ag, g^{-1}a), \qquad g\in U(n),\ A\in \End(\mathbb{C}^{n}),\ a\in \Hom(\mathbb{C},\mathbb{C}^{n}). 
$$
$\mathcal{M} = T^{*}\mathcal{A}$ is a hyperK\"ahler manifold with natural $U(n)$-action 
$$
g\cdot (A,a,B,b) = (g^{-1}Ag, g^{-1}a, g^{-1}Bg,bg),
$$
where $A,B\in \End(\mathbb{C}^{n})$, $a\in \Hom(\mathbb{C},\mathbb{C}^{n})$ and $b\in \Hom(\mathbb{C}^{n},\mathbb{C})$.
This action is hyperHamiltonian with real and complex moment maps 
\begin{align*}
\mu_{\mathbb{R}}:(\mathcal{M},\omega_{\mathbb{R}}) \to \mathfrak{u}(n)^{*},
&\quad 
(A,a,B,b)\mapsto \frac{\sqrt{-1}}{2} \big( [A,A^{*}] + [B,B^{*}] + aa^{*} -b^{*}b \big),
\\
\mu_{\mathbb{C}}:(\mathcal{M},\omega_{\mathbb{C}}) \to \mathfrak{u}(n)_{\mathbb{C}}^{*},
& 
\quad
(A,a,B,b) \mapsto [A,B] + ab,
\end{align*}
where we used identification $\mathfrak{u}(n)^{*} \simeq \mathfrak{u}(n)$. Let $\xi = \frac{\sqrt{-1}}{2}I_{n}$. Then $(\xi,0)$ is a regular value of $\mu=(\mu_{\mathbb{R}},\mu_{\mathbb{C}})$ and we have
$$
\textnormal{Hilb}^{n}(\mathbb{C}^{2}) \simeq \mathcal{M}/\!\!/\!\!/\!\!/_{(\xi,0)}U(n).
$$
We remark that $\xi$ is a regular value of $\mu_{\mathbb{R}}^{T}$ and $(\xi,0)$ is a regular value of $\mu^{T}=(\mu_{\mathbb{R}}^{T},\mu_{\mathbb{C}}^{T})$, where $\mu_{\mathbb{R}}^{T}:(\mathcal{M},\omega_{\mathbb{R}}) \to \mathfrak{t}^{*}$,
$$ 
\mu_{\mathbb{R}}^{T}(A,a,B,b)_{k}= \frac{\sqrt{-1}}{2} \Big( \sum_{j=1}^{n} \big( A_{kj}\bar{A}_{kj} - A_{jk}\bar{A}_{jk} + B_{kj}\bar{B}_{kj} - B_{jk}\bar{B}_{jk} \big) + a_{k}\bar{a}_{k} - \bar{b}_{k}b_{k} \Big). 
$$
is the real abelian moment map and
$\mu_{\mathbb{C}}^{T}:(\mathcal{M},\omega_{\mathbb{C}}) \to \mathfrak{t}^{*}_{\mathbb{C}}$,
$$ 
\mu_{\mathbb{C}}^{T}(A,a,B,b)_{k}= \sum_{j=1}^{n} A_{kj} B_{jk} + a_{k}b_{k}. 
$$
Hence the abelian symplectic and hyperK\"ahler quotients $\mathcal{M}/\!\!/_{\xi}T$ and $\mathcal{M}/\!\!/\!\!/\!\!/_{(\xi,0)}T$ exist. The latter is a hypertoric variety (cf. \cite{HP1}).

We consider an auxiliary circle action of $S=U(1)$ on $\mathcal{M}$ given by
$$
s\cdot (A,a,B,b) = (s^{N}A,s^{N}a,sB,sb),\qquad (N>n).
$$
It commutes with the $U(n)$-action and it admits moment map $\varphi:(\mathcal{M},\omega_{\mathbb{R}}) \to \mathbb{R}$,
$$
\varphi(A,a,B,b) = \frac{\sqrt{-1}}{2} \textnormal{Tr} \big( NAA^{*} + BB^{*} + Na^{*}a + bb^{*} \big), 
$$
which is proper and bounded below. 

The Chern classes of the vector bundle $\Xi_{n} = \mathcal{M}\times \mathbb{C}^{n}/\!\!/\!\!/\!\!/_{(\xi,0)}U(n)$ -- where $U(n)$ acts on $\mathbb{C}^{n}$ via standard representation -- generate the cohomology ring $H(\textnormal{Hilb}^{n}(\mathbb{C}^{n}))$ \cite{ES}. This is equivalent to the surjectivity of the Kirwan map $\kappa: H_{U(n)}(\mathcal{M})\to H(\textnormal{Hilb}^{n}(\mathbb{C}^{2}))$.

\begin{lemma*}[cf. \cite{HP1} Lemma 4.9]
The equivariant Kirwan map 
$
\kappa_{S}:H_{U(n)\times S}(\mathcal{M}) \to H_{S}(\textnormal{Hilb}^{n}(\mathbb{C}^{2}))
$
is also surjective.
\end{lemma*}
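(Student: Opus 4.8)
The plan is to upgrade the known surjectivity of the ordinary Kirwan map $\kappa\colon H_{U(n)}(\mathcal{M})\to H(\textnormal{Hilb}^{n}(\mathbb{C}^{2}))$ (which holds by \cite{ES}, since the Chern classes of $\Xi_{n}$ generate) to the $S$-equivariant setting by the standard ``lifting generators'' argument based on equivariant formality, following the strategy of \cite{HP1}. Write $R=H_{S}(pt)=\mathbb{Q}[u]$ with $\deg u=2$ and $\mathfrak{m}=(u)\subset R$. The first step is to record that the Kirwan maps sit in the commutative square
\[
\xymatrix{
H_{U(n)\times S}(\mathcal{M}) \ar[r]^-{\kappa_{S}} \ar[d] & H_{S}(\textnormal{Hilb}^{n}(\mathbb{C}^{2})) \ar[d]^{r} \\
H_{U(n)}(\mathcal{M}) \ar[r]^-{\kappa} & H(\textnormal{Hilb}^{n}(\mathbb{C}^{2})),
}
\]
whose vertical arrows are the restrictions along $U(n)\hookrightarrow U(n)\times S$, $g\mapsto(g,1)$ (equivalently, setting the $S$-parameter to zero); all four maps are restriction homomorphisms, so commutativity is formal. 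Since $\textnormal{Hilb}^{n}(\mathbb{C}^{2})$ admits a Bialynicki-Birula decomposition into finitely many affine cells, its odd cohomology vanishes and the $S$-action is equivariantly formal; hence $B:=H_{S}(\textnormal{Hilb}^{n}(\mathbb{C}^{2}))$ is a free graded $R$-module with finite-dimensional graded pieces, $r$ is surjective, and $\ker r=\mathfrak{m}B$.

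The second step is to check that $A:=\operatorname{Im}(\kappa_{S})\subseteq B$ contains $u$ and surjects onto $B/\mathfrak{m}B$. The element $u$ lies in $A$ because $\kappa_{S}$ is a homomorphism of $R$-algebras with $\kappa_{S}(1)=1$, so the structure map $R\to B$ factors through $A$. Surjectivity modulo $\mathfrak{m}$ follows from the square together with $\operatorname{Im}(\kappa)=H(\textnormal{Hilb}^{n}(\mathbb{C}^{2}))$: the composite $A\hookrightarrow B\xrightarrow{r}B/\mathfrak{m}B$ is onto. Concretely, the $S$-equivariant Chern classes of the bundle $\Xi_{n}^{S}$ associated with the $U(n)\times S$-equivariant bundle $\mathcal{M}\times\mathbb{C}^{n}$ lie in $A$ and restrict under $r$ to the ordinary Chern classes of $\Xi_{n}$, which generate $H(\textnormal{Hilb}^{n}(\mathbb{C}^{2}))$.

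The final step is the graded Nakayama argument: from $A+\mathfrak{m}B=B$ and $u\in A$ we get $\mathfrak{m}B=uB$, and by induction on the degree $d$ (trivially $A_{d}=B_{d}=0$ for $d<0$), assuming $A_{d'}=B_{d'}$ for all $d'<d$ we obtain $uB_{d-2}=uA_{d-2}\subseteq A_{d}$ since $A$ is a subring containing $u$, whence $B_{d}=A_{d}+uB_{d-2}\subseteq A_{d}$. Therefore $A=B$, i.e. $\kappa_{S}$ is surjective. The only point that is not purely formal is the equivariant formality of the non-compact space $\textnormal{Hilb}^{n}(\mathbb{C}^{2})$ (vanishing of odd cohomology and finiteness of the graded pieces), which is where one invokes the cell decomposition; granting that, everything else is routine and requires no new computation.
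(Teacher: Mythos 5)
Your proposal is correct and follows essentially the same route as the paper: the same commutative square relating $\kappa_{S}$ and $\kappa$, equivariant formality of $H_{S}(\textnormal{Hilb}^{n}(\mathbb{C}^{2}))$ as a free $H_{S}(pt)$-module, and an induction on degree (graded Nakayama) to lift classes from the ordinary to the equivariant quotient cohomology. The only difference is in how equivariant formality is justified --- you use the Bialynicki--Birula cell decomposition and vanishing of odd cohomology, while the paper invokes the proper, bounded-below $S$-moment map, citing \cite{HP1} and \cite{TW} --- and both justifications are valid.
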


\begin{proof}
Since the $S$-moment map is proper and bounded below $H_{S}(\textnormal{Hilb}^{n}(\mathbb{C}^{2}))$ is equivariantly formal, i.e. $H_{S}(\textnormal{Hilb}^{n}(\mathbb{C}^{2})) \simeq H(\textnormal{Hilb}^{n}(\mathbb{C}^{2}))  \otimes H_{S}(pt)$ as $H_{S}(pt)$-modules (\cite{HP1}, \cite{TW}). Recall that 
$H_{S}(pt) \simeq \mathbb{R}[\sigma]$ and $H_{U(n)}(pt) \simeq \mathbb{R}[\tau_{1},\ldots,\tau_{n}]^{S_{n}}$, hence
$H_{U(n)}(\mathcal{M}) \simeq \mathbb{R}[\tau_{1},\ldots,\tau_{n}]^{S_{n}}$ and $H_{U(n)\times S}(\mathcal{M}) \simeq \mathbb{R}[\tau_{1},\ldots,\tau_{n},\sigma]^{S_{n}}$. Moreover, we have the following commutative diagram
$$
\xymatrix{
H_{U(n)\times S}(\mathcal{M}) \ar[r]^{\kappa_{S}} & H_{S}(\textnormal{Hilb}^{n}(\mathbb{C}^{2})) 
\ar[d]^{\pi}
\\
H_{U(n)}(\mathcal{M}) \ar[u]^{i} \ar[r]_{\kappa} & H(\textnormal{Hilb}^{n}(\mathbb{C}^{2}))
}
$$
Consider the grading $\mathcal{H} = \oplus_{k\geq0}\mathcal{H}^{k}$ on $H_{U(n)\times S}(\mathcal{M})$ corresponding to $\oplus_{k\geq0}H^{k}(\textnormal{Hilb}^{n}(\mathbb{C}^{2}))\otimes H_{S}(pt)$. Recall that $\kappa_{S}$ is $H_{S}(pt)$ linear, hence $\mathcal{H}^{0}$ is in the image of $\kappa_{S}$. Let $\beta_{k} \in \mathcal{H}^{k}$. By surjectivity of the ordinary Kirwan map there is $\alpha_{k} \in H_{U(n)}(\mathcal{M})$ such that 
$$
\kappa(\alpha_{k}) = \pi(\beta_{k}).
$$ 
We may consider $\alpha_{k}$ as an element of $H_{U(n)\times S}(\mathcal{M})$ and we have
$\kappa_{S}(\alpha_{k}) - \beta_{k} \in \ker\pi$. Moreover, $\ker\pi = \sigma\cdot H_{S}(\textnormal{Hilb}^{n}(\mathbb{C}^{2}))$, thus 
$$
\kappa_{S}(\alpha_{k}) - \beta_{k} = \sigma \beta_{k-1}
$$ 
for some $\beta_{k-1}\in \mathcal{H}_{k-1}$. By inductive hypothesis there is $\alpha_{k-1}$ such that $\kappa_{S}(\alpha_{k-1}) = \beta_{k-1}$, hence
$$
\beta_{k} = \kappa_{S}(\alpha_{k} -\sigma\alpha_{k-1}).
$$
\end{proof}
By surjectivity of $\kappa_{S}$ we have $H_{S}(\textnormal{Hilb}^{n}(\mathbb{C}^{2})) \simeq \mathbb{R}[\sigma,\tau_{1},\ldots,\tau_{n}]^{S_{n}}\big/\ker\kappa_{S}$ and by formality
$$
H(\textnormal{Hilb}^{n}(\mathbb{C}^{2})) \simeq H_{S}(\textnormal{Hilb}^{n}(\mathbb{C}^{2})) \big/ \sigma H_{S}(\textnormal{Hilb}^{n}(\mathbb{C}^{2}))
$$
as rings. There is a non-degenerate pairing (\cite{HP2}) on $\widehat{H}_{S}(\textnormal{Hilb}^{n}(\mathbb{C}^{2})) := H_{S}(\textnormal{Hilb}^{n}(\mathbb{C}^{2}))\otimes \mathbb{R}[1/\sigma]$ given by
$$
\langle \widehat{\eta},\,\widehat{\gamma} \rangle 
:= 
\oint_{\textnormal{Hilb}^{n}(\mathbb{C}^{n})}  \widehat{\eta}\,\widehat{\gamma},
\qquad \forall\,  \widehat{\eta},\,\widehat{\gamma} \in \widehat{H}_{S}(\textnormal{Hilb}^{n}(\mathbb{C}^{2})).
$$
Therefore, $\eta \in H_{S}(\textnormal{Hilb}^{n}(\mathbb{C}^{2}))$ is zero exactly when
$$
\oint_{\textnormal{Hilb}^{n}(\mathbb{C}^{2})} \eta\, \gamma = 0,\qquad \forall\gamma \in H_{S}(\textnormal{Hilb}^{n}(\mathbb{C}^{2})).
$$
Hence, by the surjectivity of $\kappa_{S}$ we have
$$
\ker \kappa_{S} = \Big\{ \alpha \in H_{U(n)\times S}(\mathcal{M})  \,\Big|\,  \oint_{\textnormal{Hilb}^{n}(\mathbb{C}^{2})} \kappa_{S}(\alpha\beta) = 0,\ \forall \beta \in H_{U(n)\times S}(\mathcal{M})  \Big\}.
$$
We use Theorem \ref{Thm-JK-4} to compute this kernel.

\begin{thm}\label{Thm-HS-1}
For any $\alpha \in H_{U(n)\times S}(\mathcal{M}) \simeq \mathbb{R}[\sigma,\tau_{1},\ldots,\tau_{n}]^{S_{n}}$ we have
$$
\oint_{\textnormal{Hilb}^{n}(\mathbb{C}^{2})} \kappa_{S}(\alpha) = \frac{1}{vol(T)} \left( \frac{N+1}{N\sigma}\right)^{n} \sum_{\lambda \vdash n} \alpha(p_{\lambda},\sigma) b_{\lambda}(\sigma),
$$
where $b_{\lambda}(\sigma)$ is defined in (\ref{Eq-HS-9}) and $p_{\lambda}$ is defined (up to reorder) as follows. Into each box of the Young diagram of $\lambda$ we write an element of $\mathbb{Z}\sigma$ according to the next algorithm: we start with the bottom left box and we write $-\sigma$ into it; if a box is filled with $q\sigma$ then we fill the upper neighbor with $(q-N)\sigma$ and the right neighbor with $(q-1)\sigma$. We put all entries from the boxes into the vector $p_{\lambda}$.
\end{thm}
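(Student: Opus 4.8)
The plan is to apply Theorem \ref{Thm-JK-4} directly to $M = \mathcal{M} = T^{*}\mathcal{A}$ with $G = U(n)$, $S = U(1)$, at the regular value $(\xi,0)$ with $\xi = \tfrac{\sqrt{-1}}{2}I_{n}$, and then to evaluate the resulting equivariant Jeffrey--Kirwan residue explicitly. First I would unwind the ingredients of the right-hand side of Theorem \ref{Thm-JK-4}: the fixed point set $\mathcal{M}^{T\times S}$ consists of the single point $(0,0,0,0)$ (since $N>n$ forces every matrix and vector entry to carry a nonzero $T\times S$-weight), so $\oint_{\mathcal{M}}\beta = \int_{\{0\}} i^{*}_{0}\beta / e_{T\times S}\mathcal{N}(\{0\}|\mathcal{M})$ is a single fraction. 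I would list the $T\times S$-weights on $T_{0}\mathcal{M}$: for $A$ the weights are $\tau_{k}-\tau_{j}+N\sigma$ ($1\le j,k\le n$), for $a$ the weights are $\tau_{k}+N\sigma$, for $B$ the weights are $\tau_{j}-\tau_{k}+\sigma$, and for $b$ the weights are $-\tau_{k}+\sigma$. Here $\varpi_{\mathbb{R}} = \prod_{j\ne k}(\tau_{j}-\tau_{k})$ is the product of roots of $U(n)$, while $\vartheta\varpi_{\mathbb{C}}$ is the product of $T\times S$-weights on $\mathfrak{u}(n)^{*}_{\mathbb{C}}$, i.e. $\prod_{j,k}(\tau_{j}-\tau_{k}+\sigma)$ (the diagonal factors $j=k$ contribute $\sigma^{n}$, giving the $\vartheta$ part). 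Substituting $e^{\omega_{\mathbb{R}}-\mu_{\mathbb{R}}^{T}-\mu_{S}+\xi} \mapsto e^{\xi(x)}$ on the fixed point (the form part restricts trivially at $0$, and $\mu_{\mathbb{R}}^{T}$, $\mu_{S}$ vanish at $0$; note $\xi \in \mathfrak{t}^{*}$ must be interpreted as the linear functional $\tfrac{1}{2}\sum_{k}\tau_{k}$ up to normalization), the right-hand side becomes an explicit equivariant residue of a rational exponential fraction in $\tau_{1},\dots,\tau_{n},\sigma$.

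The core of the argument is then the residue computation. I would choose the bases $y$ and the generic bases $x$ so that the induced polarization on the weights is the one dictated by $\pi(y_{1})>0$ with $y_{2},\dots,y_{r+q}\in\ker\pi$, where by (\ref{Eq-JK-14}) $\pi$ reads off the $K$-component; concretely $K$ is the circle whose weight scales the $B,b$ directions by $1$ and $A,a$ by $N$, so $\pi$ is (a positive multiple of) $\sigma$. With this polarization, the $\textnormal{EqRes}_{x}$ splits into a sum over $\mathfrak{t}^{*}$-poles $V = \langle \beta_{i_{1}},\dots,\beta_{i_{n}}\rangle$ spanned by $n$ of the isotropy weights, by the Remark after the definition of $\textnormal{EqRes}$ and Proposition \ref{Prop-I-5}. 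Each contributing pole is cut out by choosing, for each $k$, one linear weight in the variable $\tau_{k}$; the combinatorial bookkeeping of which choices give a nonzero contribution (i.e. which $V$ are genuine poles and satisfy the cone/positivity condition of Proposition \ref{Prop-I-5}) is exactly what the Young-diagram language encodes. I would show that admissible poles are in bijection with partitions $\lambda\vdash n$, and that for the partition $\lambda$ the solution of the linear system $\beta_{i_{l}}=u_{l}$ assigns $\tau_{k}\mapsto p_{\lambda,k}(\sigma)$ with the recursion "fill bottom-left box with $-\sigma$; up-neighbor gets $q-N$ times $\sigma$; right-neighbor gets $q-1$ times $\sigma$" --- this is where one matches the weight $\tau_{k}+N\sigma$ (the $a$-weight, forcing a "source" box) and the transition weights $\tau_{k}-\tau_{j}+N\sigma$ (vertical steps, subtracting $N\sigma$) and $\tau_{j}-\tau_{k}+\sigma$ (horizontal steps, subtracting $\sigma$). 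The Jacobian $|\det(\partial\beta_{i_{l}}/\partial z_{j})|$ of the base change, together with the leftover product of non-pole weights evaluated at $\tau = p_{\lambda}$, the roots $\varpi_{\mathbb{R}}(p_{\lambda})$, the factor $\vartheta\varpi_{\mathbb{C}}(p_{\lambda})$, and the $\sigma$-prefactor from the $b$-weights $-\tau_{k}+\sigma$, assemble into the claimed $b_{\lambda}(\sigma)$; I would define $b_{\lambda}(\sigma)$ in a displayed equation (\ref{Eq-HS-9}) precisely as this product of linear factors over the arm/leg data of $\lambda$, divided by the appropriate determinant, and the overall constant $\bigl(\tfrac{N+1}{N\sigma}\bigr)^{n}$ emerges from the $\sigma$-degree-$n$ part (the $\sigma^{n}$ in $\vartheta$, the $a$- and $b$-weight contributions at the source boxes) together with $\tfrac{1}{vol(T)}$. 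Finally, since $|W|=n!$ and the $n!$ orderings of the entries of $p_{\lambda}$ (equivalently the $n!$ labelings of the boxes) all give equivalent tuples in the sense of Remark \ref{Rk-I-6}, the factor $\tfrac{1}{|W|}$ is absorbed by summing over partitions rather than over labeled fillings.

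The main obstacle will be the bijection-and-evaluation step: proving that the \emph{only} poles $V$ for which $\textnormal{JKRes}_{v}$ is nonzero are the $n!$ labelings of partition-fillings (and not, say, fillings producing repeated $\tau$-values or violating the polarization-cone condition of Corollary \ref{Cor-I-2}), and that at each such pole the residue evaluates to exactly $\alpha(p_{\lambda},\sigma)b_{\lambda}(\sigma)$ with the stated normalization. This requires carefully tracking the polarizations $\widetilde{\beta}_{i}$ induced by $y$ --- in particular that the weights $\tau_{k}+N\sigma$, $\sigma-\tau_{k}$ and the "step" weights are all polarized with positive $\sigma$-component, so that the cone condition singles out precisely the diagram-compatible choices --- and then a bounded, but genuinely intricate, determinant/product simplification. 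I expect the diagram language introduced in this subsection to be exactly the device that tames this bookkeeping, reducing the obstacle to a finite, diagram-local verification (one box at a time: source box contributes $\tfrac{N+1}{N\sigma}$ after all cancellations; internal boxes contribute their arm/leg factors to $b_{\lambda}$). The remaining steps --- substitution into Theorem \ref{Thm-JK-4}, independence of the generic bases $x$ by Proposition \ref{Prop-I-6}, and the limit $s\to 0$ being trivial here since $\xi$ is already a regular value of $\mu_{\mathbb{R}}^{T}$ --- are routine.
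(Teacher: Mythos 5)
Your overall route is the same as the paper's: apply Theorem \ref{Thm-JK-4} to Nakajima's quiver description of $\textnormal{Hilb}^{n}(\mathbb{C}^{2})$, reduce everything to one equivariant residue at the single $T\times S$-fixed point of $\mathcal{M}$, and then classify the contributing poles by Young diagrams, reading off $p_{\lambda}$ from the linear system cut out by the chosen weights and packaging the leftover factors into $b_{\lambda}(\sigma)$. But there is a genuine gap at the very first step: Theorem \ref{Thm-JK-4} computes $\oint \kappa_{S}(\alpha\, e^{\omega_{\mathbb{R}}-\mu_{\mathbb{R}}-\mu_{S}+\xi})$, whereas the statement you must prove concerns $\oint \kappa_{S}(\alpha)$ alone, and these are \emph{not} equal: the exponential descends to the nontrivial class $e^{\omega_{red}-\mu_{S}^{red}+\xi}$ on the quotient, so "substituting $e^{\omega_{\mathbb{R}}-\mu_{\mathbb{R}}^{T}-\mu_{S}+\xi}\mapsto e^{\xi}$ at the fixed point" only rewrites the right-hand side while silently changing the left-hand side. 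The paper resolves this by writing $\oint\kappa_{S}(\alpha)=\lim_{\epsilon\to0^{+}}\oint\kappa_{S}(\alpha\, e^{\epsilon(\omega_{\mathbb{R}}-\mu_{\mathbb{R}}+\xi-\varphi\sigma)})$ (legitimate because $\oint$ is a finite sum over the compact $S$-fixed components, continuous in $\epsilon$), applying Theorem \ref{Thm-JK-4} for each $\epsilon>0$, and letting $\epsilon\to0$ at the end; the $\epsilon$-exponential $e^{\epsilon\sum_{i}\tau_{i}/2}$ is also exactly what makes the exponent generic so that the polarizing bases $\{\sigma,\tau\}$ can be used, and it is what kills the would-be factors $e^{-\frac{\epsilon}{2}\sum_{i}p_{i}\sigma}$ in the limit. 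Without this regularization your argument proves a different identity (with an exponential class on the left and extra exponential factors on the right), not the stated one.

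Two further points. First, your weight bookkeeping for the framing is swapped: with the action $g\cdot(A,a,B,b)=(g^{-1}Ag,g^{-1}a,g^{-1}Bg,bg)$, $s\cdot(A,a,B,b)=(s^{N}A,s^{N}a,sB,sb)$, the $a$-weights are $N\sigma-\tau_{k}$ and the $b$-weights are $\sigma+\tau_{k}$; it is the vanishing of the $b$-weight, not the $a$-weight, that creates the "source" box, and this is what makes the recursion start at $-\sigma$ (your assignment would start it at $-N\sigma$). Second, the step you defer as "intricate bookkeeping" is more than a determinant evaluation via Proposition \ref{Prop-I-5}: at a typical pole $V$ many more than $n$ of the denominator factors lie in $V$, so the residue is not simple, and the paper must first exploit the numerator factors $(N+1)\sigma+\tau_{i}-\tau_{j}$ through the graph-encoded partial-fraction decompositions (Lemmas \ref{Lem-HS-2}--\ref{Lem-HS-5} and the $\widehat{\Upsilon}$-graphs) to reduce each contributing pole to a simple one; any complete write-up along your lines will need an equivalent of that mechanism, not just the cone condition of Corollary \ref{Cor-I-2}.
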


\begin{lemma}\label{Lem-HS-1}
Let $\mathbb{K}$ be a field and let $\{q_{1},\ldots,q_{m}\} \subset \mathbb{K}^{n}$, $q_{i} \neq q_{j}$ if $i\neq j$. Define $\mathcal{L}:\mathbb{K}[x_{1},\ldots,x_{n}] \to \mathbb{M}$, $P \mapsto \sum_{i=1}^{m} a_{i}P(q_{i})$, where $a_{i}\in \mathbb{K}^{*}$. Then 
$$
\{ 
P\in\mathbb{K}[x_{1},\ldots,x_{n}]  \,|\, \mathcal{L}(PQ) = 0,\ \forall Q
\}
=
\cap_{i=1}^{m} \ker(ev_{q_{i}}) 
= 
\cap_{i=1}^{m} \langle x_{j} - q_{ij}  \,|\, j=1,\ldots,n \rangle,
$$ 
where $q_{i} = (q_{i1},\ldots,q_{in}).$
\end{lemma}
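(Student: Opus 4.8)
The plan is to prove the two claimed equalities separately, since the statement is really "$\{P\mid \mathcal{L}(PQ)=0\ \forall Q\}=\bigcap_i\ker(\mathrm{ev}_{q_i})$" together with the standard identification $\ker(\mathrm{ev}_{q_i})=\langle x_j-q_{ij}\mid j=1,\dots,n\rangle$. The second equality is routine: the evaluation map $\mathrm{ev}_{q_i}:\mathbb{K}[x_1,\dots,x_n]\to\mathbb{K}$ is a surjective ring homomorphism whose kernel obviously contains the maximal ideal $\mathfrak{m}_i:=\langle x_j-q_{ij}\rangle$, and since $\mathbb{K}[x]/\mathfrak{m}_i\cong\mathbb{K}$ is already a field the inclusion $\mathfrak{m}_i\subset\ker(\mathrm{ev}_{q_i})$ is an equality (a Taylor-expansion / substitution argument: write $P(x)=P(q_i)+\sum_j(x_j-q_{ij})R_j(x)$). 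I would dispatch this in one or two sentences.

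For the main equality, the inclusion $\bigcap_i\ker(\mathrm{ev}_{q_i})\subseteq\{P\mid\mathcal{L}(PQ)=0\ \forall Q\}$ is immediate: if $P(q_i)=0$ for all $i$ then $(PQ)(q_i)=P(q_i)Q(q_i)=0$, so $\mathcal{L}(PQ)=\sum_i a_i(PQ)(q_i)=0$. The substantive direction is the reverse: suppose $\mathcal{L}(PQ)=0$ for all $Q$; I want to conclude $P(q_k)=0$ for each fixed $k$. The idea is an interpolation/separation argument. Since the $q_i$ are pairwise distinct points of $\mathbb{K}^n$, for each index $k$ there is a polynomial $Q_k$ with $Q_k(q_k)=1$ and $Q_k(q_i)=0$ for $i\neq k$; indeed for each $i\neq k$ pick a coordinate $j(i)$ with $q_{i,j(i)}\neq q_{k,j(i)}$ and set
$$
Q_k(x)=\prod_{i\neq k}\frac{x_{j(i)}-q_{i,j(i)}}{q_{k,j(i)}-q_{i,j(i)}},
$$
which is well defined because $\mathbb{K}$ is a field and the denominators are nonzero. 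Then $\mathcal{L}(PQ_k)=\sum_i a_i P(q_i)Q_k(q_i)=a_k P(q_k)$, and since $a_k\in\mathbb{K}^*$ the hypothesis $\mathcal{L}(PQ_k)=0$ forces $P(q_k)=0$. As $k$ was arbitrary, $P\in\bigcap_i\ker(\mathrm{ev}_{q_i})$.

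The only point requiring a little care — and the closest thing to an "obstacle" — is the construction of the separating polynomials $Q_k$: one must use that $\mathbb{K}$ is a field (so the product of nonzero scalars in the denominator is invertible) and that distinctness of points in $\mathbb{K}^n$ guarantees, for each pair $i\neq k$, at least one coordinate where they differ. Everything else is formal. I would present the proof in the order: (1) reduce to the two set equalities; (2) prove $\ker(\mathrm{ev}_{q_i})=\mathfrak{m}_i$ by substitution; (3) prove the easy inclusion; (4) construct $Q_k$ and deduce the hard inclusion. No deeper algebraic machinery (Nullstellensatz, primary decomposition) is needed, since the finite set of points gives everything by hand.
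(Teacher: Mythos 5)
Your proof is correct and follows essentially the same route as the paper: the paper also constructs, for each point, a separating polynomial vanishing at all other $q_j$ but not at $q_i$ (there taken as $Q_i=\prod_k\prod_{j:\,q_{jk}\neq q_{ik}}(x_k-q_{jk})$, unnormalized), and then uses $\mathcal{L}(PQ_i)=a_iP(q_i)Q_i(q_i)=0$ with $a_i\in\mathbb{K}^*$ to force $P(q_i)=0$. Your normalized one-coordinate-per-index version of $Q_k$ and the explicit verification of $\ker(\mathrm{ev}_{q_i})=\langle x_j-q_{ij}\rangle$ are only cosmetic differences.
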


\begin{proof}
Obviously, $\cap_{i=1}^{m}\ker(ev_{q_{i}}) \subset \{ 
P\in\mathbb{K}[x_{1},\ldots,x_{n}]  \,|\, \mathcal{L}(PQ) = 0,\ \forall Q
\} $. Consider the polynomial 
$$
Q_{i} = \prod_{k=1}^{n} \prod_{\substack{j=1 \\ q_{jk}\neq q_{ik} }}^{m} (x_{k} - q_{jk})
$$
Then $Q_{i}(q_{j}) = 0$ if $j\neq i$ and $Q_{i}(q_{i}) \neq 0$. If $P \in \{ 
P\in\mathbb{K}[x_{1},\ldots,x_{n}]  \,|\, \mathcal{L}(PQ) = 0,\ \forall Q
\}$  then for all $i$ we have $\mathcal{L}(PQ_{i}) = a_{i}P(q_{i})Q_{i}(q_{i}) = 0$, hence $P(q_{i}) = 0$. Thus $P \in \ker(ev_{q_{i}}).$
\end{proof}

From Theorem \ref{Thm-HS-1} and Lemma \ref{Lem-HS-1} follows

\begin{thm}\label{Thm-H-2}
Denote $C_{i}(\tau)$ the $i$th elementary symmetric polynomial in $\tau_{1},\ldots,\tau_{n}$ and let 
$$
I_{\lambda} = \langle C_{i}(\tau) - C_{i}(p_{\lambda})  \,|\,  i=1,\ldots,n \rangle,
$$
where $\lambda \vdash n$. Then
$$
H_{S}(\textnormal{Hilb}^{n}(\mathbb{C}^{2})) \simeq \mathbb{R}[C_{1}(\tau),\ldots,C_{n}(\tau),\sigma]\big/\cap_{\lambda\vdash n} I_{\lambda}.
$$
\end{thm}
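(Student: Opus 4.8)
The plan is to combine the explicit integration formula of Theorem~\ref{Thm-HS-1} with the elementary algebraic Lemma~\ref{Lem-HS-1}, after two harmless reductions: replacing the variables $\tau_i$ by the elementary symmetric polynomials $C_i(\tau)$ (I write $C=(C_1,\dots,C_n)$), and enlarging the coefficient ring from $\mathbb{R}[\sigma]$ to the field $\mathbb{K}:=\mathbb{R}(\sigma)$.

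First I would recall, from the discussion preceding Theorem~\ref{Thm-HS-1}, that surjectivity of $\kappa_S$ together with non-degeneracy of the pairing on $\widehat{H}_S(\textnormal{Hilb}^{n}(\mathbb{C}^{2}))$ gives
$$
H_S(\textnormal{Hilb}^{n}(\mathbb{C}^{2})) \simeq H_{U(n)\times S}(\mathcal{M})\big/\ker\kappa_S, \qquad \ker\kappa_S = \big\{\, \alpha \,\big|\, \oint_{\textnormal{Hilb}^{n}(\mathbb{C}^{2})}\kappa_S(\alpha\beta)=0 \ \ \forall\beta \,\big\}.
$$
Since $S_n$ acts trivially on $\sigma$ we have $H_{U(n)\times S}(\mathcal{M}) \simeq \mathbb{R}[\tau_1,\dots,\tau_n,\sigma]^{S_n} = \mathbb{R}[\sigma,C]$, a polynomial ring in the algebraically independent elements $\sigma,C_1,\dots,C_n$, and on these invariants the evaluation $\alpha\mapsto\alpha(p_\lambda,\sigma)$ is exactly the substitution $C_i\mapsto C_i(p_\lambda)$, with $C_i(p_\lambda)\in\mathbb{Z}[\sigma]$. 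Hence by Theorem~\ref{Thm-HS-1} the functional $\alpha\mapsto\oint_{\textnormal{Hilb}^{n}(\mathbb{C}^{2})}\kappa_S(\alpha)$, extended $\mathbb{K}$-linearly to $\mathbb{K}[C]$, has exactly the shape treated in Lemma~\ref{Lem-HS-1}:
$$
\mathcal{L}(\alpha) = \frac{1}{vol(T)}\Big(\frac{N+1}{N\sigma}\Big)^{n}\sum_{\lambda\vdash n}\alpha(p_\lambda,\sigma)\,b_\lambda(\sigma) = \sum_{\lambda\vdash n} a_\lambda\,\alpha(q_\lambda),
$$
with points $q_\lambda=(C_1(p_\lambda),\dots,C_n(p_\lambda))\in\mathbb{K}^{n}$ and coefficients $a_\lambda=\frac{1}{vol(T)}\big(\frac{N+1}{N}\big)^{n}\sigma^{-n}b_\lambda(\sigma)\in\mathbb{K}^{*}$, the last being nonzero by the formula (\ref{Eq-HS-9}) for $b_\lambda(\sigma)$.

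The one step that needs a real argument is the hypothesis of Lemma~\ref{Lem-HS-1} that the $q_\lambda$ be pairwise distinct. I would unwind the algorithm defining $p_\lambda$: the box in row $i$ and column $j$, counted from the bottom-left corner, receives the entry $-\big(1+(i-1)N+(j-1)\big)\sigma$. A Young diagram of $n$ boxes has at most $n$ rows and at most $n$ columns, so $0\le i-1,\ j-1\le n-1<N$, and hence $(i,j)\mapsto(i-1)N+(j-1)$ is a two-digit base-$N$ encoding, which is injective. Thus the entries of a single $p_\lambda$ are already pairwise distinct, and the unordered tuple $p_\lambda$ recovers the set of boxes of $\lambda$, hence $\lambda$ itself; therefore $\lambda\neq\lambda'$ forces $p_\lambda\neq p_{\lambda'}$ as multisets, equivalently $q_\lambda\neq q_{\lambda'}$. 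Lemma~\ref{Lem-HS-1} then gives
$$
\big\{\, P\in\mathbb{K}[C] \,\big|\, \mathcal{L}(PQ)=0 \ \ \forall Q \,\big\} = \bigcap_{\lambda\vdash n}\big\langle\, C_j-C_j(p_\lambda) \,\big|\, j=1,\dots,n \,\big\rangle_{\mathbb{K}[C]}.
$$

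Finally I would descend to $\mathbb{R}[\sigma]$-coefficients. Testing $\mathcal{L}(\alpha\beta)=0$ against all $\beta\in\mathbb{R}[\sigma,C]$ is equivalent to testing it against all $\beta\in\mathbb{K}[C]$, because the denominator of such a $\beta$ lies in $\mathbb{R}[\sigma]\setminus\{0\}\subset\mathbb{K}^{*}$ and clearing it changes neither side; hence $\ker\kappa_S$ equals the contraction to $\mathbb{R}[\sigma,C]$ of the ideal on the right above. For each $\lambda$ the quotient $\mathbb{R}[\sigma,C]/I_\lambda\cong\mathbb{R}[\sigma]$, obtained by the substitution $C_j\mapsto C_j(p_\lambda)$, is an integral domain, so $I_\lambda$ is saturated with respect to the multiplicative set $\mathbb{R}[\sigma]\setminus\{0\}$; thus $I_\lambda\,\mathbb{K}[C]\cap\mathbb{R}[\sigma,C]=I_\lambda$, and intersecting over all $\lambda$ yields $\ker\kappa_S=\bigcap_{\lambda\vdash n}I_\lambda$, which is the claimed presentation. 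I expect the main obstacle to be precisely the distinctness of the $q_\lambda$ --- the base-$N$ encoding argument resting on $N>n$; the two reductions and the invocations of Theorem~\ref{Thm-HS-1} and Lemma~\ref{Lem-HS-1} are routine.
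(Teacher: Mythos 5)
Your proposal is correct and follows exactly the route the paper takes: Theorem \ref{Thm-H-2} is deduced there in one line from Theorem \ref{Thm-HS-1} and Lemma \ref{Lem-HS-1}, and your argument simply supplies the omitted routine verifications (distinctness of the points $q_\lambda$ via the base-$N$ encoding using $N>n$, nonvanishing of the coefficients $b_\lambda(\sigma)$, and the passage to $\mathbb{K}=\mathbb{R}(\sigma)$ and back by saturation). Nothing in your write-up deviates from or adds a genuinely different idea to the paper's intended proof.
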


\begin{proof}[Proof of Theorem \ref{Thm-HS-1}]
For $\alpha\in \mathbb{R}[\sigma,\tau_{1},\ldots,\tau_{n}]^{S_{n}}$ we have
$$
\oint_{\textnormal{Hilb}^{n}(\mathbb{C}^{2})} \kappa_{S}(\alpha) 
=
\lim_{\epsilon \to 0^{+}} \oint_{\textnormal{Hilb}^{n}(\mathbb{C}^{2})} \kappa_{S}(\alpha e^{\epsilon (\omega_{\mathbb{R}} - \mu_{\mathbb{R}} +\xi - \varphi\sigma)})= 
$$
$$
\lim_{\epsilon \to 0^{+} } \frac{1}{n!\, vol(T)} \textnormal{EqRes}_{(\sigma,\tau)}
\left[
\frac{
\alpha(\sigma,\tau) ((N+1)\sigma)^{n} \prod\limits_{1\leq i\neq j\leq n}(\tau_{i} - \tau_{j})((N+1)\sigma + \tau_{i} -\tau_{j}) e^{\epsilon \sum\limits_{i=1}^{n} \frac{\tau_{i}}{2}} 
}{
N^{n}\sigma^{2n} \prod\limits_{1\leq i\neq j\leq n} (\sigma + \tau_{i} - \tau_{j})(N\sigma + \tau_{i} - \tau_{j}) \prod\limits_{k=1}^{n} (\sigma + \tau_{k})(N\sigma - \tau_{k})
}
\right]=
$$
$$ 
\lim_{\epsilon \to 0^{+} } \frac{1}{n!\,vol(T)} \left( \frac{N+1}{N\sigma}\right)^{n} \textnormal{EqRes}_{(\sigma,\tau)}
\left[
\frac{
\alpha(\sigma,\tau) \prod\limits_{1\leq i\neq j\leq n}(\tau_{i} - \tau_{j})((N+1)\sigma + \tau_{i} -\tau_{j}) e^{\epsilon \sum\limits_{i=1}^{n} \frac{\tau_{i}}{2}} 
}{
\prod\limits_{1\leq i\neq j\leq n} (\sigma + \tau_{i} - \tau_{j})(N\sigma + \tau_{i} - \tau_{j}) \prod\limits_{k=1}^{n} (\sigma + \tau_{k})(N\sigma - \tau_{k})
}
\right],
$$
by Theorem \ref{Thm-JK-4} for $\mathcal{M}/\!\!/\!\!/\!\!/_{(\xi,0)}U(n)$.
Remark that since $\sum_{i=1}^{n}\tau_{i}$ is generic we may use the polarizing bases $\{\sigma,\tau\}$ for calculating the equivariant Jeffrey-Kirwan residue. 
We choose scalar product on $\mathfrak{t}^{*}$ such that $\tau$ is an orthonormal bases.

Let $\mathcal{A} = \{N\sigma-\tau_{i},\,\sigma + \tau_{i},\, \sigma +\tau_{i} -\tau_{j},\, N\sigma +\tau_{i} - \tau_{j} \,|\, 1\leq i\neq j\leq n \}$. Let $V$ be a hyperplane in $\mathfrak{t}^{*} \oplus \mathfrak{s}^{*}$ spanned by some of elements of $\mathcal{A}$ such that $V$ projects bijectively to $\mathfrak{t}^{*}$, i.e. $V$ is a pole of 
$$
F(\tau,\sigma) 
=
\frac{
\prod\limits_{1\leq i\neq j\leq n}(\tau_{i} - \tau_{j})((N+1)\sigma + \tau_{i} -\tau_{j})  
}{
\prod\limits_{1\leq i\neq j\leq n} (\sigma + \tau_{i} - \tau_{j})(N\sigma + \tau_{i} - \tau_{j}) \prod\limits_{k=1}^{n} (\sigma + \tau_{k})(N\sigma - \tau_{k})
}. 
$$
Consider the system of equation $\alpha = 0$ for all $\alpha \in \mathcal{A}_{V} = \{\alpha\in\mathcal{A}  \,|\,  \alpha \in V\}$ in $\tau_{1},\ldots,\tau_{n}$ unknowns and $\sigma$ parameter. It has a unique solution of form $\tau_{i} = p_{i}\sigma$, $i=1,\ldots,n,$ and 
$$
V = \langle \tau_{1} - p_{1}\sigma,\,\ldots,\,\tau_{n} - p_{n}\sigma \rangle.
$$ Denote $p_{V} = (p_{1}\sigma,\ldots,p_{n}\sigma)$. We say that a pole $V$ does not contribute if 
$$
\textnormal{JKRes}_{v}\alpha(\tau(v,\sigma),\sigma) F(\tau(v,\sigma),\sigma)e^{\frac{1}{2}\sum\limits_{i=1}^{n} \tau_{i}(v,\sigma)}dv = 0
$$ 
for all $\alpha \in \mathbb{R}[\sigma,\tau_{1},\ldots,\tau_{n}]^{S_{n}}$ and where $v$ is the bases on $V$ induced by $\{\sigma,\tau_{1},\ldots,\tau_{n}\}$.
Recall that $V$ does not contribute if $\frac{1}{2}\sum_{i=1}^{n}\tau_{i} \notin \textnormal{pr}_{\mathfrak{t}^{*}}(Cone(\alpha  \,|\, \alpha\in \mathcal{A}_{V}))$, where $\textnormal{pr}_{\mathfrak{t}^{*}}:\mathfrak{t}^{*}\oplus\mathfrak{s}^{*} \to \mathfrak{s}^{*}$ is the projection.

\begin{lemma}\label{Lem-HS-2}
A pole $V$ does not contribute (or exists) if any of the following holds:
\begin{enumerate}

\item\label{Eq-HS-1a} there exist $i$ such that there is no $\alpha \in \mathcal{A}_{V}$ with positive $\tau_{i}$-coefficient, i.e. $\sigma + \tau_{i},\, \sigma + \tau_{i} - \tau_{j}, N\sigma+\tau_{i}-\tau_{j} \notin \mathcal{A}_{V}$ for all $j$.

\item\label{Eq-HS-1b} $p_{k}=m$ with $m\geq 0$ for some $k$, 

\item\label{Eq-HS-1c} $N\sigma-\tau_{i}\in\mathcal{A}_{V}$ for some $i$,

\item\label{Eq-HS-1d} $\sigma+\tau_{i},\  \varepsilon\sigma+\tau_{i}-\tau_{j}\in\mathcal{A}_{V}$ for some $i\neq j$ $(\varepsilon\in\{1,N\})$,

\item\label{Eq-HS-1e} $\sigma+\tau_{i}-\tau_{j},\ N\sigma+\tau_{i}-\tau_{j}\in\mathcal{A}_{V}$ for some $i\neq j$,

\item $\varepsilon\sigma+\tau_{i}-\tau_{j},\ \varepsilon'\sigma+\tau_{j}-\tau_{i}\in\mathcal{A}_{V}$ for some $i\neq j$ $(\varepsilon,\varepsilon'\in\{1,N\})$,

\item $\sigma+\tau_{i}\notin\mathcal{A}_{V}$ for all $i$.


\end{enumerate}
\end{lemma}

\begin{proof}
\begin{enumerate}

\item If $\frac{1}{2}\sum_{i=1}^{n}\tau_{i}\in \textnormal{pr}_{\mathfrak{t}^{*}}(Cone(\alpha\ |\ \alpha\in\mathcal{A}_{V}))$ then for all $i$ there must be an $\alpha\in\mathcal{A}_{V}$ with positive $\tau_{i}$ coefficient.

\item Let $M$ be an index such that $p_{M}$ is maximal, hence $p_{M}\geq 0$. Then $\varepsilon\sigma+\tau_{M}-\tau_{l}\notin \mathcal{A}_{V}$ ($\varepsilon\in\{1,N\}$) for all $l$ since it would imply that $p_{l}=p_{M}+\varepsilon>p_{M}$. Moreover, $\sigma+\tau_{M}\notin \mathcal{A}_{V}$ since it would imply that $p_{M}=-1<0$. Therefore, $\mathcal{A}_{V}$ does not contain element with positive $\tau_{M}$ coefficient.

\item It is equivalent that $p_{i}=N>0$.

\item It follows that $p_{i}=-1$ and $p_{j}=\varepsilon -1\geq 0$.

\item $(N-1)\sigma=(N\sigma+\tau_{i}-\tau_{j})-(\sigma+\tau_{i}-\tau_{j})\in V$, hence if $n\geq 2$ then $\sigma\in V$ which implies that $V$ is not a pole.

\item Similar to the previous one.

\item By (\ref{Eq-HS-1c}) we can suppose that $\mathcal{A}_{V}$ contains only elements of form $\varepsilon\sigma+\tau_{i}-\tau_{j}$ $(\varepsilon\in\{1,N\})$, but these are perpendicular to $\sum_{i=1}^{n}\tau_{i}$ with respect to the usual Euclidean scalar product on $\mathfrak{t}^{*}\oplus \mathfrak{s}^{*}$, hence $\sum_{i=1}^{n}\tau_{i}\notin \textnormal{pr}_{\mathfrak{t}^{*}}(Cone(\alpha\ |\ \alpha\in\mathcal{A}_{V}))$.

\end{enumerate}
\end{proof}

\begin{lemma}\label{Lem-HS-3}
Let $V$ be a pole. We cannot decompose $\{0,1,\ldots,n\}=N'\uplus N''$ as disjoint union of two non-empty subsets with properties:
	\begin{itemize}
\item if $N\sigma-\tau_{i}\in\mathcal{A}_{V}$ or $\sigma+\tau_{i}\in\mathcal{A}_{V}$ then $0,i\in N'$ or $0,i\in N''$.

\item if $\varepsilon\sigma+\tau_{i}-\tau_{j}\in\mathcal{A}_{V}$ $(\varepsilon\in\{1,N\})$ then $i,j\in N'$ or $i,j\in N''$.
	\end{itemize}
\end{lemma}
\begin{proof}
Suppose that $0\in N'$. The decomposition $\{0,1,\ldots,n\}=N'\uplus N''$ induces a decomposition $\mathcal{A}_{V}=\mathcal{A}'_{V}\uplus\mathcal{A}''_{V}$ according to an element $\alpha\in\mathcal{A}_{V}$ involving terms $\tau_{i}$ we have $i\in N'$ or $i\in N''$. Remark that $\mathcal{A}''_{V}$ may contain only elements of form $\varepsilon\sigma+\tau_{i}-\tau_{j}$. Moreover, elements of $\textnormal{pr}_{\mathfrak{t}^{*}}\mathcal{A}''_{V}$ cannot span $\mathfrak{t}^{*}$. Therefore, $\mathcal{A}'_{V}\neq\emptyset$.

We will show that $\mathcal{A}''_{V}=\emptyset$ which implies that $N''=\emptyset$. The elements $\textnormal{pr}_{\mathfrak{t}^{*}}\mathcal{A}'_{V}$ span a subpace $V'$ of $\langle \tau_{i}\ |\ i\in N'\setminus\{0\}\rangle$ with dimension at most $|N'|-1$. The elements $\textnormal{pr}_{\mathfrak{t}^{*}}\mathcal{A}''_{V}$ span a subspace $V''$ of $\langle\tau_{i}\ |\ i\in N''\rangle$. If $N''\neq\emptyset$ then the dimension of $V''$ is at most $|N''|-1$ because all elements of $\textnormal{pr}_{\mathfrak{t}^{*}}\mathcal{A}''_{V}$ are perpendicular to $\sum_{i\in N''}\tau_{i}$. Finally, if $N''\neq\emptyset$  then $V'\cap V''=\{0\}$ and $\dim V'+\dim V''\leq|N'|-1+|N''|-1=n-1$ which leads to contradiction since $V$ is a pole.
\end{proof}

To each pole $V$ 
we associate an oriented graph $\Gamma_{V}$ with vertices $\{0,1,\ldots,n\}$. The vertices will lie on the lattice $\mathbb{Z} \times N\mathbb{Z}$. We allow that two vertex lie on same lattice point. The vertex $0$ is always on the lattice point  $(0,0)$. The edges of $\Gamma_{V}$ correspond to elements of $\mathcal{A}_{V}$ as follows.
\begin{center}
\begin{tabular}{c|c}
$\alpha\in\mathcal{A}_{V}$ & edges of $\Gamma_{V}$ 
\\\hline
$N\sigma+\tau_{i}-\tau_{j}$ & vertical edge from $j$ to $i$ 
\\
$N\sigma-\tau_{i}$ & vertical edge from $i$ to $0$ 
\\
$\sigma+\tau_{i}-\tau_{j}$ & horizontal edge from $j$ to $i$ 
\\
$\sigma+\tau_{i}$ & horizontal edge from $0$ to $i$
\end{tabular}
\end{center}
The vertical and horizontal edges go as $(a,bN)\to(a,(b+1)N)$
and $(a,b)\to(a+1,b)$, respectively. We construct $\Gamma_{V}$ inductively as follows. Choose a bases $\{\alpha_{1},\ldots,\alpha_{n}\}$ of $\mathcal{A}_{V}$. We may suppose that $\alpha_{1} = \sigma + \tau_{i}$ or $\alpha_{1} = N\sigma - \tau_{i}$. We place first the vertex $0$ to $(0,0)$. Suppose that we have already placed some vertices on the lattice. In a turn, we place all other vertices which can be connected to already placed vertices following the rules in the above table for bases vectors. 
Finally, we can place all vertices by Lemma \ref{Lem-HS-3} and the edges corresponding to bases vectors form a tree (i.e. no loop). 
The shortest oriented edge path $(e_{1},\ldots,e_{m})$ on this tree from $0$ to $i$ gives relation
$$
\tau_{i} - p_{i}\sigma = \sum_{j=1}^{m} a_{j}\alpha_{k_{j}},
$$
where $\alpha_{k_{j}}$ is the bases vector corresponding to the edge $e_{j}$ and $a_{j}=\pm 1$ depending on if the orientation of the edge $e_{j}$ agrees with our walking direction on the path or no. Since $m\leq n$ we get
$$
p_{i} = a + bN
$$
with $-n \leq a + b \leq n$. Thus $p_{i}$ determines the coordinates of the vertex $i$ on the lattice: the vertex $i$ has coordinates $(-a, -bN)$ where $a = p_{i}\mod N. $ Remark that the vertex $i$ is on the line $x+y +p_{i} = 0$. Moreover, the coordinates of the vertices does not depend on the choice of the bases $\{\alpha_{1},\ldots,\alpha_{n}\}$, implying that for any $\alpha\in \mathcal{A}_{V}$ we can draw an edge according to the above table. Remark that, if vertices $i$ and $j$ lie on the same lattice point and there is an edge $i\to k$ (or $k \to i$) then there exists also an edge from $j\to k$ (or $k\to j$ respectively). Hence drawing edges will not lead to confusion if vertices lie on the same lattice point.

\begin{remark*}
The graph $\Gamma_{V}$ is full in the sense that if there is a vertex $i$ at coordinates $(a,b)$ and another vertex $j$ at coordinates $(a+1,b)$ ($(a,b+N)$ respectively) then there must be an edge $i\to j$ if $j\neq0$ ($i\neq 0$, respectively). 
\end{remark*}

We rephrase the Lemma \ref{Lem-HS-2} for graphs.

\begin{corollary}\label{Cor-HS-1}
The pole $V$ does not contribute (or does not exist)  if any of the following holds.
\begin{enumerate}


\item There are multiple edges between two vertices.

\item There is an edge with target vertex $0$.

\item\label{Eq-HS-12c} There is a vertex $i\neq0$ which has no tail. A tail of $i$  is an edge $k\to i$.

\item\label{Eq-HS-11d} There is a vertex $i\neq 0$ in the region $\{x\leq 0\}$, i.e $i$ has coordinates $(a,b)$ with $a\leq0$.

\item\label{Eq-HS-11e} There is a vertex $i$ in the region $\{y<0\}$, i.e. $i$ has coordinates $(a,b)$ with $b<0$.

\end{enumerate}
\end{corollary}

\begin{proof}
\begin{enumerate}
\item By Lemma \ref{Lem-HS-2} (\ref{Eq-HS-1e}).

\item By Lemma \ref{Lem-HS-2} (\ref{Eq-HS-1c})

\item By Lemma \ref{Lem-HS-2} (\ref{Eq-HS-1a})

\item Suppose that there is a vertex $i\neq0$ in the region $\{x \leq 0\}$. Then let  $k\neq0$ be a vertex in $\{x\leq0\}$ lying on the line $x+y+m=0$ with $m$ maximal. Hence $k$ has no tail. Indeed, if there is an edge $l \to k$ then $l$ is in $\{x\leq0\}$. Moreover, $l\neq0$ since there is only horizontal arrow from $0$. But $l$ is on the line $x+y+m+1 = 0$ if $l\to k$ is horizontal and it is on the line $x+y+m+N=0$ if $l\to k$ is vertical, leading to contradiction with maximality of $m$.

\item Let $k$ be a vertex in $\{y<0\}$ lying on the line $x+y+m = 0$ with $m$ maximal. Then $k$ has no tail. Indeed, suppose that there is a edge $l\to k$. Then $l$ is in the region  $\{y<0\}$ and it is on the line $x+y+m+1 = 0$ if $l\to k$ is horizontal and it is on the line $x+y+m+N =0$ if $l\to k$ is vertical contradicting to the maximality of $m$.

\end{enumerate}
\end{proof}

Set $F_{\Gamma_{V}}$ to $F$. If $\Gamma_{V}$ contains a subgraphs
\begin{equation}\label{Eq-HS-2}
\xymatrix{ & k \\ 0\neq i\ar[r] & j\ar[u] }
\end{equation} 
then we have partial fraction decomposition
\begin{equation}\label{Eq-HS-3}
F_{\Gamma_{V}}
=
G\frac{((N+1)\sigma+\tau_{k}-\tau_{i})}{(\sigma+\tau_{j}-\tau_{i})(N\sigma+\tau_{k}-\tau_{j})}
=
G\frac{1}{N\sigma+\tau_{k}-\tau_{j}}
+
G\frac{1}{\sigma+\tau_{j}-\tau_{i}}.
\end{equation}
Let $\widehat{\Gamma}'_{V}$ (respectively $\widehat{\Gamma}''_{V}$ ) be the extended graph obtained from $\Gamma_{V}$ by removing the edge $i\to j$ (respectively $j \to k$) and adding the dashed diagonal edge $i\dashrightarrow k$. To each dashed diagonal edge $i\dashrightarrow k$ we associate the term $(N+1)\sigma + \tau_{k} - \tau_{i}$ in the nominator of $F_{\Gamma_{V}}$. 
Denote $\Gamma'_{V}$ and $\Gamma''_{V}$ the graphs obtained from $\widehat{\Gamma}'_{V}$ and $\widehat{\Gamma}''_{V}$, respectively by forgetting the dashed diagonal edges. Moreover, to extended graphs $\widehat{\Gamma}'_{V}$ and $\widehat{\Gamma}''_{V}$  we associate  fractions 
$$
F_{\widehat{\Gamma}'_{V}} = G\frac{1}{N\sigma+\tau_{k}-\tau_{j}} \quad\textnormal{and}\quad
F_{\widehat{\Gamma}''_{V}} =G\frac{1}{\sigma+\tau_{j}-\tau_{i}}.
$$ 
Then $F_{\widehat{\Gamma}'_{V}}$ (and $F_{\widehat{\Gamma}''_{V}}$) can be obtained from $F_{\Gamma_{V}}$ by removing from the nominator and denominator the terms corresponding to the added dashed edges and removed normal edges, respectively. In picture the relation (\ref{Eq-HS-3}) looks like
\begin{equation}\label{Eq-HS-4}
\xymatrix{ 
& k \ar@{}[dr] |{\longmapsto} &  & k \ar@{}[dr] |{+} & & k  
\\ 
0\neq i\ar[r] & j\ar[u] & 
0\neq i\ar@{-->}[ru] & j\ar[u] &  
0\neq i\ar@{-->}[ru] \ar[r] & j }. 
\end{equation}
We have a similar decomposition corresponding to
\begin{equation}\label{Eq-HS-5}
\xymatrix{ 
l \ar[r] & k \ar@{}[dr] |{\longmapsto} & l \ar[r] & k \ar@{}[dr] |{+} & l & k  
\\ 
i\ar[u] &  & 
i \ar@{-->}[ru] &  &  
i \ar[u] \ar@{-->}[ru]  &  }, 
\end{equation}
namely,
$$
F_{\Gamma_{V}}
=
G\frac{((N+1)\sigma+\tau_{k}-\tau_{i})}{(\sigma+\tau_{k}-\tau_{l})(N\sigma+\tau_{l}-\tau_{i})}
=
G\frac{1}{\sigma+\tau_{k}-\tau_{l}}
+
G\frac{1}{N\sigma+\tau_{l}-\tau_{i}}.
$$
We can continue these type of decomposition for $F_{\widehat{\Gamma}'_{V}}$ and $F_{\widehat{\Gamma}''_{V}}$ to get a partial fraction decomposition of form
$$
F_{\Gamma_{V}} = \sum_{\widehat{\Upsilon}} F_{\widehat{\Upsilon}}
$$
where $\widehat{\Upsilon}$ is an extended graph  such that $\Upsilon$ is a subgraph of $\Gamma_{V}$ and obtained from $\widehat{\Upsilon}$ by forgetting all dashed diagonal edges. The fraction $F_{\widehat{\Upsilon}}$ is encoded by $\widehat{\Upsilon}$ as follows:  we remove all linear terms from the nominator and denominator of $F_{\Gamma_{V}}$ corresponding to added dashed diagonal edges and removed normal edges, respectively. Remark that if $\widehat{\Upsilon}$ contains the subgraph
$$
\xymatrix{ & k \\ 0\neq i\ar[r] \ar@{-->}[ru] & j\ar[u] }
$$
then we cannot decompose $F_{\widehat{\Upsilon}}$  to $G'\frac{1}{N\sigma+\tau_{k}-\tau_{j}} + 
G'\frac{1}{\sigma+\tau_{j}-\tau_{i}}$ (according to the picture (\ref{Eq-HS-4}))
 since $(N+1)\sigma + \tau_{k} -\tau_{i}$ is missing from the nominator of $F_{\widehat{\Upsilon}}$. An analog statement holds for the decomposition (\ref{Eq-HS-5}).

Similarly as Corollary \ref{Cor-HS-1} (\ref{Eq-HS-12c}) we have

\begin{lemma}\label{Lem-HS-4}
For all $\alpha \in \mathbb{R}[\sigma,\tau_{1},\ldots,\tau_{n}]^{S_{n}}$ we have
$$
\textnormal{JKRes}_{v} \alpha(\tau(v,\sigma),\sigma) F_{\widehat{\Upsilon}} (\tau(v,\sigma),\sigma) e^{\frac{1}{2}\sum\limits_{i=1}^{n}\tau_{i}(v,\sigma)}dv = 0 
$$  
if in $\Upsilon$ there is a vertex $i\neq 0$ without tail.
\end{lemma}

\begin{lemma}\label{Lem-HS-5}
\begin{enumerate}

\item Suppose that $\Gamma_{V}$ contains the subgraph (\ref{Eq-HS-2}). If there is no vertex $l$ such that $\Gamma_{V}$ contains the subgraph
$
\xymatrix{
l \ar[r] & k \\ 0\neq i \ar[u] \ar[r] & j \ar[u] }
$
then $V$ does not contribute.

\item Suppose that $\Gamma_{V}$ contains the subgraph 
$
\xymatrix{l \ar[r] & k \\ i\ar[u]& }.
$
 If there is no vertex $j$ such that $\Gamma_{V}$ contains the subgraph
$
\xymatrix{
l \ar[r] & k \\ 0\neq i \ar[u] \ar[r] & j \ar[u] }
$
then $V$ does not contribute.

\item If there are multiple vertices on the same lattice point.

\end{enumerate}
\end{lemma}

\begin{proof}
\begin{enumerate}
\item By decomposition (\ref{Eq-HS-4}) we have $F_{\Gamma_{V}} = F_{\widehat{\Gamma}'_{V}} + F_{\widehat{\Gamma}''_{V}}$. If there is no horizontal edge $l\to k$ then $k$ has no tail in $\Gamma''_{V}$, hence
\begin{multline}\label{Eq-HS-6}
\textnormal{JKRes}_{v} \alpha(\tau(v,\sigma),\sigma) F_{\Gamma_{V}} (\tau(v,\sigma),\sigma) e^{\frac{1}{2}\sum\limits_{i=1}^{n}\tau_{i}(v,\sigma)}dv 
\\=
\textnormal{JKRes}_{v} \alpha(\tau(v,\sigma),\sigma) F_{\widehat{\Gamma}'_{V}} (\tau(v,\sigma),\sigma) e^{\frac{1}{2}\sum\limits_{i=1}^{n}\tau_{i}(v,\sigma)}dv   
\end{multline}
Since $j$ has no horizontal tail in $\Gamma'_{V}$, the right hand side of (\ref{Eq-HS-6}) vanishes unless there is a vertical tail $j_{1} \to j$. If $j_{1}$ has no tail then (\ref{Eq-HS-6}) still vanishes. If it has a horizontal tail by 
a similar decomposition and argument we can get rid of it. So is it safe to suppose that it may have only vertical tail. But $j$  may have only finitely many vertices below it, hence the last one has no vertical tail and (\ref{Eq-HS-6}) anyhow vanishes.

If $l \to k$ horizontal edge exists then the existences of $i\to l$ follows from
$$
N\sigma + \tau_{l} - \tau_{i} = (N\sigma + \tau_{k} - \tau_{j}) - (\sigma +\tau_{k} -\tau_{l}) +(\sigma + \tau_{j}-\tau_{i}) \in V.
$$
\item The proof is similar to the previous case.

\item Suppose that there are two vertices $i$ and $j$ on the same lattice point $(a,b)$. We may suppose that there are no more double vertices on lattice points $(a,b) + \mathbb{Z}_{<0}\times \mathbb{Z}_{<0}$. 

If there are only vertical or horizontal tail to $i$ (and therefore to $j$, too) then by decomposition
	\begin{equation}\label{Eq-HS-10}
F 
= 
G\frac{ \tau_{i} - \tau_{j} }{ (\varepsilon\sigma + \tau_{i} - \tau_{k})(\varepsilon\sigma + \tau_{j} - \tau_{k}) } 
= 
-G\frac{1}{ \varepsilon\sigma + \tau_{i} - \tau_{k} } + G\frac{1}{ \varepsilon\sigma + \tau_{j} - \tau_{k} }, 
\qquad
\varepsilon\in \{1,N\}
	\end{equation}
or 
$$
F 
= 
G\frac{ \tau_{i} - \tau_{j} }{ (\sigma + \tau_{i})(\sigma+\tau_{j}) }
=
-G\frac{1}{\sigma + \tau_{i}} + G\frac{1}{\sigma+\tau_{j}}.
$$
we get two graphs in which $i$ or $j$ has no tail. By Lemma \ref{Lem-HS-4} the pole $V$ does not contribute.

Suppose that $i$ (and therefore $j$) has both vertical and horizontal tails. By Corollary \ref{Cor-HS-1} (\ref{Eq-HS-11d}) and (\ref{Eq-HS-11e}) we can suppose that $a\neq1$ and $b\neq0$. By decomposition (\ref{Eq-HS-10}) we get

\begin{align*}
F 
={}&
G\frac{(\tau_{i} - \tau_{j})(\tau_{j} - \tau_{i})}{ (\sigma + \tau_{i}-\tau_{l}) (\sigma + \tau_{j}-\tau_{l})  (N \sigma + \tau_{i}-\tau_{k})  (N\sigma + \tau_{j}-\tau_{k})}
\\
={}&
-G\frac{1}{ (\sigma + \tau_{i}-\tau_{l})  (N\sigma + \tau_{i}-\tau_{k})}
+
G\frac{1}{ (\sigma + \tau_{i}-\tau_{l})  (N\sigma + \tau_{j}-\tau_{k})}
\\
&
+
G\frac{1}{ (\sigma + \tau_{j}-\tau_{l})  (N\sigma + \tau_{i}-\tau_{k})}
-
G\frac{1}{ (\sigma + \tau_{j}-\tau_{l})  (N\sigma + \tau_{j}-\tau_{k})}.
\end{align*}
In picture
$$
\xymatrix{
l \ar@<.5ex>[r]^{i}\ar@<-.5ex>[r]_{j}   &     i,j \ar@{}[dr] |{\mapsto}     &
l  \ar[r]^{i}   &   i,j \ar@{}[dr] |{+}     &     
l \ar[r]^{i}    &       i,j  \ar@{}[dr] |{+}    &     
l \ar[r]_{j}    &       i,j   \ar@{}[dr] |{+}   &     
l \ar[r]_{j}    &       i,j  
\\
      &      k\ar@<.5ex>[u]^{i}\ar@<-.5ex>[u]_{j}    &
      &      k \ar[u]^{i}    & 
      &      k \ar[u]_{j}    &
      &      k \ar[u]^{i}   &
      &      k \ar[u]_{j}
}
$$
(the labels on the edges indicate the target of the edge). On the first and last graphs on the right hand side $i$ and respectively $j$ have no tails, so they do not contribute. By symmetry consider only the second graph on the right hand side. Suppose that $l$ has a vertical tail $h \to l$. By decomposition (\ref{Eq-HS-5}) we get
$$
\xymatrix{
l \ar[r]^{i}    &     i,j \ar@{}[dr] |{\mapsto}     &        l\ar[r]^{i}     &     i,j  \ar@{}[dr] |{+}    &        l     &     i,j    
\\
h \ar[u]    &      k   \ar[u]_{j}     &        h \ar@{-->}[ru]^{i}    &      k\ar[u]_{j}      &        h\ar[u]\ar@{-->}[ru]^{i}     &      k\ar[u]_{j}    
}
$$
and on the last graph $i$ does not have a tail (a non-dashed edge). Iterating the same argument we may assume that all vertices in the row of $l$ have only horizontal tail. Since $b\neq0$, the last vertex in that row will not have tail. Therefore $V$ does not contribute. 
\end{enumerate}
\end{proof}

Lets summarize what we showed so far. A pole $V$ does not contribute unless
\begin{itemize}

\item there are no double vertices, i.e. two vertex on the same lattice point,

\item there are no double edges, i.e. two edges between two vertex,

\item the vertices $\{1,\ldots,n\}$ are in the quadrant $\mathbb{Z}_{>0} \times \mathbb{Z}_{\geq0}$,

\item every vertex $\{1,\ldots,n\}$ has a tail,

\item all lattice points in $\{(x,y)\in \mathbb{Z}_{>0} \times N\mathbb{Z}_{\geq0} \,|\, x<a,\ y<b\}$ carry a vertex if $(a,b)$ carries a vertex (follows from Lemma \ref{Lem-HS-5}). 
\end{itemize}
For such a pole if we draw a box around each non-zero vertex we get a Young diagram, labeled by $\{1,\ldots,n\}$. Conversely, for any Young diagram such that each box contains in the center a unique lattice point in $\mathbb{Z}_{>0}\times N\mathbb{Z}_{\geq0}$ and it is labeled by $1,\ldots,n$, we can associate a pole $V=\langle \tau_{1}- p_{1}\sigma,\ldots,\tau_{n}-p_{n}\sigma \rangle$ with $p_{i} = -(a_{i} + Nb_{i})$ if the box labeled by $i$ contains the  lattice point $(a_{i},Nb_{i})$.

Let $V$ be a pole obtained from a labeled Young diagram $D$. We can decompose $F= F_{\Gamma_{V}}$ to partial fractions with the following algorithm. Take the top- and leftmost subgraph
$$
\xymatrix{ l \ar[r] & k \\ i \ar[u]\ar[r] & j\ar[u]}
$$
and apply the decomposition corresponding to the picture (\ref{Eq-HS-5}):
$F_{\Gamma_{V}} = F_{\widehat{\Gamma}'_{V}} + F_{\widehat{\Gamma}''_{V}}$, where $\widehat{\Gamma}'_{V}$ (respectively  $\widehat{\Gamma}''_{V}$)  is the graph  obtained from $\Gamma_{V}$ by removing the edge $l\to k$ (respectively $i\to l$) and adding the dashed diagonal edge $i\dashrightarrow k$. Remark that in  $\widehat{\Gamma}''_{V}$ the vertex $l$ has no tail. Hence (\ref{Eq-HS-6}) holds. We continue the algorithm as above with  $\widehat{\Gamma}'_{V}$ in place of $\Gamma_{V}$. Finally, we get that
\begin{multline}\label{Eq-HS-7}
\textnormal{JKRes}_{v} \alpha(\tau(v,\sigma),\sigma) F_{\Gamma_{V}} (\tau(v,\sigma),\sigma) e^{\frac{1}{2}\sum\limits_{i=1}^{n}\tau_{i}(v,\sigma)}dv 
\\=
\textnormal{JKRes}_{v} \alpha(\tau(v,\sigma),\sigma) F_{\widehat{\Upsilon}} (\tau(v,\sigma),\sigma) e^{\frac{1}{2}\sum\limits_{i=1}^{n}\tau_{i}(v,\sigma)}dv,   
\end{multline}
where $\widehat{\Upsilon}$ is the extended graph obtained from $\Gamma_{V}$ by removing all horizontal edges except the ones in the bottom row and by adding dashed diagonal edges under each horizontal edge removed. The graph $\Upsilon$ has exactly $n$ edges (non-dashed edges), thus in the denominator $F_{\widehat{\Upsilon}}$ has exactly $n$ linear term which are in $V$. They form a bases of $V$, since each $\tau_{i}$ appears with positive coefficient in a unique linear term. It also follows that $V$ contributes and (\ref{Eq-HS-7}) equals
\begin{equation}\label{Eq-HS-8}
\frac{1}{vol(T)}\alpha(p_{V},\sigma)b_{\widehat{\Upsilon}}(p_{V},\sigma) e^{-\frac{1}{2}\sum\limits_{i=1}^{n}p_{i}\sigma},
\end{equation}
where $b_{\widehat{\Upsilon}}$ is obtained from $F_{\widehat{\Upsilon}}$ by removing all linear terms in $V$ from the denominator. Remark that by permuting labels of $D$ we act freely on the set of poles, but $b_{\widehat{\Upsilon}}(p_{V},\sigma)$ remains invariant, therefore we denote it by
\begin{equation}\label{Eq-HS-9}
b_{\lambda}(\sigma),
\end{equation}
where $\lambda$ is the partition corresponding to the Young diagram $D$. Moreover,   for all poles $V$ obtained from the same Young diagram $D$ but with different labeling (\ref{Eq-HS-8}) yields the same result, since $\alpha$ is symmetric in $\tau_{1},\ldots,\tau_{n}$. Finally,
$$
\oint_{\textnormal{Hilb}^{n}(\mathbb{C}^{2})} \kappa_{S}(\alpha) 
= 
\frac{1}{vol(T)} \left( \frac{N+1}{N\sigma}\right)^{n} \sum_{\lambda \vdash n} \alpha(p_{\lambda},\sigma) b_{\lambda}(\sigma).
$$
\end{proof}



\begin{thebibliography}{123}
\bibitem[A]{A} M.F. Atiyah, {\it Convexity and commuting Hamiltonians}, Bull. London Math. Soc. 14 (1982), 1-15.
\bibitem[AB]{AB} M.F. Atiyah, R. Bott, {\it The moment map and equivariant cohomology}, Topology, 23(1):1Đ28, 1984.
\bibitem[BeV]{BeVe} N. Berline, M. Vergne, {\it Classes caract\'eristiques \'equivariantes. Formule de localisation en cohomologie \'equivariante,} C. R. Acad. Sci. Paris S\'er. I Math., 295(9):539-541, 1982.
\bibitem[BT]{BT} R. Bott, L. W. Tu, {\it Equivariant characteristic classes in the Cartan model}, arXiv preprint math/0102001 (2001).
\bibitem[BiV]{BV}M. Brion, M. Vergne, {\it Arrangements of hyperplane. I. Rational functions and the Jeffrey-Kirwan residue}, Annales scientifiques de lŐ\'E.N.S. 4e s\'erie, tome 32, $\textnormal{n}^{\circ}$ 5 (1999), p. 715 - 741
\bibitem[DH]{DH} J.J. Duistermaat and G.J. Heckman, {\it On the variation in the cohomology of the symplectic form of the reduced phase space}, Invent. Math., 69(2):259-268, 1982.
\bibitem[ES]{ES} G. Ellingsrud and S.A. Str\o mme, {\it On the homology of the Hilbert scheme of points in the plane}, Invent. Math. 87 (1987), 343-352.
\bibitem[GKG]{GKG} V. Guillemin, Y. Karshon, V.L. Ginzburg, {\it Moment maps, cobordisms, and Hamiltonian group actions}, Vol. 98. Amer Mathematical Society, 2002.
\bibitem[GS1]{GS} V. Guillemin, S. Sternberg, {\it Supersymmetry and equivariant de Rham theory.} Vol. 2. Springer, 1999.
\bibitem[GS2]{GS2} V. Guillemin, S. Sternberg, {\it Convexity properties of the moment mapping I.} Invent. Math. 67 (1982) 491-513.
\bibitem[HP1]{HP1} M. Harada, N. Proudfoot, {\it Properties of the residual circle action on a hypertoric variety.} Pacific J. Math 214.2 (2004): 263-284.
\bibitem[HP2]{HP2} T. Hausel, N. Proudfoot, {\it Abelianization for hyperk\"ahler quotients}, Topology 44.1 (2005): 231-248
\bibitem[JK1]{JK1} L.C. Jeffrey, F.C. Kirwan, {\it Localization for nonabelian group actions}, Topology 34 (1995), no. 2, 291 - 327.
\bibitem[JK2]{JK2} L.C. Jeffrey, Frances C. Kirwan, {\it Localization and the quatization conjecture}, Topology, Volume 36, No. 3, pp. 647 - 693, 1997.
\bibitem[JKo]{JKo} L. Jeffrey, M. Kogan, {\it Localization theorems by symplectic cuts.} in "The breadth of symplectic and Poisson geometry" (2005): 303-326. 
\bibitem[Ki]{Ki} F.C. Kirwan, {\it Cohomology of quotients in symplectic and algebraic geometry}, volume 31 of Mathematical Notes. Princeton University Press, Princeton, NJ, 1984.
\bibitem[Ler]{Ler} E. Lerman, {\it Symplectic cuts.} Math. Res. Lett 2.3 (1995): 247-258.
\bibitem[LMTW]{LMTW}. Lerman, E. Meinrenken, S. Tolman, C. Woodward, {\it Nonabelian convexity by symplectic cuts}, Topology 37 (1998), no. 2, 245-259
\bibitem[LS]{LS} M. Lehn, C. Sorger. {\it Symmetric groups and the cup product on the cohomology of Hilbert schemes}, Duke Mathematical Journal 110.2 (2001): 345-357.
\bibitem[Ma1]{Martens} J. Martens, {\it Equivariant volumes of non-compact quotients and instanton counting}, Communications in Mathematical Physics 281.3 (2008): 827-857.
\bibitem[Ma2]{Martin} S. Martin, {\it Symplectic quotients by a nonabelian group and by its maximal torus.} arXiv preprint math/0001002 (2000).
\bibitem[MS]{MS} D. McDuff, D. Salamon, {\it Introduction to symplectic topology.} Oxford University Press, USA, 1999.
\bibitem[Me]{Me} E. Meinrenken, {\it Symplectic Surgery and the Spin$^{c}$-Dirac Operator},  Advances in mathematics 134.2 (1998): 240-277
\bibitem[MNS]{MNS} G. Moore, N. Nekrasov, S. Shatashvili. {\it Integrating over Higgs branches}, Communications in Mathematical Physics 209.1 (2000): 97-121.
\bibitem[Na]{Na} H. Nakajima, {\it Lectures on Hilbert schemes of points on surfaces}. Vol. 18. Amer Mathematical Society, 1999.
\bibitem[PW]{PW} E. Prato, S. Wu. {\it Duistermaat-Heckman measures in a non-compact setting}, Compositio Math 94.2 (1994): 113-128.
\bibitem[TW]{TW} S. Tolman, J. Weitsman, {\it The cohomology rings of symplectic quotients.} Communications in Analysis and Geometry 11.4 (2003): 751-774.
\bibitem[Va]{Va} E. Vasserot, {\it Sur l'anneau de cohomologie du sch\'ema de Hilbert de $\mathbb{C}^{2}$}, Comptes rendus de l'Acad\'emie des sciences. S\'erie 1, Math\'ematique 332.1 (2001): 7-12.

\end{thebibliography}
\end{document}